 \DeclareMathAlphabet      {\mathbf}{OT1}{cmr}{bx}{n}
 \DeclareFontFamily{OT1}{pzc}{}
 \DeclareFontShape{OT1}{pzc}{m}{it}%
 {<-> s * [1.15] pzcmi7t}{}
 \DeclareMathAlphabet{\mathpzc}{OT1}{pzc}{m}{it}
 \newtheorem{thm}{Theorem}
 \newtheorem{theorem}{Theorem}[section]
 \newtheorem{lemma}[theorem]{Lemma}
 \newtheorem{definition}[theorem]{Definition}
 \newtheorem{prop}[theorem]{Proposition}
 \newtheorem{remark}{Remark}[section]
\begin{document}   \small
 	\pagenumbering{arabic}

	\pagenumbering{arabic}
	\title{On PFH and HF spectral invariants}
	\author{{ Guanheng Chen}}
	\date{}
	%\begin{titlingpage}
	\maketitle
	%Department of Mathematics, University of Adelaide \\
	\thispagestyle{empty}
	\begin{abstract}
For a closed symplectic surface, there are two types of spectral invariants: one defined by periodic Floer homology (PFH) and another by quantitative Heegaard Floer homology (QHF). The theme  of this paper is to investigate the relationship between these two invariants. We begin by defining intermediate invariants using the cylindrical formulation of QHF, which we call HF spectral invariants. These invariants are shown to be equivalent to the link spectral invariants in the author's  previous work. In the case of the sphere, we prove that the homogenized HF spectral invariants at the unit are equal to the homogenized PFH spectral invariants. This result is derived by constructing homomorphisms from quantitative Heegaard Floer homology to periodic Floer homology, which we refer to as open-closed morphisms.  In addition,  we show that  the homogenized PFH spectral invariants are quasi-morphisms.

	%study the relation between two kinds of  spectral invariants  defined by periodic Floer homology (PFH)  and  quantitative Heegaard  Floer homology (QHF).  We deduce a relation between these two spectral invariants by using the closed-open morphisms and open-closed morphisms.  For the sphere, we prove that the homogenized  PFH spectral invariants  equal to   homogenized  QHF spectral invariants at the unit. 	
	\end{abstract}
	\tableofcontents

\section{Introduction and Main results}
Let $\Sigma$ be a closed surface with genus $g$ and $\omega$ a volume form  of volume 1.  Given a \textbf{Hamiltonian function} $H: S^1_t \times \Sigma \to \mathbb{R}$, then we have a unique vector field $X_{H_t}$, called  the  \textbf{Hamiltonian vector field},    satisfying  the relation $\omega(X_{H_t}, \cdot) = d_{\Sigma} H_t $.  Let $\varphi_H^t$ be the flow generated by $X_{H_t}$, i.e., $\partial_t \varphi_H^t =X_{H_t} \circ \varphi_H^t$ and $\varphi_H^0 =id$.  The time-1 flow is denoted by $\varphi_H: =\varphi_H^1$. A symplectomorphism $\varphi$ is called a \textbf{Hamiltonian symplectomorphism} if  $\varphi =\varphi_H$ for some $H$.  The collection of all Hamiltonian symplectomorphisms forms a group $Ham(\Sigma, \omega). $

% A Hamiltonian function is called     \textbf{automatous}  if it is $t$-independent.

 %Given $\varphi \in \text{Ham}(\Sigma, \omega)$, there are two Floer theories associated with it, called \textbf{periodic Floer homology (abbreviated as PFH)} and \textbf{quantitative Heegaard Floer homology (abbreviated as QHF)}.

Given that $\varphi_H \in \text{Ham}(\Sigma, \omega)$, M. Hutchings defines a Floer-type invariant $\widetilde{PFH}(\Sigma, \varphi_H,  \gamma_H^{\mathbf{x}} )$ for $\varphi_H$, referred to as \textbf{periodic Floer homology} \cite{H1, H3}, abbreviated as PFH, where $\gamma_H^{\mathbf{x}}  =S^1 \times \mathbf{x} \subset S^1 \times \Sigma$ is a fixed reference cycle with $d$ components.  Roughly speaking,  PFH is the homology of a chain complex generated by certain sets of periodic points with total degree $d$, and the differential is defined by counting holomorphic curves in $\mathbb{R} \times S^1 \times \Sigma$.
Fix a suitable \textbf{link} (a disjoint union of simple closed curves, Definition \ref{definition6}) $\underline{L}$ on $\Sigma$;   there exists a ``relative version of PFH" associated with $(\underline{L}, \varphi_H)$, called \textbf{quantitative Heegaard Floer homology}, abbreviated as QHF,   introduced by D. Cristofaro-Gardiner, V. Humili$\grave{e}$re, C. Mak, S. Seyfaddini, and I. Smith \cite{CHMSS}. QHF is defined as the Lagrangian Floer homology of  the  Lagrangian  pair $(\operatorname{Sym}^d \varphi_H(\underline{L}), \operatorname{Sym}^d \underline{L}) \subset \operatorname{Sym}^d \Sigma$, denoted by $HF(\operatorname{Sym}^d \varphi_H(\underline{L}), \operatorname{Sym}^d \underline{L})$.  Moreover,  these two kinds of Floer homologies are non-vanishing \cite{EH, CHMSS} and independent of the choice of the Hamiltonian symplectomorphism. Therefore, we have two abstract homologies $ \widetilde{PFH}(\Sigma,  d )$ and $HF(\operatorname{Sym}^d\underline{L})$ with   canonical  isomorphisms $ \widetilde{PFH}(\Sigma, \varphi_H,  \gamma_H^{\mathbf{x}} ) \cong \widetilde{PFH}(\Sigma,   d )$ and $HF(\operatorname{Sym}^d \varphi_H(\underline{L}), \operatorname{Sym}^d \underline{L}) \cong HF(\operatorname{Sym}^d\underline{L})$  for any $\varphi_H \in Ham(\Sigma, \omega)$.

There are two family of numerical invariants defined from these two Floer theories
		\begin{equation*}
\begin{split}
&		c_{d}^{pfh} : C^{\infty}( S^1 \times \Sigma )  \times \widetilde{PFH}(\Sigma, d ) \to \{-\infty\}\cup \mathbb{R},\\
&c_{\underline{L}, \eta}^{link} : C^{\infty}( [0,1]\times \Sigma )  \times  HF(\operatorname{Sym}^d\underline{L}) \to \{-\infty\}\cup \mathbb{R}.
\end{split}
	\end{equation*}
The former is called \textbf{PFH spectral invariants}  \cite{CHS1, EH, CPZ}, and the latter is called \textbf{link spectral invariants}\cite{CHMSS}.

%A \textbf{link} $\underline{L}$ on $\Sigma$ is a disjoint union of  simple closed curves.   Under certain monotone assumptions,  D. Cristofaro-Gardiner,  V. Humili$\grave{e}$re, C. Mak, S. Seyfaddini and I. Smith show that the Lagrangian Floer homology of a Lagrangian pair  $(Sym^d \varphi_H(\underline{L}),  Sym^d \underline{L})$ in $Sym^d \Sigma$, denoted by
%$HF(Sym^d \Sigma,  Sym^d \underline{L}, Sym^d\varphi_H)$, is well-defined and   non-vanishing \cite{CHMSS}.  Here   $\varphi_H$ is a Hamiltonian symplecticmorphism.  They call the Floer homology  $HF(Sym^d \Sigma,  Sym^d \underline{L}, Sym^d\varphi_H)$   \textbf{quantitative Heegaard Floer homology}, abbreviated as QHF.  For any two different Hamiltonian symplecticmorphisms, the corresponding QHF are  canonically isomorphic to each other.  Let  $HF(Sym^d \underline{L})$ denote an abstract group that is a union of all the QHF defined by Hamiltonian symplecticmorphisms, modulo the canonical  isomorphisms. Using R. Leclercq and F. Zapolsky's general results \cite{LZ},  they  define a set  of numerical invariants parameterized by QFH
	%	\begin{equation*}
	%	c_{\underline{L},\eta}^{link} : C^{\infty}([0,1] \times \Sigma )  \times  HF(Sym^d \underline{L}) \to  \{-\infty\} \cup \mathbb{R},
%
%where $\eta$ is a fixed nonnegative constant.  These numerical invariants are called \textbf{link spectral invariants}.

  Although these two types of spectral invariants stem from different Floer theories, they satisfy many parallel properties, such as Hofer-Lipschitz continuity and the Calabi property \cite{CHMSS, EH, CPZ}. So  it is natural to study whether they have any relation. The aim of this paper is try to answer to this question at least in some specical cases.
% Because PFH is defined by counting holomorphic curves in 4-manifolds while QHF is defined by counting holomorphic disks  in  a symplectic manifold of dimension greater than 4, it is hard to establish a morphism between them directly.
To this end, our strategy is to construct morphisms between these two Floer homologies.
Because these two Floer theories are defined by counting holomorphic curves in manifolds of different dimensions, it is hard to define the morphisms directly.   To overcome this issue,   the author follows  R. Lipshitz's approach \cite{RL1}  to define an   intermediate Floer homology  by counting holomorphic curves in a 4-manifold, denoted by $HF(\Sigma,  \varphi_H(\underline{L}),  \underline{L})$ \cite{GHC2}.  Moreover,   the author proves that there is a canonical  isomorphism
\begin{equation} \label{eq30}
\begin{split}
\Phi_H: HF(\Sigma, \varphi_H(\underline{L}), \underline{L}) \to HF( \operatorname{Sym}^d \varphi_H(\underline{L}), \operatorname{Sym}^d\underline{L}).
\end{split}
\end{equation}%$HF(\Sigma,  \varphi_H(\underline{L}),  \underline{L})$ is isomorphic to the QHF.
Therefore, this can be viewed as an alternative formulation of the quantitative Heegaard Floer homology. When the context is clear,  we also call it QHF.  It serves as a bridge between the QHF and PFH. Using  $HF(\Sigma, \varphi_H(\underline{L}), \underline{L})$, we define a family of spectral invariants $c_{\underline{L}, \eta}$ as in \cite{CHMSS, LZ}.   To distinguish with the link spectral invariants  $c^{link}_{\underline{L},\eta}$, we call $c_{\underline{L}, \eta}$ the \textbf{HF spectral invariants} instead.  The properties of HF spectral invariants are summarized in Theorem \ref{thm4}.  Via the isomorphism (\ref{eq30}), we know that  $c_{\underline{L}, \eta }$ is equivalent to  $c^{link}_{\underline{L}, \eta}$  (see (\ref{eq31}) and Corollary 1.9 of \cite{GHC2}).

Return to the morphisms between PFH and QHF. In \cite{GHC2},   the author  establishes  a non-vanishing homomorphism from PFH to QHF
\begin{equation} \label{eq19}
 \mathcal{CO}(\underline{L}, H)_J:   \widetilde{PFH}(\Sigma, \varphi_H, \gamma_H^{\mathbf{x}})_J  \to HF(\Sigma,  \varphi_H(\underline{L}), \underline{L})_J
\end{equation}
 which is called the \textbf{closed-open morphism}.  Apply the same methods in \cite{GHC2}, we will construct a  reverse morphism from QHF to PFH called  \textbf{open-closed morphism} in Theorem \ref{thm2}.
The closed-open/open-closed morphisms (\ref{eq19}) are     analogy of the usual  closed-open/open-closed morphisms between  symplectic Floer homology  and  Lagrangian Floer homology   defined by P. Albers \cite{PA}.  These maps also have   been constructed by V. Colin, P. Ghiggini, and K. Honda \cite{VPK} for a  different setting.  We refer reader to Pages 4--5 of  \cite{GHC2} to the differences between our construction and the one in \cite{VPK}.

There are two special classes in PFH and (cylindrical formulation) QHF called the units, denoted by $\mathfrak{e} $ (Section 6 of \cite{GHC2}) and $e_{\underline{L}} $ (Definition \ref{definition4}) respectively. Using the closed-open and open-closed morphisms, we prove the equivalence between PFH spectral invariants and HF spectral invariants at the units  in the case of the sphere.
\begin{thm}  \label{thm0}
 Suppose that $\underline{L}$ is a  $0$-admissible link   on  $\mathbb{S}^2$. Then for any Hamiltonian function $H$, we have
% c_{\underline{L}}^-(H)  c^{pfh, -}_d(H)
$$c^{pfh}_{d}(H,  \mathfrak{e})  -1   \le   c_{\underline{L}}(H, e_{\underline{L}}) \le c^{pfh}_{d}(H,   \mathfrak{e}). $$
Moreover,  for any $\varphi \in Ham(\mathbb{S}^2, \omega)$, we have
\begin{equation} \label{eq32}
%\mu_d^{hf}(\varphi, \mathbf{1}_{hf}) =
 \mu_{\underline{L}}(\varphi, e_{\underline{L}}) = \mu^{link}_{\underline{L}}(\varphi, \mathbf{1}_{\underline{L}})=  \mu_{d}^{pfh}(\varphi, \mathfrak{e}),
\end{equation}
where  $\mathbf{1}_{\underline{L}} \in HF(\operatorname{Sym}^d \underline{L})$ is the unit of QHF, $\mu_{\underline{L}}, \mu^{link}_{\underline{L}}, \mu_{d}^{pfh}$ are the homogenization of $c_{\underline{L}, \eta =0}$, $c^{link}_{\underline{L}, \eta=0}$, and $c^{pfh}_{d}$ respectively (see (\ref{eq60}) and (\ref{eq61})).   In particular, for any two $0$-admissible links $\underline{L}, \underline{L}'$ with same number of  components, then we have  $ \mu_{\underline{L}}(\varphi, e_{\underline{L}}) = \mu_{\underline{L}'} (\varphi, e_{\underline{L}'}) $.
\end{thm}

\subsection{Preliminaries} \label{section0}

\subsubsection{Periodic Floer homology}
In this section, we review the definition of twisted periodic Floer homology and PFH spectral invariants.  For more details, please refer to \cite{H3, H4, CHS1, EH}.

Fix a  Hamiltonian symplectomorphism $\varphi \in Ham(\Sigma, \omega)$.
Define the \textbf{mapping torus} by
\begin{equation*}
Y_{\varphi} : = [0,1]_t \times \Sigma / (0, \varphi(x)) \sim (1, x).
\end{equation*}
There is a natural vector field $R : =\partial_t$ and  a closed 2-form $\omega_{\varphi}$ on $Y_{\varphi}$ induced from the above quotient.  The pair $(dt, \omega_{\varphi})$ forms a {stable Hamiltonian structure} and $R$ is the Reeb vector field.  Let $\xi : = \ker \pi_*$ denote the vertical bundle of $\pi: Y_{\varphi}  \to S^1. $ Suppose that $\varphi$ is generated by $H$. Then  we  have the following global trivialization of $Y_{\varphi}$:
\begin{equation}  \label{eq34}
\begin{split}
\Psi_H: & S^1_t \times \Sigma  \to   Y_{\varphi_H} \\
&(s, t,x) \to (s, t, (\varphi_H^t)^{-1}(x)).
\end{split}
\end{equation}
It is easy to check that $\Psi_H^*(\omega_{\varphi}) = \omega + d(H_t dt)$ and $(\Psi_H)_*(\partial_t +X_H) =R$.

\paragraph{Periodic orbits.}
 A \textbf{periodic orbit} is a map $\gamma : \mathbb{R} /q\mathbb{Z} \to Y_{\varphi}$ satisfying the ODE $\partial_t \gamma(t) =R \circ \gamma(t)$.   Here $\gamma$ could be multiply covered. The number $q>0$ is called the \textbf{period} or \textbf{degree} of $\gamma$. Note that $q$ is equal to the intersection number $[\gamma] \cdot [\Sigma]$.

Let $\gamma$ be a periodic orbit with degree $q$.   $\gamma$ is called \textbf{nondegenerate} if the    linearized return map $d\varphi^q: T_{\gamma(0)} \Sigma \to T_{\gamma(q)} $  does not have 1 as an eigenvalue. A nondegenerate periodic orbit $\gamma$  is called  \textbf{hyperbolic}  if   $d\varphi^q \vert_{\gamma(0)}$ has real eigenvalues,  and  \textbf{elliptic} otherwise.   The symplecticmorphism $\varphi$ is called \textbf{$d$-nondegenerate}  if  every closed orbit with degree at most $d$ is nondegenerate.

Let $\gamma$ be an elliptic periodic orbit with period $q$.  We can find a trivialization of $\xi$ such that the linearized flow  is a rotation $e^{i2\pi\theta_t}$, where $\{\theta_t\}_{t \in[0,q]}$ is a continuous function with $\theta_0=0$.  The number $\theta = \theta_t \vert_{t=q} \in \mathbb{R}/\mathbb{Z}$ is called the \textbf{rotation number} of $\gamma$ (see Page 37 of \cite{H4} for details).   The following definition will be used later to  define the PFH cobordism maps by holomorphic curves.
	\begin{definition} (see \cite{H5} Definition 4.1) \label{definition2}
		Fix  $d>0$.  Let  $\gamma$ be an embedded elliptic orbit with degree $0<q \le d$.
		\begin{itemize}
			\item
			$\gamma$ is called $d$-positive elliptic if the rotation number $\theta  $ is in $ (0, \frac{q}{d}) \mod 1$.
			\item
			$\gamma$ is called $d$-negative elliptic if the rotation number $\theta $ is in $ ( -\frac{q}{d},0) \mod 1$.
		\end{itemize}
	\end{definition}

An \textbf{orbit set} is a finite set of pairs $\gamma = \{(\gamma_i, m_i)\}$, where $\{\gamma_i\}$  are distinct embedded periodic orbits and $\{m_i\}$ are positive integers.   An orbit set is called \textbf{a PFH generator} if it satisfies a further condition: If $\gamma_i$ is  hyperbolic, then $m_i=1$.

\paragraph{ECH index and $J_0$ index.} \
Given orbit sets $\alpha_{\pm}=\{(\alpha_{\pm, i})\}$, let  $H_2(Y_{\varphi}, \alpha_+, \alpha_-)$ denote the set   of 2-chains $Z$ in $Y_{\varphi}$ with $\partial Z = \alpha_+ - \alpha_-$, modulo the boundary of 3-chains. We call the  element  $Z \in H_2(Y_{\varphi}, \alpha_+, \alpha_-)$  a \textbf{relative homology classes}. This an affine space of $H_2(Y_{\varphi},  \mathbb{Z}) \cong \mathbb{Z}[\Sigma] \oplus  (H_1(S^1) \otimes H_1(\Sigma))$.   %Two classes $Z_1, Z_2$ are equivalent  if $\int_{Z_1} \omega_{\varphi} = \int_{Z_2} \omega_{\varphi}$. The  equivalent class is denoted by $[Z]$. Then
%Let  $[Z]  $ denote the equivalent class in $H_2(Y_{\varphi}, \gamma_+, \gamma_-)/ \ker (\omega_{\varphi} + \eta J_0)$. Note that $H_2(Y_{\varphi}, \gamma_+, \gamma_-)/ \ker (\omega_{\varphi} + \eta J_0)$ is an affine space  of $\mathbb{Z} [\Sigma]$.

For a  relative homology class $Z \in H_2(Y_{\varphi}, \alpha_+, \alpha_-)$, Hutchings defines  a topological  index  called \textbf{ECH index}.  It is defined as follows. Fix a trivialization $\tau$ of $\xi_{\vert \alpha_{\pm}}$ along the orbits.  The ECH index  is  defined by
\begin{equation*}
I(Z) : =c_{\tau}(\xi \vert_{Z}) + Q_{\tau}(Z) + \sum_i \sum_{p=1}^{m_i} CZ_{\tau}(\alpha_{+, i}^{p}) -\sum_j \sum_{q=1}^{n_j} CZ_{\tau}(\alpha_{-, j}^{q}),
\end{equation*}
where $\alpha_{\pm, i}^{p}$ denote the $p$ covers of $\alpha_{\pm, i}$, $c_{\tau}(\xi \vert_{Z}) $ is the relative Chern number,  $Q_{\tau}(Z)$ is the relative self-intersection number and $CZ_{\tau}$ is the Conley-Zehnder index (see Section 2.2, 2.3, and 2.5 of \cite{H1}).

There is another topological index called  \textbf{$J_0$ index} \cite{H2} that measure the topological  complexity of the curves. The $J_0$ index is  given  by the following formula:
\begin{equation*}
J_0(Z) : =-c_{\tau}(\xi \vert_{Z}) + Q_{\tau}(Z) + \sum_i \sum_{p=1}^{m_i-1} CZ_{\tau}(\alpha_{+, i}^{p}) -\sum_j \sum_{q=1}^{n_j-1} CZ_{\tau}(\alpha_{-, j}^{q}).
\end{equation*}

The $J_0$ index will be used when we define the open-closed morphisms. The role of this index actually comes from the definition of the link $\underline{L}$.  We will explain this point in Remark \ref{remark3} later.

% We refer  readers  to \cite{H1, H2} for more details on $I$ and $J_0$. %the precise definition of these quantities.
\paragraph{PFH complex.}
Fix a tuple of  $d$ points $\mathbf{x}=(x_1, ...,x_d)$  on $\Sigma$ (not necessarily to be distinct). Define a reference 1-cycle   $\gamma_H^{\mathbf{x}}:=\Psi_H(S^1 \times  \textbf{x}).$   An \textbf{anchored orbit set} is a pair $(\alpha, [Z])$, where $\alpha$ is an orbit set and $[Z] \in  H_2(Y_{\varphi}, \alpha,\gamma_H^{\mathbf{x}})/ \ker \omega_{\varphi} $. We call it an   \textbf{anchored PFH generator} if $\alpha$ is a PFH generator. Note that $H_2(Y_{\varphi}, \alpha, \gamma_H^{\mathbf{x}})/ \ker \omega_{\varphi}$ is an affine space  of $\mathbb{Z} [\Sigma]$.

The chain complex $\widetilde{PFC}(\Sigma, \varphi, \gamma_H^{\mathbf{x}})$ is  the set of the formal sums (possibly infinity)
\begin{equation} \label{eq26}
\sum a_{(\alpha, [Z])}(\alpha, [Z]),
\end{equation}
where $a_{(\alpha, [Z])} \in \mathbb{Z}/2\mathbb{Z}$ and each $(\alpha, [Z])$ is an anchored PFH generator. Also, for any $C\in \mathbb{R}$, we require that there is only finitely many  $(\alpha, [Z])$ such that $\int_{Z} \omega_{\varphi_H} >C$ and $ a_{(\alpha, [Z])} \ne 0$.

Let $\Lambda=\{ \sum_i a_iq^{b_i} \vert a_i \in \mathbb{Z}/2\mathbb{Z}, b_i \in \mathbb{Z}\}$ be the Novikov ring.  Then the  $\widetilde{PFC}(\Sigma, \varphi_H, \gamma_H^{\mathbf{x}})$ is $\Lambda$-module because we    define an action
\begin{equation} \label{eq24}
\left(\sum_i a_i q^{b_i}\right)\cdot (\alpha, [Z]): = \sum_i a_i(\alpha, [Z- b_i\Sigma]).
\end{equation}

%In most of the time, it is convenient to take
%\begin{equation} \label{eq25}
%\gamma_0 =\Psi_H(S^1 \times  \textbf{x}),
%\end{equation}
%denoted by $\gamma_H^{\textbf{x}}$,  where $\textbf{x}=(x_1, ...x_d)$ is $d$-points on $\Sigma$ (not necessarily to be distinct).

\paragraph{Holomorphic curves and holomorphic currents.}
To  define the differential, consider the symplectization  $$X:=\mathbb{R}_s \times Y_{\varphi},  \  \ \Omega:=\omega_{\varphi} + ds \wedge dt. $$
An almost complex structure on $X$ is called \textbf{admissible} if it preserves $\xi$, is $\mathbb{R}$-invariant, sends $\partial_s$ to $R$, and its restriction to $\xi$ is compatible with  $\omega_{\varphi}$.  The set of admissible almost complex structures is denoted by $\mathcal{J}(Y_{\varphi}, \omega_{\varphi})$.

Given $J \in \mathcal{J}(Y_{\varphi}, \omega_{\varphi})$ and orbit sets $\alpha_+=\{ (\alpha_{+, i}, m_i) \}$,  $\alpha_-=\{(\alpha_{-, j}, n_j)\}$,   let $\mathcal{M}^J(\alpha_+, \alpha_-, Z)$ be the set of equivalence classes   of punctured holomorphic curves  $u : \dot{F} \to  X$ with the following properties: $u$ has positive ends at covers of $\alpha_{+, i}$ with total multiplicity $m_i$, negative ends at covers of $\alpha_{-, j}$ with total multiplicity $n_j$, and no other ends. Also, the relative homology class of $u$ is $Z$. Two holomophic curves $u_i: \dot{F}_i \to X$ are equivalence if there exists a biholomorphic $\phi: F_1\to F_2$ preserving the (ordered) punctures  such that $u_1=u_2 \circ \phi$.  To distinguish with the HF curves or HF-PFH curves  defined latter, sometimes we also call  an element of  $\mathcal{M}^J(\alpha_+, \alpha_-, Z)$  a  \textbf{PFH curve}. % Note that $\mathcal{M}^J(\alpha_+, \alpha_-, Z)$ admits a natural $\mathbb{R}$-action.
A holomorphic curve $u$ is called \textbf{simple} if it does not  factor as
 \begin{equation}\label{eq46}
 \dot{F} \xrightarrow{\phi} \dot{F}'  \xrightarrow{v}  X,
 \end{equation}
where $\phi$ is a branched cover of degree 2 or more, and $v$ is a $J$-holomorphic curve.

In ECH/PFH setting, we often consider a weaker concept called \textbf{holomorphic currents}.  A $J$-holomorphic current from $\alpha_+$  to $\alpha_-$ is a formal sum $\mathcal{C} =\sum_a d_a C_a$ such that $\mathcal{C}$ is asymptotic to $\alpha_{\pm}$ as $s\to \pm\infty $ in current sense, where $\{C_a\}$ are distinct simple holomorphic curves with finite energy  and  $\{d_a\}$ are positive integers. When $d_a=1$ for all $a$, then the  holomorphic current is just the same as the concept of simple holomorphic curves.  Conversely, we can obtain a holomorphic current from a holomorphic curves as follows:   Let  $u=\cup_a u_a$ be a holomorphic curve, where  $u_a$ is
irreducible.  We factorize  $u_a=v_a \circ \phi_{a}$ as in (\ref{eq46}) such that $v_a$ is simple. Then the underlying holomorphic current of $u$ is $\mathcal{C}=\sum_a \operatorname{deg} (\phi_a) v_a(\dot{F}_a')$.

A fact will be used later is that  the $J_0$ index is nonnegative for the holomorphic currents  in the symplectization of $(Y_{\varphi}, \omega_{\varphi})$.
\begin{lemma}[Lemma 2.4 of \cite{GHC2}] \label{lem13}
Let $J \in \mathcal{J}(Y_{\varphi}, \omega_{\varphi})$  be an admissible almost complex structure in  the symplectization of $\mathbb{R} \times Y_{\varphi}$. Let $\mathcal{C} $ be a holomorphic current from $\alpha_+ $ to $\alpha_-$ in $\mathbb{R} \times Y_{\varphi}$ without closed component. Then $J_0(\mathcal{C}) \ge 0$.
\end{lemma}

%For a  relative homology class $Z \in H_2(Y_{\varphi}, \gamma_+, \gamma_-)$, Hutchings defines  a topology index  called \textbf{ECH index} \cite{H1, H2}.   Fix a trivialization $\tau$ of $\xi$.

%where  $c_{\tau}(\xi \vert_{Z}) $ is the relative Chern number,  $Q_{\tau}(Z)$ is the relative self-intersection number and $CZ_{\tau}$ is the Conley-Zehnder index.   We refer  readers  to \cite{H1, H2} for the precise definition of these quantities.

\begin{comment}
By the index ambiguity formula
(Proposition 1.6 of \cite{H1}), we have
\begin{equation*}
I(Z + m[\Sigma]+ [S]) = I(Z) + 2m(d-g+1),
\end{equation*}
where   $[S]$ belongs to the $H_1(S^1, \mathbb{Z}) \otimes H_1(\Sigma, \mathbb{Z})$-component of $H_2(Y_{\varphi}, \mathbb{Z})$.
\end{comment}

\paragraph{Differential on PFH.}
Assume that $d >g(\Sigma)$ throughout.   The differential $\partial_J $ on   $\widetilde{PFC}(\Sigma, \varphi_H, \gamma_H^{\mathbf{x}})$  is defined by
\begin{equation*}
\partial_J(\alpha_+, [Z_+]): = \sum_{\alpha_-} \sum_{Z,  I(Z)=1}  \# \left(\mathcal{M}^J(\alpha_+, \alpha_-,  Z) / \mathbb{R} \right) (\alpha_-,  [Z_+ \#(- Z)]).
\end{equation*}
The homology of $(\widetilde{PFC}(\Sigma, \varphi_H, \gamma_H^{\mathbf{x}}), \partial_J)$ is called the \textbf{twisted periodic Floer homology}, denoted by $\widetilde{PFH}(\Sigma, \varphi_H, \gamma_H^{\mathbf{x}})_J$.  By
Corollary 1.1 of \cite{LT},  PFH is independent of the choice of almost complex structures and Hamiltonian isotopic of $\varphi$. For two different base points $\mathbf{x}, \mathbf{x}'$, we have a canonical isomorphism
 \begin{equation}   \label{eq21}
\Psi^{pfh}_{H,   \mathbf{x}, \mathbf{x}'} :  \widetilde{PFH}(\Sigma, \varphi_H, \gamma_H^{\mathbf{x}}) \to \widetilde{PFH}(\Sigma, \varphi_H, \gamma_H^{\mathbf{x}'}).
\end{equation}
Note that  $\widetilde{PFH}(\Sigma, \varphi_H, \gamma_H^{\mathbf{x}})$ is a $\Lambda$-module because the action  (\ref{eq24}) descends to the homology.   Thus, we have an abstract group   $\widetilde{PFH}(\Sigma, \operatorname{Id}, d)$  and a canonical isomprhism
\begin{equation} \label{eq62}
\mathfrak{j}_H^{\mathbf{x}}: \widetilde{PFH}(\Sigma, \varphi_H, \gamma_H^{\mathbf{x}}) \to \widetilde{PFH}(\Sigma, d)
\end{equation}

\paragraph{Notation.} Given relative homology classes $Z_1 \in  H_2(X, \alpha_+, \alpha_0)$ and $Z_2 \in  H_2(X, \alpha_0, \alpha_-)$, $Z_1\#Z_2$ denote the  relative homology class in   $H_2(X, \alpha_+, \alpha_-)$ by gluing along their common boundary $\alpha_0$. For $Z\in  H_2(X, \alpha_+, \alpha_-)$, $-Z \in H_2(X, \alpha_-, \alpha_+)$ denote the orientation reversing of $Z$.  Later, in the HF setting or the open-closed setting, we use ``$\#$" to denote the these operations (gluing along common boundary and orientation reversing).

\begin{remark}
We need   $d >g(\Sigma)$  for the following reasons: If $d\le g(\Sigma)$, then PFH are still well defined but using a larger class of almost complex structures (see (1.6) of \cite{HT}).  This kind of almost complex structures  are $\Omega$-tame. This causes an issue in defining the PFH cobordism maps via Seiberg-Witten equations.  When we define the PFH cobordism maps, we need to perturb the Seiberg-Witten equations by the symplectic form $r\Omega_X$. However,    $r\Omega_X$ is not self-dual with respect to natural metric $g_J(\cdot, \cdot):=\frac{1}{2}(\Omega_X(\cdot, J\cdot)) - \frac{1}{2}(\Omega_X( J\cdot, \cdot))$. Some additional works should require to modify the construction in \cite{GHC}.

Another reason is that $d$ is chosen to be the number of components of an admissible link $\underline{L}$ (Definition \ref{definition6}) for our purpose. Such a class of links has  $(g(\Sigma)+k)$-components, where $k \ge 1$.
\end{remark}

\paragraph{Grading.}
The twisted PFH admits a  $\mathbb{Z}$-grading. It  is defined as follows. Fix a constant trivialization $\tau_{\mathbf{x}}$ of $T_{\mathbf{x}}\Sigma$. Pushing forward this   trivialization  via $\Psi_H$ (\ref{eq34}) induces a trivialization of  $\xi_{\vert_{\gamma_H^{\mathbf{x}}}}, $ still denoted by  $\tau_{\mathbf{x}}$ . Then the grading of a PFH generator $(\alpha, Z)$ is
\begin{equation} \label{eq69}
 gr(\alpha, Z):= c_{\tau, \tau_{\mathbf{x}} } (Z) + Q_{\tau, \tau_{\mathbf{x}} } (Z) +CZ^{ech}_{\tau}(\alpha).
\end{equation}

\paragraph{The U-map.} There is a well-defined map
\begin{equation*}
U:  \widetilde{PFH}(\Sigma, \varphi_H, \gamma_H^{\mathbf{x}})  \to \widetilde{PFH}(\Sigma, \varphi_H, \gamma_H^{\mathbf{x}}).
\end{equation*}
Fix $z \in \mathbb{R} \times Y_{\varphi_H}$.  The definition of the U-map is similar to the differential. Instead of counting $I=1$ holomorphic curves  modulo $\mathbb{R}$ translation, the U-map is defined by counting $I=2$ holomorphic curves  that   pass through the fixed point $(0, z) \in X$.   The homotopy argument can show that the  U-map is independent of the choice of $z$.  For more details, please see Section 2.5 of \cite{HT3}.

\paragraph{PFH unit.} In Section 6 of \cite{GHC2}, the author define  a nonzero class $\mathfrak{e} \in \widetilde{PFH}(\Sigma,d )$.  It is an analogy of the HF unit defined in  Definition \ref{definition4} later.  If we take $H$ to be a small Morse function on $\Sigma$, then $(\mathfrak{j}_H^{\mathbf{x}})^{-1}(\mathfrak{e})$ is represented by anchored PFH generators consist of the constant orbits at the local maximum of $H$ (see Lemma 5.2 of \cite{GHC2}).

\paragraph{Cobordism maps on PFH.} Let $(X, \Omega_X)$ be a symplectic 4-manifold. Suppose that there exists a compact subset $K$ such that
\begin{equation} \label{eq18}
 (X-K, \Omega_X) \cong \left([0, \infty) \times Y_{\varphi_+}, \omega_{\varphi_+} +  ds \wedge dt \right) \cup \left((-\infty, 0] \times Y_{\varphi_-}, \omega_{\varphi_-} +  ds \wedge dt \right)
\end{equation}
We allow $Y_{\varphi_+} =\emptyset$ or  $Y_{\varphi_-} =\emptyset$.  We call $(X, \Omega_X )$ a \textbf{symplectic cobordism} from $(Y_{\varphi_+}, \omega_{\varphi_+})$ to $(Y_{\varphi_-}, \omega_{\varphi_-})$.  Let $\gamma_{\pm}^{ref}$ be reference 1-cycles on $Y_{\varphi_{\pm}}$.  Fix a reference homology class $Z_{ref} \in H_2(X, \gamma_+^{ref}, \gamma_-^{ref})$.  The symplectic manifold   $(X, \Omega_X)$ induces a homomorphism
\begin{equation*}
PFH_{Z_{ref}}^{sw}(X, \Omega_X) : \widetilde{PFH}(\Sigma, \varphi_+, \gamma^{ref}_+) \to  \widetilde{PFH}(\Sigma, \varphi_-, \gamma_-^{ref}).
\end{equation*}
This homomorphism is called a \textbf{PFH cobordism map.}

Following Hutchings-Taubes's  idea \cite{HT}, the cobordism map $ PFH_{Z_{ref}}^{sw}(X, \Omega_X)$ is  defined by using the Seiberg-Witten theory \cite{KM} and Lee-Taubes's isomorphism \cite{LT}.   Even though  the cobordism maps are defined by Seiberg-Witten theory, they satisfy   some nice properties called \textbf{``holomorphic curves axioms"}.   It means that the PFH cobordism maps count holomorphic curves in certain sense. For the precise statement, we refer readers to Theorem 1 of   \cite{GHC} and   Appendix  B of \cite{GHC2}.

In this paper, we  focus on the following two special cases of $(X, \Omega_X)$.

\begin{enumerate}
\item

Given two Hamiltonian functions $H_+, H_-$,  define a homotopy $H_s :=\chi(s)H_+ + (1- \chi(s)) H_-$, where $\chi$ is a cut off function such that $\chi=1$ for $s \ge R_0>0$ and and $\chi =0 $ for $\chi \le 0$.   Define
\begin{equation} \label{eq23}
\begin{split}
&X:= \mathbb{R}_s \times S^1_t \times \Sigma,\\
 &\omega_X: =\omega +dH_s \wedge dt,  \  \  \Omega_X :=\omega_X +ds \wedge dt.
\end{split}
\end{equation}
This is a symplectic cobordism if $R_0$ is sufficiently large. Note that we identify $Y_{\varphi_{H_{\pm}}}$ with $S^1\times \Sigma$ implicitly by using (\ref{eq34}).  Fix a reference relative homology class  $Z_{ref}=[\mathbb{R} \times S^1 \times \mathbf{x}] \in H_2(X, \gamma_{H_+}^{\mathbf{x}},\gamma_{H_-}^{\mathbf{x}})$.
Then we have a cobordism map
\begin{equation*}
PFH_{Z_{ref}}^{sw}(X, \Omega_{X}) :  \widetilde{PFH}(\Sigma, \varphi_{H_+}, \gamma_{H_+}^{\mathbf{x}}) \to \widetilde{PFH}(\Sigma, \varphi_{H_-}, \gamma_{H_-}^{\mathbf{x}}).
\end{equation*}
This map only depends on  $H_+$, $H_-$ and the relative homology class $Z_{ref}$. For simplicity, we write $\mathfrak{I}^{\mathbf{x}}_{H_+, H_-}=PFH_{Z_{ref}}^{sw}(X, \Omega_{X}).$  By the composition rule and holomorphic curve axioms, we have
\begin{equation}\label{eq63}
\mathfrak{I}^{\mathbf{x}}_{H, H}= \operatorname{Id}, \mbox{ and } \mathfrak{I}^{\mathbf{x}}_{H_2, H_3} \circ \mathfrak{I}^{\mathbf{x}}_{H_1, H_2}= \mathfrak{I}^{\mathbf{x}}_{H_1, H_3},
\end{equation}
for any Hamiltonian functions $H_1, H_2, H_3.$ We suppress  the base point $\mathbf{x}$ from the notation when it does not affect the argument.
\item
 Let $(B_-, \omega_{B_-}, j_{B_-})$ be a sphere with a puncture $p$.  Suppose that we have neighbourhood $U$ of $p$ so that  we have the following identification
\begin{equation*}
(B_-, \omega_{B_-}, j_{B_-}) \vert_U \cong ( [0, \infty)_s \times S_t^1, ds \wedge dt, j),
\end{equation*}
where $j$ is a complex structure that maps $\partial_s$ to $\partial_t$.   Let $\chi: \mathbb{R} \to \mathbb{R}  $   be cut off function such that $\chi=1$ when $s  \ge R_0$ and $\chi(s) =0$ when $s \le  R_0/10$. Take
\begin{equation}  \label{eq6}
\begin{split}
&X_- =:B_- \times \Sigma \\
 &\omega_{X_-}: =\omega + d(\chi(s) H dt)  \  \  \Omega_{X_-} :=\omega_{X_-} +  \omega_{B_-}.
% \mbox{ and } \Omega_{X_-} : =\omega + d(\chi(s) H dt) + \omega_{B_-}.
\end{split}
\end{equation}
For sufficiently large $R_0>0$, $(X_-, \Omega_{X_-})$ is a symplectic manifold satisfying (\ref{eq18}).

\begin{comment}
\item
 Let $(B_+, \omega_{B_+}, j_{B_+})$ be a sphere with a puncture $p$.  Suppose that we have neighbourhood $U$ of $p$ so that  we have identification
\begin{equation*}
(B_+, \omega_{B_+}, j_{B_+}) \vert_U \cong ( (-\infty, 0]_s \times S_t^1, ds \wedge dt, j),
\end{equation*}
where $j$ is a complex structure that maps $\partial_s$ to $\partial_t$.   Let $\chi: \mathbb{R} \to \mathbb{R}  $   be cut off function such that $\chi=1$ when $s \le -R_0$ and $\chi(s) =0$ when $s \ge 0$. Take
\begin{equation*}
\begin{split}
&X_+ =:B_+ \times \Sigma \\
 &\omega_{X_+}: =\omega + d(\chi(s) H dt)  \  \  \Omega_{X_+} :=\omega_{X_+} +  \omega_{B_+}.
% \mbox{ and } \Omega_{X_-} : =\omega + d(\chi(s) H dt) + \omega_{B_-}.
\end{split}
\end{equation*}
For sufficiently large $R_0>0$, $(X_+, \Omega_{X_+})$ is a symplectic manifold satisfying (\ref{eq18}). Take a relative homology class $Z_{ref} \in H_2(X_+, \emptyset, \gamma_H^{\textbf{x}})$. We require  that $(\pi_{\Sigma})_*(Z_{ref})$ is represented by a 2-chain that doesn't cover the whole $\Sigma$. Then  we have a cobordism map
\begin{equation*}
PFH_{Z_{ref}}^{sw}(X_+, \Omega_{X_+}) : \Lambda \to  \widetilde{PFH}(\Sigma, \varphi_H, \gamma_H^{\textbf{x}}).
\end{equation*}
The \textbf{unit} of PFH is defined by $\textbf{1}_{pfh}: =PFH_{Z_{ref}}^{sw}(X_+, \Omega_{X_+}) (1). $
\end{comment}

\end{enumerate}
%		Let $H\#K(t, x) =H_t(x) + K_t((\varphi^t_H)^{-1}(x))$. By the chain rule, we have $\varphi_{H\#K}^t=\varphi_H^t \circ \varphi_K^t$.   Then $\overline{H}_t(x):= -H_t( (\varphi_H^t)^{-1}(x))$ generates the inverse of $\varphi_H^t$.
%In the case (\ref{eq23}),   if $H_+$ satisfies  \ref{assumption1} and $H_-$ satisfies \ref{assumption2}, the author  shows that    the cobordism map $ PFH_{ Z_{ref}}^{sw}(X, \Omega_{X}) $ can be defined  alternatively  by using the pure holomorphic curve methods   \cite{GHC}.  The holomorphic curves definition will be used to prove Theorem \ref{thm2}.  That is why we need the   assumptions \ref{assumption1}, \ref{assumption2} in the statement.

\paragraph{Filtered PFH and PFH spectral invariants.} We define a functional $\mathbb{A}_H$ on the  anchored orbit sets by:
\begin{equation} \label{eq58}
\mathbb{A}_H(\alpha, [Z]) := \int_{Z} \omega_{\varphi} + \int_0^1 H_t(\mathbf{x})dt,
\end{equation}
where  $ \int_0^1 H_t(\mathbf{x})dt  $ is short for  $\sum_{i=1}^d  \int_0^1 H_t(x_i) dt. $
%When $\eta=0$, we write $\mathbb{A}_H(\gamma, [Z]):=\mathbb{A}^{\eta=0}_H(\gamma, [Z])$ for short. Even through we add an perturbation term to the usual action functional, this still give us a filtration on the PFH complex .%the spectral invariants defined by $\mathbb{A}^{\eta}_H$ satisfies the same properties as the usual PFH spectral invariants in \cite{CHS1}. To see this, the following lemma plays a key role.

Let $\widetilde{PFC}^{L}(\Sigma, \varphi_H, \gamma_H^{\mathbf{x}})$ be the set of formal sum (\ref{eq26})  satisfying $\mathbb{A}_H(\alpha, [Z])<L$.   %By Lemma \ref{lem13},
 It is easy to check that it is  a subcomplex of  $(\widetilde{PFC}(\Sigma, \varphi_H, \gamma_H^{\mathbf{x}}), \partial_J)$. The homology is denoted by  $\widetilde{PFH}^{L}(\Sigma, \varphi_H,  \gamma_H^{\mathbf{x}})$.   Let
$\mathfrak{i}_L: \widetilde{PFH}^{L}(\Sigma, \varphi_H,  \gamma_H^{\mathbf{x}}) \to \widetilde{PFH}(\Sigma, \varphi_H,  \gamma_H^{\mathbf{x}})$ be the map induced by the  inclusion.

 Fix $\sigma \in  \widetilde{PFH}(\Sigma, \operatorname{Id},   d) $. The \textbf{PFH spectral invariant} at $\sigma$ is defined by
\begin{equation*}
c_{d}^{pfh}(H, \sigma) :=\inf \{ L \in \mathbb{R} \vert (\mathfrak{j}_H^{\mathbf{x}} )^{-1}(\sigma)  \mbox{ belongs to the image of } \mathfrak{i}_L \}.
\end{equation*}
If   $\varphi_H$ is degenerate, take  a sequence of $\{\varphi_{H_n}\}_{n=1}^{\infty}$ such that $\varphi_{H_n}$are  nondegenerate and   $\{\varphi_{H_n}\}_{n=1}^{\infty}$ converges to $H$ in $C^{\infty}$ topology. Then,  define
\begin{equation*}
c_{d}^{pfh}(H, \sigma)=\lim_{n \to \infty } c_{d}^{pfh}(H_n, \sigma).
\end{equation*}
 %$\varphi_{H_n}$ are $d$-nondegenerate, $\{H_n\}_{n=1}^{\infty}$ $C^{\infty}$-converges to $H$, and
%$\sigma_n  \in   \widetilde{PFH}(\Sigma, \varphi_{H_n}, \gamma_{H_n}^{\mathbf{x}} )$ is the class corresponding to $\sigma$.

\begin{remark}
 Unlike the action functional defined in \cite{EH, CPZ}, our definition includes an additional term, $\int_0^1 H_t(\mathbf{x})dt$. This adjustment ensures that the format of the PFH action functional aligns with that in the HF setting (\ref{eq57}). Another advantage of this definition is that the spectral invariants defined by (\ref{eq58})  is independent of the choice of the base point $\mathbf{x}$ (see (1.7) of \cite{GHC2}).
\end{remark}

Let $\widetilde{Ham}(\Sigma, \omega)$ be the universal  cover of   ${Ham}(\Sigma, \omega)$.  A element in  $\widetilde{Ham}(\Sigma, \omega)$  is a homotopy class of paths $\{\varphi_t\}_{t\in [0, 1]} \subset Ham(\Sigma, \omega)$ with fixed endpoints  $\varphi_0=\operatorname{Id}$ and $\varphi_1 =\varphi$.
Let  $\tilde{\varphi} \in \widetilde{Ham}(\Sigma, \omega)$  be a class represented by a path generated by a  mean-normalized Hamiltonian $H$.   Define
\begin{equation*}
c_{d}^{pfh}(\tilde{\varphi}, \sigma): =c_{d}^{pfh}(H,  \sigma).
\end{equation*}
By Proposition 3.2 of \cite{CHS2},   $c_{d}^{pfh}(\tilde{\varphi})$ is well defined because it is independent of the choice of $H$.

 To obtain numerical invariants for the elements in  $Ham(\Sigma, \omega)$ rather than its universal cover, we need the homogenized  spectral invariants.
Fix $\varphi \in Ham(\Sigma, \omega)$ and $\sigma \in \widetilde{PFH}(\Sigma, d)$. We  define the  \textbf{homogenized PFH spectral invariant}  by
 \begin{equation}\label{eq60}
\mu^{pfh}_d(\varphi, \sigma) := \limsup_{n \to \infty} \frac{c_{d}^{pfh}(\tilde{\varphi}^n, \sigma)}{n}.
\end{equation}
%where $\tilde{\varphi}$ is a lift of $\varphi$ and $a_n = \mathfrak{I}_{H, \#^n H}(a)$.
By Proposition 3.5 of \cite{CHS2} and $\widetilde{Ham}(\Sigma, \omega) =Ham(\Sigma, \omega)$ when $g(\Sigma) \ge 1$,  $\mu^{pfh}_d$  descends to  $ Ham(\Sigma, \omega) $.

\subsubsection{Quantitative Heegaard Floer homology}
%$HF(Sym^d \Sigma,  Sym^d \underline{L}, Sym^d\varphi_H)$ 		
 In this section, we review the cylindrical formulation of  QHF defined in \cite{GHC2}. One will find that most of the definitions of QHF are parallel to those of  PFH.   Therefore, QHF could be regarded as a relative version of PFH.

\paragraph{Admissible Lagrangian links.}To begin with,  let us recall  a class of  links considered in \cite{GHC2}.
\begin{definition}\label{definition6}
Fix a nonnegative constant $\eta$. Let $\underline{L}=\sqcup_{i=1}^d L_i$ be a  disjoint union of simple closed curves on $\Sigma$.  We call $\underline{L}$ a \textbf{link} on $\Sigma$.     We say  a link $\underline{L}$ is $\eta$-\textbf{admissible} if it satisfies the following properties:
	\begin{enumerate} [label=\textbf{A.\arabic*}]
		\item \label{assumption3}
		%$\underline{L}$ consists of $k>1$
		The integer satisfies $d=k+g$, where $g$ is the genus of $\Sigma$ and $k >1$. $\sqcup_{i=1}^k L_i$  is a disjoint contractile  simple curves.  For $k+1\le i\le d$, $L_i$   is the cocore of the 1-handle. For each 1-handle, we have exactly one corresponding $L_i$.
		%non--separated,  non--contractile  components $\sqcup_{i=k+1}^{g+k}\Lambda_i$.
		\item  \label{assumption4}
		We require  that $\Sigma -\underline{L} =\cup_{i=1}^{k+1} \mathring{B}_i$. Let $B_i$ be the closure of   $\mathring{B}_k$.  Then  $B_i$ is a disk for $1 \le i\le k$ and $B_{k+1} $ is a planar domain with $2g+k$ boundary components. For $1\le i\le k $,	the  circle $L_i$ is the boundary of   $B_i$.
		\item \label{assumption5}
		$\mathring{B}_i \cap \mathring{B}_j =\emptyset$. %, where $\mathring{B}_i$ denote the interior of $B_i$.
		\item \label{assumption6}
			For $ 0\le i<j \le k$, we have  $\int_{B_i} \omega =\int_{B_j} \omega=\lambda$. Also, $\lambda=2\eta(2g+k-1)+ \int_{B_{k+1}} \omega$.
	\end{enumerate}
\end{definition}
A picture of an admissible link is shown in Figure \ref{figure1}.
	\begin{figure}[h]
		\begin{center}
			\includegraphics[width=10cm, height=5cm]{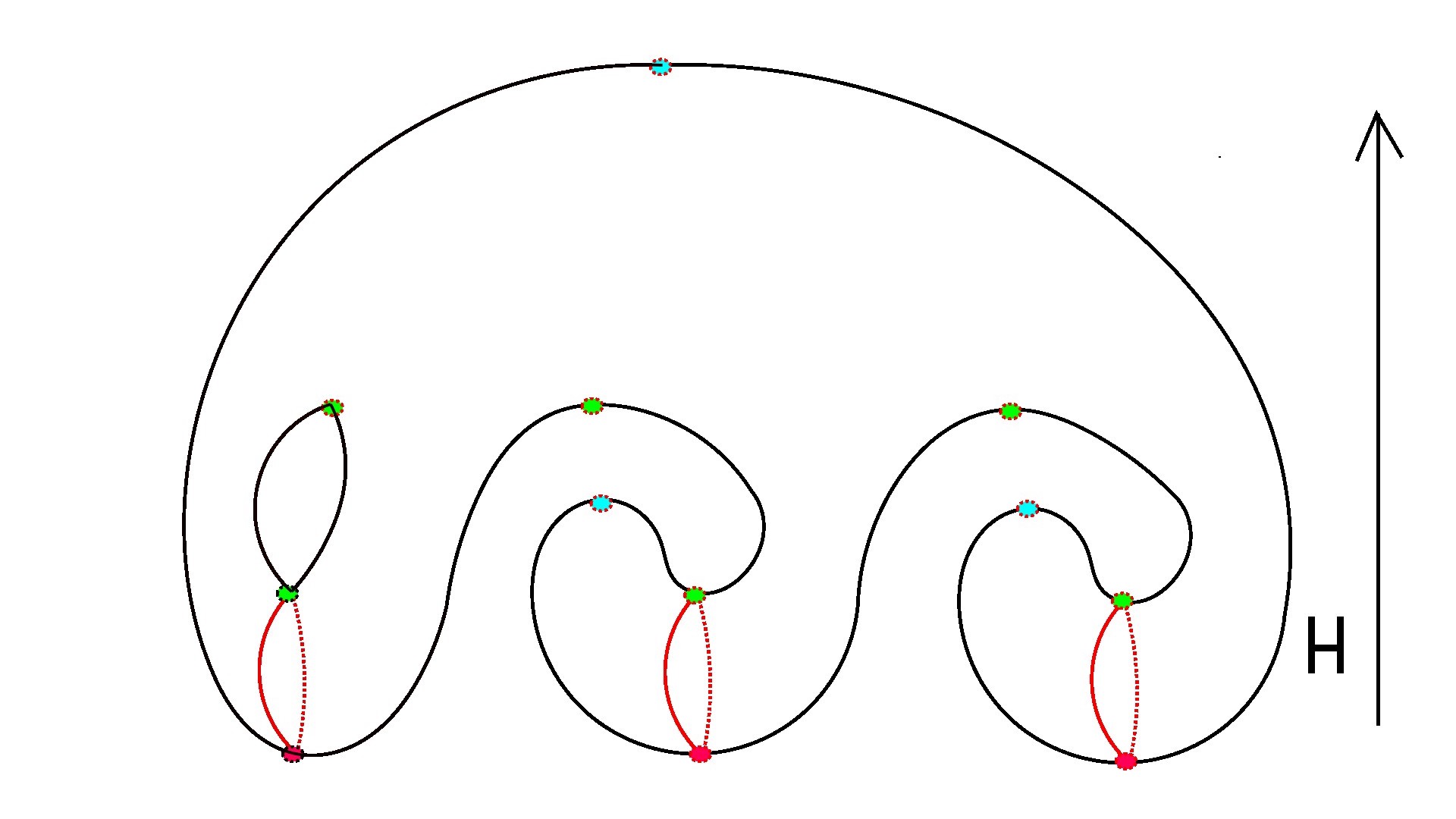}
		\end{center}
		\caption{The red circles are the admissible link. }%The red points are the index 2 critical points. The green points are saddle points. The blue points are index 0 critical points. The union of the red circles is the link.  The function $f$ is a small  perturbation of  the height function.}
		\label{figure1}
	\end{figure}
Note that  if $\underline{L}$ is admissible,  so is $\varphi(\underline{L})$, where  $\varphi$	 is  any Hamiltonian symplecticmorphism.  \textbf{We assume that the link is $\eta$-admissible throughout.}
	
 \paragraph{Cylindrical formulation of QHF.}
Fix an admissible link $\underline{L} =\cup_{i=1}^d L_i$ and   $ \varphi_H \in Ham(\Sigma, \omega)$. Throughout we  assume that $\varphi_H$ is \textbf{nondegenerate} in the sense that  $\varphi_H(\underline{L})$ intersects $\underline{L}$ transversely.

 \begin{definition} A \textbf{Reeb chord}  of $\varphi_H$ is a  union of paths $$\mathbf{y}=[0,1]\times (y_1, ...,y_d) \subset [0,1] \times \Sigma, $$ where $y_i \in L_i \cap \varphi_H(L_{\sigma(i)})$ and $\sigma: \{1,..., d\} \to \{1, ..., d\}$ is a permutation.
\end{definition}
%Obviously, there is a one to one correspondence between   Reeb chords  and  $d$-distinct intersection points $ (y_1, ..., y_d) $. Thus,  we don't distinguish the Reeb chords and   $d$-intersection points.
 %From the definition, we know that the Reeb chord in fact is 1-1 corresponding to the intersection points  $\varphi(\underline{L}) \cap \underline{L}$ .
	Fix a base point $\textbf{x}=(x_1, ..., x_d)$, where $x_i \in L_i$.  Define    a reference chord  from $\{0\}\times \varphi_H(\underline{L})$ to $\{1\} \times \underline{L}$ by
\begin{equation*}
\textbf{x}_H (t):=\varphi_H \circ (\varphi_H^{t})^{-1} (\textbf{x}) \subset [0,1]_t \times \Sigma.
\end{equation*}

Let $(E:=\mathbb{R}_s \times [0,1]_t \times \Sigma, \Omega: =\omega + ds \wedge dt)$ be a symplectic manifold.  Let $\mathcal{L} =\mathbb{R} \times (\{0\} \times \varphi_H(\underline{L}) \cup \{1\} \times \underline{L})$ be a disjoint union of Lagrangian submanifolds in $(E, \Omega)$. Let $\textbf{y}_{\pm}$ be two Reeb chords. Then we have a concept called $d$-\textbf{multisection} in $E$.  Roughly speaking, this is  a map $u: \dot{F} \to E$ which is asymptotic to $\mathbf{y}_{\pm}$ as $s \to \pm\infty$ and satisfies  the  Lagrangian boundary conditions $u(\partial \dot{F}) \subset \mathcal{L} $, where $\dot{F}$ is a Riemann surface with boundary punctures. If a $d$-multisection is holomorphic, we call it an \textbf{HF curve}.   The set of equivalence classes  of the $d$-multisections  is denoted by $H_2(E, \textbf{y}_+, \textbf{y}_-)$.
 An element in   $H_2(E, \textbf{y}_+, \textbf{y}_-)$ is also called a \textbf{relative homology class} because it is counterpart of the one in PFH setting.   Here   two  $d$-multisections represent the same relative homology class if they  are equivalent in $H_2(E, \mathcal{L}\cup \{\infty\} \times \textbf{y}_+ \cup \{-\infty\} \times \textbf{y}_-; \mathbb{Z})$.

 Fix $A \in H_2(E,  \textbf{y}_+, \textbf{y}_-)$. The ECH index and $J_0$ index also can be generalized to the current setting, denoted by $I(A)$ and $J_0(A)$ respectively.  The definition of  relative homology class, HF curves, ECH index and $J_0$ index will be postponed to  Section \ref{section1}.  We will define   these concepts  for a  a slightly more general setting.

  % is a representative of a  class $A \in H_2(E, \textbf{x}_H, \textbf{y})$. %Two cappings $A_1, A_2$ are equivalent if and only if  $\int_{A_1} \omega=\int_{A_2} \omega$.
%Let $[A]$ denote the equivalence class in $H_2(E, \textbf{x}_H, \textbf{y})/ \ker \omega$.

	% Let $\Gamma=\{\sum_{a_i}t^{\lambda_i} \vert a_i \in \mathbb{C}, \lambda_i \to \infty \}$ be the universal Novikov ring.

Given     a Reeb chord $\textbf{y}$, 	 a  \textbf{capping} of $\textbf{y}$    is an  equivalence class $[A]$ in $H_2(E,  \textbf{y}, \textbf{x}_H)/ \ker (\omega + \eta J_0)$.  Define  a  complex ${CF}(\Sigma, \varphi_H(\underline{L}), \underline{L}, \textbf{x}) $  to be  the set of  formal  sums of cappings
\begin{equation} \label{eq27}
 \sum_{(\mathbf{y}, [A])}  a_{(\mathbf{y},[A])}( \mathbf{y}, [A])
 \end{equation} satisfying that  $a_{(\mathbf{y},[A])} \in \mathbb{Z} /2\mathbb{Z}$ and for any $C\in \mathbb{R}$, there are only finitely $(\mathbf{y}, [A])$ such that  $\int_{A} \omega <C$ and  $a_{(\mathbf{y},[A])}  \ne 0$.

 \begin{remark}
 To keep the format consistent with that of the PFH setting, our convention for cappings here is opposite to those in \cite{CHMSS, GHC2}. Specifically, the negative end of a capping here is asymptotic to the reference chords, whereas in \cite{CHMSS, GHC2}, it is the positive end that is asymptotic to the reference chords.
 \end{remark}

Let $\mathcal{J}_E$ denote the set of $\Omega$-compatible almost complex structures satisfying that $J$ is $\mathbb{R}_s$-invariant, $J(\partial_s) = \partial_t$,  $J$ sends $T\Sigma$ to itself and $J \vert_{T\Sigma}$ is $\omega$-compatible.	Fix $J \in \mathcal{J}_E$.  Let $\mathcal{M}^J(\textbf{y}_+, \textbf{y}_-, A)$ denote the moduli space of HF curves that are asymptotic to $\textbf{y}_{\pm}$ as $s \to \pm \infty$ and have  relative homology class $A$.   Because $J$ is $\mathbb{R}_s$-invariant, this induces a  natural $\mathbb{R}$-action on $\mathcal{M}^J(\textbf{y}_+, \textbf{y}_-, A)$.

Fix a generic    $J \in \mathcal{J}_E$. The differential is defined by
		\begin{equation*}
		d_J (\textbf{y}_+, [A_+]):=\sum_{A \in H_2(E, \textbf{y}_+, \textbf{y}_-), I(A)=1} \#\left(\mathcal{M}^J( \textbf{y}_+, \textbf{y}_-, A)/\mathbb{R} \right) ( \textbf{y}_-, [A_+ \# (-A)]).
	\end{equation*}
The homology of $(CF_*(\Sigma, \varphi_H(\underline{L}), \underline{L}, \textbf{x}), d_J)$ is  well defined \cite{GHC2},  denoted by $HF_*(\Sigma, \varphi_H(\underline{L}), \underline{L}, \textbf{x})_J$.   Again, the Floer homology is a $R$-module.

By Proposition 3.9 of \cite{GHC2}, the homology is independent of the choices of $J$  and $H$.  More precisely, for two pairs $(H, J_H)$ and $(G, J_G)$, there is a  canonical isomorphism
  \begin{equation*}
  \mathcal{I}^{H, G}_{0, 0}: HF_*(\Sigma, \varphi_H(\underline{L}), \underline{L}, \mathbf{x}) \to HF_*(\Sigma, \varphi_G(\underline{L}), \underline{L}, \mathbf{x})
  \end{equation*}
  called  a \textbf{continuous morphism}.  More details about this point are  given in Section \ref{section1} later.    For two different choices of base points $\mathbf{x}, \mathbf{x}'$, there is an isomorphism  ((2.30) of \cite{GHC2})
  $$\Psi_{H, \mathbf{x}, \mathbf{x}' } : HF_*(\Sigma, \varphi_G(\underline{L}), \underline{L}, \mathbf{x}) \to HF_*(\Sigma, \varphi_G(\underline{L}), \underline{L}, \mathbf{x}').$$
%  the corresponding  homologies are also isomorphic.
Let $HF(\Sigma, \underline{L})$ be the direct limit of the continuous morphisms and $\Psi_{H, \mathbf{x}, \mathbf{x}' }$.  For any $H$, we have an isomorphism
		\begin{equation} \label{eq22}
		 j_H^{\mathbf{x}} :  HF(\Sigma, \varphi_H(\underline{L}),  \underline{L}, \mathbf{x}) \to HF(\Sigma, \underline{L}).
	\end{equation}
%Moreover, the isomorphism $\Phi_H$ preserves the action filtration.

\begin{remark} \label{remark4}
The links under consideration are sightly different from those in \cite{CHMSS}.  The main reason is the the admissible links are easy for computations in cylindrical setting (see  Remark 1.4 and Remark 2.2 of \cite{GHC2} for details). On the other hand, QHF $HF(\operatorname{Sym}^d \varphi_H(\underline{L}), \operatorname{Sym}^d \underline{L})$ in  \cite{CHMSS} are still well defined and we have the isomorphism  $HF(\operatorname{Sym}^d \varphi_H(\underline{L}), \operatorname{Sym}^d \underline{L}) \cong H^*(\mathbb{T}^d, R)$ for admissible links (see the explanations in Remark 1.4 and
Remark 3.1  of \cite{GHC2}).

Combining the isomorphism (\ref{eq30}) with Lemma 6.10 of  \cite{CHMSS}, we know that
 $HF_*(\Sigma,   \underline{L})$  is isomorphic to $H^*(\mathbb{T}^d, R) $
 as an $R$-vector space, where $\mathbb{T}^d$ is the $d$-torus.
\end{remark}

\begin{remark}
Even though we only define the QHF for a Hamiltonian symplecticmorphism $\varphi_H$, the above construction  also works for a pair of   Hamiltonian symplecticmorphisms $(\varphi_H, \varphi_K)$.  Because $\varphi_K(\underline{L})$ is also an admissible link,  we just need to replace  $\underline{L}$ by $\varphi_K(\underline{L})$.  The result is denoted by $HF(\Sigma, \varphi_H(\underline{L}), \varphi_K(\underline{L}), \mathbf{x})$.
\end{remark}

\paragraph{Novikov ring module.}

 Let $R=\{ \sum_i a_iT^{b_i} \vert a_i \in \mathbb{Z}/2\mathbb{Z}, b_i \in \mathbb{Z}\}$ be the Novikov ring.  Similar to the PFH case,   $HF(\Sigma, \varphi_H(\underline{L}),  \underline{L}, \mathbf{x}) $ is a $R$-module due to the following construction. %To distinguish the one for PFH, we use different notations to denote the ring and the formal variable.

For $1\le i \le k$, let $v_i: [0,1]_s \times [0,1]_t \to \Sigma$ be a map such that $v_i(0, t) = v_i(1, t)=v_i(s, 0) =x_i $ and $v_i(s,1) \in L_i$ and represents the class $[B_i] \in H_2(\Sigma, L_i, \mathbb{Z})$, where $B_i$ is the  closed disk in Definition  \ref{definition6}. Define
\begin{equation*}
\begin{split}
u_{x_i}: &[0,1]_s \times [0,1]_t \to [0, 1]_s \times [0,1]_t \times \Sigma\\
    &(s,t) \to (s, t, \varphi_H \circ (\varphi_H^t)^{-1}   \circ v_i(s,t)).
\end{split}
\end{equation*}
Together with the trivial strip at $x_j$ $(j\ne i)$, $u_{x_i}$ represents a class in $H_2(E, \textbf{x}_H, \textbf{x}_H)$, still denoted by $[B_i]$.  We also replace the map $v_i$   by  $v_i'$, where $v_i'$ satisfies   $v'_i(0, t) = v'_i(1, t)=v'_i(s, 1) =x_i $ and $v'_i(s,0) \in L_i$ and represents the class $[B_i] \in H_2(\Sigma, L_i)$.  Using  the same construction, we have  another  map $u_{x_i}'$. The  difference  between  $u_{x_i}$ and $u_{x_i}'$ is that  $u_{x_i} \vert_{t=1}$ wraps $\partial B_i$ one time while   $u'_{x_i} \vert_{t=0}$ wraps $\partial \varphi_H(B_i)$ one time.  So we denote the   equivalence class of  $u_{x_i}'$ in  $H_2(E, \textbf{x}_H, \textbf{x}_H)$ by $[\varphi_H(B_i)]$.
 By the monotone assumption  (\ref{assumption6}),   all the classes $[B_i]$ and $[\varphi_H(B_i)]$ are equivalent in $H_2(E, \textbf{x}_H, \textbf{x}_H)/\ker (\omega + \eta J_0)$, written as $\mathcal{B}$.

Then  $HF(\Sigma, \varphi_H(\underline{L}), \underline{L}, \textbf{x})$ is a $R$-module because we have   the following action
 \begin{equation}\label{eq28}
 \sum_i a_iT^{b_i} \cdot (\textbf{y}, [A]) :=\sum_i a_i( \mathbf{y}, [A]+ b_i\mathcal{ B}).
 \end{equation}

\paragraph{Filtered QFH and HF spectral invariants.}

Similar as \cite{LZ, CHMSS}, we  define an  \textbf{action functional} on 	the  generators  by
\begin{equation} \label{eq57}
	\mathcal{A}^{\eta}_{H}( \textbf{y}, [A]) : =\int_A \omega + \int_0^1 H_t(\textbf{x})dt+\eta J_0(A).
\end{equation}
\begin{remark} \label{remark3}
The  term $J_0(A)$ is corresponding to $\Delta \cdot [\hat{\mathbf{y}}] $ in \cite{CHMSS},where $\Delta$ is the diagonal of $\operatorname{Sym}^d\Sigma$ and $\hat{\mathbf{y}}$ is a capping  of a Reeb chord $\mathbf{y}$.  This view point is proved in Proposition 3.2 of  \cite{GHC2}.

The reason why $J_0$ index are included  in the action function is that the torus $\operatorname{Sym}^d\underline{L}$ are monotone with respect to $\operatorname{Sym}^d\omega + \eta PD(\Delta)$ rather than  $\operatorname{Sym}^d\omega$ (Lemma 4.21 of \cite{CHMSS}). If we want the isomorphism (\ref{eq30}) preserves the action filtration, we have to add the term $\eta J _0$ to the action function.

Moreover, in the computations  open-closed maps, we need the nonnegativeness  of energy and monotonicity to rule out some holomorphic curves (Lemma \ref{lem6}).  Due to the form of (\ref{eq57}), the energy therein should be understood as $\int {\omega} + \eta J_0$ rather than just $\int \omega$. Therefore, we also need $J_0$ index in PFH setting and open-closed setting.
\end{remark}

Given $L \in \mathbb{R}$, let $CF^L(\Sigma, \varphi_H(\underline{L}), \underline{L}, \mathbf{x})$ be the set of formal sums  (\ref{eq28}) satisfying $\mathcal{A}^{\eta}_H(\textbf{y}, [A]) < L$. It is easy to check that it is a subcomplex.   The \textbf{filtered} QHF, denoted by $HF^L(\Sigma, \varphi_H(\underline{L}), \underline{L}, \mathbf{x})$, is the homology   $(CF^L(\Sigma, \varphi_H(\underline{L}), \underline{L}), d_J)$.  Let $$i_L: HF^L(\Sigma, \varphi_H(\underline{L}), \underline{L}, \mathbf{x}) \to HF(\Sigma, \varphi_H(\underline{L}), \underline{L}, \mathbf{x}) $$ be the homomorphism  induced by the    inclusion.

\begin{definition}
Fix  $a \in HF(\Sigma, \underline{L})$.  The \textbf{HF spectral invariant} is
 $$ c_{\underline{L},\eta}(H, a) : =\inf \{L \in \mathbb{R}  \vert  (j_H^{\mathbf{x}})^{-1}(a)  \mbox{ belongs to the image of } i_L\}. $$
\end{definition}

 Let $\textbf{c}=\sum a_{ ( \textbf{y}, [A])} ( \textbf{y}, [A])$ be a cycle in $CF(\Sigma, \varphi_H(\underline{L}), \underline{L}, \mathbf{x}). $ The action of this cycle is  defined by
\begin{equation*}
\mathcal{A}^{\eta}_H(\textbf{c}) = \max\{\mathcal{A}^{\eta}_H( \textbf{y}, [A]) \vert a_{(\textbf{y}, [A])} \ne 0 \}.
\end{equation*}
Then the spectral invariant  can be expressed alternatively  as
\begin{equation} \label{eq20}
c_{\underline{L}, \eta}(H, a) =\inf \{\mathcal{A}^{\eta}_H( \textbf{c}) \vert [ \textbf{c}] = (j_H^{\mathbf{x}})^{-1} (a)\}.
\end{equation}
Fix $\varphi \in Ham(\Sigma, \omega)$ and $a \in HF(\Sigma, \underline{L})$. Define the  \textbf{homogenized  HF spectral invariant}  by
 \begin{equation} \label{eq61}
\mu_{\underline{L},\eta}(\varphi, a) := \limsup_{n \to \infty} \frac{c_{\underline{L}, \eta}(\tilde{\varphi}^n, a)}{n},
\end{equation}
where $\tilde{\varphi} \in \widetilde{Ham}(\Sigma, \omega)$ is a lift of $\varphi$.

\paragraph{Relation with the link spectral invariants.} Let $HF(\operatorname{Sym}^d \varphi_H(\underline{L}),  \operatorname{Sym}^d \underline{L}, \mathbf{x})$ denote the QHF defined in \cite{CHMSS}. Because QHF is independent of the choices of $\varphi_H$ and $\mathbf{x}$, we have an abstract group $HF(\operatorname{Sym}^d \underline{L})$ and a canonical isomorphism
\begin{equation*}
 \mathbf{j}_H^{\mathbf{x}}:  HF( \operatorname{Sym}^d \varphi_H(\underline{L}),  \operatorname{Sym}^d \underline{L}, \mathbf{x}) \to HF(\operatorname{Sym}^d \underline{L}).
\end{equation*}
  Since the canonical  isomorphism (\ref{eq30}) also preserves the action filtrations, we have
 \begin{equation} \label{eq31}
 \frac{1}{d}c_{\underline{L},\eta}(H,  a) =c^{link}_{\underline{L}, \eta}(H, \mathbf{j}^{\textbf{x}}_H \circ \Phi_H \circ (j_H^{\mathbf{x}})^{-1}(a)).
\end{equation}
By Theorem 1 of \cite{GHC2}, the class   $ \mathbf{j}^{\textbf{x}}_H \circ \Phi_H \circ (j_H^{\mathbf{x}})^{-1}(a)$ is independent of the choice of $H$.
%We  want to emphasize  that the isomorphism $\mathbf{j}^{\textbf{x}}_H \circ \Phi_H \circ (j_H^{\mathbf{x}})^{-1} : HF(\Sigma, \underline{L}) \to HF(Sym^d\underline{L}) $ may depend on $H$ and $\mathbf{x}$ a priori. Therefore, the relation (\ref{eq31}) is not strong enough to transfer all the properties of $c^{link}_{\underline{L}}$  to $c_{\underline{L}}$.

 \begin{comment}
 \paragraph{Fibrations} Let $B$ be a compact surface with a finite set of punctures  $\Gamma = \Gamma^+ \cup \Gamma^-$.  Let $(\pi: E \to B, \Omega)$ be a \textbf{symplectic fibration   with strip-like ends}. Here the phase ``strip-like ends'' means that for each puncture $p \in \Gamma$ we can find a neighbourhood $U$ of $p$ such that
\begin{equation}
(\pi: \pi^{-1}(U) \to U,  \Omega,) \cong ( \pi_{\mathbb{R}_{\pm} \times [0,1]}: \mathbb{R}_{\pm} \times [0,1] \times \Sigma \to \mathbb{R}_{\pm} \times [0,1], \omega + ds\wedge dt).
\end{equation}

Let $\mathcal{L}$ be a $d$-disjoint union Lagrangian  submanifolds in $\partial E$ satisfying  the following conditions:
\begin{enumerate}
\item
  For each puncture $p_{\pm} \in \Gamma_{\pm}$, we require that
  \begin{equation*}
 \mathcal{L} = (\mathbb{R}_{\pm} \times \{0\}  \times \underline{L}_{p_{\pm}}^0 )\cup(\mathbb{R}_{\pm} \times \{1\} \times \underline{L}_{\pm}^1 ).
\end{equation*}

 \item
 $\mathcal{L} \cap \pi^{-1}(z)$ is a link satisfying the conditions \ref{assumption3}, \ref{assumption4} and \ref{assumption5}.

 \end{enumerate}

 \end{comment}

\subsection{Main results}
In this section, we give the precise statements about the results mentioned at the beginning of the paper. These include the properties of the HF spectral invariants, open-closed morphisms, and a general relation between HF spectral invariants and PFH spectral invariants.

In the  first part of this paper, we  study the properties of the spectral invariants  $c_{\underline{L}, \eta} $. The results are summarized in the following theorem. These properties are parallel to those in \cite{CHMSS}.
\begin{thm} \label{thm4}
The spectral invariant $c_{\underline{L},\eta} : C^{\infty}([0,1] \times \Sigma) \times HF(\Sigma, \underline{L}) \to \{-\infty\}\cup \mathbb{R}$ satisfies the following properties:
\begin{enumerate}
\item
(Spectrality) For any $H$ and $a  \ne 0 \in HF(\Sigma, \underline{L})$, we have
$c_{\underline{L},\eta} (H, a ) \in Spec(H: \underline{L})$, where  $ Spec(H: \underline{L})$ is the action  spectrum  of $H$ defined in (\ref{eq59}).
%\item
%(Normalization)  For  any $a  \ne 0 \in HF(\Sigma, \underline{L})$, we have $c_{\underline{L}} (0, a) = \nu(a)$.

\item (Hofer-Lipschitz)
For $a \ne 0 \in HF(\Sigma, \underline{L})$,  we have
\begin{equation*}
d\int_0^1 \min_{\Sigma} (H_t-K_t) dt  \le c_{\underline{L},\eta}(H, a) - c_{\underline{L},\eta}(K, a) \le d \int_0^1 \max_{\Sigma} (H_t-K_t) dt.
\end{equation*}

\item (Homotopy invariance)
Let $H, K$ are two mean-normalized Hamiltonian functions. Suppose that they are homotopic in the sense of Definition \ref{definition3}.  Then
\begin{equation*}
c_{\underline{L},\eta}(H, a) = c_{\underline{L},\eta}(K, a).
\end{equation*}

\item
(Shift) Fix  $a \ne  0 \in HF(\Sigma, \underline{L})$. Let $c: [0,1]_ t \to \mathbb{R}$ be a function only dependent on $t$. Then
\begin{equation*}
c_{\underline{L},\eta} (H + c, a) =  c_{\underline{L},\eta} (H, a )  + d \int_0^1 c(t)dt.
\end{equation*}

\item (Lagrangian control)  If $H_t\vert_{L_i}=c_i(t)$ for $i=1,..,d$, then
\begin{equation*}
c_{\underline{L},\eta}(H, a) = c_{\underline{L},\eta}(0, a) +  \sum_{i=1}^d \int_0^1 c_i(t) dt.
\end{equation*}
Moreover,  for any  Hamiltonian function $H$, we have
\begin{equation*}
     \sum_{i=1}^d \int_0^1 \min_{L_i} H_t   dt +  c_{\underline{L},\eta}(0, a) \le c_{\underline{L},\eta}(H, a) \le  c_{\underline{L},\eta}(0, a)   + \sum_{i=1}^d \int_0^1 \max_{L_i} H_t   dt.
\end{equation*}

\item
(Triangle inequality) For any Hamiltonian functions $H,K$ and $a, b \in HF(\Sigma, \underline{L})$, we have
$$ c_{\underline{L},\eta}(H\#K, \mu_2(a \otimes b))  \le  c_{\underline{L},\eta}(H, a) + c_{\underline{L}, \eta}(K, b), $$
 where $\mu_2$ is the quantum product defined in Section \ref{section1}, and     $$H\#K(t, x) : =H_t(x) + K_t((\varphi^t_H)^{-1}(x))$$ is the  \textbf{composition} of two Hamiltonian functions.  %By the chain rule, we have $\varphi_{H\#K}^t=\varphi_H^t \circ \varphi_K^t$.

\item
(Normalization) For the unit $e_{\underline{L}}$, we have $c_{\underline{L},\eta}(0, e_{\underline{L}}) =0. $

\item
(Calabi property) Let $\{\underline{L}_m\}_{m=1}^{\infty}$ be a sequence of $\eta_m$-admissible links. Suppose that    $\{\underline{L}_m\}_{m=1}^{\infty}$ is  equidistributed  (Section 3.1 of   \cite{CHMSS}) in the sense that $\operatorname{diam} \underline{L}_m  \to 0$.  Let $d_m$ denote the number of components of $\underline{L}_m$. Then,  we have
\begin{equation*}
\lim_{m\to \infty}\frac{1}{d_m} c_{\underline{L}_m,\eta}(H, e_{\underline{L}_m}) =\int_0^1\int_{\Sigma} H_t dt \wedge \omega.
\end{equation*}

\end{enumerate}

\end{thm}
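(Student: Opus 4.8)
The plan is to establish each of the eight properties by transporting, via the isomorphism $\Phi_H$ of (\ref{eq30}), the corresponding facts for the link spectral invariants $c^{link}_{\underline{L},\eta}$ of \cite{CHMSS} and the abstract spectral-invariant package of Leclercq--Zapolsky \cite{LZ}; the genuinely new content is checking that the cylindrical model $HF(\Sigma,\varphi_H(\underline{L}),\underline{L})$ carries an action filtration and a PSS/continuation formalism compatible with $\Phi_H$. First I would fix, for an $\eta$-admissible $\underline{L}$ and a Hamiltonian $H$, the action functional on the relevant (doubly-periodic/cylindrical) generators and verify that the count of holomorphic curves in the $4$-manifold respects it, so that the min-max value $c_{\underline{L},\eta}(H,a)$ is well-defined; spectrality (1) is then immediate since the min-max is achieved at a finite action level, the set of action values of generators being $Spec(H:\underline{L})$. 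For (3) Shift, adding a function $c(t)$ of time only shifts the action of every generator by $d\int_0^1 c\,dt$ (each of the $d$ strands contributes $\int_0^1 c\,dt$), which shifts the whole filtration uniformly; (7) Normalization is a direct computation of the action of the fundamental-class representative when $H=0$.

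Next I would handle (2) Hofer--Lipschitz, (5) Lagrangian control, and (4) Homotopy invariance together, since all three follow from the standard a priori energy estimate for the continuation maps between the complexes for $H$ and $K$: the energy of a continuation cylinder is bounded by $d\int_0^1\max_\Sigma(H_t-K_t)\,dt$ from above and the symmetric quantity from below, because there are $d$ strands and the Hamiltonian perturbation term integrates to that; monotonicity of continuation maps in the filtration then gives (2), and taking $H$ with $H_t|_{L_i}=c_i(t)$ makes the continuation from $0$ to $H$ an isomorphism of filtered complexes up to the shift $\sum_i\int_0^1 c_i\,dt$, giving the equality in (5) and, via sandwiching, the inequality. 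Homotopy invariance (4) is the statement that homotopic (in the sense of Definition \ref{definition3}) mean-normalized $H,K$ induce the identity on the abstract group $HF(\Sigma,\underline{L})$ while preserving actions, which is the usual ``naturality of continuation under homotopy of homotopies'' argument; I expect this to be routine once Definition \ref{definition3} is in hand.

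For (6) Triangle inequality I would construct the pair-of-pants/quantum product $\mu_2$ on $HF(\Sigma,\varphi_H(\underline{L}),\underline{L})$ by counting holomorphic sections of a fibration over a pair of pants (as in Section \ref{section1}), establish the sub-additivity of actions under $\mu_2$ from the standard gluing-energy estimate, and then pass to the abstract groups; this uses that $\mu_2$ intertwines with the usual quantum product on QHF under $\Phi_H$, which I would either cite from \cite{GHC2} or verify by a degeneration argument. Finally, (8) the Calabi property follows by combining the equidistribution hypothesis of \cite{CHMSS} with (2) and (5): the Lagrangian-control inequality bounds $\tfrac{1}{d_m}(c_{\underline{L}_m,\eta}(H,a)-c_{\underline{L}_m,\eta}(0,a))$ between $\tfrac{1}{d_m}\sum_i\int_0^1\min_{L_i}H_t\,dt$ and the analogous max, and equidistribution forces both Riemann-type sums to converge to $\int_0^1\int_\Sigma H_t\,\omega\,dt$. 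The main obstacle, I expect, is not any single property but the foundational step underlying all of them: showing that the cylindrical $4$-dimensional model admits action-filtered continuation and product maps that are strictly compatible with $\Phi_H$ and with the Lagrangian-Floer structures in \cite{CHMSS, LZ}, i.e.\ that $c_{\underline{L},\eta}$ and $c^{link}_{\underline{L},\eta}$ agree up to the normalization recorded in (\ref{eq31}); once that dictionary is set up, each property is a transcription of its Lagrangian-Floer counterpart with the bookkeeping factor $d$ from the number of strands.
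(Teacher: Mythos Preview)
Your overall strategy---transporting the properties through the isomorphism $\Phi_H$ of (\ref{eq30}) from the link spectral invariants of \cite{CHMSS}---is precisely what the paper says \emph{cannot} be done at present. The paper's Remark immediately following Theorem \ref{thm4} states that it is not known whether $\Phi_H$ is canonical (i.e.\ whether the composite $\mathbf{j}^{\mathbf{x}}_H\circ\Phi_H\circ(j^{\mathbf{x}}_H)^{-1}$ is independent of $H$ and $\mathbf{x}$), nor whether the cylindrical product $\mu_2$ agrees with the usual quantum product under $\Phi_H$. Without the first, you cannot conclude that $\Phi_H$ intertwines continuation maps, so the relation (\ref{eq31}) gives you, for each fixed $H$, only an equality between $c_{\underline{L},\eta}(H,a)$ and $c^{link}_{\underline{L},\eta}(H,\cdot)$ evaluated at some class that may drift with $H$; this is too weak to transfer Hofer--Lipschitz, homotopy invariance, or the triangle inequality. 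Your final paragraph correctly identifies this compatibility as ``the main obstacle,'' but you treat it as a routine dictionary to be set up or cited from \cite{GHC2}; it is not there, and the paper explicitly declines to prove it.

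The paper's actual route avoids $\Phi_H$ entirely and reproves each property directly in the cylindrical model, redoing the \cite{CHMSS, LZ} arguments with $4$-dimensional holomorphic curves. For Hofer--Lipschitz it builds an explicit Lagrangian cobordism via a diffeomorphism $F$ (see (\ref{eq13})) and estimates $\int u^*\omega_E$ directly; for the triangle inequality it works on $(E_2,\Omega_2,\mathcal{L}_2)$, first under the extra hypothesis (\ref{eq10}) that the base point $\mathbf{x}$ consists of common nondegenerate critical points of $H_t$ and $K_t$ (so that $A_{ref}$ has zero $\omega$-area and zero $J_0$), and then removes this hypothesis by a $C^0$-small perturbation $H^\delta,K^\delta$ and Hofer--Lipschitz. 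Several of your individual sketches (Shift, Lagrangian control, Calabi) are in fact close to what the paper does, but they stand on their own once Hofer--Lipschitz and spectrality are proved \emph{internally}; the framing ``transport via $\Phi_H$'' is the part that fails.
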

The properties of $c_{\underline{L}, \eta}$ in the above theorem could be deduced from the equivalence relation (\ref{eq39}), possibly except the triangle inequality, because we have not showed that  quantum product $\mu_2$ here agrees with the one of $HF(\operatorname{Sym}^d \underline{L})$.
For self-containness, we prove these properties using HF curves in four dimensional setting instead. The methods are parallel to those in \cite{CHMSS, LZ}.

%By the discussion in  Section 7 of  \cite{GHC2}, the quantity  $c^{pfh}_d(H, \sigma,  \gamma^{\textbf{x}}_H)   + \int_0^1 H_t(\textbf{x}) dt $ is independent of the choice of $\textbf{x}$.

%Note that all the terms in the inequalities of Corollaries \ref{corollary1}, \ref{corollary2} are independent of the choice of the base points $\textbf{x}$. See discussion in Section 7 of \cite{GHC2}. %This is the reason why we suspend it from the notations.
%The terms  $c^{pfh}_{d}(H,  \mathfrak{e}^{\mathbf{x}}_{H}, \gamma_H^{\mathbf{x}}) +  \int_0^1 H_t(\textbf{x}) dt $, $c^{pfh}_{d}(H, \mathfrak{d}^{\mathbf{x}}_{H},  \gamma_H^{\mathbf{x}})+  \int_0^1 H_t(\textbf{x}) dt $ and $ c_{\underline{L}, \eta}(H, a)$ are independent of the choice of the base point.  See discussion in Section 7 of \cite{GHC2}.

%In \cite{GHC2}, we define the closed-open morphisms (\ref{eq19}).  We use the same techniques to  construct a ``reverse'' of the closed-open morphisms, called \textbf{open-closed morphisms}.  %The results are summarized  in the following theorem.

The next theorem is a summary of the properties of the open-closed morphisms.
\begin{thm} \label{thm2}
Let $\underline{L}$ be an admissible link and $\varphi_H$ a $d$-nondegenerate Hamiltonian symplectomorphism. 		We have  a  homomorphism
\begin{equation*}
\mathcal{OC}(\underline{L}, H) :   HF(\Sigma,  \varphi_H(\underline{L}), \underline{L},    \mathbf{x}) \to \widetilde{PFH}(\Sigma, \varphi_H, \gamma_H^{\mathbf{x}})
\end{equation*}
 satisfying  the following properties:
		\begin{itemize}
			\item (\textbf{Invariance})
			We have  the following commutative diagram:
\begin{equation}		\label{eq70}
\begin{CD}
				HF_{*}(\Sigma,  \varphi_H(\underline{L}),   \underline{L}, \mathbf{x}) @>\mathcal{OC}(\underline{L}, H)>>   \widetilde{PFH}_*(\Sigma,  \varphi_H, \gamma_H^{\mathbf{x}}) \\
				@VV \mathcal{I}^{H, G}_{0,0}V @VV \mathfrak{I}_{H, G}V\\
				HF_{*}(\Sigma,  \varphi_G(\underline{L}),  \underline{L}, \mathbf{x})  @> \mathcal{OC}(\underline{L}, G)>>    \widetilde{PFH}_*(\Sigma,  \varphi_G,  \gamma_G^{\mathbf{x}})
			\end{CD}
\end{equation}
			%\begin{displaymath}
			%	\xymatrix{
			%		\widetilde{PFH}_*(S^2,  \varphi_H, \gamma_1)_{J_H} \ar[d]^{PFH_{Z_{ref}}^{sw}(X, \Omega_X) } \ar[r]^{((\Phi_{\mathcal{Z}_1}(W, \Omega_H, L_{\Lambda_H}))_*} & HF_{*}(S^2,  \varphi_H,   \Lambda, \textbf{x})_{J_H}  \ar[d]^{ I(H, G)_* }\\
			%		\widetilde{PFH}_*(S^2,  \varphi_G,  \gamma_0)_{J_G} \ar[r]^{(\Phi_{\mathcal{Z}_0}(W, \Omega_G, L_{\Lambda_G}))_*} &HF_{*}(S^2,  \varphi_G,  \Lambda, \textbf{x})_{J_G} }.
			%\end{displaymath}
	%		Here $PFH_{Z_{ref}}^{sw}(X, \Omega_X)$  is the PFH cobordism map induced by  symplectic cobordism   (\ref{eq23}) and  $\mathcal{I}^{H, G}_{0,0}$ is the continuous morphism on QHF defined in Section \ref{section1}.

			\item
			(\textbf{Non-vanishing})	
	%		 There are nonzero classes $\sigma_{\diamondsuit} \in \widetilde{PFH}_*(\Sigma,   \gamma_0)$  and
			There are  nonzero classes  $\sigma_{\underline{L}}\in HF(\Sigma, \underline{L})$ and $\mathfrak{d}  \in  \widetilde{PFH}(\Sigma,d)$ such that    we have
\begin{equation*}
\mathcal{OC} (\underline{L}, H ) ((j^{\mathbf{x}}_{H})^{-1}(\sigma_{\underline{L}}))= (\mathfrak{j}^{\mathbf{x}}_{ H})^{-1}(\mathfrak{d}),
\end{equation*}
			where  $j_H^{\mathbf{x}}$ and $\mathfrak{j}_H^{\mathbf{x}}$ are   the canonical isomorphisms in   (\ref{eq22}). In particular, the open-closed morphism is non-vanishing.

\item
(\textbf{Decreasing spectral invariants}) Suppose that there are  nonzero classes   $a \in  HF(\Sigma, \underline{L})$ and $\sigma \in  \widetilde{PFH}(\Sigma,d)$ such that $\mathcal{OC} (\underline{L}, H )( (j^{\mathbf{x}}_{H})^{-1}(a))= (\mathfrak{j}^{\mathbf{x}}_{ H})^{-1}(\sigma).$ Then for any Hamiltonian function $H$, we have
\begin{equation} \label{eq65}
  c_{d}^{pfh}(H, \sigma) \le c_{\underline{L}}(H, a).
\end{equation}
		\end{itemize}
	\end{thm}

%There is a  special  class $e_{\underline{L}} \in HF(\Sigma, \underline{L})$ called the \textbf{unit} (see Definition \ref{definition4}).  Suppose that the link $\underline{L}$ is $0$-admissible.  Define spectral invariants
%\begin{equation*}
%\begin{split}
%& c^{pfh, -}_d(H, \gamma^{\textbf{x}}_H) : =c^{pfh}_d(H, \sigma_{\diamondsuit}, \gamma^{\textbf{x}}_H), \mbox{ \  \  }   c^{pfh, +}_d(H, \gamma^{\textbf{x}}_H) : =c^{pfh}_d(H, \textbf{1}_{pfh},  \gamma^{\textbf{x}}_H),\\
%&c_{\underline{L}}^{-}(H) : =  c_{\underline{L},\eta=0}(H, \sigma_{\underline{L}})  \mbox{ \   and   \  }  c_{\underline{L}}^{+}(H) : =  c_{\underline{L},\eta=0}(H, e_{\underline{L}}).
%\end{split}
%\end{equation*}
%where   $e_{\diamondsuit} \in HF(\Sigma, \underline{L})$ is a class defined in Section \ref{section6}.
%where $ \gamma^{\textbf{x}}_H$ is the reference 1-cycle defined in (\ref{eq25}).
%Then these four spectral invariants satisfy the following relations.
Combining the above theorem and Theorem 2 of \cite{GHC2}, we deduce following relation between HF spectral invariants and PFH spectral invariants.
\begin{thm}   \label{thm5}
Suppose that the link $\underline{L}$ is $0$-admissible.    For any Hamiltonian function $H$, we have
\begin{equation*}
 c^{pfh}_{d}(H, \mathfrak{d})   \le c_{\underline{L}}(H, \sigma_{\underline{L}}) \le c_{\underline{L}}(H, e_{\underline{L}}) \le c^{pfh}_{d}(H, \mathfrak{e}) .
\end{equation*}
\end{thm}

\begin{comment}
\begin{remark} \label{remark2}
For technical reasons,   the cobordism maps on PFH are defined by using the Seiberg-Witten theory \cite{KM} and the isomorphism ``PFH=SWF'' \cite{LT}.  Nevertheless,  the proof of the Theorem \ref{thm2} needs  a holomorphic curves definition. The assumptions \ref{assumption1},  \ref{assumption2} are used to guarantee that the PFH cobordism maps can be defined by counting holomorphic curves.  According to the results in \cite{GHC}, the Seiberg-Witten definition agrees with the holomorphic curves definition in these special cases. We believe  that the assumptions  \ref{assumption1},  \ref{assumption2} can  be removed if one could define the PFH cobordism maps by pure holomorphic curve methods.

By Proposition 3.7 of \cite{GHC},   the conditions  \ref{assumption1},  \ref{assumption2} can be achieved by a $C^1$-perturbation of the  Hamiltonian functions.    More precisely, fix a metric  $g_Y$  on $S^1 \times \Sigma$.  For    any $\delta>0$ and Hamiltonian function $H$,
 there is a Hamiltonian function $H'$ such that
\begin{itemize}
\item
$\varphi_{H'}$ satisfies   \ref{assumption1}   (\ref{assumption2}).
\item
$|H'-H| + |dH-dH'|_{g_Y} \le \delta$.
\end{itemize}
Even though Theorem \ref{thm2} relies on the  conditions  \ref{assumption1},  \ref{assumption2} , the above estimates and the Hofer-Lipschitz property imply  that  Theorem \ref{thm0} and Theorem \ref{thm4} work for a general Hamiltonian function.
   \end{remark}
\end{comment}

 From  Theorem 7.6 of \cite{CHMSS},  the  homogenized link spectral invariants   are  homogeneous quasimorphisms  in the case of the sphere. By Theorem  \ref{thm0}, we know that this is also true for the PFH homogeneous spectral invariants $\mu_d^{pfh}.$   Recall that a \textbf{homogeneous quasimorphism} on a group $G$ is a map $\mu: G \to \mathbb{R}$ such that
\begin{enumerate}
\item
$\mu(g^n) =n \mu(g)$;
\item
there exists a constant $D=D(\mu) \ge 0$,  called the \textbf{defect} of $\mu$, satisfying
$$|\mu(gh) - \mu(g) -\mu(h)| \le D. $$
\end{enumerate}

\begin{thm} \label{thm1}
The  homogenized spectral invariants   $\mu_{d}^{pfh}: Ham(\mathbb{S}^2, \omega) \to \mathbb{R}$ are  homogeneous quasimorphisms with defect $1$.
\end{thm}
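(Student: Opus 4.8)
The plan is to derive Theorem \ref{thm1} from Corollary \ref{corollary2} together with the properties of the HF spectral invariants collected in Theorem \ref{thm4}. By equation (\ref{eq32}), for $\Sigma = \mathbb{S}^2$ and any $0$-admissible link $\underline{L}$ with $d$ components we have $\mu_{d}^{pfh}(\varphi) = \mu_{\underline{L}, \eta=0}(\varphi, e)$, so it suffices to prove that $\varphi \mapsto \mu_{\underline{L},\eta=0}(\varphi, e)$ is a homogeneous quasimorphism with defect $1$. The homogeneity $\mu_{\underline{L},\eta=0}(\varphi^n, e) = n\,\mu_{\underline{L},\eta=0}(\varphi, e)$ is immediate from the definition of the homogenization (Section \ref{section6}), which is exactly $\lim_{n\to\infty} c_{\underline{L},0}(\varphi^n, e)/n$ applied to $\varphi^n$; I would just record this. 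The substantive content is the quasimorphism inequality with the sharp defect bound $1$.

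The key input is the triangle inequality (item 6 of Theorem \ref{thm4}): $c_{\underline{L},0}(H\# K, \mu_2(a\otimes b)) \le c_{\underline{L},0}(H,a) + c_{\underline{L},0}(K,b)$. Applying this with $a = b = e$ and using that $e$ is a unit so $\mu_2(e\otimes e) = e$, one gets subadditivity $c_{\underline{L},0}(\varphi\psi, e) \le c_{\underline{L},0}(\varphi,e) + c_{\underline{L},0}(\psi,e)$ (after checking this descends to $Ham$, i.e.\ is independent of the Hamiltonian generating a fixed time-one map, via homotopy invariance, item 4). For the reverse inequality one needs a lower bound on $c_{\underline{L},0}(\varphi\psi,e)$ in terms of $c_{\underline{L},0}(\varphi,e)$ and $c_{\underline{L},0}(\psi,e)$; here the sphere-specific estimate from Corollary \ref{corollary2} enters. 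The standard trick, following the PFH/ECH quasimorphism arguments (as in Cristofaro-Gardiner--Humili\`ere--Seyfaddini and the QHF paper \cite{CHMSS}), is to use the duality between the two spectral invariants $c_{\underline{L}}^{+}$ and $c_{\underline{L}}^{-}$ — i.e.\ the one computed from the unit $e$ and the one from the ``co-unit'' class $e_{\diamondsuit}$ — together with the pairing/Poincar\'e-duality-type relation $c_{\underline{L}}^{-}(H) = -c_{\underline{L}}^{+}(\bar H) + (\text{const})$, where $\bar H$ generates $\varphi^{-1}$. Combining $c^{+}(\varphi\psi) \le c^{+}(\varphi) + c^{+}(\psi)$, the analogous statement for $c^{-}$, and the relation $c^{-}(\varphi) = -c^{+}(\varphi^{-1}) + C$, yields $c^{+}(\varphi) + c^{+}(\psi) - \text{(bounded)} \le c^{+}(\varphi\psi)$. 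Homogenizing, the bounded error terms controlled by the gap $c_{\underline{L}}^{+} - c_{\underline{L}}^{-}$ wash out in the $+$ versus $-$ homogenizations but leave the quasimorphism defect; by (\ref{eq32}) both homogenizations coincide with $\mu_d^{pfh}$, which forces the defect to be governed by $\sup_H (c_{\underline{L}}^{+}(H) - c_{\underline{L}}^{-}(H))$.

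To pin the defect to exactly $1$, I would use Corollary \ref{corollary2}: it gives $0 \le c_{\underline{L}}^{+}(H) - c_{\underline{L}}^{-}(H) \le 1$ (subtracting the two chains of inequalities there, the $c_d^{pfh}$ and $\int H_t(\mathbf{x})$ terms cancel and one is left with a total width $1$, the volume of $\mathbb{S}^2$). This uniform bound, fed into the above duality argument, produces the quasimorphism inequality $|\mu_d^{pfh}(\varphi\psi) - \mu_d^{pfh}(\varphi) - \mu_d^{pfh}(\psi)| \le 1$. For the defect being no smaller — i.e.\ that $1$ is sharp, if the statement is asserting the exact value rather than an upper bound — I would exhibit a pair $\varphi, \psi$ (e.g.\ rotations, or the standard autonomous examples used for the Calabi quasimorphism on $S^2$) realizing the defect, or simply note that ``defect $1$'' here means the constant $D$ in the definition can be taken to be $1$ (an upper bound), which is all the duality argument needs.

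The main obstacle I anticipate is the careful bookkeeping of base points, reference homology classes, and the normalization constants $\int_0^1 H_t(\mathbf{x})\,dt$ so that they genuinely cancel when passing to the homogenization and do not contribute a spurious additive error; in particular one must check that the quasimorphism property is insensitive to the choice of $\mathbf{x}$ and of the admissible link, which is where the last sentence of Corollary \ref{corollary2} and the base-point independence remark after it are needed. A secondary technical point is verifying that $c_{\underline{L},0}(\varphi, e)$ is well-defined on $Ham(\mathbb{S}^2,\omega)$ (not just on Hamiltonians) — this uses homotopy invariance plus the fact that $\pi_1(Ham(\mathbb{S}^2)) = \mathbb{Z}/2$ acts trivially on the relevant spectral invariant after homogenization — and that the triangle inequality descends accordingly.
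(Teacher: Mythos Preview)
Your broad strategy is correct: one combines the triangle inequality for $c_{\underline{L}}^{+}$ with a duality-type relation linking $H$ and $\bar H$, and the defect comes from the gap of width $1$ in Corollary~\ref{corollary2}. However, there is a genuine gap in your execution. You invoke a relation of the form $c_{\underline{L}}^{-}(H) = -c_{\underline{L}}^{+}(\bar H) + \text{const}$ and ``the analogous [subadditivity] for $c^{-}$'', but neither is available from what has been proved. The class $e_{\diamondsuit}$ is defined via the open-closed computation, not as a Poincar\'e-dual class, so no HF-side duality of that form has been established; and since $\mu_2(e_{\diamondsuit}\otimes e_{\diamondsuit})$ is not known to equal $e_{\diamondsuit}$, item~6 of Theorem~\ref{thm4} gives no triangle inequality for $c_{\underline{L}}^{-}$. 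Without one of these two ingredients, the lower bound $c^{+}(\varphi\psi)\ge c^{+}(\varphi)+c^{+}(\psi)-1$ does not follow from your argument.

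The paper closes exactly this gap, but on the PFH side rather than the HF side. Using Usher's duality for filtered Floer--Novikov complexes, one identifies $\widetilde{PFC}(\mathbb S^2,\varphi_H^{-1},\gamma_{\bar H}^{\mathbf x})$ with the opposite complex of $\widetilde{PFC}(\mathbb S^2,\varphi_H,\gamma_H^{\mathbf x})$ via the involution $\iota:(s,t,x)\mapsto(-s,1-t,x)$, and computes (in a Morse model, then transported by the cobordism isomorphisms $\mathfrak I^{\mathbf x}_{\,\cdot\,,\,\cdot\,}$) that the class pairing nontrivially with $\sigma_{\heartsuit \bar H}^{\mathbf x}$ is precisely $\sigma_{\diamondsuit H}^{\mathbf x}$. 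Usher's theorem then gives $-c_d^{pfh}(\bar H,\sigma_{\heartsuit})=c_d^{pfh}(H,\sigma_{\diamondsuit})$, and combining with $c_d^{pfh}(H,\sigma_{\diamondsuit})\ge c_d^{pfh}(H,\sigma_{\heartsuit})-1$ (the $U$-map input behind Corollary~\ref{corollary2}) yields the single inequality $c_{\underline L}^{+}(H)+c_{\underline L}^{+}(\bar H)\le 1$. From there the proof is the one-line triangle trick $c^{+}(H)+c^{+}(K)=c^{+}(H)+c^{+}(\bar H\diamond H\diamond K)\le c^{+}(H)+c^{+}(\bar H)+c^{+}(H\diamond K)\le 1+c^{+}(H\diamond K)$, which together with subadditivity gives defect $\le 1$. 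In short: your outline is right, but the missing piece is proving the duality inequality $c^{+}(H)+c^{+}(\bar H)\le 1$, and that requires the opposite-complex machinery plus an explicit identification of the dual class, not just the $c^{+}-c^{-}\le 1$ gap.
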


\paragraph{Relavant results.}
The Calabi property in Theorem \ref{thm4} in fact   is an analogy of the ``ECH volume property''  for  embedded contact homology, it was first discovered by D.  Cristofaro-Gardiner, M. Hutchings, and V. Ramos \cite{CHR}.  \textbf{Embedded contact homology} (short for ``ECH'')  is a sister version of the periodic Floer homology.  The construction of ECH and PFH are the same. The only difference is that they are defined for different geometric structures.    If a result holds for  one of them, then one could expect that there should be a parallel result for another one.  The Calabi property also holds for PFH.  This  is proved by  O. Edtmair and Hutchings \cite{EH}, also by   D. Cristofaro-Gardiner, R. Prasad and B. Zhang \cite{CPZ}  independently.  The   Calabi property  for QHF is discovered  by D. Cristofaro-Gardiner, V. Humili$\grave{e}$re, C. Mak, S. Seyfaddini
and I. Smith  \cite{CHMSS}.

%The PFH and HF spectral invariants are  powerful tools in studying the surface dynamics. Especially, they satisfy a kind of ``Weyl law" called the \textbf{Calabi property} \cite{CHS1, CPZ, EH}.  The Calabi property plays a  crucial role  in the proof of the $C^{\infty}$-Lemma \cite{CPZ, EH} and simplicity of the Hamiltonian groups \cite{CHS1, CHS2, CHMSS, CHMSS1}.  In fact,    PFH a sister version of another invariant called \textbf{embedded contact homology},  abbreviated as ECH.   The Calabi property  is an analogy of the ``ECH volume property'' \cite{CHR}.
%Because the construction of ECH and PFH are the same,  if a result holds for  one of them, then one could expect that there should be a parallel result for another one.

Recently,   %D. Cristofaro-Gardiner, V. Humili$\grave{e}$re, C. Mak, S. Seyfaddini
%and I. Smith
the authors of \cite{CHMSS} show that the  homogenized link spectral invariants satisfy the ``two-terms Weyl law'' for a class of  autonomous  Hamiltonian functions \cite{CHMSS1} on the sphere. %We believe that the HF spectral invariants agree with the link spectral invariants.   If one could show that this is true,  Theorem \ref{thm0} implies that homogenized PFH spectral invariants agree with the homogenized link  spectral invariants.
Theorem \ref{thm0} implies that homogenized PFH spectral invariants also satisfy the ``two-terms Weyl law'' for the same class of autonomous  Hamiltonian functions.

%If one could show that the isomorphism (\ref{eq30}) is canonical or preserves the unit, Corollary  \ref{corollary2} implies that homogenized PFH spectral invariants agree with the homogenized link  spectral invariants.  This suggests that homogenized PFH spectral invariants should also satisfy the ``two-terms Weyl law''.

%One should able to use the same methods  to prove that the same result holds for  $c_{\underline{L}}$.  By Corollary  \ref{corollary2},   the  homogenized PFH spectral invariants should also satisfy the ``two-terms Weyl law''.  One  could conjecture that this is true in general case.

\paragraph{Outline of the proof. }  In fact, Theorem \ref{thm2} is  a reformation of a more  essential result Theorem \ref{thm6}.  In Theorem \ref{thm6}, we define the open-closed morphisms $\widetilde{\mathcal{OC}}(\underline{L}, H)$ by counting holomorphic curves in an   ``open-closed''
symplectic manifold $W_H$ with Lagrangian boundary condition $\mathcal{L}_H \subset \partial W_H$. Topologically, $W_H=B\times \Sigma$ and $\mathcal{L}_H =\partial B \times \underline{L}$, where $B$ is a disk with one interior puncture and one boundary puncture.  The open-closed morphisms in Theorem \ref{thm2} is defined by
\begin{equation}\label{eq71}
\mathcal{OC}(\underline{L}, H): =\mathfrak{I}_{H'_{\varepsilon}, H} \circ  \widetilde{\mathcal{OC}}(\underline{L}, H'_{\varepsilon})_J  \circ \mathcal{I}^{H, H_{\varepsilon}'}_{0,0},
\end{equation}
where $H'_{\varepsilon}$ is a  certain perturbation of a small Morse function.   By the property of the continuous morphisms, $\mathcal{OC}$ satisfy (\ref{eq70}).  If  $H'_{\varepsilon}$ is a small More function, using the  computations and restriction on the index and energy, we show that  the leading term  of  $  \widetilde{\mathcal{OC}}(\underline{L}, H'_{\varepsilon})_J$ counts the constant holomorphic curves at minimum points of $H'_{\varepsilon}$. This implies that   $  \widetilde{\mathcal{OC}}(\underline{L}, H'_{\varepsilon})_J$  is nonvanishing. So is  $\mathcal{OC}(\underline{L}, H)$. The final property of  $\mathcal{OC}(\underline{L}, H)$ comes from the  energy estimates of the holomorphic curves in $W_H$. However,    the definition (\ref{eq71}) \textbf{cannot} prove the existence of holomorphic curves in $W_H$.  On the other hand, if $\widetilde{\mathcal{OC}}(\underline{L}, H) \ne 0$, then it does  provide  holomorphic curves in $W_H$ and prove the final property of Theorem \ref{thm2}.

So we try to show that  $\mathcal{OC} = \widetilde{\mathcal{OC}} $. This is equivalent to show that  $\widetilde{\mathcal{OC}}$ satisfy the diagram (\ref{eq70}), we apply the usual neck-stretching, homotopy  and gluing argument in Floer theory.  Roughly speaking, we want to show that $\partial \mathcal{M} \cong \mathcal{M}_X \times \mathcal{M}^0_W \times \mathcal{M}_E \sqcup (-\mathcal{M}^1_W)$, where  $\mathcal{M} $  is a  moduli space of holomorphic  curves in $W$ defined by a family of data, $\mathcal{M}_W^0$ and  $\mathcal{M}_W^1$   are   moduli space of holomorphic  curves in $W$   used to define $\widetilde{\mathcal{OC}}(\underline{L}, H) $ and $\widetilde{\mathcal{OC}}(\underline{L}, G)$ respectively, and    $\mathcal{M}_E, \mathcal{M}_X$ are moduli space of curves used to define the continuous morphisms on QHF and PFH. For readers  who are  familiar  with ECH/PFH may confuse that the PFH cobordism maps  are defined by Seiberg-Witten equations rather than holomorphic curves at  current stage. Actually, we perform the above argument under some technical assumptions (\ref{assumption1}, \ref{assumption2}) on $H, G$ so that the PFH cobordism maps can be defined by holomorphic curves. Therefore, $\widetilde{OC}$ only satisfy the diagram (\ref{eq70}) under certain  technical assumptions. Thus, we call this property partial invariance. Consequently, we prove the finial property in Theorem \ref{thm2} under \ref{assumption2}. Proposition 3.7 of \cite{GHC} tells us that we can always make a $C^1$ perturbation on  $H$ so that it satisfies   \ref{assumption2}. Then Hofer-Lipschitz continuity implies that (\ref{eq65}) holds for any $H$.

Theorem \ref{thm5} is a just consequence of Theorem \ref{thm2} and Theorem 3 of \cite{GHC2}. We prove Theorem \ref{thm6} by using the computations of $\widetilde{PFH}(\mathbb{S}^2, d)$ and duality in Floer theory \cite{U}.
 \section{Morphisms on QHF}\label{section1}
 In this section, we  define the continuous morphisms,   quantum product and unit  on  $HF(\Sigma, \underline{L})$. % The construction here is similar to Section 4 of \cite{CHT}.

 \subsection{Moduli space of HF curves}
To begin with, we  introduce the definition of HF curves and  relative homology classes. These definitions are mostly paraphrases of those in  Section 4 of \cite{VPK}.

 Let $\dot{D}_m$ be a disk  with boundary punctures $(p_0, p_1, ..., p_m)$.  The order of the punctures is  counter-clockwise.  See Figure \ref{figure2}.
Let $\partial_i \dot{D}_m$ denote  the boundary of $\dot{D}_m$ connecting $p_{i-1}$ and $p_i$ for $1\le i \le m$, and $\partial_{m+1} \dot{D}_m$  the boundary connecting $p_m$ and $p_0$.

Fix a complex structure $j_m$ and a K\"ahler form $\omega_{D_m}$ on $\dot{D}_m$  throughout.    We say that $\dot{D}_m$ is a \textbf{disk with strip-like ends} if  for each  $p_i$ we have a neighborhood $U_i$ of $p_i$ such that
\begin{equation}
( U_i,  \omega_{D_m}, j_m) \cong (  \mathbb{R}_{\epsilon_i} \times [0,1],  ds\wedge dt, j),
\end{equation}
where $j$ is the standard complex structure on $\mathbb{R} \times [0,1]$ that $j(\partial_s) =\partial_t$,  where $\epsilon_i = +$ for $1\le  i\le m$ and $\epsilon_0=-$.  Here $\mathbb{R}_+ =[0, \infty)$ and  $\mathbb{R}_- =(-\infty, 0]$.
	\begin{figure}[h]
		\begin{center}
			\includegraphics[width=10cm, height=6cm]{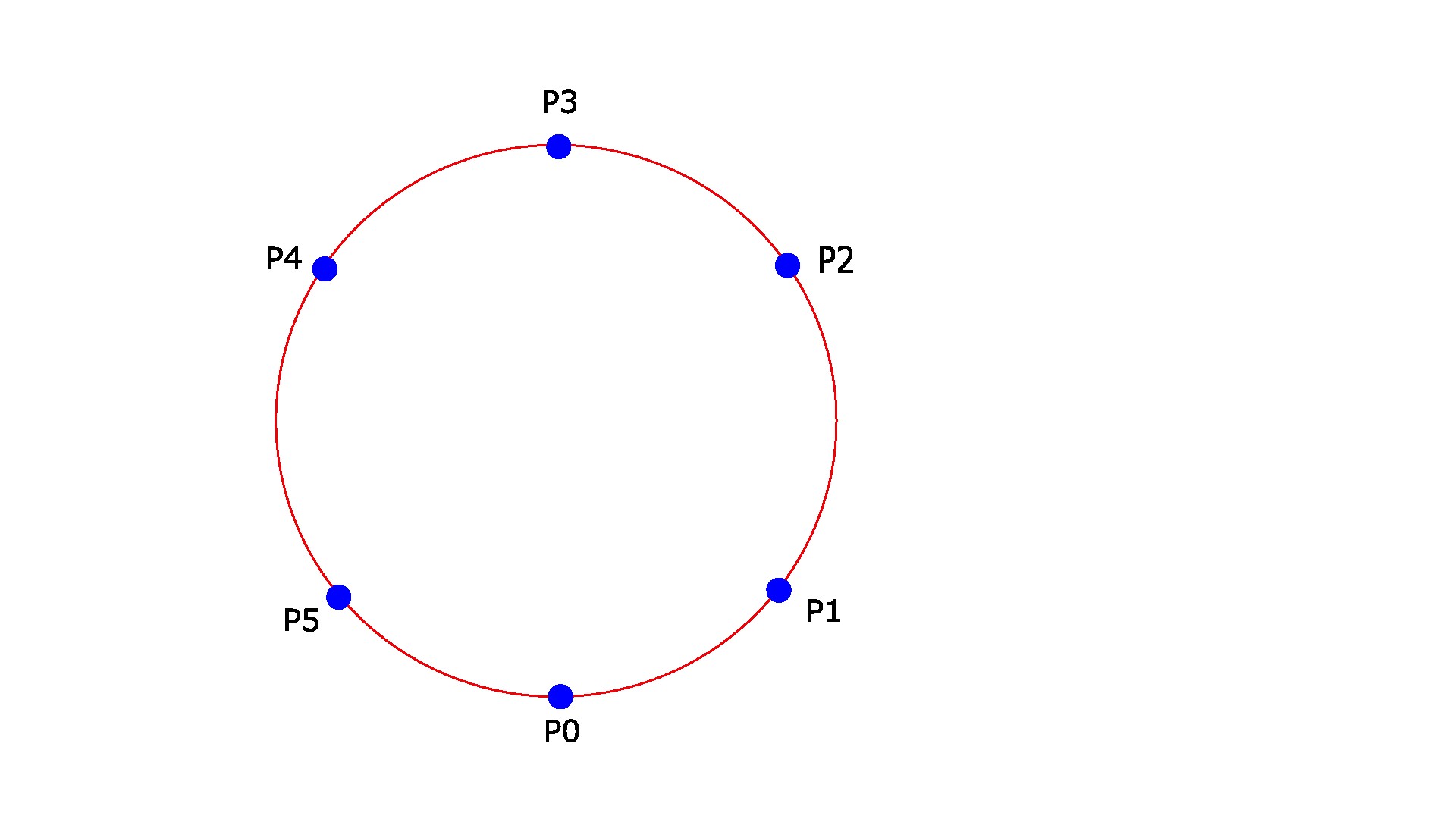}
		\end{center}
		\caption{A picture of the case $m=5$.  }%The red points are the index 2 critical points. The green points are saddle points. The blue points are index 0 critical points. The union of the red circles is the link.  The function $f$ is a small  perturbation of  the height function.}
		\label{figure2}
	\end{figure}

% Let $(E = D_m \times \Sigma, \Omega=\omega_{E_m}+ \omega_{D_m})$ be a \textbf{symplectic fibration   with strip-like ends}. Here the phase ``strip-like ends'' means that
Let $\pi_m: E_m =\dot{D}_m \times \Sigma \to \dot{D}_m$ be the trivial fibration.
 A closed 2-form $\omega_{E_m}$ is called \textbf{admissible} if $\omega_{E_m} \vert_{\Sigma} =\omega$ and  $\omega_{E_m}=\omega$ over the strip-like ends.  %over $U_j \times D_m$, where $U_j$ is a neighbourhood of the boundary puncture $p_j$.
 Note that $\Omega_{E_m} = \omega_{E_m} + \omega_{D_m}$ is a symplectic form on $E_m$ if $ \omega_{D_m}$ is large enough.  As a result, $(\pi: E_m \to \dot{D}_m, \Omega_m)$ over $U_i$ can be identified  with
 \begin{equation} \label{eq1}
( \pi_m: U_{i} \times \Sigma \to U_i,  \Omega_{D_m}) \cong (  \pi_{\mathbb{R} \times [0,1]}: \mathbb{R}_{\epsilon_i} \times [0,1] \times \Sigma \to \mathbb{R}_{\epsilon_i} \times [0,1],  \omega+  ds\wedge dt).
\end{equation}
 We call it  a  \textbf{(strip-like) end} of $(E_m, \Omega_{E_m})$ at $p_i$.

 Fix an $\eta$-admissible link. Let $\{\underline{L}_{p_i}\}_{i=0}^m$  be links such that $\underline{L}_{p_i}=\varphi_{H_i}(\underline{L})$ for some Hamiltonian functions $H_i$. Let $\mathcal{L}$ Lagrangian  submanifolds in $\partial E_m$ satisfying  the following conditions:

\begin{enumerate} [label=\textbf{C.\arabic*}]
\item \label{C0}
Let $\mathcal{L}_i= \mathcal{L} \vert_{\partial_i \dot{D}_m} \subset \pi^{-1}(\partial_i \dot{D}_m)$. $\mathcal{L}_i$ consists of  a disjoint union of $d$ Lagrangian submanifolds.
\item \label{C1}
For $1\le i\le m$, over the end at $p_i$ (under the identification \ref{eq1}), we have
  \begin{equation*}
 \mathcal{L} = (\mathbb{R}_{+} \times \{0\}  \times \underline{L}_{p_{i-1}} )\cup(\mathbb{R}_{+} \times \{1\} \times \underline{L}_{p_{i}} ).
\end{equation*}

\item \label{C2}
 Over the end at $p_0$ (under the identification \ref{eq1}), we have
  \begin{equation*}
 \mathcal{L} = (\mathbb{R}_{-} \times \{0\}  \times \underline{L}_{p_{0}} )\cup(\mathbb{R}_{-} \times \{1\} \times \underline{L}_{p_{m}} ).
\end{equation*}

 \item \label{C3}
 For any $z \in \partial \dot{D}_m$,    $\underline{L}_z=\mathcal{L} \cap \pi_m^{-1}(z)$ %$\{\underline{L}_{p_i}\}_{i=0}^{m}$
 is an   $\eta$-admissible and  it is  Hamiltonian isotropic to  a fixed $\eta$-admissible  link $\underline{L}$.

% \item \label{C4}
% For $z\in \partial D_m$, $\mathcal{L} \cap \pi^{-1}(z)$ is an admissible link. % satisfying the conditions \ref{assumption3}, \ref{assumption4} and \ref{assumption5}.
 \end{enumerate}

Let $(E_m, \Omega_m, \mathcal{L}_m)$  and $(E_n, \Omega_n, \mathcal{L}_n)$  be two symplectic fibrations.  Suppose that the pair of links $(\underline{L}_{p_{i-1}},\underline{L}_{p_{i}} )$ at the $i$-th positive end of  $(E_n, \Omega_n, \mathcal{L}_n)$  coincides with  the links at the negative end of   $(E_m, \Omega_m, \mathcal{L}_m)$.  %Suppose that the  negative end of   $(E_m, \Omega_m, \mathcal{L}_m)$ agrees with the $i$-th positive end of   $(E_n, \Omega_n, \mathcal{L}_n)$ in the following sense:
%\begin{equation*}
%\begin{split}
%&(E_m, \Omega_m, \mathcal{L}_m) \vert_{U_0} \cong (   \mathbb{R}_{s_- \le 0} \times [0,1]_t \times \Sigma,  \omega+  ds_-\wedge dt,  \mathbb{R}_{s_- \le 0} \times  (\{0\} \times \underline{L}_0 \cup \{1\}\times  \underline{L}_1) ),\\
%& (E_n, \Omega_n, \mathcal{L}_n) \vert_{U_i} \cong (   \mathbb{R}_{s_+ \ge 0} \times [0,1]_t \times \Sigma,  \omega+  ds_+\wedge dt,  \mathbb{R}_{s_+ \ge 0} \times  (\{0\} \times \underline{L}_0 \cup \{1\}\times  \underline{L}_1) ).
%\end{split}
%\end{equation*}
Fix $R \ge 0$. Define the $R$-\textbf{stretched composition} $(E, \Omega, \mathcal{L}) :=  (E_n, \Omega_n, \mathcal{L}_n) \circ_R (E_m, \Omega_m, \mathcal{L}_m) $ by
\begin{equation}  \label{eq33}
(E, \Omega, \mathcal{L}) =   (E_n, \Omega_n, \mathcal{L}_n)\vert_{s_+ \le R}  \cup_{s_+-R =s_-+R} (E_m, \Omega_m, \mathcal{L}_m) \vert_{s_-\ge -R}.
\end{equation}   In most of the time, the number $R$ is not important, so  we suppress  it from the notation. %We just call  $(E, \Omega, \mathcal{L})$ the composition of  $(E_n, \Omega_n, \mathcal{L}_n) $ and $ (E_m, \Omega_m, \mathcal{L}_m) $.

\begin{definition}
An almost complex structure  is called  adapted to fibration if
\begin{enumerate}
\item
$J$ is $\Omega_{E_m}$-tame.
\item
Over the strip-like ends, $J$ is $\mathbb{R}_s$-invariant,  $J(\partial_s) =\partial_t$, $J$ preserves $T\Sigma$ and $J \vert_{T\Sigma}$ is compatible with $\omega$.
\item
$\pi_m$ is complex linear with respect to $(J, j_m)$, i.e., $j_m \circ d\pi_m =d\pi_m \circ J.$
 \end{enumerate}
\end{definition}
Let $\mathcal{J}_{tame}(E_m)$ denote the set of the almost complex structures adapted to fibration. Using the admissible 2-form $\omega_{E_m}$, we have a  splitting $TE_m =TE_m^{hor} \oplus TE_m^{vert}$, where $TE_m^{vert}: =\ker d\pi_m$ and $TE_m^{hor}:=\{v \in TE_m \vert \omega_{E_m}(v ,w) =0, w \in T^vE_m\}$.  With respect to this splitting,  an  almost complex structure $J \in \mathcal{J}_{tame}(E_m)$ can be written as
$J =\left(
 \begin{matrix}
   J^{hh} & 0 \\
   J^{hv} & J^{vv}
  \end{matrix}
  \right) $. Therefore, $J$ is $\Omega_{E_m}$-compatible if and only if $J^{hv}=0$.

   Let $\mathcal{J}_{comp}(E_m) \subset\mathcal{J}_{tame}(E_m) $ denote the  set of almost complex structures which   are    adapted to fibration and    $\Omega_{E_m}$-compatible.  Later,  we will use the almost complex structures in $\mathcal{J}_{comp}(E_m)$ for  computations.

We now define the concept of holomorphic curves in $(E_m, \Omega_{E_m}, \mathcal{L}_m)$.
\begin{definition} \label{definition1}
		Fix Reeb chords $\mathbf{y}_i \in  \underline{L}_{p_{i-1}} \cap  \underline{L}_{p_{i}}$ and $\mathbf{y}_0 \in  \underline{L}_{p_{0}} \cap  \underline{L}_{p_{m}}$. Let $(\dot{F},j)$ be a Riemann surface (possibly disconnected) with   boundary punctures.  Each irreducible component of $\dot{F}$ has at least one puncture. A \textbf{d-multisection} is a smooth map $u: (\dot{F}, \partial \dot{F}) \to E_m $ such that
		\begin{enumerate}
			\item
			$u(\partial {\dot{F}}) \subset \mathcal{L}$.  Let $\{L_j^i\}_{i=1}^d$ be the connected components of $\mathcal{L} \vert_{\partial_j \dot{D}_m}$. For each $1 \le i\le d$,   $u^{-1}(L_j^i)$ consists of exactly one component of $\partial \dot{F}$.
			\item
			For $1\le i \le m$, $u$ is asymptotic to $\mathbf{y}_i$ as $s \to \infty $.
			\item
			$u$ is asymptotic to $\mathbf{y}_0$ as $s \to -\infty $.
			
			%\item
			%$\int_{\dot{F}} u^* \omega_{E_m} < \infty$.
		\end{enumerate}
The integer $d$ is called the \textbf{degree} of $u$. Fix an almost complex structure $J \in \mathcal{J}_{tame}(E_m)$.  If $u$ is a $J$-holomorphic $d$-multisection, then $u$ is called an \textbf{HF curve}.

\begin{remark} \label{remark5}
In our definition of HF curves, each irreducible component of $\dot{F}$ contains at least one puncture. This excludes the possibility that an HF curve contains an irreducible component entirely within a fiber or that the whole curve is contained within a fiber. As a result, an HF curve has at least $m+1$ ends.

One may define a concept called "generalized HF curves" to include these possibilities. However, to define the cobordism maps on QHF, we  do require holomorphic curves with ends. Moreover, if a holomorphic curves of the form $u=u_{\star} \cup v$, where $u_{\star} $ is an HF curve  and $v$  is a curve contained within a fiber, then   Theorem \ref{thm3} tells us that the ECH index of $[u]$ is at least two.  Since we only need to consider holomorphic curves with $I=0$ or $I=1$, for our purpose, there is no difference between  using ``HF curves'' and ``generalized HF curves'' .

On the other hand, if a sequence of  HF curves converges to a broken holomorphic curve $\mathfrak{u}$ in the sense of \cite{FYHKE}, then each level of $\mathfrak{u}$ is a generalized HF curve. See Lemma 2.9 of \cite{GHC2} for the explantation of the SFH compactness in HF  setting.
\end{remark}

	\end{definition}
	Let $H_2(E_m, \textbf{y}_1,...,\textbf{y}_m,   \textbf{y}_0)$ be the set of continuous maps $$u: (\dot{F}, \partial \dot{F}) \to \left(\check{E}_m,  \mathcal{L}\cup_{i=1}^{m}\{\infty\} \times \textbf{y}_i \cup \{-\infty\} \times \textbf{y}_0 \right)$$satisfying the conditions  $1),2), 3)$ in Definition \ref{definition1},  and modulo a  relation $\sim$, where $\check{E}_m$ is the compactification of $E_m$ by adding $\{\infty\}/\{-\infty\} \times [0,1]\time\Sigma$  to each positive/negative end.  Here  $u_1 \sim u_2  $ if and only if their compactifications   are equivalent in $H_2(\check{E}_m, \mathcal{L}\cup_{i=1}^{m}\{\infty\} \times \textbf{y}_i \cup \{-\infty\} \times \textbf{y}_0; \mathbb{Z})$.  	An element in $H_2(E_m, \textbf{y}_1,..\textbf{y}_m,   \textbf{y}_0)$   is called  \textbf{a relative homology class}. An easy generalization is that one could replace the Reeb chords  by the reference chords $\textbf{x}_H$ in the above definition. By definition, $H_2(E_m, \textbf{y}_1,..\textbf{y}_m,   \textbf{y}_0)$ is an affine space of $H_2(E_m, \mathcal{L}_m, \mathbb{Z}). $ By the exact sequence
\begin{equation*}
...\to H_2(E_m, \mathbb{Z}) \xrightarrow{j_*}  H_2(E_m, \mathcal{L}_m; \mathbb{Z}) \xrightarrow{\partial_*} H_1(\mathcal{L}_m,  \mathbb{Z}) \xrightarrow{i_*}  H_1(E_m,  \mathbb{Z}) \to...,
\end{equation*}
and the diffeomorphism $(E_m, \mathcal{L}_m) \cong (\dot{D}_m \times \Sigma, \partial \dot{D}_m \times \underline{L})$,
$H_2(E_m, \mathcal{L}_m; \mathbb{Z})$ is generated by $[B_i^{\tau_j}]$ ($1\le i \le k+1$, $1\le j \le m+1$), where $\tau_j \in \partial \dot{D}_m$ and $B_i^{\tau_j}$ is the closure of $\Sigma \setminus \pi_m^{-1}(\tau_j) \cap \mathcal{L}_m = \cup_{i=1}^{k+1} \mathring{B}_i^{\tau_j}.$

Fix $A \in H_2(E_m, \textbf{y}_1,..\textbf{y}_m,   \textbf{y}_0)$. We denote the moduli space of HF curves by $\mathcal{M}^J( \mathbf{y}_1, ...,\mathbf{y}_m; \mathbf{y}_0, A)$.

	\subsection{Fredholm index, ECH index and $J_0$ index} Now we  define   three  types of index defined for an HF curve, called Fredholm index, ECH index and $J_0$ index. These definitions   essentially follows Section 4.4 and Section 4.5 of \cite{VPK}.
	
	To begin with, fix a trivialization of $u^* T \Sigma$ as follows. Fix a non-singular vector $v$ on $\underline{L}$.  By using the symplectic parallel transport,      $(v, j_{\Sigma}(v))$ gives  a trivialization of  $T\Sigma \vert_{\mathcal{L}}$, where $ j_{\Sigma}$ is a complex structure on $\Sigma$.    We extend the trivialization  arbitrarily along $\textbf{y}_{i}$.  Such a trivialization is denoted by $\tau$.
	
	Define a real line bundle $\mathfrak{L}$ over $\partial F$ as follows. Take  $\mathfrak{L} \vert_{\partial \dot{F}} : = u^*(T \mathcal{L} \cap T\Sigma)$. Extend $\mathfrak{L}$ to $\partial F -\partial \dot{F}$ by rotating  in  the counter-clockwise direction from  $u^*T L_{p_{j-1}}^i$ and $u^*T L^i_{p_{j}}$ by the minimum amount. Then $(u^*T\Sigma, \mathfrak{L})$ forms  a bundle pair over $\partial  F$. With respect to the trivialization $\tau$, we have a  well-defined  Maslov index $\mu_{\tau}(u):= \mu(u^* T\Sigma, \mathfrak{L}, \tau)$ and 	relative Chern number  $ c_1(u^*T\Sigma, \tau). $
The number $2c_1(u^*T\Sigma, \tau) + \mu_{\tau}(u)$ is independent of the trivialization  $\tau$.
	
	The \textbf{Fredholm index} of an HF curve is defined by
	\begin{equation*}
		\operatorname{ind} u:= -\chi(F) +  2c_1(u^*T\Sigma, \tau) + \mu_{\tau}(u) + d(2-m).
	\end{equation*}
	The above index formula can be obtained by the doubling argument in Proposition 5.5.2  of \cite{VPK}.

	%The concept of ECH index also can be generalized to the current setting.
To define the ECH index, we first need to define the relative self-intersection number as follows. 	Given $A \in   H_2(E_m, \textbf{y}_1,...,\textbf{y}_m,   \textbf{y}_0)$, an oriented immersed    surface   $C \subset  E_m$ is a $\tau$-representative of $A$ if
	\begin{enumerate}
		\item
		$C$ intersects the fibers positively along $\partial C$;
		\item
		$\pi_{[0,1] \times \Sigma } \vert_C$ is an embedding near infinity; %$\partial C \cap (\{-1, 1\} \times [0,1] \times \Sigma)$;
		\item
		$C$ satisfies the $\tau$-trivial conditions in the sense of Definition 4.5.2 in \cite{VPK}.
	\end{enumerate}
	
	Let $C$ be a $\tau$-trivial representative of $A.$ Let $\psi$ be a section of the normal bundle $N_C$ such that $\psi \vert_{\partial C} =J\tau$.  Let $C'$ be a push-off of $C$ in the direction of $\psi$. Then the \textbf{relative self-intersection number } is defined by
	\begin{equation*}
		Q_{\tau}(A) := \#( C\cap C').
	\end{equation*}

Let  $A \in   H_2(E_m, \textbf{y}_1,...,\textbf{y}_m,   \textbf{y}_0)$  be a relative homology class. Define the \textbf{ECH index}  of $A$ by
\begin{equation*}
		I(A):=  c_1(T\Sigma \vert_A, \tau) +Q_{\tau} (A) +\mu_{\tau}(A) + d(1-m).
	\end{equation*}
Note that $I(A)$ is indepdent of the choices of $\tau$ and $\tau$-representative of $A$.
%where $\partial_t$
%The ECH index only depends on the relative homology class of $u$.

%Using the relative adjunction formula, we have the following result.

The following theorem  summarizes  all the properties of the ECH index that we need.
\begin{theorem} %(Theorem 4.5.13 of \cite{VPK})
 \label{thm3}
We have the following statements for the ECH index:
\begin{itemize}
\item
Let $u $ be an irreducible  $J$-holomorphic HF curve. Then the ECH index and the Fredholm index satisfy the following properties:
$$I(u) =\operatorname{ind} u + 2\delta(u), $$
where $\delta(u) \ge 0$   is a count of the singularities of $u$ with positive weight.
Moreover, $I(u)=\operatorname{ind}  u$ if and only if $u$ is embedded.

\item
Let $u=\cup_a u_a$ be an HF curve and each $u_a$ is irreducible. Then
\begin{equation*}
I(u)=\sum_a I(u_a) +2\sum_{a\ne b}\#(u_a \cap u_b).
\end{equation*}
\item
If $J$ is generic, then $I(u) \ge 0$. %  Moreover, $I(u)=0$ if and only if $u$ is a disjoint union of trivial strips.
\item
Let $A, A' \in H_2(E_m, \mathbf{y}_1,..\mathbf{y}_m,   \mathbf{y}_0)$  be relative homology classes  such that
$$A'-A= \sum_{i=1}^{k+1} c_i [{B}^z_i] + n[\Sigma],$$
where $z\in \partial \dot{D}_m$ and $B_z^i$ are closure of $\Sigma \setminus \underline{L}_z = \cup_{i=1}^{k+1} \mathring{B}_i^z$.  Then
\begin{equation*}
I(A')=I(A) +2\sum_{i=1}^{k+1} c_i +2n(k+1).
\end{equation*}
\end{itemize}
\end{theorem}

\begin{proof}
We prove the statements in the theorem one by one as follows.
\begin{itemize}
\item
Let $u$ be an irreducible HF curve. By the same argument as in  Lemma 4.5.9  of  \cite{VPK}, we have the following adjunction formula
\begin{equation} \label{eq36}
\begin{split}
c_1(u^*TE_m, (\tau, \partial_t)) &= c_1(du(TF), \partial_t) + c_1(N_u, J\tau)\\
&=\chi(F)-d + Q_{\tau}(u) -2\delta(u),
\end{split}
\end{equation}
where $N_u$ is the normal bundle of $u$ and $\partial_t$ is a trivialization of $T\dot{D}_m$ such that it agrees with $\partial_t$ over the ends. On the other hand,  we have $$c_1(u^*TE_m, (\tau, \partial_t))  = c_1(u^*T\Sigma, \tau) + c_1(u^*T\dot{D}_m,  \partial_t)  =c_1(u^*T\Sigma, \tau).$$
Combine the above two equations; then we obtain the ECH equality $I(u) =\operatorname{ind} u + 2\delta(u)$.

\item
To prove the second statement,  without loss of generality, assume that $u=u_0 \cup u_1$, where $u_0$ and $u_1$ are irreducible HF curves.  Let $d$, $d_0$ and $d_1$ denote the degree of $u$, $u_0$ and $u_1$ respectively. Then $d=d_0+d_1$.
By definition, the Chern number and  Maslov index are additive, and the relative self-intersection number is quadratic  in the sense that
\begin{equation} \label{eq37}
 Q_{\tau}(u) =Q_{\tau}(u_0) +Q_{\tau}(u_1) + 2\#(u_0 \cap u_1).
\end{equation}
Therefore, we have
\begin{equation*}
\begin{split}
I(u) &=  c_1(T\Sigma \vert_u, \tau) +Q_{\tau} (u) +\mu_{\tau}(u) + d(1-m)\\
&= c_1(T\Sigma \vert_{u_0}, \tau) + c_1(T\Sigma \vert_{u_1}, \tau)+ \mu_{\tau}(u_0)  +\mu_{\tau}(u_1) \\
 &+Q_{\tau} (u_0) +Q_{\tau} (u_1) + 2\#(u_0 \cap u_1) + d_0(1-m) +d_1(1-m) \\
&= I(u_0) +  I(u_1) + 2\#(u_0 \cap u_1).
\end{split}
\end{equation*}
\item
Let $u=\cup_a u_a$ be an HF curve, where each $u_a$ is irreducible.  Since $J$ is generic, $\operatorname{ind} u_a \ge 0$. By the first bullet, we have $I(u_a) \ge \operatorname{ind} u_a  \ge 0$. By intersection positivity of holomorphic curves, $\#(u_a \cap u_b) \ge 0$ for $a\ne b$.  Therefore, $I(u) \ge 0$ follows from the second bullet.

%If $I(u)=0$, then the first and  second bullets imply that  $\{u_a\}$ are pairwise disjoint and $I(u_a)=\operatorname{ind} u_a=0$.  When $J$ is generic, the trival strips are the only HF curves with $\operatorname{ind} =0$

 \item
We now prove the final  statement of the theorem. Let $z, z'\in\partial \dot{D}_m$ be two points in same component of $\partial \dot{D}_m$. Note that   $$A+ \sum_{i=1}^{k+1} c_i [{B}^{z}_i] + n[\Sigma] = A + \sum_{i=1}^{k+1} c_i [{B}^{z'}_i] + n[\Sigma].$$
Hence, we may assume $z$ lies in the strip-like ends of $D_m$.  Let $u$ be a  $\tau$-representative of $A$. For $1\le i\le k$, by the argument in Lemma 2.4 of \cite{GHC2}, we have a $\tau$-representative $u'$ such that $[u']=A+[B_i^z]$.  Moreover, we have
\begin{equation} \label{eq38}
\begin{split}
&c_1(T\Sigma \vert_{u'}, \tau) = c_1(T\Sigma \vert_{u'}, \tau) +1, \mu_{\tau}(u') = \mu_{\tau}(u)\\
 &Q_{\tau}(u')= Q_{\tau}(u)+1,  \mbox{ and }\delta(u')=\delta(u).
\end{split}
\end{equation}
Therefore, $I(u') =I(u) +2$. Perform this construction $c_i$-times for each $1\le i \le k$.  Then we obtain  $I(A+ \sum_{i=1}^{k} c_i [{B}^z_i] )=I(A) + 2 \sum_{i=1}^{k} c_i. $ By definition,
\begin{equation} \label{eq48}
\begin{split}
I(A + n[\Sigma]) &= c_1(T\Sigma \vert_{A+n[\Sigma]}, \tau) +Q_{\tau} (A+n[\Sigma]) +\mu_{\tau}(A) + d(1-m)\\
&= c_1(T\Sigma \vert_{A}, \tau) +n c_1(T\Sigma \vert_{[\Sigma]}) + Q_{\tau} (A) + n^2[\Sigma]\cdot[\Sigma] \\
&+ 2n \#(A\cap \Sigma) +\mu_{\tau}(A) + d(1-m)\\
&= I(A) +n(\chi(\Sigma) + 2d)\\
&=I(A) + 2n(d-g+1) =I(A) +2n(k+1).
\end{split}
\end{equation}
Note that $[\Sigma] =\sum_{i=1}^{k+1}[B^z_i]$. Therefore, we have
\begin{equation} \label{eq50}
\begin{split}
I(A) +2c_{k+1}(k+1) &=  I(A + c_{k+1}[\Sigma])  =I(A + \sum_{i=1}^{k+1}c_{k+1}[B^z_i])\\
&=I(A + c_{k+1}[B^z_i]) +2 \sum_{i=1}^{k}c_{k+1} \\
&= I(A + c_{k+1}[B^z_i]) +2k c_{k+1}.
\end{split}
\end{equation}
This implies that  $ I(A + c_{k+1}[B^z_i]) =I(A)+ 2 c_{k+1}.$

\end{itemize}
\end{proof}

\begin{remark}
The first statement of Theorem \ref{thm3} is called ECH equality.    When $m=1$,  it agrees with  Theorem 4.5.13 in \cite{VPK}.  They are  an analogue of the ECH inequality   discovered by M. Hutchings  (Theorem 4.15 of \cite{H2}).

In contrast  to Theorem 4.15 of \cite{H2},  our result here is an equality rather an  inequality.   The reason is that the Reeb chords are simple in our setting.  Then the terms  on Malsov index are the same for ECH index and Fredholm index.  If one allow the Reeb chords to be multiply covered,  then we get an inequality (see Theorem7 of  \cite{CEWYZ}).
\end{remark}

%\paragraph{$J_0$ index}
We follow Hutchings's approach   to define the \textbf{$J_0$ index}. The construction of $J_0$ here more or less comes from the relative adjunction formula.  %topological index which measures the Euler characteristic of the holomorphic curves.
A similar concept  called $J_+$ index for the usual Heegarrd Floer homology can be found in \cite{KMHW}. Fix a relative homology class $A \in  H_2(E_m, \textbf{y}_1,...,\textbf{y}_m,   \textbf{y}_0)$. The $J_0$ index is defined by
\begin{equation*}
\begin{split}
J_0(A):=-c_1(TE_m\vert_A, (\tau, \partial_t)) +Q_{\tau}(A).
\end{split}
\end{equation*}

The following lemma summarize the properties of $J_0$. These properties are parallel to those  of  ECH  index in Theorem \ref{thm3}.
\begin{lemma} \label{lem12}
The index $J_0$ satisfies the following properties:
\begin{enumerate}
\item
Let $u: \dot{F} \to E_m$ be an irreducible  HF curve with degree $d$, then
\begin{equation*}
J_0(u)=-\chi(F) + d+ 2\delta(u).
\end{equation*}

\item
Let $u=\cup_a u_a$ be an HF curve and each $u_a$ is irreducible. Then
\begin{equation*}
J_0(u) = \sum_a J_0(u_a) +2 \sum_{a \ne b}  \#(u_a  \cap u_b).
\end{equation*}

\item
If a class $A$ supports an HF curve, then $J_0(A) \ge 0$.
\item
Let $A, A' \in H_2(E_m, \mathbf{y}_1,...,\mathbf{y}_m,   \mathbf{y}_0)$. Suppose that $A'-A=n[\Sigma]+\sum_{i=1}^{k+1} c_i[B^z_i]$. % where $\cup_{i=1}^{k+1}B^z_i$ are the  completion of  $\mathcal{L}_m \cap \pi_m^{-1}(z)$ for $z \in \partial D_m$.
 Then
\begin{equation*}
J_0(A')=J_0(A) + 2c_{k+1}(d+g-1)+ 2n(d+g-1).
\end{equation*}
\end{enumerate}
\end{lemma}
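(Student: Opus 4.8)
The plan is to deduce all four statements from the relative adjunction identity already established in the proof of Theorem \ref{thm3}, combined with the additivity of the relative Chern number and the cross-term (``index ambiguity'') behaviour of the relative self-intersection number $Q_\tau$, following Hutchings' treatment of the ECH and $J_0$ indices \cite{H1,H2} and its Lagrangian analogue in \cite{VPK}.

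For (1), recall from the proof of Theorem \ref{thm3} that an irreducible HF curve $u: F\to E_m$ satisfies
\begin{equation*}
c_1\bigl(u^*TE_m,(\tau,\partial_t)\bigr)=\chi(F)-d+Q_\tau(u)-2\delta(u),\qquad c_1\bigl(u^*TE_m,(\tau,\partial_t)\bigr)=c_1(u^*T\Sigma,\tau),
\end{equation*}
so substituting into $J_0(u)=-c_1(TE_m\vert_u,(\tau,\partial_t))+Q_\tau(u)$ gives $J_0(u)=-\chi(F)+d+2\delta(u)$ at once; when $u$ is one component of a reducible curve the same computation applies with $d$ replaced by the covering degree of $\pi\circ u$. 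For (2), I would use that the relative Chern number is additive under unions of relative classes and that, for holomorphic curves, $Q_\tau(u_0\cup u_1)=Q_\tau(u_0)+Q_\tau(u_1)+2\,\#(u_0\cap u_1)$; inserting these into the definition of $J_0$ yields $J_0(u)=J_0(u_0)+J_0(u_1)+2\,\#(u_0\cap u_1)$.

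For (4), I would evaluate how $J_0$ changes on the two kinds of generators. Adding the fibre class $[\Sigma]=\{z\}\times\Sigma$: additivity gives $c_1(TE_m\vert_{A+[\Sigma]},(\tau,\partial_t))=c_1(TE_m\vert_A,(\tau,\partial_t))+\langle c_1(T\Sigma),[\Sigma]\rangle=c_1(TE_m\vert_A,(\tau,\partial_t))+(2-2g)$, while the cross-term formula with $[\Sigma]\cdot[\Sigma]=0$ and $A\cdot[\Sigma]=d$ gives $Q_\tau(A+[\Sigma])=Q_\tau(A)+2d$; hence $J_0(A+[\Sigma])=J_0(A)-(2-2g)+2d=J_0(A)+2(d+g-1)$, and iterating gives the factor $2m(d+g-1)$. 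Adding a disk class $[B^i_z]$, a Lagrangian-bounded disk inside a boundary fibre, leaves $J_0$ unchanged: the relative Chern number, the relative self-intersection number and the boundary intersection with $A$ cancel, exactly as for the corresponding disk classes in \cite{VPK,H1}, using that $[B^i_z]$ is capped by a Lagrangian disk and may be pushed into a fibre disjoint from the relevant part of $A$. Combining the two computations proves (4).

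Finally, for (3), decompose an HF curve $u$ in the class $A$ into irreducible components, $u=\bigl(\bigsqcup_\alpha v_\alpha\bigr)\sqcup\bigl(\bigsqcup_\beta w_\beta\bigr)$, where $\pi\circ v_\alpha$ is non-constant of degree $d_\alpha\ge 1$ (so $\sum_\alpha d_\alpha=d$) and each $w_\beta$ lies in a single fibre, with class $\ell_\beta[\Sigma]$ plus disk classes. Iterating (2) gives $J_0(A)=\sum_\alpha J_0(v_\alpha)+\sum_\beta J_0(w_\beta)+2\Delta$ with $\Delta\ge 0$ the sum of all pairwise intersection numbers. By (1), $J_0(v_\alpha)=-\chi(F_\alpha)+d_\alpha+2\delta(v_\alpha)\ge 0$, since $\pi\circ v_\alpha$ is a degree-$d_\alpha$ branched cover of the disk $D_m$, so $\chi(F_\alpha)\le d_\alpha$. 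A fibre component satisfies $J_0(w_\beta)\ge 2\ell_\beta(g-1)$, which can be negative only when $g=0$, but $w_\beta$ meets the multisection components in at least $\bigl(\sum_\alpha d_\alpha\bigr)\ell_\beta=d\ell_\beta$ points by positivity of intersections, so $J_0(w_\beta)+2d\ell_\beta\ge 2\ell_\beta(d+g-1)\ge 0$ because $d+g-1\ge 1$ by \ref{assumption3}; the remaining intersection numbers are non-negative, so $J_0(A)\ge 0$. I expect this last step to be the main obstacle: one must allow multiply covered or image-overlapping components, so the intersection-positivity bookkeeping has to be done after passing to the underlying somewhere-injective curves, and the sign conventions for $Q_\tau$ and for the boundary intersection numbers entering (4) must be pinned down carefully; once these are settled, (1), (2) and (4) are formal consequences of relative adjunction and the ambiguity formulas.
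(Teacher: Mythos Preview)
Your argument is correct and follows the paper's route: (1) from the relative adjunction identity in Theorem~\ref{thm3}, (2) from additivity of $c_1$ and the cross term in $Q_\tau$, (4) from the index--ambiguity computation, and (3) by combining (1) and (2) with intersection positivity. The paper's own proof is much terser: for (4) it cites the computation in \cite{GHC2} and simply notes that attaching boundary disks leaves the Euler characteristic unchanged, and for (3) it only says that since an HF curve has at least one boundary, $-\chi(F)+d\ge 0$. Your treatment of (3) is more careful in that you explicitly separate possible closed fibre components $w_\beta$ and balance their self-term $2\ell_\beta(g-1)$ against their intersection $2d\ell_\beta$ with the multisection part; the paper's one-line argument tacitly uses that every irreducible component of a $d$-multisection carries a boundary arc (hence has positive base degree), which is the intended reading of Definition~\ref{definition1}, but your version would survive even if closed fibre components were allowed in the class.
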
	
\begin{proof}
We  demonstrate the validity of these statements one by one.
\begin{itemize}
\item
By definition and the adjunction formula  (\ref{eq36}), we have
\begin{equation*}
\begin{split}
J_0(u)&=-c_1(TE_m\vert_u, (\tau, \partial_t)) +Q_{\tau}(u)\\
&=-\chi(F) +d -Q_{\tau}(u) +2\delta(u)  +Q_{\tau}(u)\\
&=-\chi(F) +d   +2\delta(u).
\end{split}
\end{equation*}
\item
To prove the second statement, without loss of generality, assume  that $u=u_0 \cup u_1$ has two irreducible components. Since   Chern number is additive and  the relative self-intersection is quadratic  (\ref{eq37}), we have
\begin{equation*}
\begin{split}
J_0(u) &=  -c_1(T\Sigma \vert_{u_0\cup u_1}, \tau) +Q_{\tau} (u_0 \cup u_1)\\
&= -c_1(T\Sigma \vert_{u_0}, \tau) +Q_{\tau} (u_0)  -  c_1(T\Sigma \vert_{u_1}, \tau)+ Q_{\tau} (u_1)  + 2\#(u_0 \cap u_1)\\
&= J_0(u_0) +  J_0(u_1) + 2\#(u_0 \cap u_1).
\end{split}
\end{equation*}
\item
If $u$ is irreducible, then by the first bullet, we have
\begin{equation*}
\begin{split}
J_0(u)=2g(F) -2 + \# \partial F +d +2\delta(u).
\end{split}
\end{equation*}
Since $u$ has at least one boundary, $d \ge 1$,  and $\delta(u) \ge 0$,   we have $$ \# \partial F +d +2\delta(u) \ge 2. $$ Then $J(u) \ge0$.  If $u =\cup_a u_a$  consists of several  irreducible components, then $J_0(u) \ge 0$ follows from the second bullet and intersection positivity of holomorphic curves.

\item
Let $u$ be a  $\tau$-representative  of $A$.  From  the proof of  Theorem \ref{thm3},  we know that there is  $\tau$-representative  $u'$ with relative homology class  $[u'] =A+ \sum_{i=1}^{k} c_i[B^z_i]$ for $1\le i \le k$.  By the computations (\ref{eq38}) and definition, we have $J_0(u') = J_0(u)$. In other words, $J_0(A+ \sum_{i=1}^{k} c_i[B^z_i])  =J_0(A)$.  A geometric interpretation of this formula is that adding a disk does not change the topology  of a $d$-multisection.  We now compute contribution of $n[\Sigma]$ to the $J_0$ index.  By definition,
\begin{equation} \label{eq49}
\begin{split}
J_0(A + n[\Sigma]) &= -c_1(TE_m \vert_{A+n[\Sigma]}, \tau) +Q_{\tau} (A+n[\Sigma])  \\
&=- c_1(TE_m \vert_{A}, \tau) -n c_1(TE_m \vert_{[\Sigma]}) + Q_{\tau} (A) + n^2[\Sigma]\cdot[\Sigma] + 2n\#(A\cap \Sigma)\\
&=J_0(A) -n\chi(\Sigma)  + 2nd \\
&= J_0(A) +2n(d+g-1).
\end{split}
\end{equation}
Finally, the contribution of $c_{k+1}[B_{k+1}^z]$ follows from the same trick as (\ref{eq50}). We have $J_0(A+ c_{k+1}[B^z_{k+1}])  = J_0(A) +2c_{k+1}(d+g-1). $

\end{itemize}
%The first item follows directly from the definition and the relative adjunction formula.   The second item also follows from defintion directly.  Since an HF curve has at least one boundary, hence $-\chi(F)+ d \ge 0$. By the first two  items,  the third item holds.

%Using the computations in Lemma  3.4 of \cite{GHC2}, we obtain the last item.  A quick way to see $J_0(A) =J_0(A +\sum_{i=1}^k c_i B^i_z)$ is that adding disks along boundaries  will not change the  Euler characteristic of the curves.
\end{proof}

\subsection{Cobordism maps}
With the above preliminaries, we now define the cobordisms  on QHF in Proposition \ref{lem1}. %cobordism maps on HF induced $(E, \Omega, \mathcal{L})$.
It  is similar to  the result in Section 4 of \cite{CHT}, where  V. Colin, K. Honda, and  Y. Tian establish  the $A_{\infty}$ structure on high dimensional Heegaard Floer homology. The definition of high dimensional Heegaard Floer homology is essentially the same as  QHF. The difference is that the symplectic manifolds and Lagrangian submanifolds are exact in the setting of \cite{CHT}, and hence no bubbles exist.  In our setting, we can rule out the bubbles by the index  computations in Lemma  \ref{lem15}, and the rest of argument is the same as those of \cite{CHT}.

%To begin with, let us  define the  cobordism maps on QHF induced  by $(E_m, \Omega_m, \mathcal{L}_m).$

First, note that every HF curve must be simple because its ends are asymptotic to   Reeb chords, and the Reeb chords are embedded. Then by the standard Sard-Smale argument, we have the following transversality result.
\begin{lemma} \label{lem18}
There exists a Baire subset $\mathcal{J}_{tame}^{reg}(E_m)$ of $\mathcal{J}_{tame}(E_m)$ such that  the moduli space $\mathcal{M}^J( \mathbf{y}_1, ...,\mathbf{y}_m; \mathbf{y}_0)$ is a manifold of expected dimension.
\end{lemma}
\begin{proof}
The proof follows from the same argument in  Lemma 9.12  of \cite{H1}.
\end{proof}
We call  a almost complex structure in $\mathcal{J}_{tame}^{reg}(E_m)$ \textbf{a generic almost complex structure.}

Combining the above transversality result and the properties of ECH index in Theorem \ref{thm3}, we obtain the following compactness results for HF curves with lower ECH index.
\begin{lemma}  \label{lem15}
Let $J \in \mathcal{J}^{reg}(E_m)$ be a generic almost complex structure.
\begin{itemize}
\item
If $I(A) =0$, then  $\mathcal{M}^J( \mathbf{y}_1, ...,\mathbf{y}_m; \mathbf{y}_0, A)$  is a set of finite points.
\item
If $I(A) =1$, then  $\mathcal{M}^J( \mathbf{y}_1, ...,\mathbf{y}_m; \mathbf{y}_0, A)$  is a $1$-dimensional manifold with boundary
\begin{equation*}
\begin{split}
\partial \mathcal{M}^J( \mathbf{y}_1, ...,\mathbf{y}_m; \mathbf{y}_0, A) &= \cup_{i=1}^m \cup_{A_{i1}\#A_{i2} =A}  \mathcal{M}^J(\mathbf{y}_i, \mathbf{y}'_i, A_{i1})  \times  \mathcal{M}^J( \mathbf{y}_1, ...,\mathbf{y}'_i,...,\mathbf{y}_m; \mathbf{y}_0, A_{i2})   \\
& \cup   \cup_{A_{1}\#A_{2} =A}  \mathcal{M}^J( \mathbf{y}_1,...,\mathbf{y}_m; \mathbf{y}'_0, A_{1}) \times   \mathcal{M}^J(\mathbf{y}'_0, \mathbf{y}_0, A_{2}).
\end{split}
\end{equation*}
\end{itemize}
\end{lemma}
\begin{proof}
\begin{itemize}
\item
The proof of the first statement is as follows.
By the ECH equality in Theorem \ref{thm3}, $I(u)=0$ implies that $\operatorname{ind} u =0$.
By Lemma \ref{lem18}, it suffices to show  that  $\mathcal{M}^J( \mathbf{y}_1, ...,\mathbf{y}_m; \mathbf{y}_0, A)$ is compact.

Consider  a sequence of HF curves   $\{u_n: \dot{F}_n \to E_m\}_{n=1}^{\infty}$  in $\mathcal{M}^J( \textbf{y}_1, ...,\textbf{y}_m; \textbf{y}_0, A)$.  By Lemma \ref{lem12},   we may assume that the topological  types of $\{\dot{F}_n\}_{n=1}^{\infty}$ are  fixed.  Applying the SFT compactness  in
11.3  of  \cite{FYHKE}  to $\{u_n\}_{n=1}^{\infty}$, $\{u_n\}_{n=1}^{\infty}$ converges to a broken holomorphic curve $\mathbf{u}$.  Let $u^0$ denote the level in $E_m$.  Then $u^0= u^0_{\star}  \cup_i v_i$, where $ u^0_{\star} $ is an HF curve and $v_i$ are bubbles   arising   from pinching an arc or an interior simple curve in $\dot{F}_n$. Since $\pi_m$ is complex linear,  by the open mapping theorem, $v_i$ lies in a fiber $\pi_m^{-1}(\tau_i)$, where $\tau_i  \in  \dot{D}_m$.  If $\tau_i \in \partial  \dot{D}_m$, then $v_i$ is a holomorphic curves in $\Sigma$ with boundary in $\underline{L}_{\tau_i} = \pi_m^{-1}(\tau_i) \cap \mathcal{L}$ and its  homology class is $[v_i]=\sum_{j=1}^{k+1}c_{ij} [B_j^{\tau_i}] \in H_2(\Sigma, \underline{L}_{\tau_i}, \mathbb{Z})$, where  $B_j^{\tau_i}$ is closure of $\Sigma \setminus \underline{L}_{\tau_i} =\cup_{j=1}^{k+1}\mathring{B}_j^{\tau_i} .$
 If $\tau_i $ lies in the interior of $\dot{D}_m$, then $v_i$ is closed and its homology class is $n_i[\Sigma]$. Thus, we have $$[u^0]= [u^0_{\star}] +\sum_{i} \left(\sum_{j=1}^{k+1}c_{ij} [B_j^{\tau_i}] +n_i[\Sigma]\right).$$
By  Theorem \ref{thm3}, we have
\begin{equation} \label{eq40}
I(u^0) =I(u^0_{\star}) +2\sum_{i} \left( \sum_{j=1}^{k+1}c_{ij} + n_i(k+1) \right) \ge 2\sum_{i} \left( \sum_{j=1}^{k+1}c_{ij} + n_i(k+1) \right).
\end{equation}
For each  $1\le  j \le k+1$, let $s_j$ be a section of $E_m$ satisfying the following properties:
 \begin{enumerate}
 \item
 $s_j$ intersects the fibers positive transversely;
 \item
 For $\tau \in \partial \dot{D}_m$, $s_j(\tau) \in \mathring{B}^{\tau}_j$.
 \item
 $s_j=\mathbb{R}_{\pm} \times [0,1] \times z_j$ over the strip-like ends, where  $z_j \in \mathring{B}_j$.
 \end{enumerate}

 Define  $\mathfrak{n}_{j}(v) : =\#(s_j  \cap  v)$, where $v$ is a fiber bubble.   Note that $\mathfrak{n}_{j}(v)$ only depends on the homology class $[v] \in H_2(\Sigma, \underline{L}_{\tau}, \mathbb{Z})$. By definition, $\mathfrak{n}_{j}(B^{\tau}_{l}) =\delta_{jl} $.  Hence, $\mathfrak{n}_{j}(v_i) =c_{ij}$. On the other hand, $s_j$ intersects  fiber positive transversely.  Also, the orientation of $v_i$ is the same as the fiber because it is holomorphic.  Hence, $s_j $ intersects the $v_i$ positive transversely. Then  $\mathfrak{n}_{j}(v_i) =c_{ij} \ge 0$.  The same argument also implies that $n_i \ge 0$.  The above discussion also holds for the HF curves in the strip levels (see Lemma 2.10 of \cite{GHC2}).

  In sum,  the  ECH index of each level is nonnegative.  Moreover, if a level contains a bubble, then its ECH index is at least two. Because the total ECH index $I(A)$ is zero, then the  ECH index of each level  must be zero and   no bubbles exist.  The HF curves in strip levels must be the trivial strips; otherwise, their index are at least one which  violates   the  condition that $I(A)=0$.  In sum, $\mathcal{M}^J( \textbf{y}_1, ...,\textbf{y}_m; \textbf{y}_0, A)$ is compact.

\item
The proof of the second bullet basically is the same as the first item. By the same argument, the bubbles of $\mathbf{u} \in \partial \mathcal{M}^J( \mathbf{y}_1, ...,\mathbf{y}_m; \mathbf{y}_0, A)  $ can be ruled out.  Since the ECH index of each level is nonnegative, $\mathbf{u}$ only consists of two levels, one has ECH index one and another one has zero ECH index. This leads to our second statement.
\end{itemize}
\end{proof}

Assume that $\underline{L}_{p_i} = \varphi_{H_i}(\underline{L})$. Define  reference chords  by $\delta_i(t) : = \varphi_{H_{i}}(\textbf{x}_{\bar{H}_{i}\#H_{i-1}}(t))$ for $1\le i \le m$ and  $\delta_0(t) =  \varphi_{H_{m}}(\textbf{x}_{\bar{H}_{m}\#H_{0}}(t)),$ where $\bar{H}_t(x) = -H_t(\varphi_H^t(x))$.
\begin{prop} \label{lem1}
Let $(\pi_m : E_m =\dot{D}_m \times \Sigma \to \dot{D}_m, \Omega_m) $ be the symplectic fiber bundle  with strip-like ends. Let $\mathcal{L}_m \subset \pi^{-1}(\partial D_m)$ be  Lagrangian submanifolds of $(E_m, \Omega_m)$ satisfying \ref{C0}, \ref{C1}, \ref{C2},  and \ref{C3}. Fix a reference relative  homology class $A_{ref} \in H_2(E_m,  \delta_1,.. ,\delta_m,  \delta_0)$ and a generic almost complex structure $J \in \mathcal{J}_{tame}(E_m)$. Then  $(\pi_m: E_m \to D_m, \Omega_m, \mathcal{L}_m) $ induces a homomorphism
\begin{equation*}
HF_{A_{ref}}(E_m, \Omega_m, \mathcal{L}_m)_J: \bigotimes_{i=1}^mHF(\Sigma, \underline{L}_{p_{i-1}}, \underline{L}_{p_{i}}, \mathbf{x}) \to HF(\Sigma, \underline{L}_{p_0},  \underline{L}_{p_{m}}, \mathbf{x})
\end{equation*}
satisfying the following properties:
\begin{enumerate}

\item
(Invariance) Suppose that  there exists a family of symplectic form $\{\Omega_{\tau}\}_{\tau \in [0,1]}$ and a family of $\Omega_{\tau}$-Lagrangians $\{\mathcal{L}\}_{\tau \in [0,1]} \subset \partial E_m$ satisfying   \ref{C0}, \ref{C1}, \ref{C2}, \ref{C3} and  $\{(\Omega_{\tau}, \mathcal{L}_{\tau})\}_{\tau \in [0,1]}$ is $\tau$-independent over the strip-like ends. Assume $\{J_{\tau} \}_{\tau \in [0,1]}$ is a general family of almost complex structures.   Then
\begin{equation*}
HF_{A_{ref}}(E_m, \Omega_0, \mathcal{L}_0)_{J_0} = HF_{A_{ref}}(E_m, \Omega_1, \mathcal{L}_1)_{J_1}.
\end{equation*}
In particular, the cobordism maps are independent of the choice of almost complex structures.

\item
(Composition rule)  Suppose that the negative  end of $(E_m, \Omega_m, \mathcal{L}_m)$ agrees  with the $j$-th positive end of  $(E_n, \Omega_n, \mathcal{L}_n)$. Then we have
\begin{equation*}
HF_{A_2}(E_n, \Omega_n, \mathcal{L}_n) \circ HF_{A_1}(E_m, \Omega_m, \mathcal{L}_m)= HF_{A_1\#A_2}(E_{m+n-1}, \Omega_{m+n-1}, \mathcal{L}_{m+n-1}),
\end{equation*}
where $(E_{m+n-1}, \Omega_{m+n-1}, \mathcal{L}_{m+n-1})$ is the composition of $(E_m, \Omega_m, \mathcal{L}_m)$ and $ (E_n, \Omega_n, \mathcal{L}_n)$ defined in (\ref{eq33}).
\end{enumerate}
\end{prop}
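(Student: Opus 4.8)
The plan is to follow the now-standard recipe for constructing Floer-theoretic cobordism maps from moduli spaces of holomorphic sections of a symplectic fibration, adapted to the cylindrical/multisection setting of this paper (the parallel source being Section 4 of \cite{CHT} for the PFH analogue and Proposition 3.9 of \cite{GHC2} for the continuous morphisms). First I would fix generic $J \in \mathcal{J}_{tame}(E_m)$ and, for Reeb chords $\mathbf{y}_1,\dots,\mathbf{y}_m$ at the positive ends and $\mathbf{y}_0$ at the negative end and a relative class $A \in H_2(E_m, \mathbf{y}_1,\dots,\mathbf{y}_m,\mathbf{y}_0)$, consider the moduli space $\mathcal{M}^J(\mathbf{y}_1,\dots,\mathbf{y}_m,\mathbf{y}_0,A)$ of HF curves in that class. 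The chain-level map is then defined by counting the zero-dimensional components: on a generator $\bigotimes_i (\mathbf{y}_i,[A_i])$ one sets
\begin{equation*}
\Phi\Big(\bigotimes_{i=1}^m (\mathbf{y}_i,[A_i])\Big) = \sum_{\mathbf{y}_0}\ \sum_{I(A)=0}\ \#\big(\mathcal{M}^J(\mathbf{y}_1,\dots,\mathbf{y}_m,\mathbf{y}_0,A)\big)\,\big(\mathbf{y}_0,\ [A_{ref}\#A\#A_1\#\cdots\#A_m]\big),
\end{equation*}
using Theorem \ref{thm3} to identify $I=0$ with embeddedness (so the count is of honest embedded curves, just as in ECH/PFH). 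The reference class $A_{ref}$ enters precisely to fix the capping bookkeeping and to make the map respect the $R$-module structures via the action formula for $\mathcal{B}$.

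The core analytic steps, carried out in order, are: (i) transversality — a generic $J \in \mathcal{J}_{tame}(E_m)$ achieves regularity of all $I\le 1$ moduli spaces of somewhere-injective curves, with multiply covered configurations excluded from the relevant low-index strata by the ECH index inequality of Theorem \ref{thm3} exactly as in Hutchings' obstruction bundle arguments; (ii) compactness — SFT/Gromov compactness for the $I=1$ moduli spaces, where the only breakings are into an $I=0$ HF curve in $E_m$ together with an $I=1$ cylinder/strip in one of the symplectization/strip-like ends, i.e. a differential $d_J$ on one of the tensor factors or on the target; here one uses the admissibility of $\omega_{E_m}$ (it equals $\omega$ over the ends) and Lemma \ref{lem12}(3) together with Lemma \ref{lem13} to rule out sphere/closed-component bubbling and to bound energies; (iii) the $I=1$ boundary count then gives $\partial_J \circ \Phi \pm \Phi \circ (\text{sum of }d_J) = 0$, so $\Phi$ is a chain map and descends to homology. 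Finiteness of the sums (so that $\Phi$ lands in the completed complex) follows from the energy/action filtration: for each $C$ only finitely many output cappings have action above $C$, because $\int_A \omega_{E_m}$ is bounded below on nonempty moduli spaces.

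The two named properties are then formal consequences. For (Invariance), one runs the usual continuation argument: given the family $\{(\Omega_\tau,\mathcal{L}_\tau,J_\tau)\}$ that is constant over the strip-like ends, the parametrized moduli space $\bigcup_\tau \mathcal{M}^{J_\tau}$ over $[0,1]$ of $I=-1$ curves is a $1$-manifold whose boundary produces a chain homotopy between $\Phi_0$ and $\Phi_1$; constancy over the ends guarantees the asymptotics (hence the generator sets and the reference class) do not move. For (Composition rule), one uses the $R$-stretched composition (\ref{eq33}): stretching the neck $R \to \infty$ along the matched end degenerates HF curves on $(E_{m+n-1},\Omega_{m+n-1},\mathcal{L}_{m+n-1})$ into pairs of HF curves on $(E_m,\dots)$ and $(E_n,\dots)$ glued along $\mathbf{y}_0 = \mathbf{y}_j$, and a standard gluing theorem shows the count on the stretched object equals the composite count for $R$ large; since the cobordism maps are $R$-independent (part of the invariance statement, or absorbed into it), this gives the stated identity, with the reference classes composing as $A_1 \# A_2$.

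I expect the main obstacle to be step (ii), the compactness/gluing analysis: one must verify that no unexpected degenerations occur — in particular that components cannot escape into the fiber direction to form closed holomorphic curves covering $\Sigma$ or disks on the fiberwise links, and that the multisection (``$d$ boundary components, one hitting each Lagrangian strand'') constraint is preserved under limits. This is exactly where the structural inputs of the paper are needed: the index positivity $J_0 \ge 0$ of Lemma \ref{lem12}(3), the fibration-compatibility of $J$ (which makes $\pi$ holomorphic and hence controls the horizontal behaviour), and the admissibility hypotheses \ref{C0}--\ref{C4} on $\mathcal{L}_m$. The remaining pieces — transversality, the chain-map identity, invariance, and the neck-stretching composition — are routine adaptations of \cite{CHT, GHC2, VPK} and I would treat them briskly, citing those references for the parts that carry over verbatim.
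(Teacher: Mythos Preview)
Your overall architecture matches the paper's: define the chain map by counting $I=0$ HF curves, show it is a chain map via $I=1$ compactness, and obtain invariance and the composition rule by the standard homotopy and neck-stretching arguments. Two points deserve correction, one minor and one more substantive.

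First, transversality is easier than you suggest. HF curves are asymptotic to Reeb chords, and by condition (1) of Definition \ref{definition1} each of the $d$ boundary components of the domain is sent to a \emph{distinct} connected component of $\mathcal{L}_m$. This forces every HF curve to be simple; there is no need to invoke the ECH index inequality to exclude multiple covers. Generic $J$ then gives regularity directly.

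Second, and this is the real gap, your bubble-exclusion argument cites the wrong tool. Lemma~\ref{lem12}(3) and Lemma~\ref{lem13} concern the $J_0$ index, but what rules out bubbles here is the behaviour of the \emph{ECH index} $I$, not $J_0$. The paper's argument is: since the connected components of $\mathcal{L}_m$ are pairwise disjoint, any arc that pinches has both endpoints on the same Lagrangian component, and the open mapping theorem forces the resulting bubble to lie in a single fiber $\pi^{-1}(z)$. Its image is therefore one of the disks $B^i_z$, or a complement $\Sigma\setminus B^i_z$, or (for an interior node) a full fiber $\Sigma$. The index ambiguity computation (the generalization of Lemma~3.3 of \cite{GHC2} to this setting) then shows that attaching any such class increases $I$ by at least $2$ --- a disk contributes $2$ via its Maslov index, and a fiber $[\Sigma]$ contributes $2(k+1)$. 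This contradicts $I=0$ (and similarly $I=1$ for the chain-map and homotopy arguments). You correctly anticipated that step (ii) is where the admissibility hypotheses \ref{C0}--\ref{C4} enter, but the mechanism is the $I$-index jump for fiber classes, not $J_0$-positivity; Lemmas~\ref{lem12}(3) and~\ref{lem13} play no role in this proposition.
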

\begin{proof}
%Let $\mathcal{M}_0^{J}(y_{p_+^1},...y_{p_{+}^{k_+}},  y_{p_-^1},...y_{p_{-}^{k_-}})$
At the  chain level, define
\begin{equation*}
CF_{A_{ref}}(E_m, \Omega_m, \mathcal{L}_m)_J(( \textbf{y}_1,[A_1])\otimes ...(\textbf{y}_m, [A_m])) = \sum_{I(A)=0} \#\mathcal{M}^J( \textbf{y}_1, ...,\textbf{y}_m; \textbf{y}_0, A) (\textbf{y}_0, [A_0]).
\end{equation*}
Here $A_0$ is determined  by the relation $A_1\#..A_m\#A\#(-A_0) =A_{ref}$.  %If the bubbles, then the statement in the theorem can be derived  from the  standard compactness,  homotopy and neck-stretching argument.
By Lemma \ref{lem15} and gluing argument (see Appendix of \cite{RL1} for example), the above definition makes sense and $CF_{A_{ref}}(E_m, \Omega_m, \mathcal{L}_m)_J$ is a chain map.

The invariance and the composition rule follow from the standard homotopy and neck-stretching argument.  Again, the bubbles can be ruled  by the index reason as in Lemma \ref{lem15}.

 \end{proof}

\subsubsection{Reference relative  homology classes}  \label{section3.1}
Obviously, the cobordism maps depend on the choice of the reference relative homology class $A_{ref}$. For any two different reference homology classes, the cobordism maps defined by them are  differed from a shifting (\ref{eq28}).  To exclude this ambiguity,  we fix a reference relative homology class in the following way:

\begin{comment}
n the rest of the paper, we will take  a reference  relative homology class  that is represented by a $d$-multisection $u$ and $u$ satisfies the following conditions:
 \begin{enumerate} [label=\textbf{R.\arabic*}]
 \item  \label{R1}
 There exists an open subset $U \subset \Sigma$ such that $\pi_{\Sigma} (u) \subset  \Sigma -U$.
 \item \label{R2}
 $\pi_{\Sigma}(u)$  doesn't contain a disk $\varphi(B_i)$, where $1\le i\le k$  and $\varphi$ is a Hamiltonian symplecticmorphism.
 \end{enumerate}
 Note that this  choice of the class is unique. Also, if  two reference homology classes satisfy \ref{R1}, \ref{R2}, then   their composition still satisfies the conditions.
%Take $\mathcal{L}_m := \cup_{i=1}^{m+1} (\partial_i D \times \varphi_{H_i}(\underline{L}))$.  Take $\mu_m : =CF_{A_{ref}}(E_m, \Omega_m, \mathcal{L}_m)_J$. Then
\end{comment}

Let $\chi_+(s): \mathbb{R}_s  \to \mathbb{R} $ be a function such that $\chi_+=1$ when $s \le -R_0$ and $ \chi_+=0$ when $s \ge -1$. Define a diffeomorphism
\begin{equation*}
\begin{split}
F_+: &\mathbb{R}_- \times [0,1] \times \Sigma \to   \mathbb{R}_- \times [0,1] \times \Sigma\\
&(s, t, x) \to (s, t, \varphi_K \circ \varphi_{\chi_+(s)H} \circ (\varphi_{\chi_+(s)H}^{t})^{-1}(x)).
\end{split}
\end{equation*}
We view $F_+$ as a map on the end of $E_0$ by extending  $F_+$ to be $(z, x) \to (z, \varphi_K(x))$ over the rest of $E_0$.  Let $\mathcal{L}_+:= F_+(\partial \dot{D}_0 \times \underline{L}) \subset \partial E_0$ be a submanifold. Note that  $\mathcal{L}_+ \vert_{s \le -R_0} = \mathbb{R}_{s \le -R_0} \times (\{0\}\times \varphi_K \circ \varphi_H(\underline{L}) \cup \{1\} \times \varphi_K(\underline{L}) )$.
The surface $F_+(\dot{D}_0 \times \{\mathbf{x}\})$  represent a relative homology class  $A^+ \in H_2(E_0, \emptyset, \varphi_K (\mathbf{x}_{\bar{K} \# (K\#H)}))$.

For any Hamiltonian functions $H_1, H_2$,  we  find a suitable $H$ such that $H_1=H\#K$ and $H_2= K$.  So the above construction gives us a class $A_{H_1, H_2}^+ \in H_2(E_0, \emptyset,  \varphi_{H_2}(\textbf{x}_{\bar{H}_{2}\#H_{1}}) )$.

Let $\dot{D}^0$ be a disk with a strip-like positive end.  Define $E^0 := \dot{D}^0  \times \Sigma$.  By a  similar construction, we have a fiber-preserving diffeomorphism $F_-: E^0 \to E^0$.  Let $\mathcal{L}_- := F_-(\partial  \dot{D}^0 \times \underline{L})$. Then $A^-_{H_1, H_2} :=[F_- ( \dot{D}^0 \times \{\mathbf{x}\})]$ gives a relative homology class  in $H_2(E^0,  \varphi_{H_2}(\textbf{x}_{\bar{H}_{2}\#H_{1}}), \emptyset)$.

Using $A^{\pm}_{H_1, H_2} $, we determine a unique reference homology class  $A_{ref} \in H_2(E_m,  \delta_1,.. ,\delta_m,  \delta_0)$ as follows:  For i-th positive end of  $(E_m, \mathcal{L}_m)$, we  glue it with  $(E_0, \mathcal{L}_+)$ as in (\ref{eq33}), where  $ \mathcal{L}_+$ is determined by $H_{i-1}, H_i$.  Similarly, we glue the negative end of    $(E_m, \mathcal{L}_m)$ with $(E^0, \mathcal{L}_-)$.  Then this gives us a pair $(\overline{E} = \overline{D} \times \Sigma, \overline{\mathcal{L}})$, where $\overline{D}$ is a closed disk without puncture.   Note that $H_2(\overline{E}, \overline{\mathcal{L}}, \mathbb{Z}) \cong H_2(\overline{E}, \partial \overline{D} \times \underline{L}, \mathbb{Z}) $.  Under this identification, we have a canonical class $A_{can} =[\overline{D} \times \{\mathbf{x}\}] \in H_2(\overline{E}, \overline{\mathcal{L}}, \mathbb{Z})$.  We pick $A_{ref} \in H_2(E_m,  \delta_1,.. ,\delta_m,  \delta_0)$ to be  a  unique class such that
\begin{equation*}
A^{-}_{H_0, H_m} \# A_{ref}\#_{i=1}^m A^+_{H_{i-1}, H_i} =A_{can}.
\end{equation*}

  %Define an admissible $\omega_{E_0}= \omega + d(\chi(s) H_t \wedge dt)$ on $\mathbb{R} \times [0,1] \times \Sigma $.  Because $F=id$ and $\omega_{E_0} =\omega$ when $s \ge -1$, we can extend $F$ and $\omega_L$ to be $id$ and $\omega$ respectively on the rest of $E_0$.

%Note that $(F^{-1})^*\omega_{E_0} =\omega$ when $s \le -R_0$.  Let $\mathcal{L} = \partial D_0 \times \varphi_H(\underline{L})$.   Note that $\mathcal{L}$ is an $\omega_L$-Lagrangian.  Also, $F(\mathcal{E})$ is an $(F^{-1})^*\omega_L$-Lagrangian such that $F(\mathcal{L}) \vert_{s \le -R_0} = \mathbb{R}_{s \le -R_0} \times (\{0\}\times \varphi_H(\underline{L}) \cup \{1\} \times \underline{L} )$.    Of course, we can replace the link by $\varphi_K(\underline{L})$ for any other .

\subsubsection{Continuous morphisms}
In this subsection, we recall the continuous morphisms defined in Proposition 2.14  of \cite{GHC2}.   It is a special case of Proposition \ref{lem1}.

In the case that $m=1$, we  identify $\pi_1: E_1 \to D_1$ with  $ \pi: \mathbb{R}_s \times [0,1]_t  \times \Sigma   \to \mathbb{R}_s \times [0,1]_t$.  Given two pairs of symplecticmorphisms $(\varphi_{H_1}, \varphi_{K_1})$ and  $(\varphi_{H_2}, \varphi_{K_2})$,  we can use the same argument  in Lemma 6.1.1 of \cite{CHT}  to construct a pair $(\Omega_1, \mathcal{L}_1)$ such that
\begin{enumerate}
\item

$\Omega_1$ is a symplectic form such that $\Omega_1 \vert_{|s| \ge R_0}= \omega + ds \wedge dt$;
 \item
 $\mathcal{L}_1 \subset \mathbb{R} \times \{0, 1\} \times \Sigma $ are two  $d$ disjoint union of  $\Omega_1$-Lagrangian submanifolds;
\item
$\mathcal{L}_1 \vert_{s\ge R_0} =\left( \mathbb{R}_{s \ge R_0}  \times \{0\} \times \varphi_{H_1}(\underline{L})  \right) \cup (\mathbb{R}_{s \ge R_0}  \times \{1\} \times \varphi_{K_1}(\underline{L}))  $;

\item
$\mathcal{L}_1 \vert_{s \le -R_0} =\left( \mathbb{R}_{s \le -R_0}  \times \{0\} \times \varphi_{H_2}(\underline{L})  \right) \cup (\mathbb{R}_{s \le -R_0}  \times \{1\} \times \varphi_{K_2}(\underline{L}))  $.
 \end{enumerate}
We call the above triple $(E_1, \Omega_1, \mathcal{L}_1)$ a \textbf{Lagrangian cobordism} from $(\varphi_{H_1}(\underline{L}),  \varphi_{K_1}(\underline{L}))$ to $ (\varphi_{H_2}(\underline{L}),  \varphi_{K_2}(\underline{L}))$.

Recall that the reference class $A_{ref}$ is the unique class defined in Section \ref{section3.1}.  By the invariance property  in Proposition \ref{lem1}, the cobordism map $HF_{A_{ref}}(E_1, \Omega_1, \mathcal{L}_1)$ only depends on $\{(H_i, K_i)\}_{i=1,2}$. We call it a  \textbf{continuous morphism}, denoted by $\mathcal{I}^{H_1, H_2}_{K_1, K_2}$.
Proposition \ref{lem1} implies that
the  continuous morphisms satisfy
\begin{equation}\label{eq64}
\mathcal{I}^{H_2, H_3}_{K_2, K_3}\circ \mathcal{I}^{H_1, H_2}_{K_1, K_2}=\mathcal{I}^{H_1, H_3}_{K_1, K_3}, \mbox{ and } \mathcal{I}^{H, H}_{K, K} =\operatorname{Id}.
\end{equation}
Thus,  $\mathcal{I}^{H_1, H_2}_{K_1, K_2}$ is an isomorphism.

 The direct limit of   $HF(\Sigma, \varphi_H(\underline{L}), \varphi_K(\underline{L}),\mathbf{x})$ is denoted by $HF(\Sigma, \underline{L})$. % Since for difference base points, there is also a canonical isomorphism between them. Then we have an abstract Floer group   $HF(\Sigma, \underline{L})$ and a canonical isomorphism
Because  $HF(\Sigma, \varphi_H(\underline{L}), \varphi_K(\underline{L}),\mathbf{x})$    is independent of  $\mathbf{x}$, so is  $HF(\Sigma, \underline{L})$.  We  have a canonical isomorphism
 \begin{equation}
 j^{\mathbf{x}}_{H, K} : HF(\Sigma, \varphi_H(\underline{L}), \varphi_K(\underline{L}), \mathbf{x}) \to HF(\Sigma, \underline{L})
 \end{equation}
 % denote
 that is  induced by the direct limit.

Let $H$ be a Hamiltonian function.  We consider another homomorphism
 \begin{equation}
I_H: CF(\Sigma,  \varphi_K(\underline{L}), \underline{L}) \to CF(\Sigma,  \varphi_{H\#K}(\underline{L}),  \varphi_H(\underline{L}))
 \end{equation}
 defined   by mapping $(\mathbf{y}, [A]) $ to  $(\varphi_H(\mathbf{y}), [\varphi_H(A)])$. Obviously, it  induces  an isomorphism $(I_H)_*$  at the homological level. We call it the \textbf{naturality isomorphism}.    In the following lemma, we show that it  is a special case of continuous morphisms.

\begin{lemma} \label{lem9}
 The naturality isomorphisms satisfy the following diagram:
$$\begin{CD}
				HF(\Sigma,  \varphi_{K_1}(\underline{L} ),  \underline{L})  @> (I_{H_1})_*  >> HF(\Sigma,  \varphi_{H_1\#K_1}(\underline{L} ), \varphi_{H_1}(\underline{L}))  \\
				@VV\mathcal{I}^{K_1, K_2}_{0,0}  V @VV \mathcal{I}_{H_1, H_2}^{H_1\#K_1, H_2\#K_2}V\\
				HF(\Sigma,   \varphi_{K_2}(\underline{L} ), \underline{L})  @> (I_{H_2})_* >> HF(\Sigma,   \varphi_{H_2\#K_2}(\underline{L} ), \varphi_{H_2}(\underline{L})).
			\end{CD}$$
In particular, we have  $(I_{H_1})_*= \mathcal{I}_{  0, H_1}^{K_1, H_1\#K_1}$.
\end{lemma}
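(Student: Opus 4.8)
The plan is to prove the commutativity of the square by exhibiting both composites as cobordism maps associated to a single symplectic fibration with strip-like ends, and then invoking the invariance and composition-rule properties of Proposition \ref{lem1}. First I would unwind the definitions: the naturality map $I_H$ is induced by the fiber-preserving diffeomorphism $(z,x)\mapsto(z,\varphi_H(x))$ of $E_1$, which carries the Lagrangian data for the pair $(\varphi_K(\underline L),\underline L)$ to the Lagrangian data for $(\varphi_{H\#K}(\underline L),\varphi_H(\underline L))$ (using $\varphi_{H\#K}^t=\varphi_H^t\circ\varphi_K^t$), and carries each generator $(\mathbf y,[A])$ to $(\varphi_H(\mathbf y),[\varphi_H(A)])$. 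Because this diffeomorphism intertwines the respective (possibly perturbed) almost complex structures and symplectic forms, it induces a bijection of moduli spaces of HF curves preserving the ECH index; hence $I_H$ is a chain isomorphism, and in fact it realizes the cobordism map $HF_{A_{ref}}(E_1,\Omega_1,\mathcal L_1)$ of a Lagrangian cobordism whose ends are $(\varphi_K(\underline L),\underline L)$ at $s\to -\infty$ and $(\varphi_{H\#K}(\underline L),\varphi_H(\underline L))$ at $s\to +\infty$, with the interpolating data pushed forward by the time-dependent flow. This is precisely the content of the ``In particular'' clause once one checks the reference class matches $A^{H_1\#K_1,K_1}_{H_1,0}$.

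Next I would stack the four Lagrangian cobordisms appearing in the square. The left vertical arrow $\mathcal I^{K_1,K_2}_{0,0}$ is $HF_{A_{ref}}$ of a Lagrangian cobordism $C_{\mathrm L}$ from $(\varphi_{K_1}(\underline L),\underline L)$ to $(\varphi_{K_2}(\underline L),\underline L)$; the top arrow $(I_{H_1})_*$ is $HF_{A_{ref}}$ of $C_{\mathrm T}$ from $(\varphi_{K_1}(\underline L),\underline L)$ to $(\varphi_{H_1\#K_1}(\underline L),\varphi_{H_1}(\underline L))$; similarly for the bottom arrow $(I_{H_2})_*$ with cobordism $C_{\mathrm B}$, and the right arrow $\mathcal I^{H_1\#K_1,H_2\#K_2}_{H_1,H_2}$ with cobordism $C_{\mathrm R}$. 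By the composition rule (part 2 of Proposition \ref{lem1}), the downward-then-rightward composite equals $HF$ of the glued cobordism $C_{\mathrm R}\circ C_{\mathrm T}$ and the rightward-then-downward composite (on the other route) equals $HF$ of $C_{\mathrm B}\circ C_{\mathrm L}$, provided the reference classes are glued compatibly — this is guaranteed by the canonical choice of $A_{ref}$ in Section \ref{section3.1}, since the gluing of the $A^\pm$ pieces is associative and the canonical class $A_{can}$ on the closed disk is unique. So it remains to show $C_{\mathrm R}\circ C_{\mathrm T}$ and $C_{\mathrm B}\circ C_{\mathrm L}$ are homotopic through admissible fibration-Lagrangian pairs with fixed strip-like ends.

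For that homotopy I would use the invariance property (part 1 of Proposition \ref{lem1}): both composite cobordisms have the same incoming end $(\varphi_{K_1}(\underline L),\underline L)$ and the same outgoing end $(\varphi_{H_2\#K_2}(\underline L),\varphi_{H_2}(\underline L))$, and the interpolating symplectic form and Lagrangians can be connected by a path — concretely, both arise from composing the ``twist by a time-dependent Hamiltonian flow'' construction with the ``change of Hamiltonian pair'' construction, and these two operations commute up to the usual homotopy because $\varphi_{H\#K}^t=\varphi_H^t\circ\varphi_K^t$ makes the two orders of performing them differ only by reparametrization of the base. One checks along the homotopy that the links stay $\eta$-admissible and Hamiltonian isotopic (conditions \ref{C0}--\ref{C4}), which is where admissibility of the link and the fact that Hamiltonian images of admissible links are admissible get used; then bubbling is excluded by the index argument in the proof of Proposition \ref{lem1}, so the cobordism maps agree. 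The main obstacle is bookkeeping: tracking the reference relative homology classes through all the gluings and verifying that the two routes around the square assemble to the \emph{same} reference class $A_{can}$ on the closed disk, so that no Novikov shift (\ref{eq28}) is introduced; the geometric content is routine once that is pinned down. Finally, setting $K_1=K_2=0$, $H_1$ arbitrary, $H_2=0$ collapses the square and yields $(I_{H_1})_*=\mathcal I^{H_1\#K_1,K_1}_{H_1,0}$.
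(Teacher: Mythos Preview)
Your approach is correct in outline but takes a different route from the paper. You treat all four arrows as abstract cobordism maps and appeal to Proposition~\ref{lem1} (composition rule plus homotopy invariance) to show the two composite cobordisms are homotopic rel ends. The paper instead splits the square into two by inserting an intermediate row with $(\varphi_{H_1\#K_2}(\underline L),\varphi_{H_1}(\underline L))$, and proves each sub-square by an explicit diffeomorphism: for the top square it uses the constant push-forward $F_{H_1}(s,t,x)=(s,t,\varphi_{H_1}(x))$, which sends any Lagrangian cobordism for $(\varphi_{K_i}(\underline L),\underline L)$ bijectively onto one for $(\varphi_{H_1\#K_i}(\underline L),\varphi_{H_1}(\underline L))$ and hence identifies moduli spaces curve-by-curve; for the bottom square it uses an $s$-dependent push-forward $F_{\{H_s\}}(s,t,x)=(s,t,\varphi_{H_s}(x))$ with $H_s$ interpolating between $H_1$ and $H_2$, which carries the \emph{product} cobordism for $(\varphi_{K_2}(\underline L),\underline L)$ to a genuine Lagrangian cobordism realizing $\mathcal I^{H_1\#K_2,H_2\#K_2}_{H_1,H_2}$. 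This direct identification of moduli spaces bypasses exactly the bookkeeping of reference classes through gluings that you flag as the main obstacle, since the diffeomorphisms visibly send the canonical reference surfaces to one another. Your route works but requires constructing and verifying the homotopy of glued cobordisms and tracking $A_{can}$ through it; the paper's route is shorter precisely because it never glues.

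One small slip: your final specialization is off. To recover $(I_{H_1})_*=\mathcal I^{H_1\#K_1,K_1}_{H_1,0}$ you should set $K_2=K_1$ and $H_2=0$ (so the left vertical and bottom arrows become identities), not $K_1=K_2=0$.
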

\begin{proof}
To prove the statement, we first split the diagram into two:
$$\begin{CD}
				HF(\Sigma,  \varphi_{K_1}(\underline{L}), \underline{L})  @> (I_{H_1})_* >> HF(\Sigma,   \varphi_{H_1\#K_1}(\underline{L} ), \varphi_{H_1}(\underline{L}))  \\
				@VV\mathcal{I}^{K_1, K_2}_{0,0} V @VV \mathcal{I}_{H_1, H_1}^{H_1\#K_1, H_1\#K_2}V\\
				HF(\Sigma,  \varphi_{K_2}(\underline{L} ),  \underline{L})  @> (I_{H_1})_* >> HF(\Sigma,  \varphi_{H_1\#K_2}(\underline{L} ), \varphi_{H_1}(\underline{L}))   \\
				@VV Id  V @VV \mathcal{I}_{H_1, H_2}^{H_1\#K_2, H_2\#K_2}V\\
				HF(\Sigma,   \varphi_{K_2}(\underline{L} ), \underline{L})  @> (I_{H_2})_* >> HF(\Sigma,   \varphi_{H_2\#K_2}(\underline{L} ), \varphi_{H_2}(\underline{L}))
			\end{CD}$$
To prove the first diagram, we define a diffeomorphism
\begin{equation*}
\begin{split}
F_{H_1}: &\mathbb{R} \times [0,1] \times \Sigma \to   \mathbb{R} \times [0,1] \times \Sigma\\
&(s, t, x) \to (s, t, \varphi_{H_1}(x))
\end{split}
\end{equation*}
%Note that if $u \in \mathcal{M}(x, y)$ is a $J$-holomorphic curve in $\mathbb{R} \times [0,1] \times \Sigma$ with Lagrangian boundaries $\mathbb{R} \times (\{0\} \times \varphi_H(\underline{L}) \cup \{1\} \times \underline{L})$. Then $\mathcal{I}_{H_1}(u)$ is a $J$-holomorphic curve in $\mathbb{R} \times [0,1] \times \Sigma$ with Lagrangian boundaries $\mathbb{R} \times (\{0\} \times \varphi_H(\underline{L}) \cup \{1\} \times \underline{L})$.
Let $(\mathbb{R} \times [0,1] \times \Sigma, \Omega_1, \mathcal{L})$ be a Lagrangian cobordism from $(\varphi_{K_1}(\underline{L}), \underline{L})$ to $(\varphi_{K_2}(\underline{L}), \underline{L})$.  Let $u \in \mathcal{M}^J(\textbf{y}_+, \textbf{y}_-)$ be  an HF curve in $(\mathbb{R} \times [0,1] \times \Sigma, \Omega_1)$ with Lagrangian boundaries $\mathcal{L}$. Then $F_{H_1}(u)$ is a $F_{H_1*}J$-holomorphic HF curve in $(\mathbb{R} \times [0,1] \times \Sigma, (F_{H_1}^{-1})^*\Omega_1)$ with Lagrangian boundaries  $F_{H_1}(\mathcal{L})$.  This gives a one-to-one correspondence between the curves in $(E_1, \Omega_1, \mathcal{L})$ and curves in $(E_1, (F_{H_1}^{-1})^*\Omega_1, F_{H_1}(\mathcal{L}))$.   Note that  $F_{H_1}(u)$ is a holomorphic curve contributing  to the cobordism map $CF_{A_{ref}}(E_1,  (F_{H_1}^{-1})^*\Omega_1,  F_{H_1}(\mathcal{L}))$, and it  induces $\mathcal{I}_{H_1, H_1}^{H_1\#K_1, H_1\#K_2}$. Hence, the first diagram is true.

To prove the second diagram, the idea is the same.  Let $H_s: [0,1] \times \Sigma \to \mathbb{R} $ be a family of  Hamiltonian functions such that $H_s= H_1$ for $s \ge R_0$ and   $H_s= H_2$ for $s \le -R_0$.  Define a diffeomorphism
\begin{equation*}
\begin{split}
F_{\{H_s\}}: &\mathbb{R} \times [0,1] \times \Sigma \to   \mathbb{R} \times [0,1] \times \Sigma\\
&(s, t, x) \to (s, t, \varphi_{H_s}(x))
\end{split}
\end{equation*}
Let $\mathcal{L} = \mathbb{R} \times (  ( \{0\} \times \varphi_K(\underline{L}) ) \cup ( \{1\} \times \underline{L}) ) $ be  Lagrangian submanifolds  in  $(\mathbb{R} \times [0,1] \times \Sigma, \Omega=\omega+ ds \wedge dt)$.  Then  $F_{\{H_s\}}(\mathcal{L})$ is a disjoint  union of   $(F_{\{H_s\}}^{-1})^*\Omega$-Lagrangian submanifolds  such that
  $$F_{\{H_s\}}(\mathcal{L})=\begin{cases}
  \mathbb{R}_{\ge R_0} \times ( (\{0\} \times \varphi_{H_1\#K}(\underline{L}) \cup \{1\} \times \varphi_{H_1}(\underline{L}) )& \text{when $s \ge R_0$}\\    \mathbb{R}_{\le -R_0} \times ( (\{0\} \times \varphi_{H_2\#K}(\underline{L}) \cup \{1\} \times \varphi_{H_2}(\underline{L}))& \text{ when $s\le -R_0$}
   \end{cases}$$
  Therefore, we define the continuous morphism  $\mathcal{I}_{H_1, H_2}^{H_1\#K_2, H_2\#K_2}$ by counting the holomorphic curves in   $(\mathbb{R} \times [0,1] \times \Sigma, (F_{\{H_s\}}^{-1})^*\Omega, \mathcal{I}_{\{H_s\}}(\mathcal{L}))$. Similar as the previous case, the map $F_{\{H_s\}}$
establishes a one-to-one correspondence between the curves in $(\mathbb{R} \times [0,1] \times \Sigma, \Omega, \mathcal{L})$ and curves in $(\mathbb{R} \times [0,1] \times \Sigma, (F_{\{H_s\}}^{-1})^*\Omega, \mathcal{I}_{\{H_s\}}(\mathcal{L}))$. This gives us the second diagram.

To see  $(I_{H_1})_*= \mathcal{I}_{ 0, H_1}^{K_1, H_1\#K_1}$, we just need to take $K_2=K_1$ and $H_2=0$ in the diagram.
\end{proof}

\subsubsection{Quantum product on HF}
In this subsection, we define the product structures on QHF by using the cobordism maps.

Consider  $E_2=\dot{D}_2 \times \Sigma$ with a symplectic form $\Omega_{E_2}= \omega + \omega_{D_2}$.  Take   $$\mathcal{L}_2= (\partial_1 \dot{D}_2 \times \varphi_{H_1}(\underline{L})) \cup (\partial_2 \dot{D}_2 \times \varphi_{H_2}(\underline{L}))  \cup (\partial_3 \dot{D}_2 \times \varphi_{H_3}(\underline{L})). $$
Define $\mu^{H_1, H_2, H_3}_2:=HF_{A_{ref}}(E_2, \Omega_2, \mathcal{L}_2)$, where $A_{ref}$ is the reference class in Section \ref{section3.1}.  Then  $\mu^{H_1,H_2, H_3}_2$ is a map
\begin{equation*}
\mu^{H_1,H_2, H_3}_2: HF(\Sigma, \varphi_{H_1}(\underline{L}), \varphi_{H_2}(\underline{L} )) \otimes HF(\Sigma, \varphi_{H_2}(\underline{L}), \varphi_{H_3}(\underline{L} )) \to HF(\Sigma, \varphi_{H_1}(\underline{L}), \varphi_{H_3}(\underline{L} )).
\end{equation*}
By Proposition \ref{lem1}, we have  the following diagram:
$$\begin{CD}
				HF(\Sigma, \varphi_{H_1}(\underline{L}), \varphi_{H_2}(\underline{L} )) \otimes HF(\Sigma, \varphi_{H_2}(\underline{L}), \varphi_{H_3}(\underline{L} )) @> \mu^{H_1, H_2, H_3}_2  >> HF(\Sigma, \varphi_{H_1}(\underline{L}), \varphi_{H_3}(\underline{L} ))  \\
				@VV\mathcal{I}^{H_1, K_1}_{H_2, K_2} \otimes \mathcal{I}^{H_2, K_2}_{H_3, K_3}V @VV \mathcal{I}^{H_1, K_1}_{H_3, K_3}V\\
				HF(\Sigma, \varphi_{K_1}(\underline{L}), \varphi_{K_2}(\underline{L} )) \otimes HF(\Sigma, \varphi_{K_2}(\underline{L}), \varphi_{K_3}(\underline{L} )) @> \mu^{K_1, K_2, K_3}_2 >> HF(\Sigma, \varphi_{K_1}(\underline{L}), \varphi_{K_3}(\underline{L} ))
			\end{CD}$$	
Therefore, $\mu^{H_1, H_2, H_3}_2$ descends to a bilinear map $\mu_2:  HF(\Sigma, \underline{L})\otimes HF(\Sigma, \underline{L}) \to HF(\Sigma, \underline{L})$. We call $\mu_2$ the \textbf{quantum product} on QHF.

	%The link $\Lambda$ is called  \textbf{ monotone} if $\omega(B_i) =\omega(B_i) =\frac{1}{d+1}$, $1 \le i \le d$. We assume that the link  $\Lambda$ is monotone  throughout.
 \subsubsection{Unit}
 In this subsection, we define the unit  of  the  quantum product $\mu_2$.

Consider the case that $m=0$.  Let $\mathcal{L}_0 \subset \partial E_0 = \partial \dot{D}_0 \times \Sigma $ be $d$ disjoint union of submanifolds such that $$\mathcal{L}_0 \vert_{s \le -R_0} = \mathbb{R} \vert_{s\le -R_0} \times (\{0\} \times \varphi_H(\underline{L}) \cup  \{1\} \times \varphi_K(\underline{L})). $$
Take a symplectic form $\Omega_0$ such that $\Omega_0  \vert_{s \le -R_0}=\omega + ds \wedge dt$  and $\mathcal{L}_0$ is  a disjoint union of  $\Omega_0$-Lagrangian submanifolds.   More precisely, the tuple $(E_0, \Omega_0, \mathcal{L}_0)$ can be constructed as follows: First, we take a Lagrangian cobordism $(E_1, \Omega_1, \mathcal{L}_1)$ from $(\underline{L}, \underline{L})$ to  $(\varphi_H(\underline{L}), \varphi_K(\underline{L}))$.  Then  take   $(E_0, \Omega_0, \mathcal{L}_0)$  to be  the composition of  $(E_1, \Omega_1, \mathcal{L}_1)$ and $(E_0, \omega+ \omega_{D_0}, \partial \dot{D}_0 \times \underline{L})$.

These data induce  a cobordism map
\begin{equation*}
HF_{A_{ref}}(E_0, \Omega_0, \mathcal{L}_0): R\to HF(\Sigma, \varphi_H(\underline{L}), \varphi_K(\underline{L})).
\end{equation*}
Again, $A_{ref}$ is the reference class  defined in Section \ref{section3.1}. Define $$e_{H, K} := HF_{A_{ref}}(E_0, \Omega_0,  \mathcal{L}_0)(1).$$  By Proposition \ref{lem1}, we have
\begin{equation*}
\begin{split}
\mathcal{I}^{H_1, H_2}_{K_1, K_2}(e_{H_1, K_1})
&=e_{H_2, K_2},\\
 \mu^{H_1, H_2, H_3}_2( e_{H_1, H_2} \otimes a ) =\mathcal{I}_{H_3, H_3}^{H_2, H_1}(a), & \mbox{ and }  \mu^{H_1, H_2, H_3}_2(b \otimes e_{H_2, H_3}) =\mathcal{I}_{H_2, H_3}^{H_1, H_1}(a),
\end{split}
\end{equation*}
where $a\in HF(\Sigma, \varphi_{H_2}(\underline{L}),   \varphi_{H_3}(\underline{L}))$ and $b\in HF(\Sigma, \varphi_{H_1}(\underline{L}),   \varphi_{H_2}(\underline{L}))$.   These identities  imply  that the following definition makes sense.

\begin{definition} \label{definition4}
The class $e_{H, K}$ descends to  a class $e_{\underline{L}} \in  HF(\Sigma, \underline{L} )$. We call it the  \textbf{unit}.  It is the  unit with respect to $\mu_2$  in the sense that   $\mu_2(e_{\underline{L}} \otimes a) =\mu_2(a \otimes e_{\underline{L}}) = a$.
\end{definition}

Similar to Lemma 5.6 of \cite{GHC2},  when $H$ is a suitable small Morse function,  the unit is represented by maximum points of $H$. We prove this as follows.

 %We now describe the unit when $H$ is a small Morse function.
Fix  perfect Morse functions $f_{L_i}: L_i \to \mathbb{R}$ with a maximum point $y_i^+$ and a minimum point $y_i^-$. Extend $\cup_i f_{L_i}$ to be a Morse  function  $f: \Sigma \to \mathbb{R}$ satisfying the following conditions:
	\begin{enumerate} [label=\textbf{M.\arabic*}]
		\item \label{M1}
		$(f, g_{\Sigma})$ satisfies the Morse-Smale condition, where $g_{\Sigma}$ is a fixed metric on $\Sigma$.
		\item \label{M2}
		$f\vert_{L_i}  $ has a unique maximum $y_i^+$ and a unique minimum $y_i^{-}$.
		\item  \label{M3}
		$\{y_i^+\}$ are the only maximum points  of $f$. Also, $f\le 0$ and $f(y^+_i)=0$	for $1\le i \le d$.
		\item \label{M4}
		$f=f_{L_i} -\frac{1}{2}y^2$ in a  neighborhood of $L_i$, where $y$ is the coordinate of the normal
direction.
	\end{enumerate}
Take $H =1/\kappa  f$, where $\kappa \gg 1$.  By Lemma 5.1  in \cite{GHC2},   the set of Reeb chords of $\varphi_H$ is \begin{equation} \label{eq14}
\{ \textbf{y} = [0,1] \times (y_1, ...,y_d) \  \vert \    y_i \in Crit(f\vert_{{L_i}}) \}
\end{equation}

For each  $\textbf{y}=[0,1] \times (y_1, ...,y_d) $, we construct a relative homology class $A_{\textbf{y}}$ as follows: Let $\eta = \cup_{i=1}^d\eta_i : \oplus_i[0,1]_{s} \to \underline{L}$ be a $d$-union of paths in $\underline{L}$, where  $\eta_i \subset L_i$ satisfies $\eta_i(0)=  y_i$ and $\eta_i(1) =x_i$.  Let  $u_i(s, t): = (s, t,   \varphi_H\circ (\varphi_{H}^t)^{-1} (\eta_i(s)))$. Then $u=\cup_{i=1}^d u_i$ is a $d$-multisection and it  gives arise a relative homology class  $A_{\textbf{y}} \in H_2(E, \mathbf{x}_H, \mathbf{y})$.

By  Lemma \ref{lem12},  it is easy to show that
\begin{equation} \label{eq3}
\begin{split}
%&I([A_{\mathbf{y}}]+\sum_{i=1}^{k+1} c_i [B_i] )  = n(\mathbf{y}) + \sum_{i=1}^{k+1} 2c_i\\
&\mathcal{A}_{H} (\textbf{y}, [A_{\textbf{y}}]+\sum_{i=1}^{k+1} c_i [B_i] )=  H(\textbf{y})  - \lambda \sum_{i=1}^k c_i - c_{k+1} \int_{B_{k+1}} \omega,\\
&J_0([A_{\textbf{y}}]+\sum_{i=1}^{k+1}c_i [B_i]) = 2c_{k+1}(g+d-1).
\end{split}
\end{equation}
%where $n(\textbf{y})$ is the number of $y_i^-$-components.

\begin{lemma} \label{lem2}
Take $H =1/\kappa f$, where $\kappa$ is a sufficient large constant. Let $\mathbf{y}_{\heartsuit} = [0,1 ] \times (y_1^+, ..., y_d^+)$.     Let $A_{ref}$ be the reference homology class defined in Section \ref{section3.1}.    Then    we have a suitable pair $(\Omega_{E_0}, \mathcal{L}_0)$ such that for a generic  $J \in \mathcal{J}_{comp}(E_0)$, we have
$$CF_{A_{ref}}(E_0,   \Omega_{E_0}, \mathcal{L}_0)_J(1) =( \mathbf{y}_{\heartsuit}, [A_{\mathbf{y}_{\heartsuit}}]). $$
In particular, $( \mathbf{y}_{\heartsuit}, [A_{\mathbf{y}_{\heartsuit}}])$ is a cycle  representing   the unit.
\end{lemma}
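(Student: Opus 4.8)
The plan is to pick the pair $(\Omega_{E_0},\mathcal L_0)$ as explicitly as possible, reduce the statement to a count of holomorphic $d$--multisections on the capping disk, bound the action of the output by an energy argument, and then compute the remaining count by an adiabatic ($\epsilon\to0$) reduction to Morse theory on $\underline L$. This is the HF analogue of the Morse/PSS computation of the unit in \cite{CHT} and of the PFH computation in Section 7 of \cite{GHC2}. Concretely, since $H=\epsilon f$ is autonomous I would choose a cut--off $\chi(s)$ equal to $1$ for $s\le -R_0$ and to $0$ for $s\ge -1$ and let $F\colon E_0\to E_0$ be the fiber--preserving diffeomorphism equal to $(s,t,x)\mapsto (s,t,(\varphi_H^{\,\chi(s)t})^{-1}(x))$ on the end and the identity elsewhere; put $\omega_{E_0}:=(F^{-1})^{*}\bigl(\omega+d(\chi(s)H\wedge dt)\bigr)$, $\Omega_{E_0}:=\omega_{E_0}+\omega_{D_0}$, and $\mathcal L_0:=F(\partial D_0\times\varphi_H(\underline L))$. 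Then $\omega_{E_0}=\omega$ with $\mathcal L_0$ in the required product form over the negative end, $\mathcal L_0\cap\pi^{-1}(z)$ is admissible for every $z$, and $F(D_0\times\{\mathbf x\})$ is a multisection whose relative homology class, after gluing in the capping pieces $A^{\pm}$, is the normalized reference class $A_{ref}$ of Section \ref{section3.1}. Fix a generic $J\in\mathcal J_{comp}(E_0)$; by the invariance part of Proposition \ref{lem1} none of these choices affect $HF_{A_{ref}}(E_0,\Omega_{E_0},\mathcal L_0)(1)$.

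Next I would prove the energy bound. If $u$ is an HF curve contributing to $CF_{A_{ref}}(E_0,\Omega_{E_0},\mathcal L_0)_J(1)$ with output $(\mathbf y,[A])$ and $I(A)=0$, then $0\le \int_{\dot F}u^{*}\omega_{E_0}$, and the same computation that yields (\ref{eq3}), carried out for the normalized $A_{ref}$, gives $\int_{\dot F}u^{*}\omega_{E_0}=-\mathcal A_H(\mathbf y,[A])$, so $\mathcal A_H(\mathbf y,[A])\le 0$. Writing $[A]=[A_{\mathbf y}]+\sum_i c_i[B_i]+m[\Sigma]$, formula (\ref{eq3}) reads $\mathcal A_H(\mathbf y,[A])=\epsilon\sum_i f(y_i)-\lambda\sum_i c_i-m$ with $\lambda>0$; since $f\le 0$ with $f(y_i^{+})=0$, and since the constraint $I(A)=0$ together with $J_0(A)\ge 0$ (Lemma \ref{lem12}(3),(4)) confines the admissible $(c_i,m)$, a short bookkeeping shows that the only generator with $\mathcal A_H\ge 0$ is $(\mathbf y_\heartsuit,[A_{\mathbf y_\heartsuit}])$, of action $0$. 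Hence $CF_{A_{ref}}(E_0,\Omega_{E_0},\mathcal L_0)_J(1)=n\,(\mathbf y_\heartsuit,[A_{\mathbf y_\heartsuit}])+(\text{terms of strictly negative action})$ for some $n\in\mathbb Z/2$.

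Finally I would run the adiabatic computation to pin $n=1$ and remove the lower terms. Letting $\epsilon\to0$, for the generators still in question the energy of any contributing curve is $-\mathcal A_H(\mathbf y,[A])=O(\epsilon)$, so Gromov compactness for HF curves (\cite{FYHKE}), with bubbling excluded exactly as in the proof of Proposition \ref{lem1} (a disk bubble costs $\ge 2$ and a fiber component costs $\ge 2(k+1)$ in the ECH index), forces any limit configuration to consist of $d$ constant sections over a critical tuple of $f|_{\underline L}$ together with half--gradient pieces on $\underline L$. The standard implicit function/gluing analysis of this Morse model (cf.\ \cite{CHT}, and the parallel PFH argument in Section 7 of \cite{GHC2}), using that each $f|_{L_i}$ is a \emph{perfect} Morse function with a unique maximum $y_i^{+}$, then shows that for small $\epsilon$ the $I=0$ moduli space with output $(\mathbf y,[A])$ is empty unless $(\mathbf y,[A])=(\mathbf y_\heartsuit,[A_{\mathbf y_\heartsuit}])$, in which case it is one transversally cut--out point. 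Thus $CF_{A_{ref}}(E_0,\Omega_{E_0},\mathcal L_0)_J(1)=(\mathbf y_\heartsuit,[A_{\mathbf y_\heartsuit}])$. Since $CF_{A_{ref}}(E_0,\Omega_{E_0},\mathcal L_0)_J$ is a chain map out of $(R,0)$, its value on $1$ is automatically $d_J$--closed, and by Definition \ref{definition4} its class is $e_{H,K}$, which descends to the unit $e$; hence $(\mathbf y_\heartsuit,[A_{\mathbf y_\heartsuit}])$ is a cycle representing $e$.

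The main obstacle is the third step: making the $\epsilon\to0$ adiabatic reduction rigorous — establishing transversality for compatible $J$ in this Morse regime, carrying out the gluing that identifies $I=0$ holomorphic multisections with the Morse/PSS data on $\underline L$, and checking that the resulting count is exactly $1$ and is concentrated at the maximal tuple $\mathbf y_\heartsuit$. A secondary technical point is the verification in the first step that the flow--straightened model genuinely realizes the normalized reference class $A_{ref}$ of Section \ref{section3.1}, so that the energy--action identity in the second step carries no extra additive constant.
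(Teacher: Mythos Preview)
Your setup in step~1 and the energy--action identity $\int u^{*}\omega_{E_0}=-\mathcal A_H(\mathbf y,[A])$ in step~2 match the paper exactly. The divergence is in how you finish, and here the paper's route is shorter and sidesteps precisely the obstacle you flag.

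The paper (which only sketches the argument and refers to Lemma~6.6 of \cite{GHC2}, following Seidel's nonnegative--curvature principle) exploits condition \ref{M3}: since $H=\epsilon f\le 0$ and $\chi$ is non--increasing, one checks that $T^{h}E_0=\mathrm{span}\{\partial_s-t\dot\chi X_H,\,\partial_t\}$ and hence $\omega_{E_0}\vert_{T^{h}E_0}=\dot\chi\,H\,\omega_{D_0}\ge 0$. For $J\in\mathcal J_{comp}(E_0)$ this gives the \emph{pointwise} decomposition
\[
u^{*}\omega_{E_0}=|d^{vert}u|^{2}+\dot\chi\,H\,|d^{hor}u|^{2}\ge 0,
\]
not just the integral bound. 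Combining the explicit ECH index formula in this model, $I(u)=n(\mathbf y)+2\sum_i c_i+2m(k{+}1)$ (with $n(\mathbf y)$ the number of $y_i^{-}$ components), with the energy identity and (\ref{eq3}) forces $n(\mathbf y)=0$, i.e.\ $\mathbf y=\mathbf y_{\heartsuit}$, $[A]=[A_{\mathbf y_{\heartsuit}}]$, and $\int u^{*}\omega_{E_0}=0$. Then the pointwise inequality yields $d^{vert}u=0$, so $u$ is the union of horizontal sections $D_0\times\{y_i^{+}\}$; there are no ``lower action terms'' at all. These horizontal sections are automatically regular (Lemma~2.11 of \cite{PS}), giving the count $=1$ directly.

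Two remarks on your write--up. First, your bookkeeping claim in step~2 is slightly off: among $I=0$ outputs, those with $n(\mathbf y)\ge 2$ actually have $\mathcal A_H>0$ for small $\epsilon$ (not $<0$), and it is the energy bound that excludes them; once you insert the index formula above, the lower--action tail disappears. Second, your adiabatic/Morse--PSS limit in step~3 would work in principle, but it carries exactly the transversality and gluing burden you identify; the paper avoids it entirely via the nonnegative--curvature observation, which is the point of choosing $f\le 0$ in \ref{M3}.
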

\begin{proof}
To begin with, let us construct a symplectic form $\Omega_{E_0}$ and   Lagrangian  $\mathcal{L}_0$ explicitly  over $E_0$ as follows.

 Define a 2-form $\omega_{0}: = \omega + d(\chi(s) H\wedge dt)$  and a diffeomorphism
\begin{equation*}
\begin{split}
\Phi: &\mathbb{R}_- \times [0,1] \times \Sigma \to   \mathbb{R}_- \times [0,1] \times \Sigma \\
&(s, t, x) \to (s, t, (\varphi_{{H}}^{\chi(s)t})^{-1}( x)),
\end{split}
\end{equation*}
where $\chi$ is a cutoff function such that $\chi(s)=0$ when $s \ge -1$ and $\chi(s)=1$ when $s \le -R_0$.
Because $\Phi=\operatorname{Id}$ when $s \ge -1$, we  extend it to be $\operatorname{Id}$ over the rest of $E_0$. Let $\varphi^t=\varphi_{H}^t$.  Note that $(\varphi^t)^* H =H$ because $H$ is  $t$-independent. By a direct computation, we have
\begin{equation*}
\begin{split}
&\Phi^{-1}_*(\partial_s)=\partial_s + t\dot{\chi}(s) X_{ H} \circ\varphi^{\chi(s)t}, \\
&\Phi^{-1}_*(\partial_t)=\partial_t + \chi(s)X_{ H} \circ\varphi^{\chi(s)t},\\
&  \Phi^{-1}_*(v) = \varphi^{\chi(s)t}_*(v).
\end{split}
\end{equation*}
Combining these ingredients, we get  a 2-form $$\omega_{E_0}: =(\Phi^{-1})^* \omega_0 = \omega + t\dot{\chi}(s) ds \wedge dH + \dot{\chi}(s) H ds\wedge dt  $$  satisfying  $\omega_{E_0}= \omega$ when $s \le -R_0$.  The symplectic form on $E_0$ is defined by $\Omega_{E_0}:= \omega_{E_0}+ \omega_{D_0}$
Also, $\mathcal{L} := \Phi(\partial \dot{D}_0 \times \varphi_{H}(\underline{L}))$ is a  $\Omega_{E_0}$-Lagrangian such that $$\mathcal{L} \vert_{s \le -R_0} = \mathbb{R}_{s\le -R_0} \times (\{0\} \times  \varphi_{H}(\underline{L} )\cup \{1\} \times  \underline{L} ).$$
The reference relative homology class $A_{ref}$ is represented by $\Phi(\dot{D}_0 \times \varphi_H(\mathbf{x}))$.

Take $J \in \mathcal{J}_{comp}(E_0)$. By the same argument in Lemma 5.8 of \cite{GHC2}, $CF_{A_{ref}}(E_0,   \Omega_{E_0}, \mathcal{L})_J(1)$ is still well defined. Let $ \mathcal{M}^J(\emptyset, \mathbf{y}, A)$ be the moduli space of HF curves  in  $E_0$ with Lagrangian boundary condition $\mathcal{L}$.   %Let $u \in \mathcal{M}^J(\emptyset, \mathbf{y},  A)$ be an HF curve contributing  to $CF_{A_{ref}}(E_0,   \Omega_{E_0}, \mathcal{L})_J(1)$.
Note that
  \begin{equation*}
\begin{split}
&\int_{A_{ref}} \omega_{E_0} =  \int_{D_0 \times \varphi_{H}(\textbf{x})} \omega+ d(\chi(s)  H dt) =- H(\mathbf{x}) \mbox{ and }  J_0(A_{ref}) =0.
\end{split}
\end{equation*}
Let $A_0  \in H_2(E_1, \mathbf{x}_H, \mathbf{y})$ be the class determined  by $A=A_{ref} \#A_0$. Then
\begin{equation} \label{eq39}
\begin{split}
&\int u^*\omega_{E_0} = \int |d^{vert} u|^2 + \omega_{E_0}(d^{hor}u, J^{hh} d^{hor}u) =-\mathcal{A}_{H}( \mathbf{y}, A_0),\\
& J_0(u) = J_0(A_{ref}) + J_0(A_0) = J_0(A_0).
\end{split}
\end{equation}
where $d^{vert} u \in T^{vert}{E_0}$ and $d^{hor} u \in T^{hor}{E_0}$ are respectively the vertical and horizontal components of $du$.
By definition,  $T^{hor}E_0=span\{\partial_s -t\dot{\chi} X_{H}, \partial_t\} $.  Therefore, $\omega_{E_0} \vert_{T^{hor}E_0} = \dot{\chi} H \omega_{D_0}$.  By (\ref{M3}), $\dot{\chi} H \ge 0$.  Hence,  $$\int u^*\omega_{E_0} = \int |d^{vert} u|^2 + \dot{\chi} H |d^{hor}u|^2 \ge 0. $$  By the third bullet of Lemma \ref{lem12}, we have $J_0(u) \ge 0$. Combining these  with   (\ref{eq39}),
\begin{equation}  \label{eq47}
\begin{split}
&\int u^*\omega_{E_0} + \eta  J_0(u)  =  -\mathcal{A}^{\eta}_{H}( \mathbf{y}, A_0) \ge 0.
\end{split}
\end{equation}
Write $A_0=A_{\mathbf{y}} + \sum_{i=1}^{k+1} c_i [B_i]$. By Theorem \ref{thm3} and (\ref{eq3}), it is not difficult to show that
\begin{equation} \label{eq51}
\begin{split}
&0=I(u) = n(\mathbf{y}) + \sum_{i=1}^{k+1} 2c_i\\
&\mathcal{A}^{\eta}_{H}( \mathbf{y}, [A_0]) =\mathcal{A}_{H}( \mathbf{y}, [A_0])- \eta  J_0(u)\\
=&  H (\mathbf{y}) - \sum_{i=1}^{k+1} c_i \lambda -c_{k+1}\left(\int_{B_{k+1}} \omega +2\eta(d+g-1)\right)\\
=&H(\mathbf{y}) - \lambda\sum_{i=1}^{k+1} c_i,
\end{split}
\end{equation}
where $n(\textbf{y})$ is the number of $y_i^-$-components.   By (\ref{eq47}) and (\ref{eq51}), we know that  $ \textbf{y} =\textbf{y}_{\heartsuit}$,  $\int u^*\omega_{{E_0}} =0$   and $d^{vert} u=0$.     Therefore, the  horizontal section $u=\dot{D}_0 \times \{\textbf{y}_{\heartsuit}\}$ is the only holomorphic curve contributing  to $CF_{A_{ref}}(E_0,   \Omega_{E_0}, \mathcal{L})_J(1)$.  %Moreover, by the same argument in Lemma 5.8 of \cite{GHC2}, the horizontal section $u$ is Fredholm regular.
\end{proof}

%The idea of the proof  is to use index and energy constraint to show  that  the union of horizontal sections  is the only $I=0$ holomorphic curve  contributed to the  cobordism map  $CF_{A_{ref}}(E_0,   \Omega_{E_0}, \mathcal{L}_0)_J(1)$. Since the proof  Lemma \ref{lem2} is the same as  Lemma 6.6 in \cite{GHC2}, we omit the details here.
 From  Lemma \ref{lem2}, we  know that the definition of unit in Definition \ref{definition4} agrees with the Definition 3.7  of \cite{GHC2}.

\section{Proof of Theorem \ref{thm4}} \label{section4}

 %The spectral invariants defined for $HF(\Sigma, \underline{L}, \varphi_H(\underline{L}))$ and $HF(Sym^d\Sigma, Sym^d\underline{L}, H)$ are denoted by $c_{\underline{L}}(H, \sigma)$ and  $c^{hf}_{\underline{L}}(H, \sigma)$  respectively.

%Let $e$ and $1$ denote the unit in $HF(\Sigma, \underline{L}, \varphi_H(\underline{L})) $ and  $HF(Sym^d\Sigma, Sym^d\underline{L}, H)$ respectively.
In this section, we study the properties  of the spectral invariants $c_{\underline{L},\eta}$.  These  properties   and their proof  are parallel to those  in Theorem 1.13 of  \cite{CHMSS}. %Using these properties, we prove the Theorem \ref{thm1}.

\subsection{The HF action spectrum}
Fix a base point $\mathbf{x}$.  Define the \textbf{action spectrum} to be
\begin{equation} \label{eq59}
Spec(H: \underline{L}, \mathbf{x}):=\{\mathcal{A}^{\eta}_H(\mathbf{y}, [A] ) \vert  A \in H_2(E, \mathbf{x}_H, \mathbf{y}) \}.
\end{equation}
For different base points $\mathbf{x}, \mathbf{x}'$, we have an isomorphism $$\Psi_{H, {\mathbf{x}, \mathbf{x}'}}:  H_2(E, \mathbf{x}_H, \mathbf{y}) \to  H_2(E, \mathbf{x}'_H, \mathbf{y})$$
preserving the action functional (see  (2.30)  of \cite{GHC2}). In particular, the  action spectrum is independent of the base point. So we omit $\mathbf{x}$ from the notation.

  A Hamiltonian function $H$ is called \textbf{mean-normalized} if  $\int_{\Sigma} H_t \omega =0$ for any $t$.
\begin{definition} \label{definition3}
Two
mean-normalized Hamiltonian functions $ H^0, H^1$ are said to be homotopic if there exists a smooth path of Hamiltonian functions   $\{H^s\}_{s\in [0,1]}$ connecting $H^0$ to $H^1$ such that   $H^s$ is normalized and $\varphi_{H^s}=\varphi_{H^0}=\varphi_{H^1}$ for all  $s$.
\end{definition}

The following lemma shows that the spectrum are invariant under homotopic.
\begin{lemma} \label{lem10}
If two mean-normalized Hamiltonian functions $H, K$ are homotopic, then we have
 $$Spec(H: \underline{L})=Spec(K: \underline{L}). $$
\end{lemma}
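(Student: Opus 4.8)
The plan is, for every Reeb chord $\mathbf{y}=[0,1]\times(y_1,\dots,y_d)$, to produce an isomorphism of affine spaces $H_2(E,\mathbf{x}_H,\mathbf{y})\to H_2(E,\mathbf{x}_K,\mathbf{y})$ that intertwines $\mathcal{A}^\eta_H$ with $\mathcal{A}^\eta_K$; this yields $Spec(H:\underline L)\subseteq Spec(K:\underline L)$, and applying it to the reversed homotopy gives equality. First I would record the structural consequences of Definition \ref{definition3}: since $\varphi_{H^s}=\varphi_{H^0}$ for all $s$, the symplectic manifold $(E,\Omega)$, the Lagrangian $\mathcal{L}=\mathbb{R}\times(\{0\}\times\varphi_{H^0}(\underline L)\cup\{1\}\times\underline L)$, and the whole set of Reeb chords are independent of $s$ — only the reference chord $\mathbf{x}_{H^s}(t)=\varphi_{H^s}\circ(\varphi^t_{H^s})^{-1}(\mathbf{x})$ moves, and it moves through chords with the $s$-independent endpoints $\varphi_{H^0}(\mathbf{x})$ and $\mathbf{x}$. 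The family $\{\mathbf{x}_{H^{s'}}\}_{0\le s'\le s}$ therefore sweeps a bigon-like strip $\mathcal{T}_s\subset[0,1]_t\times\Sigma$ whose only non-degenerate sides are $\mathbf{x}_{H^0}$ and $\mathbf{x}_{H^s}$; pushing $\mathcal{T}_s$ into $E$ and concatenating produces a continuous family of identifications $\Psi_s:H_2(E,\mathbf{x}_{H^0},\mathbf{y})\to H_2(E,\mathbf{x}_{H^s},\mathbf{y})$ commuting with the $H_2(E;\mathbb Z)$-action (in particular with adding $[\Sigma]$ and the disk classes $[B_i]$). Equivalently, one fixes a surface $A^0$ representing $[A]$, keeps its boundary arcs on $\mathcal{L}$ and its $\mathbf{y}$-end fixed, and drags only its $\mathbf{x}_{H^0}$-end along $\mathbf{x}_{H^s}$, producing a continuous family $[A^s]:=\Psi_s[A]$.

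The heart of the argument is then that $s\mapsto\mathcal{A}^\eta_{H^s}(\mathbf{y},[A^s])$ is constant. The $J_0$-term is locally constant: $J_0(A^s)=-c_1(TE|_{A^s},(\tau,\partial_t))+Q_\tau(A^s)$, where $\tau$ is prescribed on $\underline L$ and $\varphi_{H^0}(\underline L)$ — hence at all the relevant chord endpoints — and extending $\tau$ continuously over the moving chords makes $c_1$ and $Q_\tau$ continuous integer-valued functions of $s$, hence constant (cf. Lemma \ref{lem12}). For the remaining piece $-\int_{A^s}\omega+\int_0^1 H^s_t(\mathbf{x})\,dt$, differentiating in $s$ and using $A^s=A^0\#\mathcal{T}_s$ shows that the area term contributes $-\int_0^1\omega\!\left(\partial_s\mathbf{x}_{H^s},\partial_t\mathbf{x}_{H^s}\right)dt$ (only the $\Sigma$-directions survive, as $\omega$ is pulled back from $\Sigma$); substituting $\mathbf{x}_{H^s}(t)=\varphi_{H^0}\circ(\varphi^t_{H^s})^{-1}(\mathbf{x})$, using the defining relation $\omega(X_{H^s_t},\cdot)=d_\Sigma H^s_t$ together with $(\varphi^1_{H^s})^*\omega=\omega$, and integrating by parts in $t$, one checks that this cancels $\frac{d}{ds}\int_0^1 H^s_t(\mathbf{x})\,dt=\int_0^1\partial_s H^s_t(\mathbf{x})\,dt$. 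Integrating from $s=0$ to $s=1$ gives $\mathcal{A}^\eta_H(\mathbf{y},[A])=\mathcal{A}^\eta_K(\mathbf{y},[A^1])$, and since $\mathbf{y}$ and $[A]$ are arbitrary the two spectra coincide. One may instead repackage the computation through the fiber-preserving diffeomorphisms of (\ref{eq34}) and Section \ref{section3.1}: then $\mathcal{A}^\eta_{H^s}$ becomes the area for a twisted form $\widetilde\Omega_{H^s}$ that equals $\Omega$ near the ends, $\widetilde\Omega_{H^s}-\widetilde\Omega_{H^0}$ is exact, and invariance of the area is a Stokes-theorem statement.

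The step I expect to be delicate is precisely this cancellation — more specifically, controlling the boundary contributions along $\mathcal{L}$ that appear when comparing the two twisted forms (equivalently, showing the primitive of $\widetilde\Omega_{H^s}-\widetilde\Omega_{H^0}$ can be chosen with vanishing periods on $\mathcal{L}$). This is exactly where the mean-normalization hypothesis of Definition \ref{definition3} is used; everything else is bookkeeping with relative homology classes and trivializations. The argument runs in parallel with the proof that the Hamiltonian action spectrum on a closed surface depends only on the time-$1$ map for mean-normalized Hamiltonians, adapted to the Lagrangian boundary condition $\mathcal{L}$ as in \cite{CHMSS, LZ}.
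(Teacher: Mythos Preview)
Your proposal is correct and follows essentially the same route as the paper: concatenate with the swept strip $\{\mathbf{x}_{H^s}\}_{s\in[0,1]}$ to identify cappings, observe that $J_0$ is unchanged, and show the $\omega$-area of the swept strip exactly accounts for $\int_0^1 K_t(\mathbf{x})\,dt-\int_0^1 H_t(\mathbf{x})\,dt$. The paper carries out your ``delicate cancellation'' by introducing the $s$-direction Hamiltonian $F_t^s$ (with $X_{F_t^s}=\partial_s\varphi_{s,t}\circ\varphi_{s,t}^{-1}$, mean-normalized so $F_0^s=F_1^s=0$) and invoking the standard identity $\partial_s H_t^s-\partial_t F_t^s-\{F_t^s,H_t^s\}=0$ from \cite{Oh}, which is precisely the ``integration by parts in $t$'' you allude to.
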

\begin{proof}
Fix a base point $\mathbf{x}=(x_1,...,x_d) \in \underline{L}$.    Let $\{\varphi_{s, t}:=\varphi_{H^s}^t\}_{s\in[0,1], t\in [0,1]}$ be a homotopic  such that  $\varphi_{0,t}= \varphi^t_H$, $\varphi_{1,t}= \varphi^t_K$ and $\varphi_{H^s}^1 =\varphi_H=\varphi_K$ for all $s\in[0,1]$.   For a  fixed $t$, $\{\varphi_{s, t}\}_{s\in[0,1]} $ is also a family of Hamiltonian symplectomorphisms. Let $F^s_t$ be the Hamiltonian function  in   $s$-direction, i.e.,
\begin{equation*}
%X_{H_t^s} =\partial_t \varphi_{s,t} \circ \varphi_{s,t}^{-1} \  \mbox{ and } \
X_{F_t^s} =\partial_s \varphi_{s,t} \circ \varphi_{s,t}^{-1}.
\end{equation*}
$F_t^s$ is unique if we require that $F_t^s$ is mean-normalized.
Note that   $X_{F_t^s} =0 $ along $t=0,1$ because  $\varphi_{s, 0} =\operatorname{Id} $ and    $\varphi_{s, 1} =\varphi_H=\varphi_K =\varphi $.  By the mean-normalized condition, we have $F_0^s=F_1^s=0$.

Let $u_i(s,t) = (s, t, \varphi \circ  \varphi_{s,t}^{-1}(x_i))$. Note that $u_i(s, 0) \in \varphi(L_i)$ and $u_i(s, 1) \in L_i$ because   $\varphi_{s, 0} =\operatorname{Id} $ and    $\varphi_{s, 1} =\varphi $.  Then $u:=\cup_{i=1}^d u_i$ represents a class $A_0 \in H_2(E, \mathbf{x}_K, \mathbf{x}_H )$.  This induces an isomorphism    $$\Psi_{A_0}: CF(\Sigma, \varphi_H(\underline{L}), \underline{L}, \mathbf{x}) \to CF(\Sigma, \varphi_K(\underline{L}), \underline{L}, \mathbf{x})$$%H_2(E, \mathbf{x}_H, \mathbf{y}) \to  H_2(E, \mathbf{x}_K, \mathbf{y})$
by mapping $(\mathbf{y}, [A])$ to  $(\mathbf{y}, [A_0\#A])$.

Since $u$ is a disjoin union of strips, we have $J_0(A) =J_0(A_0\#A)$.
By a direct computation, we have
\begin{equation*}
\begin{split}
\int u_i^* \omega = & \int_0^1 \int_0^1 \omega(\partial_s \varphi_{s,t}^{-1}(x_i), \partial_t \varphi_{s,t}^{-1}(x_i)) ds \wedge dt\\
= & \int_0^1 \int_0^1 \omega(X_{F_t^s}(x_i), X_{H_t^s}(x_i)) ds \wedge dt =  \int_0^1 \int_0^1 \{F_t^s, H_t^s\}(x_i)ds \wedge dt
\end{split}
\end{equation*}
Because $H, K$ are mean-normalized,    $\partial_sH_t^s - \partial_t F_t^s - \{F_t^s, H_t^s\}= 0$ (see  (18.3.17) of \cite{Oh}).  Therefore,
\begin{equation*}
\begin{split}
\int u_i^* \omega &=  \int_0^1 \int_0^1 \left(\partial_sH_t^s(x_i) - \partial_t F_t^s(x_i) \right)ds \wedge dt\\
& = \int_0^1 H_t^1(x_i) dt -  \int_0^1 H_t^0(x_i) dt =  \int_0^1 K_t(x_i) dt -  \int_0^1 H_t(x_i) dt.
\end{split}
\end{equation*}
This implies that $\mathcal{A}^{\eta}_K( \mathbf{y}, [\Psi_{A_0}(A)]) = \mathcal{A}^{\eta}_H(\mathbf{y}, [A])$. In particular, $Spec(H: \underline{L})=Spec(K: \underline{L})$.
%we have $\int u^* \omega =  \int_0^1 K_t(\mathbf{y}) dt -  \int_0^1 H_t(\mathbf{y}) dt.$

\end{proof}

\subsection{Proof of Theorem \ref{thm4}}
We now give the proof of  the properties in Theorem \ref{thm4} one by one.
\begin{comment}
Let $f$ be the Morse function satisfying \ref{M1}, \ref{M2},  and \ref{M4}. Let $H_{\epsilon}=\epsilon f$, where $0<\epsilon \ll 1$ is a small number.
 Let $\textbf{y}$ be a Reeb chord of $\varphi_{H}$. It is corresponding to $d$ distinct critical points of $f \vert_{\underline{L}}$ (see (\ref{eq14})).   Let $A_{\textbf{y}}$ be the relative homology class defined in  Section \ref{section1}. Then we  identify the chain complex with
\begin{equation}\label{eq6}
CF(\Sigma,   \varphi_{H_{\epsilon}}(\underline{L}), \underline{L}) \cong \oplus_{\textbf{y}}  R \textbf{y}
\end{equation}
by sending $\sum_i a_iT^{b_i} \textbf{y} $ to $\sum_ia_i (\textbf{y}, A_{\textbf{y}} + b_i \mathcal{B})$.

% $(\textbf{y}, [A_{\textbf{y}}]+ m\mathcal{B} )$ to $T^{m}y$.

Given a element $r=\sum_ia_iT^{b_i} \in R$, define
%\begin{equation*}
$\nu(r):=\inf\{b_i \vert a_i \ne 0\}. $
%\end{equation*}
 Let $\textbf{c}=\sum_{\textbf{y}}  r_{\textbf{y}} \textbf{y} \in CF(\Sigma,   \varphi_{H_{\epsilon}}(\underline{L}), \underline{L})$ be a chain under the identification (\ref{eq6}). Define
$$\nu(\textbf{c}) =-\frac{1}{k+1} \min\{ \nu(r_{\textbf{y}})\}. $$
Let $a \in   HF(\Sigma,   \underline{L})$. Then we define
$$\nu(a) := \inf\{ \nu(\textbf{c}) \vert  (j^{\mathbf{x}}_{H_{\epsilon}})^{-1}(a)=[\textbf{c}]\}. $$
We will see in Lemma \ref{lem7} that $d_J=0$ for sufficiently small $\epsilon$.  Therefore, the number  is independent
%In the the following theorem, we will see  that $\nu(a)$ is independent of $\epsilon$ and the Morse function $f$.
\end{comment}

%We summarize the  properties of  $c_{\underline{L}} $ in the following theorem.

\begin{proof}
\begin{itemize}
\item
(Spectrality)
Suppose that $\varphi_H$ is nondegenerate.  Then  $Spec(H: \underline{L})$ is a discrete set over $\mathbb{R}$.  The  spectrality follows directly from the expression (\ref{eq20}).  For the case that $\varphi_H$ is degenerate, the statement can be deduced from the limit argument in \cite{LZ}.

\begin{comment}
\item
To prove the normalization property,  reintroduce the Morse function $f$  and  $H_{\epsilon} =\epsilon f$. Under the identification  (\ref{eq6}), we have
$$\mathcal{A}_H(T^m\mathbf{y}) = \epsilon f(\mathbf{y}) -  \frac{m}{k+1}. $$
Therefore, $c_{\underline{L}}(\epsilon f, a) =  O(d \epsilon) + \nu(( j^{\mathbf{x}}_{\epsilon f})^{-1}(a))$.  Since $Spec(\epsilon f: \underline{L})$ is a discrete set over $\mathbb{R}$, by the continuity and spectrality properties, the term $\nu(( j^{\mathbf{x}}_{\epsilon f})^{-1}(a))$ is independent of  $\epsilon$. The statement follows from by taking $\epsilon \to 0$.
\end{comment}

\item(Hofer-Lipschitz continuity)
To prove the Hofer-Lipschitz, we first need to construct a Lagrangian cobordism so that we could estimate the energy of holomorphic curves.

Let $\chi(s): \mathbb{R}_s  \to \mathbb{R} $ be a  non-decreasing   cut-off function such that
\begin{equation} \label{eq29}
\chi(s)=
\begin{cases}
0&\mbox{if $s \le -R_0$}\\
1.  &\mbox{if $s \ge R_0$.}
\end{cases}
\end{equation}
Let  $H^s := \chi(s)H_+ + (1- \chi(s)) H_-$.
Define a diffeomorphism
\begin{equation} \label{eq13}
\begin{split}
F: &\mathbb{R} \times [0,1] \times \Sigma \to   \mathbb{R} \times [0,1] \times \Sigma\\
&(s, t, x) \to (s, t,  \varphi_{H^s} \circ (\varphi_{H^{s}}^{t})^{-1}(x)).
\end{split}
\end{equation}
 Let
\begin{equation*}
\begin{split}
&\mathcal{L} :=F(\mathbb{R} \times \{0, 1\} \times \underline{L} ),\\
&\omega_{E}:=(F^{-1})^*(\omega + d(H^s_t dt)) \mbox{ and } \Omega_{E} =\omega_{E} +ds \wedge dt.
\end{split}
\end{equation*}
Then $\omega_E$ is admissible and  $\mathcal{L} \subset \mathbb{R} \times \{0, 1\} \times \Sigma$ is a disjoint  union of  $\Omega_{E}$-Lagrangian submanifolds  such that
\begin{equation*}
\begin{split}
&\mathcal{L} \vert_{s \ge R_0} = \mathbb{R}_{s \ge R_0} \times\left( ( \{0\} \times \varphi_{H_+}(\underline{L})) \cup   ( \{1\} \times  \underline{L})\right), \\
&\mathcal{L} \vert_{s \le  -R_0} = \mathbb{R}_{s \le R_0} \times\left( ( \{0\} \times \varphi_{H_-}(\underline{L})) \cup   ( \{1\} \times  \underline{L})\right).
\end{split}
\end{equation*}
Let $A_{ref} =F(\mathbb{R} \times [0,1] \times \{\mathbf{x}\}) \in H_2(E_1, \mathbf{x}_{H_+}, \mathbf{x}_{H_-})$.  Take  a generic  $J \in \mathcal{J}_{comp}(E_1)$.  Then we have a  cobordism map $HF_{A_{ref}}(E_1, \Omega_E, \mathcal{L})_J= \mathcal{I}^{H_+,H_-}_{0,0}$.

  Let $u  \in \mathcal{M}^J(\textbf{y}_+, \textbf{y}_-)$ be an HF curve in $(E_1, \Omega, \mathcal{L})$. The energy of $u$ satisfies
\begin{equation} \label{eq35}
\begin{split}
 \int u^*\omega_{E}
 &=\int_{F^{-1}(u)} \omega + d_{\Sigma}H^s \wedge dt +  \dot{\chi}(s)(H_+-H_-) ds \wedge dt \\
% &= \int_{F^{-1}(u)} (\omega + d_{\Sigma}H^s \wedge dt)(F^{-1}_* (d^vu), F^{-1}_* (J^{vv}d^vu)) + \dot{\chi}(s)(H_+-H_-) ds \wedge dt \\
%  &= \int_u \omega_{E}(d^vu, J^{vv}d^vu) +  \int_{F^{-1}(u)}  \dot{\chi}(s)(H_+-H_-) ds \wedge dt \\
 &\ge  \int_{F^{-1}(u)}  \dot{\chi}(s)(H_+-H_-) ds \wedge dt \\
 & \ge d \int_0^1 \min_{\Sigma}(H_+ - H_-) dt.
\end{split}
\end{equation}
The inequality in the second step ($\int_{F^{-1}(u)} \omega + d_{\Sigma}H^s \wedge dt  \ge 0$) follows the same argument in Lemma 3.8 of  \cite{CHS1}.

On the other hand,   we have
\begin{equation*}
\begin{split}
&\int_{A_{ref}} \omega_E  = \int_{A_+} \omega + \int u^* \omega_E -\int_{A_-} \omega\\
&J_0(A_{ref}) = J_0(A_+) + J_0(u) - J_0(A_-)
\end{split}
\end{equation*}
due to  the relation $A_+\# [u]\#(-A_-) =A_{ref}$.  Note that $$\int_{A_{ref}} \omega_E = \int_0^1 H_+(t, \mathbf{x})dt - \int_0^1 H_-(t, \mathbf{x})dt \mbox{ and } J_0(A_{ref})=0.$$
By Lemma \ref{lem12} and (\ref{eq35}), we have
\begin{equation} \label{eq55}
d \int_{0}^1 \min_{\Sigma} (H_+ -H_-)dt \le \int u^* \omega_E + \eta J_0(u)= \mathcal{A}^{\eta}_{H_+} (\textbf{y}_+, A_+) - \mathcal{A}^{\eta}_{H_-} (\textbf{y}_-, A_-).
\end{equation}
% By Lemma \ref{lem12} and (\ref{eq35}),   $ \mathcal{A}^{\eta}_{H_+} (\textbf{y}_+, A_+) - \mathcal{A}^{\eta}_{H_-} (\textbf{y}_-, A_-) \ge d \int_{0}^1 \min_{\Sigma} (H_+ -H_-)dt.  $.

%Let $a_{H\#K} =j_{H\#K}^{-1}(a)$ and $a_H =j_H^{-1}(a)$. Then $\mathcal{I}^{H\#K, H}_{0,0}(a_{H\#K} ) = a_H$.

Fix $a \ne 0 \in HF(\Sigma, \underline{L})$.  For any fixed $\delta$, take a cycle $\mathbf{c}_+ \in  CF(\Sigma,   \varphi_{H_+}(\underline{L}), (\underline{L}))$ representing $(j_{H_+}^{\mathbf{x}})^{-1}(a)$ and satisfying
\begin{equation*}
\mathcal{A}^{\eta}_{H}(\mathbf{c}_+) \le c_{\underline{L},\eta}(H_+, a) + \delta.
\end{equation*}
Let $\mathbf{c}_- =  \mathcal{I}^{H_+,H_-}_{0,0}(\mathbf{c_+})$.  Then it is a cycle representing $(j_{H_-}^{\mathbf{x}})^{-1}(a)$. Take a summand $(\textbf{y}_-, [A_-])$  of  $\mathbf{c}_-$ such that $\mathcal{A}^{\eta}_H(\textbf{y}_-, [A_-]) \ge \mathcal{A}^{\eta}_H(\textbf{y}'_-, [A'_-]) $ for any other summand $(\textbf{y}'_-, [A'_-])$.  Find a  summand   $(\textbf{y}_+, [A_+])$ of  $\mathbf{c}_+$ such that $ < \mathcal{I}^{H_+,H_-}_{0,0}(\textbf{y}_+, [A_+]), (\textbf{y}_-, [A_-])>=1$.  Then the  estimate (\ref{eq55}) implies that
\begin{equation*}
\begin{split}
 d \int_{0}^1 \min_{\Sigma} (H_+ - H_-)dt \le   c_{\underline{L},\eta}(H_+, a) -c_{\underline{L},\eta}(H_-, a) + \delta.
\end{split}
\end{equation*}
%This is equivalent to say $c_{\underline{L}}(H_1, a) -c_{\underline{L}}(H_2, a) \ge  d \int_{0}^1 \min_ {\Sigma}(H_1-H_2)dt $ for any $H_1, H_2$.
Taking  $\delta \to 0$, we obtain a half part of Hofer-Lipschitz property.  Interchange the positions of $H_+$ and $H_-$; then we obtain the  remainder part.

\item
(Homotopy invariance) We now show that the spectral invariants are invariance under homotopic.
Let  $H$ and $K$ be  mean-normalized Hamiltonian functions  such that they are homotopic. Then,  we have a family of Hamiltonian functions $\{H_t^s\}_{s\in[0,1]}$ with $H^0_t=H_t$ and  $H^1_t=K_t$.  By Lemma \ref{lem10},  we have $$ Spec(H: \underline{L})=Spec(H^s: \underline{L})=Spec(K: \underline{L}). $$
%Also, the spectrum is a discrete set because we assume  $\varphi_H=\varphi_K$ is non-degenerate.
On the other hand,  $c_{\underline{L},\eta}(H^s, a) $ is continuous with respect to $s$.  Moreover, $Spec(H: \underline{L})$ is a nowhere dense set of $\mathbb{R}$. Therefore,  $c_{\underline{L},\eta}(H^s, a) $ must be  a constant.

\item
(Shift property) Consider  a family of  functions $H^s= H + s c$, where $0\le s\le 1$. Since $\varphi_{H^s} =\varphi_H$ for all $s$,  the  chain complex $CF(\Sigma, \varphi_{H^s}(\underline{L}), \underline{L}, \mathbf{x})$ is independent of $s$.   Note that  $$\mathcal{A}^{\eta}_{H+sc}(\textbf{y}, A) =\mathcal{A}^{\eta}_{H}(\textbf{y}, A) + s\int_0^1 c(t)dt.$$ Therefore, %the spectral $Spec(H^s: \underline{L})$  is $Spec(H: \underline{L})$ translated by $s\int_0^1c(t)dt$.  As a result,
 $c_{\underline{L},\eta}(H^s, a) - s \int_0^1 c(t)dt \in Spec(H: \underline{L})$.
By  the Hofer-Lipschitz property,  $c_{\underline{L},\eta}(H^s, a) - s \int_0^1 c(t)dt$ is a constant. Taking $s =0$, we know that the constant is  $c_{\underline{L},\eta}(H, a)$.
%Then the shift property   follows directly from    Hofer-Lipschitz property.

\item (Lagrangian control property)
We now prove the Lagrangian control property.
Let $H_t$ be a Hamiltonian such that $H_{t} \vert_{L_i} =c_i(t)$.   Then $X_{H_t}$ is tangent to $L_i$ along $L_i$. Hence,  $\varphi_H(L_i) =L_i$. The Reeb chords are corresponding to $\mathbf{y} \in \underline{L}$. By  assumption \ref{assumption6}, we have
\begin{equation*}
\begin{split}
Spec(H: \underline{L})& = \{   m_0\lambda + m_1(1-k\lambda) + m_1 2\eta(d+g-1)+\sum_{i=1}^d \int_0^1 c_i(t) dt   \vert m_0, m_1 \in \mathbb{Z} \}\\
&= \{  m \lambda +\sum_{i=1}^d \int_0^1 c_i(t) dt   \vert m \in \mathbb{Z} \}.
\end{split}
\end{equation*}
Define a family of Hamiltonians functions $\{H^s :=sH\}_{s \in [0,1]}$.  By the spectrality, we have
$c_{\underline{L}}(H^s, a)  = m_0 \lambda  + \sum_{i=1}^d \int_0^1 sc_i(t) dt $. Here $m_0 \in \mathbb{Z}$ must be  a constant due  to the Hofer-Lipschitz continuity.  We know that $ m_0 \lambda  =  c_{\underline{L}}(0, a) $ by taking $s=0$.  Then the Lagrangian control property follows from taking $s=1$.

\item
(Triangle inequality)We now prove the triangle inequality.
First, we introduce   an operation  on Hamiltonian functions  called the \textbf{join}.  The join of $H$ and $K $ is defined by
\begin{equation*}
H_t \diamond K_t(x)=
\begin{cases}
2\rho'(2t)K_{\rho(2t)}(x)&\mbox{if $0\le t \le \frac{1}{2}$}\\
2\rho'(2t-1)H_{\rho(2t-1)}(x)  &\mbox{if $ \frac{1}{2} \le t \le 1$},
\end{cases}
\end{equation*}
where $\rho: [0,1] \to [0,1]$ is a fixed non-decreasing smooth function that  is equal to 0 near 0 and equal to 1 near 1.
Similar to  the composition, the time 1-flow of $H_t \diamond K_t$ is $\varphi_H \circ  \varphi_K$. We first prove the triangle inequality for  $H \diamond K$ instead of  $ H \#K$.

Let $a, b \in HF(\Sigma, \underline{L})$.  Take
\begin{equation*}
\begin{split}
&\Omega_2= \omega + \omega_{D_2} \\
&\mathcal{L}_2 = \left(\partial_1 \dot{D}_2 \times \varphi_{H} \circ \varphi_K (\underline{L}) \right)\cup \left(\partial_2 \dot{D}_2  \times \varphi_{H} (\underline{L}) \right)  \cup  \left(\partial_3 \dot{D}_2  \times \underline{L}\right).
\end{split}
\end{equation*}
These  induce the quantum  product  $$\mu_2: HF(\Sigma, \varphi_{H} \circ \varphi_K(\underline{L}), \varphi_{H}(\underline{L} )) \otimes HF(\Sigma, \varphi_{H}(\underline{L}), \underline{L} ) \to HF(\Sigma, \varphi_{H} \circ \varphi_{K}(\underline{L}), \underline{L} ). $$

Let us first consider the following special case: Suppose that there is a base point $\mathbf{x}=(x_1,...,x_d) \in \underline{L}$ such that
\begin{equation} \label{eq10}
\begin{split}
&d_{\Sigma}H_t(x_i)=d_{\Sigma}K_t(x_i) =0, \\
\mbox{ and }& \nabla^2H_t(x_i), \nabla^2K_t(x_i)  \mbox{ are non-degenerate. }
\end{split}
\end{equation}
for $1\le i \le d$.
 This assumption implies that  $\varphi_H^{t}(x_i) =x_i$, $\varphi_K^{t}(x_i) =x_i$ and $d_{\Sigma}(H_t\diamond K_t)(x_i)=0$.  In particular, $\mathbf{x}$ is a non-degenerate Reeb chord of $\varphi_H$, $\varphi_K$ and $\varphi_H \circ \varphi_K$.  Also, the reference chords become $\mathbf{x}_H= \varphi_H(\mathbf{x}_K)=\mathbf{x}_{H\diamond K}=\mathbf{x}$.

Take  $A_{ref} =[D_2 \times \{\mathbf{x}\}] \in H_2(\varphi_H(\mathbf{x}_K),  \mathbf{x}_H,\mathbf{x}_{H\diamond K} )$  be  the reference relative homology class. By definition,  we have
\begin{equation} \label{eq8}
\begin{split}
\int_{A_{ref}} \omega = \int_{D_2 \times\{ \mathbf{x}\}} \omega =0 \mbox{ and } J_0(A_{ref})=0.
\end{split}
\end{equation}

 Let  $u \in \mathcal{M}^J(\mathbf{y}_1, \mathbf{y}_2; \mathbf{y}_0, A)$ be an HF curve with $I=0$. Here the relative homology class $A$ satisfy $A_1\#A_2\#A\#(-A_0) = A_{ref}$.   Therefore,  the energy and $J_0$ index of $u$ is
\begin{equation} \label{eq7}
\begin{split}
 &\int u^*\omega = -\int_{A_1} \omega  -\int_{A_2} \omega +  \int_{A_0} \omega + \int_{A_{ref}} \omega\\
 &J_0(A_1) + J_0(A_2) + J_0(u) - J_0(A_0) =J_0(A_{ref}).
\end{split}
\end{equation}

\begin{comment}
By the assumptions \ref{R1}, \ref{R2}, $\pi_{\Sigma}(A_{ref})$ is contained in a region of $\Sigma$ where $\omega$ can be written as $\omega =d \beta_{\Sigma}$. By Stokes's theorem, we have
\begin{equation*}
\begin{split}
 \int_{A_{ref}} \omega =\int_{\partial A_{ref}} \beta_{\Sigma} = \int \textbf{x}_H(t)^* \beta_{\Sigma}  + \varphi_H(\textbf{x}_K(t))^* \beta_{\Sigma} -\textbf{x}_{H\#K}(t)^* \beta_{\Sigma}.
\end{split}
\end{equation*}
Note that the chord $\textbf{x}_H(t)$ is contained in $[0,1]_t \times \Sigma$. We  pull back the chord by using the  diffeomorphism (\ref{eq10}) restricting at a fixed $s$.
Then
\begin{equation} \label{eq8}
\begin{split}
\int \textbf{x}_H(t)^* \beta_{\Sigma}  = &\int_{[0,1] \times \varphi_H(\textbf{x})} \Psi^*_H\beta_{\Sigma} \\
 = &
\int_{[0,1] \times \varphi_H(\textbf{x})} (\beta_{\Sigma} +  H_t  dt) =\int_0^1H_t(\varphi_H(\textbf{x}))dt.
\end{split}
\end{equation}
The computation is similar for the other two terms. Because the almost complex structure $J \in \mathcal{J}_{comp}(E_2)$,  we have $\int u^* \omega \ge 0$. Note that $\varphi_H^{-1}(A_2) \in H_2(E_2, \textbf{x}_K(t), \varphi_H^{-1}(\textbf{y}_2))$ and $\int_{A_2} \omega = \int_{\varphi_H^{-1}(A_2)} \omega. $
\end{comment}

Take $J \in \mathcal{J}_{comp}(E_2)$.   Then $\int u^*\omega= \int |d^{vert} u|^2 \ge 0$. By Lemma \ref{lem12}, $J_0(u) \ge 0$. Combine these facts  with   (\ref{eq8}), (\ref{eq7}); then we have
\begin{equation}\label{eq15}
\begin{split}
\mathcal{A}^{\eta}_{H\diamond K}( \textbf{y}_0, [A_0]) \le  \mathcal{A}^{\eta}_{K}( \varphi_H^{-1}(\textbf{y}_1), [\varphi_H^{-1}(A_1)]) + \mathcal{A}^{\eta}_{H}( \textbf{y}_2, [A_2]).
\end{split}
\end{equation}

   Assume that $\mu_2(a\otimes b) \ne 0$. Let $\mathbf{c}_0\in CF(\Sigma,   \varphi_{H} \circ \varphi_{K}(\underline{L}), \underline{L})$,    $\textbf{c}_1\in CF(\Sigma,  \varphi_{H} \circ \varphi_{K}(\underline{L}), \varphi_H(\underline{L}))$, and $\textbf{c}_2\in CF(\Sigma,  \varphi_{H}(\underline{L}), \underline{L} )$  be cycles  represented  $j_{H\diamond K}^{-1}( \mu_2(a \otimes b)),$   $j_{H\diamond K, H}^{-1}(a)$, and $j_H^{-1}(b)$ respectively.   By Lemma \ref{lem9},  $ \varphi_H^{-1}(\textbf{c}_2 )$  is a cycle represented  $j_K^{-1}(a)$.    We choose $\textbf{c}_1, \textbf{c}_2$ such  that
\begin{equation*}
\begin{split}
&\mathcal{A}^{\eta}_{K}(\varphi_H^{-1}(\textbf{c}_1) ) \le c_{\underline{L},\eta}(K, a) + \delta,\\
&\mathcal{A}^{\eta}_{H}(\textbf{c}_2) \le c_{\underline{L},\eta}(H, b) + \delta.
\end{split}
\end{equation*}

Therefore, (\ref{eq15}) implies that  $\mathcal{A}^{\eta}_{H\diamond K}( \mathbf{c}_0) \le   \mathcal{A}^{\eta}_{K}( \varphi_H^{-1}( \mathbf{c}_1)) +\mathcal{A}^{\eta}_{H}(  \mathbf{c}_2) .  $ Take $\delta \to 0$. We have
\begin{equation*}
\begin{split}
 c_{\underline{L},\eta}(H\diamond K, \mu_2(a \otimes b))  \le  c_{\underline{L},\eta}(K, a) + c_{\underline{L},\eta}(H, b).
\end{split}
\end{equation*}

For general Hamiltonians $H_t, K_t$, we construct approximations  $H_t^{\delta}$, $K_t^{\delta}$ satisfying the assumptions (\ref{eq10}) as follows.

Fix local coordinates $(x, y)$ around $x_i$.  Then we can write
$$H_t(x, y) = H_t(0) + \partial_xH_t(0)x +\partial_yH_t(0)y +R_t(x, y),$$
where $R_t(x, y)$ is the high order terms.
We may assume that $\nabla^2 H_t(0)$ is non-degenerate; otherwise, we can achieve this by perturbing $H_t$ using  a small Morse function with a  critical point at $x_i$.

Pick a cut-off function $\chi_{\delta}(r): \mathbb{R}_+ \to \mathbb{R}$ such that $\chi_{\delta} (0)=1$, $\chi_{\delta}'(0)=0$ and  $\chi_{\delta} =0$ for $r \ge  \delta$, where $r=\sqrt{x^2+y^2}$.  Define $H^{\delta}_t$ by
$$H^{\delta}_t(x, y) = H_t(0) + (1-\chi_{\delta}(r))(\partial_xH_t(0)x +\partial_yH_t(0)y) +R_t(x, y).$$
We perform the same construction for $K_t$.
Apparently, we have
\begin{equation*}
\begin{split}
&d_{\Sigma}H^{\delta}_t(x_i)=d_{\Sigma}K^{\delta}_t(x_i)=0,\\
&\nabla^2H^{\delta}_t(x_i)= \nabla^2H_t(x_i), \  \  \nabla^2K^{\delta}_t(x_i)= \nabla^2K_t(x_i),\\
&|H^{\delta}_t-H_t| \le c_0 \delta, \  \ |K^{\delta}_t-K_t| \le c_0 \delta, \\
&|H_t\diamond K_t - H^{\delta}_t\diamond K^{\delta}_t| \le c_0\delta.\\
%&|dH_t| \le c_0, \  \  |dK_t| \le c_0.
\end{split}
\end{equation*}

Apply the triangle inequality to $H^{\delta}_t, K^{\delta}_t, H^{\delta}_t\diamond K^{\delta}_t$, and then  by the Hofer-Lipschitz continuity, we have
\begin{equation*}
\begin{split}
 c_{\underline{L},\eta}(H\diamond K, \mu_2(a \otimes b))  \le  c_{\underline{L},\eta}(H, a) + c_{\underline{L},\eta}(K, b) + O(\delta).
\end{split}
\end{equation*}
Note that  the above construction works for any $\delta$, we can  take $\delta \to 0$.

Since the normalization of  $H \diamond K$ and $H \#K$ are homotopic, we replace   $H \diamond K$  in the triangle equality by $H\#K. $

 \item (Normalization)
 To see $ c_{\underline{L},\eta}(0, e_{\underline{L}}) =0$,   note that  we have
 \begin{equation*}
\begin{split}
 c_{\underline{L},\eta}(0, e_{\underline{L}})=c_{\underline{L},\eta}( 0, \mu_2(e_{\underline{L}} \otimes e_{\underline{L}}))  \le  c_{\underline{L},\eta}(0, e_{\underline{L}}) + c_{\underline{L}, \eta}(0, e_{\underline{L}}).
\end{split}
\end{equation*}
Hence, we get $c_{\underline{L}, \eta }(0, e_{\underline{L}}) \ge 0.$ On the other hand, Lemma \ref{lem2} and  (\ref{eq20}) imply that $ c_{\underline{L}, \eta}(0, e_{\underline{L}})\le \mathcal{A}_{1 /\kappa f}^{\eta} ((\mathbf{y}_{\heartsuit}, [A_{\mathbf{y}_{\heartsuit} }])) =0$.

\begin{comment}
We have
\begin{equation*}
\begin{split}
H_t^{\delta} \# K_t^{\delta} (x) &=H_t^{\delta}(x) + K_t^{\delta}((\varphi_{H^{\delta}}^{t})^{-1}(x)) \\
&= H_t(x) +  ((\varphi_H^t)^{-1})^*K_t^{\delta}( \varphi^t_H \circ(\varphi_{H^{\delta}}^{t})^{-1}(x)) + O(\delta)\\
& =  H_t(x) +  ((\varphi_H^t)^{-1})^*K_t^{\delta}(x) + O(|dK^{\delta}| dist( \varphi^t_H \circ(\varphi_{H^{\delta}}^{t})^{-1}(x), x)) + O(\delta)\\
 &= H_t\#K_t(x) + O(|dK^{\delta}| dist( \varphi^t_H \circ(\varphi_{H^{\delta}}^{t})^{-1}(x), x)) + O(\delta).
\end{split}
\end{equation*}
\end{comment}

\item (Calabi property)
The proof of the Calabi property relies on the  Hofer-Lipschitz and the Lagrangian control properties.  We have obtained these properties.  One can follow the same argument in (Page 12--13)  \cite{CHMSS} to prove the Calabi property. We skip the details here.

\end{itemize}
\end{proof}

\section{Open-closed morphisms} \label{section5}
In this section, we prove Theorem \ref{thm2}. Instead of  proving it directly, we first establish the following theorem, from which Theorem \ref{thm2} follows easily.

\begin{thm} \label{thm6}
Let $\underline{L}$ be an admissible link and $\varphi_H$ a $d$-nondegenerate Hamiltonian symplectomorphism. 		 	Then for a generic admissible almost complex structure $J \in \mathcal{J}_{tame}(W, \Omega_H)$, we have  a  homomorphism
\begin{equation*}
\widetilde{\mathcal{OC}}(\underline{L}, H)_J :   HF(\Sigma,  \varphi_H(\underline{L}), \underline{L},    \mathbf{x})_J \to \widetilde{PFH}(\Sigma, \varphi_H, \gamma_H^{\mathbf{x}})_J
\end{equation*}
 satisfying  the following properties:
		\begin{itemize}
			\item (\textbf{Partial invariance})
			Suppose that $\varphi_H, \varphi_G$ satisfy the following conditions: (see Definition \ref{definition2})
			\begin{enumerate} [label=\textbf{$\spadesuit$.\arabic*}]
				\item  \label{assumption1}
				Each  periodic orbit of $\varphi_{H}$ with degree less than or equal $d$ is either $d$-negative elliptic or hyperbolic.
				
				\item \label{assumption2}
				Each periodic orbit of  $\varphi_{G}$  with degree less than or equal $d$ is either $d$-positive elliptic or hyperbolic.	
			\end{enumerate}
	
Then for any generic admissible almost complex structures   $J_H \in \mathcal{J}_{tame}(W, \Omega_{H})$ and $J_G \in \mathcal{J}_{tame}(W, \Omega_{G})$, we have  the following commutative diagram:
\begin{equation}			\label{eq67}
\begin{CD}
				HF_{*}(\Sigma,  \varphi_H(\underline{L}),   \underline{L}, \mathbf{x})_{J_H} @>\widetilde{\mathcal{OC}}(\underline{L}, H)_{J_H}>>   \widetilde{PFH}_*(\Sigma,  \varphi_H, \gamma_H^{\mathbf{x}})_{J_H} \\
				@VV \mathcal{I}^{H, G}_{0,0}V @VV \mathfrak{I}_{H, G}V\\
				HF_{*}(\Sigma,  \varphi_G(\underline{L}),  \underline{L}, \mathbf{x})_{J_G}  @> \widetilde{\mathcal{OC}}(\underline{L}, G)_{J_G}>>    \widetilde{PFH}_*(\Sigma,  \varphi_G,  \gamma_G^{\mathbf{x}})_{J_G}
			\end{CD}
\end{equation}

			\item
			(\textbf{Non-vanishing})	
			There are  nonzero classes  $\sigma_{\underline{L}}\in HF(\Sigma, \underline{L})$ and $\mathfrak{d}  \in  \widetilde{PFH}(\Sigma,d)$ such that  if $\varphi_H$ satisfies the  condition (\ref{assumption2}), 	then  we have
\begin{equation*}
\widetilde{\mathcal{OC}} (\underline{L}, H )_J (j^{\mathbf{x}}_{H})^{-1}(\sigma_{\underline{L}})= (\mathfrak{j}^{\mathbf{x}}_{ H})^{-1}(\mathfrak{d}),
\end{equation*}
			where  $j_H^{\mathbf{x}}$ and $\mathfrak{j}_H^{\mathbf{x}}$ are   the canonical isomorphisms in   (\ref{eq22}).
		%	In particular, the open-closed morphism is non-vanishing.
		\end{itemize}
	\end{thm}

The construction of $\widetilde{\mathcal{OC}}(\underline{L}, H)_J$ are parallel to Section 6 of  \cite{VPK} and  the counterparts of the closed-open morphisms in \cite{GHC2}.  Therefore, we will just outline the construction of the open-closed morphisms and the proof of partial invariance in Theorem \ref{thm6}. We will focus on proving the non-vanishing  of   open-closed morphisms.

 \begin{remark} \label{remark2}
 The assumptions \ref{assumption1} and \ref{assumption2} come from the holomorphic curve definition of the PFH cobordism maps.
For technical reasons,   the cobordism maps on PFH are defined by using the Seiberg-Witten theory \cite{KM} and the isomorphism ``PFH=SWF'' \cite{LT}.  Nevertheless,  the proof of the partial invariance in  Theorem \ref{thm6}  is  to perform the  neck-stretching, homotopy and  argument for holomorphic curves in an open-closed symplectic manifold (see the outline in Page 18--19). Thus,  we   need  a holomorphic curves  definition of the PFH cobordism maps. The assumptions \ref{assumption1},  \ref{assumption2} are used to guarantee that the PFH cobordism maps can be defined by counting holomorphic curves in the special cases (\ref{eq23}).  According to the results in \cite{GHC}, the Seiberg-Witten definition agrees with the holomorphic curves definition in these special cases. We believe  that the assumptions  \ref{assumption1},  \ref{assumption2} can  be removed if one could define the PFH cobordism maps by pure holomorphic curve methods.

   \end{remark}

\subsection{Open-closed  symplectic cobordism}
To begin with, let us  introduce the open-closed  symplectic manifold and the Lagrangian submanifolds. The construction follows \cite{VPK}.  Define a base surface  $B \subset \mathbb{R}_s \times (\mathbb{R}_t/(2 \mathbb{Z}))$  by $B:=  \mathbb{R}_s \times (\mathbb{R}_t/(2 \mathbb{Z})) -B^c$, where $B^c$ is $(2, \infty)_s \times [1,2]_t $ with the corners rounded.   See Figure \ref{figure3}.
	\begin{figure}[h]
		\begin{center}
			\includegraphics[width=9cm, height=3.5cm]{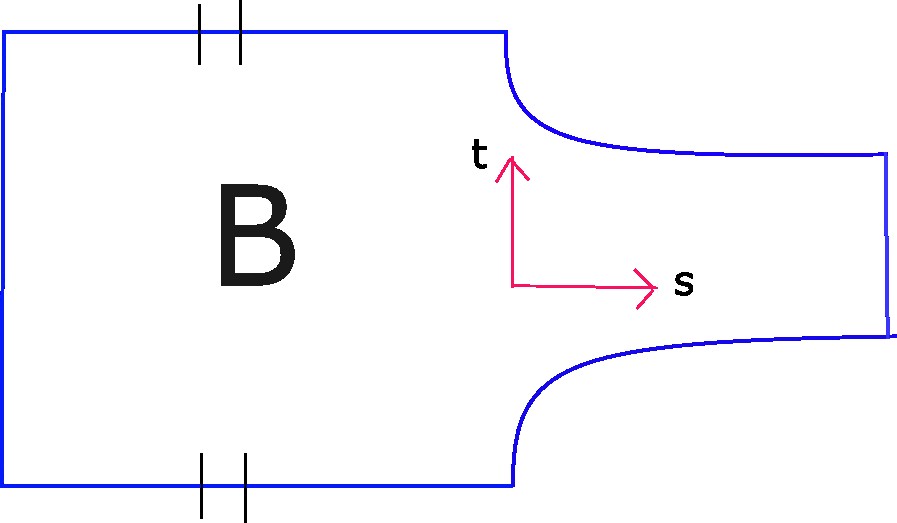}
		\end{center}
		\caption{The open-closed surface}%The red points are the index 2 critical points. The green points are saddle points. The blue points are index 0 critical points. The union of the red circles is the link.  The function $f$ is a small  perturbation of  the height function.}
		\label{figure3}
	\end{figure}

 Let $Y_{\varphi_H}: =[0, 2] \times \Sigma / (0, \varphi_H(x)) \sim (2, x)$ 	be the mapping torus of $\varphi_H$.  Then  $\pi: \mathbb{R}_s \times Y_{\varphi_{H}} \to \mathbb{R}_s \times  (\mathbb{R}_t/(2 \mathbb{Z}))$ is a surface  bundle over the cylinder.
	Define  a surface bundle  $W_H$ by
	$$\pi_W= \pi \vert_{W}: W_H:=\pi^{-1}(B) \to B. $$
 The symplectic form $\Omega_H$ on $W_H$ is defined to be the restriction of $\omega_{\varphi_H} + ds\wedge dt$.  Note that $W_H$ is diffeomorphic  (preserving the fibration structure) to the $B \times \Sigma$.    So we denote $W_H$ by $W$ instead when the context is clear.

	We place a copy of $\underline{L}$ on the fiber $\pi_W^{-1}(3,1)$ and take its parallel transport along $\partial B$ using the symplectic connection.  The parallel transport sweeps out an $\Omega_H$-Lagrangian submanifold $\mathcal{L}_{H}$ in $W$.   Then  $\mathcal{L}_{H}$ consists of $d$ disjoint connected components.  Moreover, we have
	\begin{equation*}
		\begin{split}
			&\mathcal{L}_{H} \vert_{s \ge 3 \times\{0\}} = \mathbb{R}_{s \ge 3} \times \{0\} \times \varphi_H(\underline{L})\\
			&\mathcal{L}_{H} \vert_{s \ge 3 \times \{1\}} = \mathbb{R}_{s \ge 3} \times \{1\} \times \underline{L}.
		\end{split}
	\end{equation*}
	We call the  triple $(W_H, \Omega_H, \mathcal{L}_{H})$   an \textbf{open-closed cobordism}. The concepts of almost complex structures and holomorphic curves of   $(W_H, \Omega_H, \mathcal{L}_{H})$  are defined similar to the case of HF curves.

\begin{definition}
Let $\mathcal{J}_{tame}(W, \Omega_H)$ be the set of almost complex structures satisfying the following conditions:
\begin{enumerate}
  \item
  $J \vert_{\mathbb{R}_{\ge 3} \times [0,1] \times \Sigma}$ and $J \vert_{\mathbb{R}_{\le -1} \times Y_{\varphi_H}}$ are  admissible.
  \item
  $d\pi_W$ is $(J, j_B)$-complex linear, i.e., $d\pi_W \circ J =j_{B} \circ d\pi_W$, where $j_B$ is  the complex structure of $B$ that maps $\partial_s$ to $\partial_t$.
  \item
  $J$ is  $\Omega_H$-tame.
\end{enumerate}
\end{definition}
	
\begin{definition} [Definition 5.4.3 of \cite{VPK}] \label{definition5}
		Fix a Reeb chord $\mathbf{y}$ and an orbit set $\alpha$ with degree $d$. Let $(\dot{F},j)$ be a Riemann surface (possibly disconnected) with   punctures.  Each irreducible component of $\dot{F}$ has at least one puncture. A \textbf{d-multisection} in $W$ is a smooth map $u: (\dot{F}, \partial \dot{F}) \to W$ such that
		\begin{enumerate}
			\item
			$u(\partial {\dot{F}}) \subset \mathcal{L}_{H }$. Write $\mathcal{L}_H = \cup_{i=1}^d L^i_{H} $, where $L_H^i$  is a   connected component of $\mathcal{L}_{H}$. For each $1 \le i\le d$,   $u^{-1}(L_{H}^i)$ consists of exactly one component of $\partial \dot{F}$.
			\item
			$u$ is asymptotic to $\mathbf{y}$ as $s \to \infty $.
			\item
			$u$ is asymptotic to $\alpha$ as $s \to -\infty $.
			
			%\item
			%$\int_{\dot{F}} u^* \omega_{\varphi_H} < \infty$.
		\end{enumerate}
Fix $J \in \mathcal{J}_{tame}(W, \Omega_H)$. A $J$-holomorphic $d$-multisection is called an \textbf{HF-PFH curve}. We call  the integer  $d$
 the degree of an HF-PFH curve.

	\end{definition}
 Similar to the case  of  HF curves (Remark \ref{remark5}),   an HF-PFH curve is an authentic ``multisection'',  because each irreducible component of $\dot{F}$ contains at least one puncture.
   Consequently, an HF-PFH has at least one positive end and one negative end.

Let  $$Z_{\textbf{y}, \alpha} : = \mathcal{L}_{H} \cup (\{ \infty \} \times \mathbf{y}) \cup ( \{- \infty \}  \times \alpha) \subset \check{W},$$
where $\check{W} =W \cup (\{\infty\} \times [0,1] \times \Sigma) \cup (\{-\infty\} \times Y_{\varphi_H})$.
We denote $H_2(W,   \mathbf{y}, \alpha)$ the equivalence classes of continuous  maps  $u: (\dot{F}, \partial \dot{F}) \to (\check{W}, Z_{\mathbf{y}, \alpha}) $ satisfying 1), 2), 3) in Definition \ref{definition5}. Two  maps are equivalent if they represent  the same element in $H_2(\check{W},Z_{\textbf{y}, \alpha}; \mathbb{Z})$.   Note that  $H_2(W, \textbf{y}, \alpha)$ is an affine space of $H_2(W, \mathcal{L}_{H}; \mathbb{Z})$.  By the exact sequence
\begin{equation*}
...\to H_2(W, \mathbb{Z}) \xrightarrow{j_*}  H_2(W, \mathcal{L}_{H}; \mathbb{Z}) \xrightarrow{\partial_*} H_1(\mathcal{L}_{H},  \mathbb{Z}) \xrightarrow{i_*}  H_1(W,  \mathbb{Z}) \to...,
\end{equation*}
we know that  $H_2(W, \mathcal{L}_{H}; \mathbb{Z})$ is generated by $[\Sigma]$ and $\ker i_*$.  Since $(W, \mathcal{L}_H)$ is diffeomorphic to $(B \times \Sigma, \partial B \times \underline{L})$, it is easy to check that  $\ker i_*$ is generated by    $[B_i]$ ( $1\le i\le k$), where  $[B_i]$ is the class represented by the parallel translation of $B_i \subset \pi_W^{-1}(3, 1)$. Because $\sum_{i=1}^{k+1}[B_i] =[\Sigma]$,   $H_2(W, \mathcal{L}_{H}; \mathbb{Z})$ is generated by   $[B_i]$ ( $1\le i\le k+1$) and  $H_1(S^1, \mathbb{Z}) \otimes H_1(\Sigma, \mathbb{Z})$.
% By the assumptions on the link and the  exact sequence $$H_2(\Sigma, \mathbb{Z})\cong H_2(W, \mathbb{Z}) \to  H_2(W, \mathcal{L}_{H}; \mathbb{Z}) \to H_1( \mathcal{L}_{H}, \mathbb{Z}) \to H_1(W, \mathbb{Z}) \cong H_1(\Sigma, \mathbb{Z}),$$  we know that
Therefore, the difference  of any two relative homology  classes  can be written as
	\begin{equation*}
		\mathcal{Z}'- \mathcal{Z}=\sum_{i=1}^{k+1} c_i [B_i] + [S],
	\end{equation*}
where  $[S]$ is a  class  in the $H_1(S^1, \mathbb{Z}) \otimes H_1(\Sigma, \mathbb{Z})$-component of $H_2(Y_{\varphi_H}, \mathbb{Z})$.

    We remark that the HF-PFH curves are simple because they are asymptotic to    Reeb chords. Thus, we have the following transversality result.%This observation is crucial in the proof of  Theorem \ref{thm2}.
\begin{lemma}
There exists a Baire subset $\mathcal{J}^{reg}_{tame}(W, \Omega_H)$ of $\mathcal{J}_{tame}(W, \Omega_H)$ such that for  $J \in \mathcal{J}^{reg}_{tame}(W, \Omega_H)$, the $J$-holomorphic HF-PFH curves are Fredholm regular.
\end{lemma}
\begin{proof}
 The proof is the same as  Lemma 9.12  of  \cite{H1}.
\end{proof}
As before, we call   almost complex structures  in $\mathcal{J}^{reg}_{tame}(W, \Omega_H)$ \textbf{generic}.

\subsection{ECH index and $J_0$ index}
The ECH index and $J_0$ index also can be adapted to the open-closed setting.

Fix a non-vanishing vector field on $\underline{L}$. This gives a trivialization $\tau$ of $T\Sigma \vert_{\underline{L}}$. We extend it to $T\Sigma \vert_{\mathcal{L}_H}$ by using the symplectic parallel transport.  We then extend the trivialization of $T\Sigma \vert_{\mathcal{L}_H}$  in an arbitrary manner   along $\{\infty\} \times \textbf{y} $ and along $\{-\infty\} \times \alpha$. Then we  define the relative Chern number $c_1(u^*T\Sigma, \tau)$.  This is the obstruction of extending $\tau$ to $u$.

Define a real line bundle $\mathfrak{L}$ of $T\Sigma$ along $\mathcal{L}_H \cup \{\infty\}   \times \textbf{y}$ as follows.  We set $\mathfrak{L} \vert_{\mathcal{L}_H} : = T\mathcal{L}_H \cap T\Sigma$. Then extend $\mathfrak{L}$  across $ \{\infty\}   \times \textbf{y}$ by rotating in the counterclockwise direction from $T\varphi_H(\underline{L})$ to $T\underline{L}$ in $T\Sigma$ by the minimum amount.  With respect to the trivialization $\tau$, we have Maslov index for the bundle pair $(u^* \mathfrak{L}, u^*T\Sigma)$, denoted by $\mu_{\tau}(u)$.  %Then $\mathfrak{L} \vert_{\{\infty\} \times \{0\} \times \{y\}} =T_y\varphi_H(L_i)$ and

The \textbf{Fredholm index}  of   an HF-PFH curve $u$ is
\begin{equation*}
		\begin{split}
		 \operatorname{ind}u :=-\chi(\dot{F}) -d  +2 c_1(u^*T\Sigma, \tau) + \mu_{\tau}(u) - CZ^{ind}_{\tau}(\alpha).
		\end{split}
	\end{equation*}	
The notation  $CZ^{ind}_{\tau}(\alpha)$ is explained as follows. Let $\alpha=\{(\alpha_i, m_i)\}$. Suppose that for each $i$, $u$ has $k_i$-negative ends and each end is asymptotic to $\gamma_i^{q_j}$. Then the  total multiplicity is $m_i=\sum_{j=1}^{k_i} q_j$. Define  $$CZ^{ind}_{\tau}(\alpha):=\sum_i \sum_{j=1}^{k_i} CZ_{\tau}(\alpha_i^{q_j}),$$ where $CZ_{\tau}$ is the Conley-Zehnder  index.

Given $\mathcal{Z} \in  H_2(W, \textbf{y}, \alpha)$, we have the relative self-intersection number $Q_{\tau}(\mathcal{Z})$ defined as before.  The  \textbf{ECH index}   is defined by   (Definition 5.6.5  of \cite{VPK})
\begin{equation*}
		\begin{split}
		I(\mathcal{Z}):= c_{1}(TW \vert_{\mathcal{Z}}, \tau) +Q_{\tau}(\mathcal{Z}) + \mu_{\tau}(\mathcal{Z}) - CZ^{ech}_{\tau}(\alpha) -d,	
		\end{split}
	\end{equation*}	
where $CZ^{ech}(\alpha) : = \sum_i \sum_{p=1}^{m_i} CZ_{\tau}(\alpha_{i}^{p})$.

The index inequalities in Theorem \ref{thm3} still hold in the open-closed setting.
\begin{lemma} (Theorem 5.6.9 of \cite{VPK}, Lemma 5.2 of \cite{GHC2}) \label{lem3}
The ECH index satisfies the following properties:
\begin{itemize}
\item
Let $u \in \mathcal{M}^J(\mathbf{y}, \alpha )$ be an irreducible HF-PFH curve in $(W , \Omega_H, \mathcal{L}_H)$. Then we have
\begin{equation*}
		\begin{split}
		 &I(u) \ge \operatorname{ind} u  + 2 \delta(u).\\
		\end{split}
	\end{equation*}	
 Moreover, equality  holds only if $u$ satisfies the ECH partition condition.

 \item
 If $u=\cup_a u_a$  is an  HF-PFH curve consisting  of several  (distinct) irreducible components, then
\begin{equation*}
		\begin{split}
		  I(u) \ge \sum_a I(u_a) + 2\sum_{a \ne b} \# (u_a \cap u_b).
		\end{split}
	\end{equation*}
\item
Let $u$ be an HF-PFH curve. Then     $I(u) \ge 0$  provided that $J$ is generic.

\item
Let $\mathcal{Z}, \mathcal{Z} \in H_2(W, \mathbf{y}, \alpha)$  be relative  homology classes  such that $$\mathcal{Z}'-\mathcal{Z}=n[\Sigma]+\sum_{i=1}^{k+1} c_i[B_i] + [S], $$ where $[S] \in H_1(S^1, \mathbb{Z}) \otimes H_1(\Sigma, \mathbb{Z})$.   Then  we have
		\begin{equation} \label{eq5}
\begin{split}
			&I(\mathcal{Z}')=I(\mathcal{Z}) + \sum_{i=1}^{k+1} 2c_i  + 2n(k+1).
\end{split}
		\end{equation}
 \end{itemize}
\end{lemma}
In this paper, we don't need the details on  ``ECH partition condition''.   For the readers who are interested  in it, please refer to Definition 4.1 of \cite{H1} and Definition 4.13 of \cite{H2}. % The proof of Theorem \ref{thm5} basically is a combination of the relative adjunction formula and Hutchings's analysis in \cite{H2}. We omit the details here.

\begin{proof}
\begin{itemize}
\item
We begin with proving the ECH inequality for HF-PFH curves.  Let  $u$ be an irreducible HF-PFH curve.
Note that   $$c_{1}(u^*TW , (\tau, \partial_t))  =c_{1}(u^*T\Sigma , \tau ) + c_{1}(u^*TB , \partial_t)  =c_{1}(u^*T\Sigma , \tau ) + \chi(B)  =c_{1}(u^*T\Sigma , \tau ) . $$
By definition and adjunction formula (Lemma 5.6.3  of \cite{VPK}), we have
\begin{equation*}
		\begin{split}
		 &I(u) - \operatorname{ind} u  =2\delta(u)+ w_{\tau}(u) +CZ^{ind}_{\tau}(\alpha)  - CZ^{ech}_{\tau}(\alpha), \\
		\end{split}
	\end{equation*}	
where $w_{\tau}(u)$ is  the total writhe of the
braids $u(\dot{F})\cap \{s\} \times Y_{\varphi_H}$ for $s\ll -1$  with respect to $\tau$. See Definition 2.8 of \cite{H2} for its definition.

 By Lemma 6.13 of \cite{H1}, we have $ w_{\tau}(u) +CZ^{ind}_{\tau}(\alpha) \ge CZ^{ech}_{\tau}(\alpha)$ and equality holds only if $u$ satisfies the ECH partition condition.  This implies the first bullet.

\item
To prove the second statement, without loss of generality, assume that $u=u_0\cup u_1$ has two distinct irreducible components, where   $u_i \in \mathcal{M}^J(\mathbf{y}_i, \alpha_i)$. By Lemma 8.5 of \cite{H1},
\begin{equation} \label{eq41}
Q_{\tau}(u_0, u_1) = \# (u_0 \cap u_1) -l_{\tau}(u_0, u_1),
\end{equation}
 where  $l_{\tau}(u_0, u_1)$ is  the total  linking number  of the
braids $u_0(\dot{F}_0)\cap \{s\} \times Y_{\varphi_H}$ and $u_1(\dot{F}_1)\cap \{s\} \times Y_{\varphi_H}$ for $s\ll -1$  with respect to $\tau$  (see Definition 2.9 of \cite{H2}).
 Because  Chern number and Maslov  index are  additive and  the  relative intersection number is quadratic, we have
\begin{equation*}
		\begin{split}
		 &I(u_0 \cup u_1)  - I(u_0) -I(u_1)    =2\#(u_0 \cap u_1) -2l_{\tau}(u_0, u_1) +CZ_{\tau}^{ech}(\alpha_0)  + CZ_{\tau}^{ech}(\alpha_1) -CZ_{\tau}^{ech}(\alpha)     \\
		\end{split}
	\end{equation*}	
By Lemma 4.17 and  Lemma 5.10 of \cite{H2}, we have $ CZ_{\tau}^{ech}(\alpha_0)  + CZ_{\tau}^{ech}(\alpha_1) \ge 2l_{\tau}(u_0, u_1)+ CZ_{\tau}^{ech}(\alpha) $.  Then we  get the second bullet.

\item
We now show that the ECH index is nonnegative when $J$ is generic. By the first bullet and $J$ is generic, we have $I(u_a) \ge \operatorname{ind} u_a +2\delta(u_a) \ge 0$.  By the intersection positivity of holomorphic curves, we have $\#(u_a \cap u_b) \ge0$.  Therefore, the third bullet follows from the second bullet.

\item
The proof  of the last statement is essentially the same as  the proof of fourth bullet of Theorem \ref{thm3}. Let $u$ be a $\tau$-trivial representative of $\mathcal{Z}$. For $1\le i \le k$, we modify an end of $u$, denote the result by $u'$ such that $[u']= \mathcal{Z} +[B_i]$ (see Lemma 2.4 of \cite{GHC2} for the construction). Then   (\ref{eq38}) still holds.  It tells us that adding $[B_i]$ to $\mathcal{Z}$ increasing the ECH index by $2$.

To see the contribution from $[\Sigma] + [S]$, we just need to repeat the  computations in (\ref{eq48}) and (\ref{eq49}).  By using $\sum_{i=1}^{k+1} [B_i] =[\Sigma]$ and the trick in (\ref{eq50}), we know that adding $[B_{k+1}]$ to $\mathcal{Z}$ increasing the ECH index by $2$.
\end{itemize}
\end{proof}

Define the  \textbf{$J_0$ index} of $\mathcal{Z}$ by
\begin{equation*}
		\begin{split}
		J_0(\mathcal{Z}):= -c_{1}(TW \vert_{\mathcal{Z}}, \tau) +Q_{\tau}(\mathcal{Z})  - CZ^{J_0}_{\tau}(\gamma),	
		\end{split}
	\end{equation*}	
where $CZ^{J_0}_{\tau}(\alpha) = \sum_i \sum_{p=1}^{m_i-1} \mu_{\tau}(\alpha_i^{p})$. The following lemma  is an analogue of Lemma \ref{lem12}.

\begin{lemma}
\label{lem17}
The $J_0$ index satisfies the following properties:
\begin{itemize}
\item
Let $u \in \mathcal{M}^J(\mathbf{y}, \alpha )$ be an irreducible HF-PFH curve. Then we have
\begin{equation*}
		\begin{split}
       J_0(u)\ge 2(g(F) -1    + \delta(u)) + \#\partial F + |\alpha|,
		\end{split}
	\end{equation*}	
where $|\alpha| $ is a quantity satisfying $|\alpha| \ge 1$ provided that $\alpha$ is nonempty (see Definition 6.4 of \cite{H2}).

 \item
 If $u=\cup_a u_a$  is an  HF-PFH curve consisting  of several  (distinct) irreducible components, then
\begin{equation*}
		\begin{split}
		  J_0(u) \ge \sum_a J_0(u_a) + \sum_{a \ne b} 2\# (u_a \cap u_b).
		\end{split}
	\end{equation*}
\item
Let $u$ be an HF-PFH curve.  Then $J_0(u) \ge 0$.

\item
		Let $\mathcal{Z}, \mathcal{Z} \in H_2(W, \mathbf{y}, \alpha)$  be relative  homology classes  such that $$\mathcal{Z}'-\mathcal{Z}=n[\Sigma]+\sum_{i=1}^{k+1} c_i[B_i] + [S], $$ where $[S] \in H_1(S^1, \mathbb{Z}) \otimes H_1(\Sigma, \mathbb{Z})$.   Then  we have
		\begin{equation}
\begin{split}
J_0(\mathcal{Z}')=J_0(\mathcal{Z}) + 2c_{k+1} (d+g-1)+ 2n(d+g-1).
\end{split}
		\end{equation}
 \end{itemize}
\end{lemma}
\begin{proof}
 \begin{itemize}
 \item
By definition and adjunction formula (Lemma 5.6.3  of \cite{VPK}), we obtain
\begin{equation*}
		\begin{split}
		 J_0(u)  &=-\chi(\dot{F}) + w_{\tau}(u)  + 2\delta(u) - CZ^{J_0}_{\tau}(\alpha) \\
&=2(g(F)-1 + \delta(u)) +\#\partial F + \#\Gamma + w_{\tau}(u)   - CZ^{J_0}_{\tau}(\alpha),
		\end{split}
	\end{equation*}	
where $\Gamma$ is the set of interior punctures.
By (6.2) of \cite{H2}, we have  $$  \#\Gamma + w_{\tau}(u)   - CZ^{J_0}_{\tau}(\alpha) \ge |\alpha|.$$ Hence, the inequality in the first statement holds.

\item
Again, assume that $u=u_0\cup u_1$ has two distinct irreducible components, where   $u_i \in \mathcal{M}^J(\mathbf{y}_i, \alpha_i)$.
 Because  Chern number and Maslov  index are  additive,  the  relative intersection number is quadratic and (\ref{eq41}), we have
\begin{equation*}
		\begin{split}
		 &J(u_0 \cup u_1)  - J(u_0) -J_0(u_1)    =2\#(u_0 \cap u_1) -2l_{\tau}(u_0, u_1) +CZ_{\tau}^{J_0}(\alpha_0)  + CZ_{\tau}^{J_0}(\alpha_1) -CZ_{\tau}^{J_0}(\alpha)     \\
		\end{split}
	\end{equation*}	
By Lemma 4.17 and Lemma 6.15 of \cite{H2}, we have $ CZ_{\tau}^{J_0}(\alpha_0)  + CZ_{\tau}^{J_0}(\alpha_1) \ge 2l_{\tau}(u_0, u_1)+ CZ_{\tau}^{J_0}(\alpha) $.  Then we  get the second bullet.

\item
Because an HF-PFH curve at least one  boundary and $\alpha_a$ are  not empty,  by the first bullet, we have $ J_0(u_a)\ge 0.$ Then $J_0(u) \ge  0$ follows from second bullet and intersection positivity of holomorphic curves.

\item
The proof of the fourth statement is just the same as those in Lemma \ref{lem3}.
 \end{itemize}
\end{proof}

\subsection{Construction and invariance of $\widetilde{\mathcal{OC}}$}
In this subsection, we outline the construction of the open-closed morphisms. Also, we will explain why it satisfies the partial invariance.

To define the open-closed morphisms, the key ingredient is the following compactness result.
\begin{lemma}\label{lem16}
Let $J \in \mathcal{J}_{tame}(W, \Omega_H)$ be a generic almost complex structure.
\begin{enumerate}
  \item
 If $I(\mathcal{Z})=0$, then  $\mathcal{M}^J(\mathbf{y}, \alpha, \mathcal{Z})$ is a set of finite points.
 \item
Suppose that  $I(\mathcal{Z})=1$ and $\alpha$ is a PFH generator. Let $\{u_n\}_{n=1}^{\infty} \subset \mathcal{M}^J(\mathbf{y}, \alpha, \mathcal{Z})$ be a sequence of HF-PFH curves. Then $\{u_n\}_{n=1}^{\infty}$ converges to a broken holomorphic curve $\mathbf{u}$ in the sense of SFT \cite{FYHKE}. Moreover, $\mathbf{u}$ belongs to one of the following types:
\begin{enumerate}
  \item
  $\mathbf{u} \in  \mathcal{M}^J(\mathbf{y}, \alpha, \mathcal{Z})$;
  \item
    $\mathbf{u} =\{u^0, u^1\}$, where   $u^0 \in \mathcal{M}^J( \mathbf{y}', \alpha)$ is an embedded  HF-PFH curve  with $I=\operatorname{ind} =0$,  and $u^1 \in  \mathcal{M}^J(\mathbf{y}, \mathbf{y}')$ is an embedded  HF curve  with $I=\operatorname{ind} =1$.
  \item
  $\mathbf{u} =\{u^{1}, v_1,..., v_k, u^0\}$, where   $u^0 \in  \mathcal{M}^J( \mathbf{y}, \beta)$ is an embedded  HF-PFH curve  with $I=\operatorname{ind} =0$, $u^1 \in \mathcal{M}^J( \beta, \alpha)$ is  a    PFH curve  with $I=\operatorname{ind} =1$, and $v^i \in \mathcal{M}^J(\beta, \beta)$ are connectors with $\operatorname{ind} =0$.
\end{enumerate}
\end{enumerate}
\end{lemma}
\begin{proof}
Suppose that $I(\mathcal{Z}) =0$. Let $\{u_n\}_{n=1}^{\infty} \subset \mathcal{M}^J(\mathbf{y}, \alpha, \mathcal{Z})$ be a sequence of HF-PFH curves.  By the first two bullets of Lemma \ref{lem17}, we may assume that the the domains of $\{u_n\}_{n=1}^{\infty} $ have a fixed topological type.

By applying  the SFT compactness \cite{FYHKE}, $\{u_n\}_{n=1}^{\infty} $  converges to a broken holomorphic curve $\mathbf{u} = \{u^{-N_-}, ..., u^0,..., u^{N_+}\}$, where $u^0$ is  a curve in $W$, $u^{i}$ are curves in $\mathbb{R} \times Y_{\varphi_H}$ for $i<0$, and $u^{i}$ are curves in $ \mathbb{R} \times [0,1] \times \Sigma$ for $i>0$.  Moreover, we have
\begin{equation} \label{eq72}
\sum_{i=-N_-}^{N_+} I([u^i]) = I([\mathfrak{u}]) =I(\mathcal{Z})=0
\end{equation}
 Decompose $u^0=u^0_{\star} \cup v$, where $u^0_{\star}$ is an HF-PFH curve and $v$ is a bubble.  Without loss of generality, assume that $v$ is irreducible.  By open mapping theorem, the image of $v$ are inside a fiber $\pi_W^{-1}(b)$.  If $b \in \partial B$, then the homology class of $v$ is $[v] =\sum_{i=1}^{k+1} c_i [B_i^b]$, where $\cup_{i=1}^k \mathring{B}_i^b = \Sigma\setminus (\mathcal{L}\cap  \pi_W^{-1}(b))$ and $B_i^b$ is the closure of $\mathring{B}_i^b$. Fix $z_i \in \mathring{B}_i$. Define
$$n_{z_i}(v): =\#(\mathbb{R}\times  \Psi_H(S^1 \times z_i) \cap v).$$
Here  we regard $W$ as a submanifold of $\mathbb{R} \times Y_{\varphi_H}$. The intersection number $n_{z_i}$ only depends on the homology class of $v$. By definition, $n_{z_i}(B_j^b) =\delta_{ij}$. Hence, $n_{z_i}(v) =c_i$.  Because $v$ is holomorphic, the orientation of $v$ is the same as the fibers. Since $\mathbb{R}\times  \Psi_H(S^1 \times z_i)$ intersects the fibers positively transversely,  $c_i= n_{z_i}(v)  \ge 0$.   By the first and fourth bullets of Lemma \ref{lem3}, we have
 \begin{equation} \label{eq73}
   I(u^0)=I(u^0_{\star} )  +2 \sum_{i=1}^{k+1}c_i \ge 0.
 \end{equation}
The above argument also works for $b \in \mathring{B}$. Combining (\ref{eq73}) with the proof of Lemma \ref{lem15}, we know that each level of $\mathbf{u}$ has nonnegative ECH index and the bubbles contribute at least two to the ECH index.  Thus, (\ref{eq72}) implies that no bubbles exist and $I(u^i) =0$.  For $i>0$, the HF curves with zero ECH index are just union of trivial strips which are ruled out. For $i<0$, $u^i$ are  branched covers of the trivial cylinders. By Lemma 1.7 of \cite{HT1},  $\operatorname{ind} u^i  \ge 0$.  Because the Fredholm indices are additive,  $\sum_{i=-N_-}^0\operatorname{ind} u^i =\operatorname{ind} u_n $ for $n \gg1$.    By the ECH inequality in Lemma \ref{lem3}, $\operatorname{ind} u_n =0$.  Therefore,  $\operatorname{ind} u^i =0$ for each $i$.
By the first bullet of Lemma \ref{lem3},  the negative ends of $u_n$ satisfy the ECH partition condition.  So does  $u^{-N_-}$.  By  exercise 3.14 of  \cite{H4}, if  the negative ends of a $\operatorname{ind}=0$ branched covered trivial cylinder satisfies the ECH partition condition, then  the covering must be trivial. Therefore, $u^{-N_-}$ must be trivial covers of the trivial cylinders  which are also ruled out. In sum, $ \mathcal{M}^J( \mathbf{y}, \alpha, \mathcal{Z})$ is compact.

If $I(\mathcal{Z}) =1$, then the same argument also can use to rule out the bubbles because each bubble increases ECH index $2$.  Then, the rest what we need to do is just to  repeat the same argument in Theorem 6.1.4. of \cite{VPK}.
\end{proof}

Recall that $W$ is a subset of $Y_{\varphi_H}$. Let  $\mathcal{Z}_{ref}\in H_2(W, \mathbf{x}_H,  \gamma^{\mathbf{x}}_H)$   be a reference relative homology class that is represented by $(\mathbb{R} \times \Psi_H(S^1 \times \mathbf{x}) ) \cap W$. The open-closed morphism  at the chain level is defined  by
\begin{equation*}
\widetilde{\mathcal{OC}}(\underline{L}, H)_J (\mathbf{y}, [A]) = \sum_{(\alpha, [Z])}  \sum_{\mathcal{Z}, I(\mathcal{Z}) =0}\#\mathcal{M}^J(\textbf{y}, \alpha, \mathcal{Z}) (\alpha, [Z]),
\end{equation*}
 The class $Z$ is characterized   by $A\#\mathcal{Z}\#Z =\mathcal{Z}_{ref}. $
 By Lemma \ref{lem16},  the above equation is well defined.  Using Hutchings-Taubes's gluing analysis \cite{HT1, HT2} and the compactness result in the second bullet of Lemma \ref{lem16},  $\mathcal{OC}(\underline{L}, H)_J$ is a chain map.  We refer reader to Section 6.5 of \cite{VPK} for a nice overview of  Hutchings-Taubes's gluing argument. The authors of   \cite{VPK} also explain why the gluing argument can be adapted to the open-closed setting therein.  Therefore, $ \mathcal{OC}(\underline{L}, H)_J$ descends to a homomorphism at the homological level
 \begin{equation*}
\widetilde{\mathcal{OC}}(\underline{L}, H)_J: HF(\Sigma, \underline{L}, \varphi_H, \mathbf{x}) \to \widetilde{PFH}(\Sigma,  \varphi_H,\gamma_H^{\mathbf{x}}).
\end{equation*}
%The arguments in \cite{VPK} (also see the relevant  argument for closed-open morphisms \cite{GHC2}) show that this is well defined  and it is a chain map. The main difference here with   \cite{VPK}  is that the bubbles would appear, but these can be ruled out by Lemma \ref{lem3} and the argument in Proposition \ref{lem1}.

 To prove the partial invariance, the arguments  consist of the following key steps:
 \begin{enumerate}
 \item
 Consider a family of  tuples  $(\Omega_{\tau}, \mathcal{L}_{\tau}, J_{\tau})_{\tau \in [0,1]}$, where $\Omega_{\tau}$ is a symplectic form of  $W$, $J_{\tau}\in \mathcal{J}_{tame}(W, \Omega_{\tau})$, and  $ \mathcal{L}_{\tau} \subset \partial W$ is a $d$ disjoint union of $\Omega_{\tau}$-Lagrangian submanifolds.  Moreover, $\mathcal{L}_{\tau} \cap \Sigma$ is Hamiltonian homotopy to $\underline{L}$  and  $$(\mathcal{L}_{\tau}, J_{\tau})  \vert_{s \ge R_0} =( \mathbb{R}_{s \ge R_0} \times( \{0\} \times\varphi_H(\underline{L} \cup \{1\} \times \underline{L}, J). $$ If $J_{\tau_{\star}}$($\tau_{\star} \in [0,1]$) is generic, then we can define a homomorphism $\widetilde{\mathcal{OC}}(\Omega_{\tau_{\star}}, \mathcal{L}_{\tau_{\star}})_{J_{\tau_{\star}}}$ by counting $I=0$ HF-PFH curves as before.

 \item
 Suppose that $J_0, J_1$ are generic, and the family $ \{J_{\tau})_{\tau \in [0,1]}$ is generic in the sense that any $J_{\tau}$ HF-PFH curve has Fredholm index at least $-1$. Then we define a map  $K:CF(\Sigma, \underline{L}, \varphi_H, \mathbf{x}) \to \widetilde{PFC}(\Sigma,  \varphi_H,\gamma_H^{\mathbf{x}})$ by counting $I=-1$ HF-PFH curves.  We have a similar compactness result  as in Lemma \ref{lem16} (see Lemma 4.8 of \cite{GHC2} for its counterpart in closed-open setting).  Using the compactness result and Hutchings-Taubes's gluing analysis \cite{HT1, HT2}, $K$ is a chain homotopy, i.e.,
 $$\widetilde{\mathcal{OC}}(\Omega_{0}, \mathcal{L}_0)_{J_{0}} - \widetilde{\mathcal{OC}}(\Omega_{1}, \mathcal{L}_{1})_{J_{1}} =K \circ d_J + \partial_J \circ K. $$
\item
Assume that $\varphi_H$ satisfies \ref{assumption1} and  $\varphi_G$ satisfies \ref{assumption2}. Let $(E_1, \Omega_1, \mathcal{L}_1)$ be a Lagrangian cobordism from $(\varphi_G(\underline{L}), \underline{L})$ to $(\varphi_H(\underline{L}), \underline{L})$.  Let $(X, \Omega_X)$ be  a symplectic cobordism from $(Y_{\varphi_H}, \omega_{\varphi_H})$ to $(Y_{\varphi_G}, \omega_{\varphi_G})$ defined by (\ref{eq23}).   Consider the $R$-stretched composition of  $(E_1, \Omega_1, \mathcal{L}_1)$, $(W_H, \Omega_H, \mathcal{L}_H)$ and  $(X, \Omega_X)$, denoted by  $(W_R, \Omega_R, \mathcal{L}_R)$.   Let $J_R$ be a generic family of almost complex structures converging to generic almost complex structures $J_1, J_H, J_X$ on $E_1, W, X$ respectively as $R \to \infty$.

By the second bullet, we have
\begin{equation} \label{eq42}
\widetilde{\mathcal{OC}}(\Omega_{R=0}, \mathcal{L}_{R=0})_{J_{R_=0}} =\widetilde{\mathcal{OC}}(\underline{L}, G)_{J_G}.
\end{equation}
As $R \to \infty$, the $I=0$ HF-PFH curves in $(W_R, \Omega_R, \mathcal{L}_R)$  converges to  a broken holomorphic curve $\mathbf{u}$.  Under assumptions \ref{assumption1}, \ref{assumption2}, the PFH curves in $(X, \Omega_X)$ have nonnegative ECH index  (see Section 7.1 of \cite{GHC}).   By Lemma \ref{lem3}, the bubbles contributes at least two to the ECH index. Combining the above two facts  with Theorems \ref{thm3} and Lemma \ref{lem3}, the holomorphic curves in each level have nonnegative ECH index.  As a result, these holomorphic curves have  zero ECH index and no bubbles exist.  Each level of $\mathbf{u}$ is  either embedded or branched covers of trivial cylinders.  See  Lemma 4.10 and Lemma 4.11 of \cite{GHC2} for its counterparts. By Huctings-Taubes's gluing argument \cite{HT1, HT2},   we have
\begin{equation} \label{eq43}
\widetilde{\mathcal{OC}}(\Omega_R, \mathcal{L}_R) = I^{G, H}_{0,0} \circ \widetilde{\mathcal{OC}}(\underline{L}, H)_{J_H} \circ PFH_{Z_{ref}}(X,\Omega_X)_{J_X}
\end{equation}
for $R \gg 1$.  Here $PFH_{Z_{ref}}(X,\Omega_X)_{J_X}$ is the PFH cobordism map  defined by counting embedded holomorphic curves in $X$. Follows from Theorem 2 of \cite{GHC}, it is well defined. Moreover,  by Theorem 3  of  \cite{GHC},   $PFH_{Z_{ref}}(X,\Omega_X)_{J_X} = PFH^{sw}_{Z_{ref}}(X,\Omega_X) =\mathfrak{I}_{H, G} . $%we can replace it by $\mathfrak{I}_{H, G} = PFH_{Z_{ref}}^{sw}(X,\Omega_X)$.

Again, by the second bullet, we have  $\widetilde{\mathcal{OC}}(\Omega_{R=0}, \mathcal{L}_{R=0})_{J_{R_=0}}  =\widetilde{\mathcal{OC}}(\Omega_{R}, \mathcal{L}_{R})_{J_{R}} $.  The partial invariance follows from (\ref{eq42}) and (\ref{eq43}).
 \end{enumerate}

 %By the standard neck-stretching and homotopy arguments, we can show that  the open-closed morphisms satisfy the partial invariance property. Because the argument is the same as \cite{GHC2}, we omit the details here.  Under assumptions \ref{assumption1}, \ref{assumption2}, the PFH cobordism maps is well-defined by counting embedded holomorphic curves (\cite{GHC}).

\subsection{Computations of $\widetilde{\mathcal{OC}}$}
In this subsection, we  compute the open-closed morphism for  a special Hamiltonian function $H$ satisfying \ref{assumption1}.  Using  partial invariance, we deduce the non-vanishing result under the assumption \ref{assumption2}.   The main idea here is   the same as \cite{GHC2}.

 %\subsubsection{A suitable Hamiltonian  function}
Suppose that $f$ is a  Morse function satisfying \ref{M1}, \ref{M2}, \ref{M3},  and \ref{M4}. Define $H_{1/\kappa} = - 1/\kappa f$, where $ \kappa \gg 1$ is a large constant.  $H_{1/\kappa}$ is a slight perturbation of the height function in Figure \ref{figure1}. This is a nice candidate for computations because we can describe the periodic orbits and Reeb chords in terms of the critical points,  and the indices  of holomorphic curves are computable.   However, the $H_{1/\kappa}$ does not satisfy  \ref{assumption1} or \ref{assumption2}.  We need to follow   the discussion in Section 5.1 of \cite{GHC2} to modify $H_{1/\kappa}$.

\begin{comment}
Let $f_{\Lambda_i} : \Lambda_i \to \mathbb{R}$ be a perfect Morse function with a minimum $y_i^-$ and  a maximum $y_i^+$. We can extend $\cup_i f_{\Lambda_i}$ to be a Morse function $f: \Sigma\to \mathbb{R} $ such that
	\begin{enumerate}
		\item
		$(f, g)$ satisfies the Morse--Smale condition, where $g$ is a fixed metric on $\Sigma$.
		\item
		$f=f_{\Lambda_i} + \frac{1}{2} y^2$ in a neighbourhood of $\Lambda_i$, where $y$ is the coordinate of the normal direction.
		\item $\{y_i^-\}$ are the only   minimum of $f$. Also,   $f(y_i^-) =0$ for any $i$.
	
	\end{enumerate}
\end{comment}
  Fix  numbers $ \kappa_0 \gg 1$ and $ \delta, \delta_0>0$.  By \cite{GHC2}, we can take   a smooth function $\varepsilon : \Sigma  \to \mathbb{R}$ such that $0< 1/\kappa \le \varepsilon \le  1/\kappa_0$  and    the new autonomous Hamiltonian function $H_{\varepsilon} =- \varepsilon f$ satisfies  the following properties:
\begin{enumerate} [label=\textbf{F.\arabic*}]
\item \label{F1}
There is  a collection of open disks $\mathcal{U}^{\delta+ \delta_0} = \cup_p U^{\delta + \delta_0}_p$ with radius $\delta+ \delta_0$  such that $H_{\varepsilon} \vert_{\Sigma-\mathcal{U}^{\delta+ \delta_0}}=H_{1/\kappa} \vert_{\Sigma-\mathcal{U}^{\delta+ \delta_0}} $, where $p$ runs over all the local maximums of $-f$ and  $U^{\delta + \delta_0}_p $ is a   $(\delta+ \delta_0)$-neighbourhood  of $p$.

 \item  \label{F2}
$H_{\varepsilon}$ is still a Morse function satisfying the Morse-Smale conditions. Moreover, $Crit(H_{\varepsilon}) = Crit(- f)$.

\item \label{F3}
 $\varphi_{H_{\varepsilon}}$ is  $d$-nondegenerate.   The periodic orbits of  $\varphi_{H_{\varepsilon}}$  with period at most  $d$ are covers of the constant orbits at critical points of $H_{\varepsilon}$.

\item \label{F4}
For each local maximum $p$, $\varphi_{H_{\varepsilon}}$ has a family periodic orbits $\gamma_{r_0, \theta}(t)$ that foliates $S_t^1 \times \partial \mathcal{U}^{r_0}_p$, where $\delta + \delta_0  \le r_0 \le \delta +2 \delta_0$. Moreover, the period of  $\gamma_{r_0, \theta}(t)$  is  strictly greater than $d$.

\item \label{F5}
The Reeb chords  of $\varphi_{H_{\varepsilon}}$ are  still
 corresponding to the critical points of $\cup_{i=1}^df_{L_i}$.   See (\ref{eq14}).
\end{enumerate}

By Proposition  3.7 of \cite{GHC}, we perturb $H_{\varepsilon}$ to a new Hamiltonian function $H_{\varepsilon}'$ (may depend on $t$) such that it satisfies the following properties:
\begin{enumerate} [label=\textbf{G.\arabic*}]
\item \label{G1}
  $H'_{\varepsilon} \vert_{\Sigma-\mathcal{U}^{\delta}}=H_{\varepsilon} \vert_{\Sigma-\mathcal{U}^{\delta}} $.

\item
$H_{\varepsilon} '$ still satisfies  \ref{F4} and \ref{F5}.
\item
$|H_{\varepsilon} ' - H_{\varepsilon} | \le c_0 \delta$ and   $|d H_{\varepsilon} ' -dH_{\varepsilon} | \le c_0 \delta$.

 \item
The periodic orbits of $\varphi_{H'_{\varepsilon}}$   with period less than or equal to $d$ are either hyperbolic or $d$-negative elliptic.  In other words,   $\varphi_{H'_{\varepsilon}}$ is  $d$-nondegenerate  and    satisfies \ref{assumption1}.
\end{enumerate}

\begin{remark}
Because we take $H_{\varepsilon} =-\varepsilon f$, the maximum points $\{y_i^+\}$ of $f$ are the minimum points of  $H_{\varepsilon}$.  We use $\{y_-^i\}$ to denote the minimum points  of $H_{\varepsilon}$ from now on.
\end{remark}

%Let $y_-=[0,1] \times (y_-^1, ...y_-^d)$. By Lemma \ref{lem7}, it is a cycle   and it represents a class $e_- \in HF(\Sigma, \underline{L})$

Let $y$ be a critical point of $H_{\varepsilon}$. Let $\gamma_{y}$ denote the constant simple periodic orbit at the critical point $y$. Note that the period of $\gamma_y$ is $1$.  We define special PFH generators and a Reeb chord as follows:
\begin{enumerate}
\item
Let $I=(i_1, ...,i_d)$. Here we allow $i_j=i_k$ for $j\ne k$. Let $\alpha_I=\gamma_{y_{-}^{i_1}} ... \gamma_{y_{-}^{i_d}}$.  When $I=(1,2, ..., d)$, we denote $\alpha_I$ by $\alpha_{\diamondsuit}$.   Here we use multiplicative notation  to denote an orbit set instead.

\item
$\mathbf{y}_{\diamondsuit}:=[0,1] \times (y_-^1, ...,y_-^d)$.
\end{enumerate}
 Let $\alpha=\gamma_{p_1} \cdots \gamma_{p_d}$ and  $\beta=\gamma_{q_1} \cdots \gamma_{q_d}$ be two orbit sets, where $p_i , q_j \in Crit(H_{\varepsilon})$.  Define a relative homology class $Z_{\alpha, \beta}$ as follows:  Let $\eta=\sqcup_{i=1}^d \eta_i: \sqcup_{i=1}^d [0,1] \to \Sigma$ be a union of paths with $d$ components such that $ \eta_i(1) =p_i$ and $ \eta_i(0) =q_i$. Define  a relative homology class by
	\begin{equation}
		Z_{\alpha, \beta}:=[\Psi_{H_{\varepsilon}}(S^1 \times \eta)] \in H_2(Y_{\varphi_{H_{\varepsilon}}}, \alpha, \beta).
	\end{equation}
We also use this way to define  $Z_{\alpha} \in H_2(Y_{\varphi_{H_{\varepsilon}}}, \alpha, \gamma_H^{\textbf{x}})$.
Note that $Z_{\alpha, \beta}= Z_{\alpha}-Z_{\beta}.$
% When $\alpha = \alpha_{\diamondsuit}$, we just write  $Z_{\alpha_{{\diamondsuit}}} = Z_{{\diamondsuit}}$.

The following lemma tells us that $(\textbf{y}_{\diamondsuit}, A_{\textbf{y}_{\diamondsuit}})$ is a cycle.
\begin{lemma} \label{lem7}
Let $d_J$ be the differential of $CF(\Sigma,  \varphi_{H'_{\varepsilon}}(\underline{L}), \underline{L} ) $. Then $d_J=0$.  In particular,  $(\mathbf{y}_{\diamondsuit}, A_{\mathbf{y}_{\diamondsuit}})$ is a cycle, where $A_{\mathbf{y}_{\diamondsuit}}$ is the relative homology class  defined  in  (\ref{eq3}).
\end{lemma}
\begin{proof}
By \ref{F5}, we know that $CF(\Sigma, \varphi_{H'_{\varepsilon}}(\underline{L}),  \underline{L}) \cong \oplus^{2^d} R$. According to  Lemma 6.8 in \cite{CHMSS} and  the isomorphism (\ref{eq30}) (Theorem 1 of \cite{GHC2}), we know that $$CF(\Sigma,  \varphi_{H'_{\varepsilon}}(\underline{L}),  \underline{L} ) \cong H^*(\mathbb{T}^d, R) \cong HF(\Sigma,   \varphi_{H'_{\varepsilon}}(\underline{L}), \underline{L})$$
as vector spaces. Since $\operatorname{dim}_{R} HF(\Sigma,   \varphi_{H'_{\varepsilon}}(\underline{L}), \underline{L} )  \le \operatorname{dim}_{R}  \ker d_J \le \operatorname{dim}_{R} CF(\Sigma,   \varphi_{H'_{\varepsilon}}(\underline{L}), \underline{L})$,  we  must have $d_J= 0$.
\end{proof}

 %Define a chain $\mathfrak{c}=\sum (\alpha_I, Z_I)$ satisfying the following conditions
%\begin{itemize}
%\item
%$\alpha_I= \gamma_{y_{i_1}^-} ... \gamma_{y_{i_d}^-}$ for $I=(i_1, ...i_d)$. Here we allow $i_j=i_k$ for $j\ne k$.
%\item
%$Z_I \in H_2(Y, \alpha_I, \gamma_{ref})$ are relative homology classes such that $Z_I-Z_{I'} =Z_{\alpha_I, \alpha_{I'}}$.
%\end{itemize}
%If $I=(1, 2, ... d)$, we denote $\alpha_I$ by $\alpha_-$.

To prove the non-vanishing result, our idea is to show that $\mathcal{OC}(\underline{L}, H'_{\varepsilon})_J (\mathbf{y}_{\diamondsuit}, A_{\mathbf{y}_{\diamondsuit}})$ is non-exact.
To this end, we take $J$ in a smaller set of almost complex structures   $ \mathcal{J}(W, \Omega_{H'_{\varepsilon}}) $ instead. Here
 $ \mathcal{J}(W, \Omega_{H'_{\varepsilon}})  \subset  \mathcal{J}_{tame}(W, \Omega_{H'_{\varepsilon}}) $  is   a set of almost complex structures  which are the restriction of  admissible almost complex structures   in  $\mathcal{J}(Y_{\varphi_{ H'_{\varepsilon}}}, \omega_{\varphi_{H'_{\varepsilon}}} )$.
%For computation, we take     $J \in \mathcal{J}(W, \Omega_{H'_{\varepsilon}}) $ instead.
The reason  of using such a $J$ is that  $u_{y_i} $ is a $J$-holomorphic HF-PFH curve  in $\mathcal{M}^J(y_i, \gamma_{y_i})$, where  $u_{y_i} $  is  the restriction of $\mathbb{R} \times \gamma_{y_i}$ to $W$. It is called a \textbf{horizontal section of    $(W, \Omega_{H_{\varepsilon}'}, \mathcal{L}_{{H'_{\varepsilon}}}, J)$}. Moreover, it is easy to check  that $\operatorname{ind} u_{y_-^i} =0$  from the definition.

%Now we work with a smaller set of almost complex structures   $\mathcal{J}(W, \Omega_H)$.
The following lemma tells us that  the open-closed at the chain level are still well defined by using  $J \in \mathcal{J}(W, \Omega_H)$.
\begin{lemma}
We have the following statements about the transversality.
\begin{itemize}
   \item
   There is a Baire subset of $\mathcal{J}(W, \Omega_H)$, denoted by $\mathcal{J}^{reg}(W, \Omega_H)$. For  $J \in \mathcal{J}^{reg}(W, \Omega_H)$,   if $u$ is  $J$-holomorphic  HF-PFH curve   which is not a horizontal section, then $u$ is Fredholm regular.
   \item
   For  $J \in \mathcal{J}(W, \Omega_H)$, if $u$ is a horizontal section with $\operatorname{ind} u=0$, then $u$ is Fredholm regular.
 \end{itemize}
\end{lemma}
\begin{proof}
 The proof is the same as the proof of Lemma 5.8 in \cite{GHC2}.
\end{proof}	

The Fredholm regularity   implies that for  $J' \in \mathcal{J}_{tame}(W, \Omega_{H'_{\varepsilon}})$ that is close to   $J \in \mathcal{J}(W, \Omega_{H'_{\varepsilon}})$, we have $\widetilde{\mathcal{OC}}(\underline{L}, H_{\varepsilon}')_{J'}= \widetilde{\mathcal{OC}}(\underline{L}, H_{\varepsilon}')_J$. Thus, we work with  $J \in \mathcal{J}(W, \Omega_{H'_{\varepsilon}})$ from now on.

Another advantage of using   $J \in \mathcal{J}(W, \Omega_{H})$ is that the energy of HF-PFH curves are nonnegative. Moreover, the horizontal sections are characterized  by   energy.
%The following lemma is a key property of the horizontal sections. It tells us
	\begin{lemma} \label{lem8}
		Let    $J \in \mathcal{J}(W, \Omega_H)$. Let $u: \dot{F} \to W$ be a $J$-holomorphic HF-PFH curve in   $(W, \Omega_{H}, \mathcal{L}_{{H}})$.  Then the $\omega_{\varphi_H}$-energy satisfies $$E_{\omega_{\varphi_{H}} }(u) :=\int_{\dot{F}} u^* \omega_{\varphi_H}  \ge 0.$$ Moreover, when $H=H'_{\varepsilon}$, $E_{\omega_{\varphi_H}} (u) =0$ if and only if $u$ is a union of  the horizontal sections.
	\end{lemma}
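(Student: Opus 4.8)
The plan is to establish the energy inequality by writing the $\omega_{\varphi_H}$-energy of an HF-PFH curve as an integral of a nonnegative quantity, exactly as in the analogous computations for the unit (Lemma \ref{lem2}) and in Section 6 of \cite{GHC2}. First I would use the splitting $TW = TW^h \oplus TW^v$ induced by the closed $2$-form $\omega_{\varphi_H}$ together with the fibration $\pi_W$, where $TW^v = \ker (\pi_W)_*$ and $TW^h$ is the $\omega_{\varphi_H}$-orthogonal complement. For $J \in \mathcal{J}(W,\Omega_H)$ adapted to the fibration, $J$ respects this splitting and $\omega_{\varphi_H}$ restricted to $TW^v$ is $\omega$ (so it pairs nonnegatively with $d^{vert}u$ via $J$-compatibility on the fiber), while $\omega_{\varphi_H}$ restricted to $TW^h$ is a nonnegative multiple of the base form $\omega_B = ds\wedge dt$ — this is precisely the statement that the symplectic fibration $(W_H,\Omega_H)\to B$ has nonnegative curvature, which here follows because $\omega_{\varphi_H}$ is already a genuine (closed, fiberwise-symplectic) $2$-form with no negative contributions, and because for $J\in \mathcal{J}(W,\Omega_H)$ coming from an admissible $J$ on $Y_{\varphi_H}$ the form $\omega_{\varphi_H}$ is $J$-tame. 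Writing $du = d^{vert}u + d^{hor}u$ then gives
\begin{equation*}
E_{\omega_{\varphi_H}}(u) = \int_F u^*\omega_{\varphi_H} = \int_F \bigl( |d^{vert}u|^2 + \omega_{\varphi_H}(d^{hor}u, J\, d^{hor}u)\bigr) \ge 0,
\end{equation*}
with both integrand terms nonnegative pointwise.

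Next, for the equality case with $H = H'_{\varepsilon}$, the above shows $E_{\omega_{\varphi_H}}(u) = 0$ forces $d^{vert}u \equiv 0$ and the horizontal part of $du$ to be "flat" in the sense that $u$ is everywhere tangent to the horizontal distribution. A curve with $d^{vert}u \equiv 0$ is a multisection of the flat connection, i.e.\ its image is contained in a union of parallel-transport leaves. Since the parallel transport of the reference fiber used to build $\mathcal{L}_{H'_{\varepsilon}}$ is by construction compatible with the horizontal distribution, a horizontal multisection satisfying the Lagrangian boundary conditions and asymptotic to Reeb chords/orbits must project to a $\mathbb{R}$-invariant configuration over the cylindrical ends; matching the asymptotics forces each component to be the restriction to $W$ of some $\mathbb{R}\times \gamma_{y}$ for a critical point $y$ of $H_{\varepsilon}$. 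One then checks that the Lagrangian boundary condition (each component of $\partial \dot F$ mapping to exactly one $L^i_{H'_\varepsilon}$) and the asymptotic-to-a-Reeb-chord condition pin $y$ down so that $u$ is exactly a union of the horizontal sections $u_{y^i_-}$. Conversely, a union of horizontal sections has image inside leaves on which $\omega_{\varphi_{H'_\varepsilon}}$ pulls back to a form with vanishing vertical part and vanishing horizontal part (since $H_\varepsilon$ is autonomous along these leaves, the curvature term $\dot\chi H$ appearing in the analogous computation in Lemma \ref{lem2} is absent here, or more precisely the relevant horizontal $2$-form vanishes on the critical locus), so $E_{\omega_{\varphi_{H'_\varepsilon}}}(u) = 0$.

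The main obstacle I anticipate is making the equality-case argument fully rigorous: one must rule out the possibility that a zero-energy curve is a horizontal multisection that is \emph{not} a union of sections over critical points — for instance a multisection wrapping around one of the period-$>d$ orbit families $\gamma_{r_0,\theta}$ from \ref{F4}, or sitting over a non-critical leaf. This is controlled by the asymptotic conditions: the positive end is asymptotic to a Reeb chord $\mathbf{y}$, whose components lie at critical points of $\cup_i f_{L_i}$ by \ref{F5}, and the negative end is asymptotic to an orbit set of degree $d$; combined with $d^{vert}u\equiv 0$ and the open mapping theorem applied fiberwise, this forces the image to lie over the (isolated) critical points rather than over a positive-dimensional leaf. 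I would cite the corresponding argument in Lemma 6.6 of \cite{GHC2} (which handles exactly this flatness-plus-asymptotics rigidity in the closed-open setting) and note that the open-closed case is identical after replacing the HF-curve asymptotics on one end by the PFH-orbit asymptotics, using Theorem \ref{thm5} only to the extent needed to know these horizontal sections have $ind = 0$. The rest is routine.
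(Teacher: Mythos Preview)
Your proposal is correct and follows exactly the approach the paper takes --- the paper's proof is literally a one-line reference to Lemma~6.6 of \cite{GHC2}, and you have reproduced that argument (horizontal/vertical splitting, nonnegativity of each term, $d^{vert}u\equiv 0$ forcing horizontality) and cited the same source. Two small corrections that do not affect validity: first, the horizontal term $\omega_{\varphi_H}(d^{hor}u, J\,d^{hor}u)$ is in fact identically zero here (since $i_R\omega_{\varphi_H}=0$ and $\omega_{\varphi_H}$ has no $ds$-component on $T^hW=\mathrm{span}\{\partial_s,\partial_t\}$), so the ``nonnegative curvature'' discussion is unnecessary; second, in the equality case the conclusion is that $u$ is a union of horizontal sections $u_{y_i}$ at \emph{arbitrary} critical points $y_i\in\mathrm{Crit}(f|_{L_i})$ --- the boundary and asymptotic conditions alone do not force $y_i=y_i^-$.
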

\begin{proof}
The proof is the same as Lemma 6.6 in \cite{GHC2}.
\end{proof}

The horizontal section  $u_{{\diamondsuit}}:=\cup_{i=1}^d u_{y_-^i}$ represents  a relative homology class $\mathcal{Z}_{hor}$.   We take the reference relative homology class to be $\mathcal{Z}_{ref} = A_{\textbf{y}_{\diamondsuit}} \# \mathcal{Z}_{hor} \#Z_{\alpha_{\diamondsuit}} \in H_2(W, \textbf{x}_H, \gamma_H^{\textbf{x}}) $. Using the horizontal sections, we obtain the leading term of $\widetilde{\mathcal{OC}}(\underline{L}, {H'_{\varepsilon}})_J (\mathbf{y}_{\diamondsuit}, A_{\mathbf{y}_{\diamondsuit}})$ in the following lemma.
\begin{lemma} \label{lem14}
For a generic $J \in \mathcal{J}(W, \Omega_{H'_{\varepsilon}})$, we have
\begin{equation*}
\widetilde{\mathcal{OC}}(\underline{L}, {H'_{\varepsilon}})_J (\mathbf{y}_{\diamondsuit}, A_{\mathbf{y}_{\diamondsuit}})=  (\alpha_{\diamondsuit}, Z_{\alpha_{\diamondsuit}}) +\sum (\beta, Z),
\end{equation*}
%where $Z_-$ is determined by $A\#\mathcal{Z}_{ref} \#(-Z_-) =\mathcal{Z}_{hor}$.
Here $(\beta, Z)$ satisfies   $\beta \ne \alpha_{\diamondsuit}$  and $\mathbb{A}_{H'_{\varepsilon}} (\alpha_{\diamondsuit}, Z_{\alpha_{\diamondsuit}}) + \eta J_0(Z_{\alpha_{\diamondsuit}} -Z)> \mathbb{A}_{H'_{\varepsilon}} (\beta, Z)$.
% $\int_{Z_-} \omega_{\varphi_H}  -\int_Z \omega_{\varphi_H}>0$.
\end{lemma}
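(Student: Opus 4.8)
The plan is to mimic the computation of the closed-open morphism from Lemma 6.6 of \cite{GHC2}, using the same index-and-energy constraints that appeared in the proofs of Proposition \ref{lem1} and Lemma \ref{lem2}. First I would fix a generic $J \in \mathcal{J}(W,\Omega_{H'_{\varepsilon}})$ and consider an arbitrary HF-PFH curve $u \in \mathcal{M}^J(\mathbf{y}_{\diamondsuit},\gamma,\mathcal{Z})$ with $I(\mathcal{Z})=0$ contributing to $\mathcal{OC}_{\mathcal{Z}_{ref}}(W,\Omega_{H'_{\varepsilon}},\mathcal{L}_{H'_{\varepsilon}})_J(\mathbf{y}_{\diamondsuit},A_{\mathbf{y}_{\diamondsuit}})$. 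By \ref{F3} the negative asymptotics $\gamma$ must be a product of constant orbits $\gamma_{q_i}$ at critical points of $H_{\varepsilon}$, so $\gamma=\beta$ for some orbit set $\beta=\gamma_{q_1}\cdots\gamma_{q_d}$, and the class $Z$ defined by $A_{\mathbf{y}_{\diamondsuit}}\#\mathcal{Z}\#Z=\mathcal{Z}_{ref}$ is of the form described. The crucial point is that, since the curve is asymptotic to a Reeb chord, it is simple, so transversality is achieved generically and Theorem \ref{thm5} applies: $I(u)=\mathrm{ind}(u)+2\delta(u)$ together with the $J_0$-bound and the non-negativity of $J_0$.

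Next I would run the index bookkeeping. Writing $u$ as a union of its irreducible components and using Theorem \ref{thm5} together with Lemma \ref{lem3} (which records how $I$ and $J_0$ change under adding $[B_i]$, $m[\Sigma]$, or an $[S]$-class), the condition $I(u)=0$ forces each component to have non-negative ECH index summing to zero, hence each has $I=0$; moreover no bubbling into fiber disks $B^i_z$ or fiber spheres $\Sigma$ can occur, exactly as in the proof of Proposition \ref{lem1}, because each such bubble contributes at least $2$ to the ECH index. The horizontal sections $u_{y^i_-}$ have $\mathrm{ind}=0$ and are regular by the argument of Lemma 2.11 of \cite{PS} (non-negative symplectic curvature), and Lemma \ref{lem8} shows $E_{\omega_{\varphi_{H'_{\varepsilon}}}}(u)\ge 0$ with equality iff $u$ is a union of horizontal sections. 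Combining the energy identity coming from $A_{\mathbf{y}_{\diamondsuit}}\#\mathcal{Z}\#Z=\mathcal{Z}_{ref}$ with $\int u^*\omega_{\varphi_{H'_{\varepsilon}}}\ge 0$ and $J_0(u)\ge 0$, one reads off that $\mathbb{A}^{\eta}_{H'_{\varepsilon}}(\alpha_{\diamondsuit},Z_{\alpha_{\diamondsuit}})\ge \mathbb{A}^{\eta}_{H'_{\varepsilon}}(\beta,Z)$ for every contributing $(\beta,Z)$, with equality only for the distinguished curve. Here one also uses \ref{F1}–\ref{F5} and the small-$\epsilon$ hypothesis to guarantee that the only contributing $I=0$ curve whose negative end is $\alpha_{\diamondsuit}$ (and whose energy is $0$) is precisely the union of horizontal sections $\cup_i u_{y^i_-}$, whose class is $Z_{\alpha_{\diamondsuit}}$ by the choice $\mathcal{Z}_{ref}=A_{\mathbf{y}_{\diamondsuit}}\#\mathcal{Z}_{hor}\#Z_{\alpha_{\diamondsuit}}$. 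This yields the coefficient $1$ on $(\alpha_{\diamondsuit},Z_{\alpha_{\diamondsuit}})$ and the strict action inequality for all remaining terms.

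I expect the main obstacle to be controlling the curves with \emph{non-zero} energy: showing that, even though many other $I=0$ HF-PFH curves may contribute to the chain-level map, every one of them lands on a generator $(\beta,Z)$ of strictly smaller $\mathbb{A}^{\eta}_{H'_{\varepsilon}}$-action. This amounts to verifying that $E_{\omega_{\varphi_{H'_{\varepsilon}}}}(u)+\eta J_0(u)>0$ whenever $u$ is not the union of horizontal sections, which follows from Lemma \ref{lem8} for the energy term (strictly positive unless $u$ is horizontal) combined with $J_0(u)\ge 0$; the delicate part is ruling out that a non-horizontal curve could nonetheless have zero energy by degenerating onto fiber components, which is precisely where the bubble-exclusion argument via the ECH index, together with the perturbation conditions \ref{F1}–\ref{F5} that pin down the periodic orbits and Reeb chords, must be invoked carefully. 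Since all of these ingredients are established earlier (Theorems \ref{thm3}, \ref{thm5}, Lemmas \ref{lem3}, \ref{lem8}, \ref{lem12}) and the argument parallels Lemma 6.6 of \cite{GHC2} verbatim in structure, I would present the proof by indicating these steps and referring to \cite{GHC2} for the routine computations, exactly as the excerpt does for Lemma \ref{lem2}.
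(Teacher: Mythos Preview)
Your proposal is correct and follows essentially the same approach as the paper's proof: use Lemma~\ref{lem8} to get $E_{\omega_{\varphi_{H'_\varepsilon}}}(u)\ge 0$ with equality only for horizontal sections, combine this with $J_0(u)\ge 0$ from Theorem~\ref{thm5}, and translate $E+\eta J_0$ into the action difference via $A_{\mathbf{y}_{\diamondsuit}}\#\mathcal{Z}\#Z=\mathcal{Z}_{ref}$. The paper is slightly more explicit in one spot you gloss over: for curves landing on $\alpha_{\diamondsuit}$ it writes the class as $\mathcal{Z}_{hor}+\sum c_i[B_i]+m[\Sigma]+[S]$ and uses the monotonicity identity $(k+1)\lambda=1+2\eta(d+g-1)$ (a consequence of \ref{assumption6}) to show that $I=0$ forces $E+\eta J_0=\lambda\bigl(\sum_i c_i+(k+1)m\bigr)=0$ exactly, hence $E=0$ and $u$ is horizontal; your sketch reaches the same conclusion but should make this algebraic step explicit rather than appealing to bubble exclusion, which is not really the mechanism here.
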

\begin{proof}
 Consider the moduli space of HF-PFH curves $\mathcal{M}^J(\textbf{y}_{\diamondsuit}, \alpha_{\diamondsuit}, \mathcal{Z})$ with $I(\mathcal{Z})=0$.  Then
 $$\mathcal{Z}=\mathcal{Z}_{hor} + \sum_{i=1}^{k+1} c_{i} [B_i] + n[\Sigma] +[S]$$
 for some $c_i, n \in\mathbb{Z}$ and $[S]\in H_1(S^1, \mathbb{Z}) \otimes H_1(\Sigma, \mathbb{Z})$.
% where $\mathcal{Z}_{hor} $ is the relative homology class represented by the horizontal sections.
 Let $u \in \mathcal{M}^J(\textbf{y}_{\diamondsuit}, \alpha_{\diamondsuit}, \mathcal{Z})$. By definition, $I(\mathcal{Z}_{hor}) = I(u_{\diamondsuit}) =0$.
 By the fourth bullet of Lemma \ref{lem3} and Lemma \ref{lem17}, we have
  \begin{equation} \label{eq44}
  \begin{split}
  &0=I(u)=2\sum_{i=1}^{k+1} c_i  + 2n(k+1),  \\
  &J_0(u)=2c_{k+1}(d+g-1)+2n(d+g-1).
  \end{split}
  \end{equation}
 %$J_0(u)=2c_{k+1}(d+g-1)+2m(d+g-1)$. Also,  $I(u)=0$ implies that $\sum_{i=1}^{k+1} c_i  + m(k+1) =0$.
 On the other hand, by \ref{assumption6}, we have
 \begin{equation}\label{eq45}
 \begin{split}
 E_{\omega_{H'_{\varepsilon}}}(u) + \eta J_0(u)&= \int |d^{vert}u|^2+ \eta J_0(u)\\
 &= \sum_{i=1}^{k} \lambda c_i  +c_{k+1} \int_{B_{k+1}} \omega+ 2 \eta c_{k+1}(d+g-1)+ n + \eta2n(d+g-1) \\
 &= \lambda \left(\sum_{i=1}^{k+1}  c_i + (k+1)n \right).
 \end{split}
  \end{equation}
 By (\ref{eq44}) and (\ref{eq45}), we have $ E_{\omega_{H'_{\varepsilon}}}(u) + \eta J_0(u)=0$.  Since $J_0(u) \ge 0$ (Lemma \ref{lem17}),  $ E_{\omega_{H'_{\varepsilon}}}(u) =  J_0(u)=0$.   Lemma \ref{lem8} implies   that  $u  =\cup_{i=1}^d u_{y_-^i}$ is a union of horizontal sections. In other words, the union of horizontal sections $u_{{\diamondsuit}}$ is the unique element in   $\mathcal{M}^J(\textbf{y}_{\diamondsuit}, \alpha_{\diamondsuit})$ with $I=0$. Therefore, $<\mathcal{OC}(\underline{L}, {H'_{\varepsilon}})_J (\mathbf{y}_{\diamondsuit}, A_{\mathbf{y}_{\diamondsuit}}),  (\alpha_{\diamondsuit}, Z_{\alpha_{\diamondsuit}})>=1.$

If $u$ is an  HF-PFH curve in   $\mathcal{M}^J(\textbf{y}_{\diamondsuit}, \beta)$ and $\beta \ne \alpha_{{\diamondsuit}}$, then $E_{ \omega_{\varphi_{H'_{\varepsilon}}} }(u) >0$; otherwise, by Lemma \ref{lem8}, $u$ is horizontal and $u$ must be asymptotic to $\alpha_{{\diamondsuit}}$.  By  Lemma \ref{lem17}, we have
\begin{equation*}
\begin{split}
0<E_{\omega_{H'_{\varepsilon}}}(u) +\eta J_0(u)&=\int_{\mathcal{Z}_{ref}} \omega_{\varphi_{H'_{\varepsilon}}} -\int_{A_{\textbf{y}_{\diamondsuit}}} \omega -\int_Z  \omega_{\varphi_{H'_{\varepsilon}}} + \eta ( J_0(\mathcal{Z}_{ref}) - J_0(A_{\mathbf{y}_{\diamondsuit}}) -J_0(Z))\\
&=\int_{Z_{\alpha_{\diamondsuit}} }  \omega_{\varphi_{H'_{\varepsilon}}} - \int_Z  \omega_{\varphi_{H'_{\varepsilon}}} + \eta (J_0(Z_{\alpha_{\diamondsuit}}) -J_0(Z)) \\
&=\mathbb{A}_{H'_{\varepsilon}} (\alpha_{\diamondsuit}, Z_{\alpha_{\diamondsuit}}) - \mathbb{A}_{H'_{\varepsilon}} (\beta, Z) + \eta J_0(Z_{\alpha_{\diamondsuit}} -Z).
%+  \int_{Z_{\alpha_{{\diamondsuit}}}}  \omega_{\varphi_{H'_{\varepsilon}}}   -\int_Z  \omega_{\varphi_{H'_{\varepsilon}}} =\int_{Z_{\alpha_{{\diamondsuit}}}}  \omega_{\varphi_{H'_{\varepsilon}}} -\int_Z  \omega_{\varphi_{H'_{\varepsilon}}} .
\end{split}
\end{equation*}
\end{proof}

Let $\mathfrak{c}:=\widetilde{\mathcal{OC}}( \underline{L} , {H'_{\varepsilon}})_J (\mathbf{y}_{{{\diamondsuit}}}, A_{\mathbf{y}_{\diamondsuit}})$.  By Lemma \ref{lem7}, $\mathfrak{c}$ is a cycle.  However, it is difficult to determine  whether $\mathfrak{c}$ is exact or not  at this stage, because we do not  know yet the differential on $\widetilde{PFH}(\Sigma, \varphi_{H'_{\varepsilon}}, \gamma^{\mathbf{x}}_{H'_{\varepsilon}})$.
 To show that $\mathfrak{c}$ is non-exact,  the strategy is to find the corresponding cycle $\mathfrak{c}' \in \widetilde{PFC}(\Sigma, \varphi_{H_{\varepsilon}}, \gamma^{\mathbf{x}}_{H_{\varepsilon}})$, as the  elements  in  $\widetilde{PFC}(\Sigma, \varphi_{H_{\varepsilon}}, \gamma^{\mathbf{x}}_{H_{\varepsilon}})$   be determined more easily. Thus, we need to  compute the cobordism map  $\mathfrak{I}_{H'_{\varepsilon}, H_{\varepsilon}}$. To this end, we need to introduce some definitions.

%Let $\chi$ be a cut-off function such that $\chi=1$ when $s \ge R_0$ and $\chi=0 $ when $s\le0$. Define $H_s= \chi(s) H'_{\varepsilon} + (1-\chi(s)) H_{\varepsilon}$.  Define a symplectic cobordism by
%\begin{equation*}
%\begin{split}
%&X:= \mathbb{R}_s \times S^1_t \times \Sigma,\\
 %&\omega_X: =\omega +dH_s \wedge dt,  \  \  \Omega_X :=\omega_X +ds \wedge dt.
%end{split}
%\end{equation*}
Let $(X, \Omega_X)$ be the symplectic cobordism defined in (\ref{eq23}).  Take $H_+= H'_{\varepsilon} $ and $H_-=H_{\varepsilon}.$
By \ref{G1},  $\Omega_X=\omega + dH_{\varepsilon} \wedge dt + ds \wedge dt$ is $\mathbb{R}$-invariant over  $\mathbb{R} \times S^1_t \times (\Sigma -\mathcal{U}^{\delta+ \delta_0})$. This region is called a \textbf{product region}.  Take $Z_{ref} =[\mathbb{R} \times S^1 \times \mathbf{x}] \in H_2(X, \gamma_{H'_{\varepsilon}}^{\mathbf{x}},  \gamma_{H_{\varepsilon}}^{\mathbf{x}})$ be the reference homology class.

Let $\mathcal{J}_{comp}(X, \Omega_X)$ be the set of $\Omega_X$-compatible  almost complex structures  such that
\begin{enumerate}
\item
$J_X \vert_{s \ge R_0} \in \mathcal{J}(Y_{\varphi_{H'_{\varepsilon}}}, \omega_{\varphi_{H'_{\varepsilon}}}) $  and $J_X \vert_{s \le 0} \in \mathcal{J} ( Y_{\varphi_{H_{\varepsilon}}}, , \omega_{\varphi_{H_{\varepsilon}}}) $.
\item
$j\circ \pi_*= \pi_* \circ J_X $, where $j$ is the  complex structure on $\mathbb{R}_s \times S^1_t $ that $j(\partial_s) =\partial_t$.

\end{enumerate}
Given orbit sets $\alpha_{\pm}$, let $\overline{\mathcal{M}_i^{J_X}}(\alpha_+, \alpha_-)$ denote the moduli space of broken holomorphic currents in $X$ with ECH index $i$.

In the following lemmas, we   compute  $PFC_{Z_{ref}}^{sw}(X, \Omega_X)_{J_X}(\mathfrak{c}). $
\begin{lemma} \label{lem4}
Let $J_{X} \in \mathcal{J}_{comp}(X, \Omega_X)$ be an almost complex structure such that it is $\mathbb{R}$-invariant in the product region $\mathbb{R} \times S^1_t \times (\Sigma -\mathcal{U}^{\delta+ \delta_0})$.  Then $\overline{\mathcal{M}_0^{J_{X}}}(\alpha_{{\diamondsuit}}, \alpha_I) =\emptyset$ unless $\alpha_I =\alpha_{\diamondsuit}$. In the case that $\alpha_I =\alpha_{\diamondsuit}$,  the trivial cylinder $\mathbb{R} \times \alpha_{\diamondsuit}$ is the unique element in $\overline{\mathcal{M}_0^{J_{X}}}(\alpha_{\diamondsuit}, \alpha_{\diamondsuit})$.
\end{lemma}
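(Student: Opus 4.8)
\textbf{Proof strategy for Lemma \ref{lem4}.}
The plan is to mimic the computation of the open-closed morphism in Lemma \ref{lem14}, now in the symplectization-type cobordism $(X,\Omega_X)$, using the energy/index bookkeeping together with the fact that $J_X$ is $\mathbb R$-invariant on the product region $\mathbb R\times S^1_t\times(\Sigma-\mathcal U^{\delta+\delta_0})$. First I would fix a broken holomorphic curve $u\in\overline{\mathcal M_0^{J_X}(\alpha_{\diamondsuit},\alpha_I)}$ and record the two topological identities available for the relative homology class $\mathcal Z$ of each level: the ECH index additivity/ambiguity formula (Lemma \ref{lem3}, which in this setting reads $I(\mathcal Z')=I(\mathcal Z)+\sum_i 2c_i+2m(k+1)$ and $J_0(\mathcal Z')=J_0(\mathcal Z)+2m(d+g-1)$), and the $\omega_{\varphi_{H_\varepsilon}}$-energy formula. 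Since $H_+=H'_\varepsilon$ satisfies \ref{assumption1}, every PFH-type curve in $X$ has $I\ge 0$ (cited in Section 7.1 of \cite{GHC}), and by Theorems \ref{thm3}, \ref{thm5} the same holds for the HF-PFH levels; so $I=0$ on the whole building forces $I=0$ on each level, which pins down $\sum_i c_i+(k+1)m=0$ level by level.

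Next I would run the energy estimate. For a curve in $X$ with $J_X\in\mathcal J_{comp}(X,\Omega_X)$, the $\Omega_X$-compatibility and $j\circ\pi_*=\pi_*\circ J_X$ give $\int u^*\omega_{\varphi_{H_\varepsilon}}=\int|d^{vert}u|^2+(\text{nonnegative horizontal term})\ge 0$, exactly as in Lemma \ref{lem8}; combined with $J_0\ge 0$ (Theorem \ref{thm5}) and the topological identity one gets that the total action-type quantity $E_{\omega}(u)+\eta J_0(u)$ equals a nonnegative multiple of $\lambda$ times $\sum_i c_i+(k+1)m=0$, hence vanishes. Therefore every level has zero $\omega_{\varphi_{H_\varepsilon}}$-energy, so $d^{vert}u=0$ on each level, i.e. each component is a (branched cover of a) trivial cylinder over a periodic orbit. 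Because the only periodic orbits of degree $\le d$ are the constant ones at critical points of $H_\varepsilon$ (property \ref{F3}), and the negative asymptotic of the building is $\alpha_I$ while the positive one is $\alpha_{\diamondsuit}$, a chain of trivial cylinders over constant orbits forces $\alpha_I=\alpha_{\diamondsuit}$ and the building to be the trivial cylinder over $\alpha_{\diamondsuit}$. For $\alpha_I=\alpha_{\diamondsuit}$ one then checks by the standard transversality/automatic-regularity argument (as in Lemma \ref{lem2} and \cite{PS}) that the trivial cylinder is cut out transversely and is the unique element.

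One point that needs care, and which I expect to be the main obstacle, is controlling the curves in the part of $X$ that is \emph{not} in the product region, i.e. over $\mathcal U^{\delta+\delta_0}$: there $J_X$ is genuinely $s$-dependent and the open-mapping / foliation argument that rules out nontrivial components does not apply directly. Here I would invoke property \ref{F4}: the periodic orbits foliating $S^1_t\times\partial\mathcal U^{r_0}_p$ have period strictly greater than $d$, so no degree-$\le d$ orbit set can have a component entering $\mathcal U^{\delta+\delta_0}$ in the relevant action window, and the energy bound $\int u^*\omega_{\varphi_{H_\varepsilon}}=0$ already forbids any component with nonzero vertical derivative regardless of where it sits. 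The remaining subtlety is the gluing/compactness statement that the moduli space $\overline{\mathcal M_0^{J_X}(\alpha_{\diamondsuit},\alpha_I)}$ is exactly the space of such buildings; this is the Hutchings--Taubes style argument already used in the construction of $PFH^{sw}$ and invoked in the partial-invariance discussion, so I would cite it rather than reprove it. Putting these together yields the claimed description of $\overline{\mathcal M_0^{J_X}(\alpha_{\diamondsuit},\alpha_I)}$.
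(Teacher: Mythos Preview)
Your approach has a genuine gap in handling the non-product region. You assert that ``the energy bound $\int u^*\omega_{\varphi_{H_\varepsilon}}=0$ already forbids any component with nonzero vertical derivative regardless of where it sits,'' but this is precisely where the argument breaks. Lemma~\ref{lem8} applies to $W$, which is literally a piece of the symplectization $\mathbb R\times Y_{\varphi_{H'_\varepsilon}}$; there the relevant $2$-form is the stable Hamiltonian form and zero energy characterizes horizontal sections. The cobordism $X$ is different: $\omega_X=\omega+d(H^s_t\,dt)$ interpolates between $H'_\varepsilon$ and $H_\varepsilon$, and over $\mathcal U^{\delta}$ the horizontal part of $u^*\omega_X$ carries a curvature term $\partial_sH_s=\dot\chi(s)(H'_\varepsilon-H_\varepsilon)$ of indeterminate sign. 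Hence $\int u^*\omega_X=0$ does \emph{not} force $d^{vert}u=0$ pointwise once the curve enters $\mathcal U^{\delta}$. Your appeal to \ref{F4} only says the orbits $\gamma_{r_0,\theta}$ have period $>d$ and so are not PFH generators; it does not by itself prevent a holomorphic curve from crossing into the perturbed region. A minor side issue: $X$ is a PFH-to-PFH cobordism with no Lagrangian boundary, so there are no disk classes $[B_i]$, and the index reads $I(Z)=2m(k+1)$ rather than $\sum_i 2c_i+2m(k+1)$.

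The missing step, and what the paper actually does, is a confinement argument via intersection positivity. From $I=0$ one gets $m=0$ directly, hence $\int_Z\omega_X=0$. Since $J_X$ is $\mathbb R$-invariant on the product region, the cylinders $\mathbb R\times\gamma_{r_0,\theta}$ are $J_X$-holomorphic, and one computes the topological intersection number $\#\bigl(Z\cap(\mathbb R\times\gamma_{r_0,\theta})\bigr)=0$. Positivity of intersection then forces $\mathcal C$ to miss these barrier cylinders and hence to lie entirely in the product region $\mathbb R\times S^1\times(\Sigma-\mathcal U^{\delta+\delta_0})$. There $(X,\Omega_X,J_X)$ is genuinely symplectization-like, so $\int_{\mathcal C}\omega_X=0$ implies $\mathcal C$ is a union of trivial cylinders (Proposition~9.1 of \cite{H1}), giving $\alpha_I=\alpha_\diamondsuit$ and uniqueness. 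No $J_0$ accounting or level-by-level argument is needed.
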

\begin{proof}
 Let $\mathcal{C} \in   \overline{\mathcal{M}_0^{J_{X}}}(\alpha_{{\diamondsuit}}, \alpha_I) $ be a (broken) holomorphic curve. Let $Z  \in H_2(X, \alpha_{\diamondsuit}, \alpha_I)$ denote the  relative homology class of $\mathcal{C}$.  Then $Z$ can be written as $ Z_{\alpha_{\diamondsuit}, \alpha_I} + n(Z) [\Sigma ] + [S]$, where $[S] \in H_1(S^1,\mathbb{Z}) \otimes H_1(\Sigma, \mathbb{Z})$. It is easy to show that
\begin{equation*}
\begin{split}
&I(\alpha_{\diamondsuit}, \alpha_I, Z) =2n(Z)(k+1)   \mbox{ and }  \int_Z \omega_X = n(Z).
\end{split}
\end{equation*}
Then $n(Z) =0$ because  $I=0$.   Also,  by definition, we have
\begin{equation*}
\begin{split}
\#(\mathcal{C}  \cap \mathbb{R} \times \gamma_{r_0, \theta}) =\# ((Z_{\alpha_{\diamondsuit}, \alpha_I} + [S]) \cap  \mathbb{R} \times \gamma_{r_0, \theta} ) = 0.
\end{split}
\end{equation*}
Note that the above intersection numbers are well defined because $\gamma_{r_0, \theta}$ and $\alpha_I$ are disjoint.
Because $\mathbb{R} \times \gamma_{r_0, \theta} $ is holomorphic by the choice of $J_X$,  the above equality implies that $\mathcal{C}$ doesn't intersect  $\mathbb{R} \times \gamma_{r_0, \theta} $.  Consequently,    $\mathcal{C}$  is contained in the product region $\mathbb{R} \times S^1_t \times (\Sigma -\mathcal{U}^{\delta+ \delta_0})$.  Then  $  \int_{\mathcal{C}} \omega_X =0$ implies that $\mathcal{C}$ is a union of trivial cylinders (Proposition 9.1 of \cite{H1}). Thus we must have $\alpha_I=\alpha_{\diamondsuit}$.
\end{proof}

\begin{lemma} \label{lem5}
Let $J_X $ be a  generic almost complex structure  in $\mathcal{J}_{comp}(X, \Omega_X)$ such that  $J_X$ is $\mathbb{R}$-invariant in the product region $\mathbb{R} \times S^1_t \times (\Sigma -\mathcal{U}^{\delta+ \delta_0})$.    Then we have
\begin{equation*}
PFC_{Z_{ref}}^{sw}(X, \Omega_X)_{J_X} (\alpha_{\diamondsuit}, Z_{\alpha_{\diamondsuit}}) =  (\alpha_{\diamondsuit}, Z_{\alpha_{\diamondsuit}}) +\sum (\beta', Z'),
\end{equation*}
where $(\beta', Z')$ satisfies $\beta' \ne \alpha_I$ and
$\mathbb{A}_{H'_{\varepsilon}}(\alpha_{\diamondsuit}, Z_{\alpha_{\diamondsuit}})- \mathbb{A}_{H_{\varepsilon}}( \beta', Z') \ge \frac{1}{4(k+1)}$.
\end{lemma}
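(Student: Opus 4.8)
The plan is to deduce Lemma~\ref{lem5} from three inputs: the holomorphic curves axiom for the Seiberg--Witten cobordism map recalled from \cite{GHC}, the rigidity statement of Lemma~\ref{lem4}, and the discreteness of the PFH action spectrum. Since $\varphi_{H'_\varepsilon}$ satisfies \ref{assumption1} and $\varphi_{H_\varepsilon}$ is $d$-nondegenerate, \cite{GHC} supplies, for a generic $J_X\in\mathcal{J}_{comp}(X,\Omega_X)$, a chain-level representative $PFC^{sw}_{Z_{ref}}(X,\Omega_X)_{J_X}$ of $PFH^{sw}_{Z_{ref}}(X,\Omega_X)$ whose matrix coefficients are controlled by broken $J_X$-holomorphic currents of ECH index $0$. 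Concretely, this chain map does not increase the action functional, and its action-preserving (``leading'') part is the $\mathbb{Z}/2$-count of index-$0$ broken $J_X$-holomorphic buildings, so that every other term $(\beta',Z')$ that occurs satisfies the strict inequality $\mathbb{A}_{H_\varepsilon}(\beta',Z')<\mathbb{A}_{H'_\varepsilon}(\alpha_\diamondsuit,Z_{\alpha_\diamondsuit})$.

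First I would feed Lemma~\ref{lem4} into this. For our $J_X$, which is $\mathbb{R}$-invariant over the product region $\mathbb{R}\times S^1_t\times(\Sigma-\mathcal{U}^{\delta+\delta_0})$, the only index-$0$ broken $J_X$-holomorphic current issuing from $(\alpha_\diamondsuit,Z_{\alpha_\diamondsuit})$ is the trivial cylinder over $\alpha_\diamondsuit$, counted once. Hence the leading part of $PFC^{sw}_{Z_{ref}}(X,\Omega_X)_{J_X}(\alpha_\diamondsuit,Z_{\alpha_\diamondsuit})$ is exactly $(\alpha_\diamondsuit,Z_{\alpha_\diamondsuit})$, and every other term $(\beta',Z')$ in the image obeys $\mathbb{A}_{H_\varepsilon}(\beta',Z')<\mathbb{A}_{H'_\varepsilon}(\alpha_\diamondsuit,Z_{\alpha_\diamondsuit})$. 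Moreover, since $\overline{\mathcal{M}_0^{J_X}(\alpha_\diamondsuit,\alpha_I)}=\emptyset$ whenever $\alpha_I\ne\alpha_\diamondsuit$, no generator with orbit set of the form $\alpha_I$ can sit in the action-preserving band, which accounts for the requirement $\beta'\ne\alpha_I$.

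Next I would upgrade the strict inequality to the quantitative gap $\tfrac{1}{4(k+1)}$, using discreteness of the spectrum. A computation of the $\omega_{\varphi_{H_\varepsilon}}$-energy exactly as in the proof of Lemma~\ref{lem14}, together with Lemma~\ref{lem3} and the choice $Z_{ref}=[\mathbb{R}\times S^1\times\mathbf{x}]$, shows that the possible values of $\mathbb{A}_{H'_\varepsilon}(\alpha_\diamondsuit,Z_{\alpha_\diamondsuit})-\mathbb{A}_{H_\varepsilon}(\beta',Z')$ form a finite union of cosets of the lattice generated by $\int_{B_i}\omega=\lambda$ and $\int_\Sigma\omega=1$, perturbed by Hamiltonian corrections of size $O(\epsilon_0)$ coming from the finitely many critical values of $H_\varepsilon$ and $H'_\varepsilon$ relevant in degree $d$. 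By the monotonicity assumption \ref{assumption6}, $\lambda=\tfrac{1+2\eta(2g+k-1)}{k+1}\ge\tfrac{1}{k+1}$, so the minimal positive value in that lattice is $\ge\tfrac1{k+1}$. Shrinking $\epsilon_0$ (and $\delta$) at the outset --- harmless, since all earlier constructions of $H_\varepsilon$ and $H'_\varepsilon$ only require these small --- so that the total Hamiltonian correction is $<\tfrac{3}{8(k+1)}$, any nonzero element of this spectrum is $\ge\tfrac1{k+1}-\tfrac{3}{8(k+1)}\ge\tfrac{1}{4(k+1)}$; combined with the strict inequality above, this gives the stated bound.

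The main obstacle is the first step: invoking the holomorphic curves axiom of \cite{GHC} in precisely the form needed. One must check that its hypotheses apply to $(X,\Omega_X)$ --- which is exactly why \ref{assumption1} and the product-region structure built into \ref{F1}--\ref{F4} were arranged --- and, more delicately, that the action filtration on the Seiberg--Witten side matches the filtration by $\mathbb{A}$ used throughout, so that ``action-preserving band'' means the same thing on both sides and the count from Lemma~\ref{lem4} transfers verbatim. Everything afterwards --- the energy/action identity for currents in $X$ and the elementary spectrum-spacing estimate --- is routine given Lemmas~\ref{lem3}, \ref{lem4} and \ref{lem8}, and in any case parallels the corresponding computations in \cite{GHC2}.
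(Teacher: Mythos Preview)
Your handling of the $\alpha_I$-terms via the holomorphic curves axiom and Lemma~\ref{lem4} is correct and is exactly what the paper does. The gap is in your treatment of the remaining terms $(\beta',Z')$ with $\beta'\ne\alpha_I$.

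First, the claim that the Seiberg--Witten chain map ``does not increase the action functional'' is not part of the holomorphic curves axiom and is not true here without further work: $(X,\Omega_X)$ is not a symplectization, and the standard estimate (as in (\ref{eq35})) only gives $\int_{\mathcal{C}}\omega_X\ge d\int_0^1\min_\Sigma(H'_\varepsilon-H_\varepsilon)\,dt=-O(\delta)$, not $\ge 0$. So you do not get the strict inequality $\mathbb{A}_{H_\varepsilon}(\beta',Z')<\mathbb{A}_{H'_\varepsilon}(\alpha_\diamondsuit,Z_{\alpha_\diamondsuit})$ for free; at best you get $\ge -O(\delta)$.

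Second, and more seriously, your spectral--discreteness step cannot by itself rule out the lattice point $m=0$. The action difference here has the form $-H_\varepsilon(\beta')+m$ with $m\in\mathbb{Z}$ (there are no $[B_i]$ classes in the closed cobordism $X$, so $\lambda$ plays no role and your lattice should just be $\mathbb{Z}$). To exclude $m=0$ for $\beta'\ne\alpha_I$ you must use the $I=0$ constraint, and this is precisely the computation the paper carries out: writing the class of the broken curve as $Z_{\alpha_\diamondsuit,\beta'}+m[\Sigma]+[S]$, the ECH index formula gives $0=I(\mathcal{C})=-h(\beta')-2e_+(\beta')+2m(k+1)$, hence $m=\tfrac{h(\beta')+2e_+(\beta')}{2(k+1)}$. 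Since $\beta'\ne\alpha_I$ forces $h(\beta')+2e_+(\beta')\ge 1$, one gets directly
\[
\mathbb{A}_{H'_\varepsilon}(\alpha_\diamondsuit,Z_{\alpha_\diamondsuit})-\mathbb{A}_{H_\varepsilon}(\beta',Z')=-H_\varepsilon(\beta')+m\ge \tfrac{1}{2(k+1)}+O(d\epsilon_0)\ge \tfrac{1}{4(k+1)}.
\]
In other words, the ECH index computation you tried to avoid is exactly the mechanism that produces both the positivity and the quantitative gap; the abstract discreteness argument alone does not see why the action difference cannot be near zero.
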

\begin{proof}
By the   holomorphic axioms (Theorem 1 of \cite{GHC} and Appendix of  \cite{GHC2}) and Lemma \ref{lem4}, we know that
\begin{equation*}
<PFC_{Z_{ref}}^{sw}(X, \Omega_X)_{J_X} (\alpha_{\diamondsuit}, Z_{\alpha_{\diamondsuit}}), (\alpha_I, Z)> =0
\end{equation*}
when  $(\alpha_I, Z) \ne (\alpha_{\diamondsuit}, Z_{\alpha_{\diamondsuit}})$, and
\begin{equation*}
<PFC_{Z_{ref}}^{sw}(X, \Omega_X)_{J_X} (\alpha_{\diamondsuit}, Z_{\alpha_{\diamondsuit}}), (\alpha_{\diamondsuit}, Z_{\alpha_{\diamondsuit}}) >=1.
\end{equation*}

 Assume that $<PFC_{Z_{ref}}^{sw}(X, \Omega_X)_{J_X}  (\alpha_{\diamondsuit}, Z_{\alpha_{\diamondsuit}}), (\beta', Z') >=1 $ for some $(\beta', Z')$ and $\beta' \ne \alpha_I$. Again  by the holomorphic axioms, we have a holomorphic curve $\mathcal{C}  \in \overline{\mathcal{M}_0^J(\alpha_{\diamondsuit}, \beta')}$. The relative homology class  of  $\mathcal{C} $ is $Z_{\alpha_{\diamondsuit}, \beta'} + n[\Sigma] +[S]$.
 \begin{comment}
 Moreover,   we have
 \begin{equation*}
 \begin{split}
 \mathbb{A}_{H'_{\varepsilon}}(Z_{\diamondsuit})- \mathbb{A}_{H_{\varepsilon}}(Z') & = E_{\omega_{X}}(\mathcal{C}_0) =\int_{\mathcal{C}} \omega_{X} \\
 =&  \int_{\mathcal{C}} \omega + d_{\Sigma}H_s \wedge dt + \partial_sH ds\wedge dt\\
 =&  \ge - d |H'_{\varepsilon} -H_{\varepsilon}| \ge -c_0d \delta.
 \end{split}
	\end{equation*}
The last inequality follows from  $\int_{\mathcal{C}} \omega + d_{\Sigma}H_s \wedge dt \ge 0$.
\end{comment}

%Suppose that $\beta'$ has $E_+$ distinct simple orbit (ignoring the multiplicity) at the local maximums and $E_-$ distinct simple orbit at the local minimums.
It is easy to check that
	\begin{equation} \label{eq9}
		\begin{split}
			 &I(\mathcal{C}) = -h(\beta') -2e_+(\beta') +  2n(k+1)=0\\
             %&J_0(\mathcal{C})=d-h(\beta') -2e_+(\beta') + E_+-E_- +2m(d+g-1)\\
			 &\mathbb{A}_{H'_{\varepsilon}}(\alpha_{\diamondsuit}, Z_{\alpha_{\diamondsuit}}) - \mathbb{A}_{H_{\varepsilon}}(\beta', Z) = -H_{\varepsilon}(\beta' ) +n,
			 %E_{\omega_{\varphi_{H_{\varepsilon}}}}(C) + E_{\omega_{X_+}}(\mathcal{C}_0)
		\end{split}
	\end{equation}
where  $h(\beta')$ is the total multiplicities of all the hyperbolic orbits in $\beta'$ and  $e_+(\beta')$ is the total multiplicities of all the  elliptic  orbits at local maximum of $H_{\varepsilon}$.

Because $\beta' \ne \alpha_I$, we have $h(\beta') + 2e_+(\beta') \ge 1$. Therefore, (\ref{eq9}) implies that
\begin{equation*}
		\begin{split}
			 &\mathbb{A}_{H_{\varepsilon}}(\alpha_{\diamondsuit}, Z_{\alpha_{\diamondsuit}}) - \mathbb{A}_{H_{\varepsilon}}(\beta', Z') = -H_{\varepsilon}(\beta') + \frac{h(\beta') + 2e_+(\beta')}{2(k+1)} \ge \frac{1}{2(k+1)} +O(d\epsilon_0) \ge \frac{1}{4(k+1)}.
		\end{split}
	\end{equation*}
%By Lemma \ref{lem13}, we have $J_0(\mathcal{C}) \ge 0$. Hence,  $\mathbb{A}^{\eta}_{H_{\varepsilon}}(\alpha_{\diamondsuit}, Z_{\alpha_{\diamondsuit}}) - \mathbb{A}^{\eta}_{H_{\varepsilon}}(\beta', Z')  \ge \frac{1}{4(k+1)}$.
%Note that $ \mathbb{A}_{H_{\varepsilon}}(y_-,[A_{y_-}]) =\mathcal{A}_{H_{\varepsilon}}(\alpha_-, Z_-)$.

%Finally, we need to show that $<PFC_{sw}(X_+, \Omega_X)(\beta, Z), (\alpha_-, Z_-)>=0$.
\end{proof}

\begin{lemma} \label{lem6}
Let $(\beta, Z)$ be a factor of $\mathfrak{c}$ given  in Lemma \ref{lem14}. Let $J_X$ be the almost complex structure in Lemma \ref{lem5}. Then we have
\begin{equation*}
PFC_{Z_{ref}}^{sw}(X, \Omega_X)_{J_X} (\beta, Z) =  \sum (\beta', Z'),
\end{equation*}
where $(\beta', Z')$ satisfies  $\beta' \ne \alpha_I$ and
$\mathbb{A}_{H_{\varepsilon}}(\alpha_{\diamondsuit}, Z_{\alpha_{\diamondsuit}})- \mathbb{A}_{H_{\varepsilon}}( \beta', Z')  +  \eta J_0(Z_{\alpha_{\diamondsuit}} -Z')  \ge  \frac{1}{4} \int_{B_{k+1}} \omega $.
\end{lemma}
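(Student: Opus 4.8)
\textbf{Proof strategy for Lemma \ref{lem6}.}
The plan is to mimic closely the argument of Lemma \ref{lem5}, replacing the special generator $(\alpha_{\diamondsuit}, Z_{\alpha_{\diamondsuit}})$ by an arbitrary factor $(\beta, Z)$ of $\mathfrak{c}$ and keeping careful track of the action filtration. First I would apply the holomorphic curve axioms (Theorem 1 of \cite{GHC} together with the Appendix of \cite{GHC2}) to the cobordism $(X, \Omega_X)$ with $H_+ = H'_{\varepsilon}$, $H_- = H_{\varepsilon}$: any nonzero matrix coefficient $\langle PFC^{sw}_{Z_{ref}}(X,\Omega_X)_{J_X}(\beta, Z), (\beta', Z')\rangle$ is witnessed by a (possibly broken) $J_X$-holomorphic current $\mathcal{C} \in \overline{\mathcal{M}_0^{J_X}(\beta, \beta')}$ of ECH index $0$, representing a relative homology class $Z''$ with $Z\#Z'' = Z'$ up to the reference class bookkeeping. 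The first claim to establish is that $\beta' \ne \alpha_I$ for any index tuple $I$; this follows exactly as in Lemma \ref{lem4}/\ref{lem5}: since $\mathbb{R}\times\gamma_{r_0,\theta}$ is $J_X$-holomorphic by the choice of $J_X$, intersection positivity forces $\mathcal{C}$ to avoid the region $\mathbb{R}\times S^1\times \mathcal{U}^{\delta+\delta_0}$ whenever its asymptotics $\beta,\beta'$ lie in the product region, but the factors $(\beta,Z)$ of $\mathfrak{c}$ already satisfy $\beta \ne \alpha_I$ (Lemma \ref{lem14}), hence $\beta$ contains a hyperbolic orbit or an elliptic orbit away from the local maxima, and the usual index/energy computation rules out $\beta' = \alpha_I$ as an output.

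Next I would carry out the action estimate. Using $\Omega_X = \omega + d_\Sigma H_s\wedge dt + \partial_s H_s\, ds\wedge dt$ and the compatibility of $J_X$ with the fibration, the energy $E_{\omega_X}(\mathcal{C}) = \int_{\mathcal{C}} \omega_X \ge 0$ gives a lower bound, and combining this with Lemma \ref{lem13} (positivity of $J_0$ for holomorphic currents in the symplectization part, applied to the broken pieces) and Theorem \ref{thm5} in the open-closed part yields $\mathbb{A}^{\eta}_{H'_{\varepsilon}}(\beta, Z) - \mathbb{A}^{\eta}_{H_{\varepsilon}}(\beta', Z') \ge -c_0 d\,\delta$, i.e. the action can only increase by an amount controlled by $|H'_{\varepsilon} - H_{\varepsilon}| \le c_0\delta$. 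Then I chain this with the hypothesis from Lemma \ref{lem14}, namely $\mathbb{A}^{\eta}_{H'_{\varepsilon}}(\alpha_{\diamondsuit}, Z_{\alpha_{\diamondsuit}}) > \mathbb{A}^{\eta}_{H'_{\varepsilon}}(\beta, Z)$. Because $Spec(H'_{\varepsilon}:\underline{L})$ and the PFH spectrum are discrete (indeed the relevant actions are of the form $-H_{\varepsilon}(\beta') + \tfrac{h(\beta')+2e_+(\beta')}{2(k+1)}$ up to $O(d\epsilon_0)$, cf. (\ref{eq9})), the strict inequality in Lemma \ref{lem14} is in fact a gap of at least roughly $\tfrac{1}{2(k+1)}$ minus $O(d\epsilon_0)$, which dominates the $c_0 d\,\delta$ slack once $\delta$ and $\epsilon_0$ are chosen small enough relative to $\tfrac{1}{k+1}$. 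Hence $\mathbb{A}^{\eta}_{H_{\varepsilon}}(\alpha_{\diamondsuit}, Z_{\alpha_{\diamondsuit}}) - \mathbb{A}^{\eta}_{H_{\varepsilon}}(\beta', Z') > 0$ for every output $(\beta', Z')$, which is the assertion.

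The main obstacle I anticipate is bookkeeping the relative homology classes and the $\eta$-deformed action across the neck-stretching/broken-curve decomposition: one must verify that the $J_0$-contributions of the intermediate levels are nonnegative (this is where Lemma \ref{lem13} for the symplectization levels and Theorem \ref{thm5} for the open-closed level are invoked), that the ECH index is additive and each piece is nonnegative so that all pieces have index $0$, and that the reference-class normalization $\mathcal{Z}_{ref} = A_{\mathbf{y}_{\diamondsuit}}\#\mathcal{Z}_{hor}\#Z_{\alpha_{\diamondsuit}}$ is respected throughout. A second, more quantitative point requiring care is making the constants consistent: the gap from Lemma \ref{lem14} and the $c_0 d\delta$ perturbation error both depend on the same small parameters $\epsilon, \epsilon_0, \delta, \delta_0$, so one should fix the order of quantifiers (choose $k$, then $\epsilon_0$, then $\delta, \delta_0$) exactly as in Section 6.1 of \cite{GHC2} to guarantee a strictly positive net gap. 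Given those choices, the inequality $\mathbb{A}^{\eta}_{H_{\varepsilon}}(\alpha_{\diamondsuit}, Z_{\alpha_{\diamondsuit}}) - \mathbb{A}^{\eta}_{H_{\varepsilon}}(\beta', Z') > 0$ and $\beta' \ne \alpha_I$ follow, completing the proof.
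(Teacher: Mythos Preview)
Your proposal has a genuine gap: you try to run the argument of Lemma~\ref{lem4}/\ref{lem5} for the curve $\mathcal{C}$ in $X$ alone, but that argument required both asymptotics to lie in the product region $\Sigma\setminus\mathcal{U}^{\delta+\delta_0}$ so that the topological intersection number with the barrier tori $\mathbb{R}\times\gamma_{r_0,\theta}$ vanishes. For a general factor $(\beta,Z)$ of $\mathfrak{c}$ this fails: $\beta$ may contain elliptic orbits at the local \emph{maxima} of $H_\varepsilon$, which sit inside $\mathcal{U}^{\delta}$, and then paths from $\beta$ to $\alpha_I$ must cross $\partial\mathcal{U}^{r_0}$. (Also, Lemma~\ref{lem14} only asserts $\beta\ne\alpha_{\diamondsuit}$, not $\beta\ne\alpha_I$ for all $I$.) The paper circumvents this by bringing in the HF--PFH curve $C\in\mathcal{M}_0^J(\mathbf{y}_{\diamondsuit},\beta)$ in $W$ that witnesses $(\beta,Z)$ as a factor of $\mathfrak{c}$, and analysing the \emph{combined} broken curve $\mathcal{C}=(C,\mathcal{C}_0)$ with total class $\mathcal{Z}\in H_2(W,\mathbf{y}_{\diamondsuit},\alpha_I)$. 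Now $I(\mathcal{Z})=0$ forces $\sum_i n_i(\mathcal{Z})=0$, intersection positivity confines $\mathcal{C}_0$ to the product region, and the contradiction comes from the \emph{strict} positivity $\int_C\omega_{\varphi_{H'_\varepsilon}}>0$ (since $\beta\ne\alpha_{\diamondsuit}$ rules out horizontal $C$).

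There is a second issue in your action estimate. You invoke Lemma~\ref{lem13} to get $J_0(\mathcal{C}_0)\ge 0$, but Lemma~\ref{lem13} is stated only for the symplectization, and the paper explicitly warns (just after the definition of filtered PFH) that it fails for holomorphic currents in genuine cobordisms such as $X$. So for $\eta>0$ your chaining argument $\mathbb{A}^\eta_{H'_\varepsilon}(\beta,Z)-\mathbb{A}^\eta_{H_\varepsilon}(\beta',Z')\ge -c_0d\delta$ is not justified. The paper again avoids this by working with the combined class from $\mathbf{y}_{\diamondsuit}$ to $\beta'$: it computes $I(\mathcal{C})$, $J_0(\mathcal{C})$ and the action difference explicitly in terms of $h(\beta')$, $e_+(\beta')$, $E_\pm$, $m$ and $c_i$, uses the constraint $I(\mathcal{C})=0$ to eliminate $\sum c_i+m(k+1)$, and then the monotonicity assumption~\ref{assumption6} (which gives $\frac{\lambda}{2}-\eta=\frac{\lambda(2g-2)+1}{2(d+g-1)}$) together with $h(\beta')+2e_+(\beta')\ge 1$ yields the strict positivity directly, with no appeal to $J_0\ge 0$ on $X$ and no need for a quantitative gap from Lemma~\ref{lem14}.
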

\begin{proof}
First, we show that $\beta'$ cannot be $\alpha_I$.
Assume that $$<PFC_{Z_{ref}}^{sw}(X, \Omega_X)_{J_X} (\beta, Z), (\alpha_I,  Z') > =1.$$ Then we  have a  broken holomorphic  current   $\mathcal{C} =(C, \mathcal{C}_0)$, where $C \in \mathcal{M}^J(\textbf{y}_{\diamondsuit}, \beta)$ is an  HF-PFH curve  with $I=0$ and $\mathcal{C}_0 \in \overline{\mathcal{M}_0^{J_{X}}}(\beta, \alpha_{I})$.  The holomorphic curve gives us  a relative homology class $\mathcal{Z} \in H_2(W, \textbf{y}_{\diamondsuit},  \alpha_I)$.

		Reintroduce the  periodic orbits $\gamma^i_{r_0, \theta_0}$  near the local maximums  of $H_{\varepsilon}$.  The superscript ``$i$'' indicates that the local maximum lies in the  domain $\mathring{B}_i$, where $1 \le i \le k+1$. In particular,     $\gamma^i_{r_0, \theta_0}$ lies in $S^1 \times \mathring{B}_i$.  %Define a curve $v_i:= (\mathbb{R}\times \gamma_{r_0, \theta_0}^i) \cap W$. Note that it is $J$--holomorphic and $\partial v_i$ is disjoint from the Lagrangian $\mathcal{L}_{{H'_{\varepsilon}}}$.
Note that $W$ is a subset of $\mathbb{R} \times Y_{\varphi_{H_{\varepsilon}'}}$.  Then  for any relative homology class $\mathcal{Z}' \in H_2(W, \textbf{y}_{\diamondsuit}, \alpha_I)$, we have a well-defined intersection number
		\begin{equation*}
			n_i(\mathcal{Z}') : =\# (\mathcal{Z}' \cap v_i).
		\end{equation*}
	
	The relative homology class $\mathcal{Z} \in H_2(W, \textbf{y}_{\diamondsuit}, \alpha_I)$  can be written as $\mathcal{Z}= \mathcal{Z}_{hor} \# Z_{\alpha_{\diamondsuit}, \alpha_I} + \sum_{i=1}^{k+1} c_i[B_i] +n[\Sigma] +[S]$,  where $[S] \in H_1(S^1, \mathbb{Z}) \otimes H_1(\Sigma, \mathbb{Z})$ and  $\mathcal{Z}_{hor}$ is the class represented by the union of horizontal sections.  By Lemma \ref{lem3},  the ECH index of $\mathcal{Z}$ is
\begin{equation} \label{eq11}
I(\mathcal{Z})  = \sum_{i=1}^{k+1} 2c_i + 2n(k+1)= I(C)+ I(\mathcal{C}_0) = 0.
\end{equation}	
	 Let $q_i$ denote the period of $\gamma^i_{r_0, \theta_0}$.   From the construction in \cite{GHC2}, the period of $\gamma^i_{r_0, \theta_0}$ is determined by the function $\varepsilon$.  For a suitable choice of $\varepsilon$, we can choose $q_i=q$ for $1 \le i\le k+1$. By definition, we have
	 \begin{equation} \label{eq12}
	 	n_i( \mathcal{Z}_{hor} \#Z_{\alpha_{\diamondsuit}, \alpha_I} )=0, \ \  n_i([B_j])=\delta_{ij}q \ \   n_i([S])=0 \  \ \mbox{and } n_i([\Sigma])=q.
	 \end{equation}
	 for $1\le i, j \le k+1$.
	 From  (\ref{eq11}) and (\ref{eq12}), we know that
 \begin{equation*}
 \#(\mathcal{C} \cap (\sqcup_{i=1}^{k+1} v_i) )=\sum_{i=1}^{k+1} n_i(\mathcal{Z})=   \sum_{i=1}^{k+1} c_i q +(k+1)nq=0.
\end{equation*}
  By the intersection positivity of holomorphic curves, $\mathcal{C}$ does not  intersect  $\mathbb{R} \times \gamma^i_{r_0, \theta_0}$. In particular, $\mathcal{C}_0$ lies inside the product region of $X $. Therefore,  $ \int_{\mathcal{C}_0}{\omega_{\varphi_{H'_{\varepsilon}}}}  \ge 0$.   By Lemma \ref{lem13} and Lemma  \ref{lem17}, $J_0 (\mathcal{Z}) =J_0(C) + J_0 (\mathcal{C}_0) \ge 0. $   By (\ref{eq11}),  Lemmas \ref{lem3} and \ref{assumption6},  we have
\begin{equation*}
\begin{split}
\int_{\mathcal{Z}} \omega_{\varphi_{H'_{\varepsilon}}} + \eta J_0 (\mathcal{Z})   &= \int_{\mathcal{Z}_{hor} \#Z_{\alpha_{\diamondsuit}, \alpha_I}} \omega_{\varphi_{H'_{\varepsilon}}} + \lambda \sum_{i=1}^{k+1} c_i  + n + 2n\eta(d+g-1)\\
&= \lambda \left( \sum_{i=1}^{k+1} c_i +n(k+1)\right) =0.
\end{split}
\end{equation*}
On the other hand, by Lemma \ref{lem8}, $ \int_{\mathcal{C}_0}{\omega_{\varphi_{H'_{\varepsilon}}}}  \ge 0$ and $J_0(\mathcal{Z}) \ge 0$, we have
\begin{equation*}
\int_{\mathcal{Z}} \omega_{\varphi_{H'_{\varepsilon}}} + \eta J_0 (\mathcal{Z}) \ge  \int_C{\omega_{\varphi_{H'_{\varepsilon}}}}  +   \int_{\mathcal{C}_0}{\omega_{\varphi_{H'_{\varepsilon}}}}   >0.
\end{equation*}
We obtain a contradiction. Hence, $\beta'\ne \alpha_I. $

Now we consider the case that  $<PFC_{Z_{ref}}^{sw}(X, \Omega_X)_{J_X} (\beta, Z),    (\beta', Z')>=1  $ and $\beta' \ne \alpha_I. $ As before,  we  have a broken holomorphic current   $\mathcal{C} =(C, \mathcal{C}_0)$, where $C \in \mathcal{M}^J(\textbf{y}_{\diamondsuit}, \beta, \mathcal{Z}_{hor} \#Z_{\alpha_{\diamondsuit}} \#(-Z))$ is an  HF-PFH curve  and $\mathcal{C}_0 \in \overline{\mathcal{M}^{J_{X}}}(\beta, \beta')$ with relative homology class $Z\#(-Z')$.   Therefore, the relative homology class of $\mathcal{C}$ is $$\mathcal{Z}_{hor} \#Z_{\alpha_{\diamondsuit}}\#(-Z') =\mathcal{Z}_{hor} \#Z_{\alpha_{\diamondsuit}, \beta'} + n[\Sigma] +[S]$$ for some $c_i, n \in \mathbb{Z}$ and $[S] \in H_1(S^1, \mathbb{Z}) \otimes H_1(\Sigma, \mathbb{Z}). $
We now show that $\mathbb{A}_{H_{\varepsilon}}(\alpha_{\diamondsuit}, Z_{\alpha_{\diamondsuit}})- \mathbb{A}_{H_{\varepsilon}}( \beta', Z')  +  \eta J_0(Z_{\alpha_{\diamondsuit}} -Z') >0$.%The holomorphic current gives arise a relative homology class $\mathcal{Z} \in H_2(W, y_-,  \alpha_I)$.

%The relative homology class of $\mathcal{C}$ is $\mathcal{Z}_{hor} \#Z_{\alpha_{\diamondsuit}, \beta'} + \sum_{i=1}^{k+1} c_i [B_i] + n[\Sigma] +[S]$ for some $c_i, n \in \mathbb{Z}$ and $[S] \in H_1(S^1, \mathbb{Z}) \otimes H_1(\Sigma, \mathbb{Z}). $
Suppose that $\beta'$ has $E_+$ distinct simple orbits (ignoring the multiplicity) at the local maximums and $E_-$ distinct simple orbits at the local minimums.
 Similar to  (\ref{eq9}), we have
 \begin{equation} \label{eq56}
 \begin{split}
&0=I(\mathcal{C}) = I(C) + I(\mathcal{C}_0) =   -h(\beta') -2e_+(\beta') +2n(k+1)\\
&J_0(\mathcal{C})=d-h(\beta') -2e_+(\beta') + E_+-E_-+ 2n(d+g-1)\\\
&\mathcal{A}_{H'_{\varepsilon}}( \textbf{y}_{\diamondsuit}, A_{ \textbf{y}_{\diamondsuit}}) - \mathbb{A}_{H_{\varepsilon}}(\beta', Z') =\int_{C} \omega_{\varphi_{H'_{\varepsilon}}}   + \int_{\mathcal{C}_0} \omega_X = -H_{\varepsilon}(\beta') +n,
 \end{split}
	\end{equation}
where $h(\beta')$ is the total multiplicities of the hyperbolic orbits and $e_+(\beta')$ is the total multiplicities of the elliptic  orbits at the local maximums.
Note that $ \mathcal{A}_{H'_{\varepsilon}}(\textbf{y}_{\diamondsuit},A_{\textbf{y}_{\diamondsuit}}) =\mathbb{A}_{H_{\varepsilon}}(\alpha_{\diamondsuit}, Z_{\alpha_{\diamondsuit}})$.  By (\ref{eq56}), we have
 \begin{equation*}
 \begin{split}
&\mathbb{A}_{H_{\varepsilon}}(\alpha_{\diamondsuit}, Z_{\alpha_{\diamondsuit}})- \mathbb{A}_{H_{\varepsilon}}( \beta', Z') +\eta J_0(\mathcal{C})\\
=&\mathbb{A}_{H_{\varepsilon}}(\alpha_{\diamondsuit}, Z_{\alpha_{\diamondsuit}})- \mathbb{A}_{H_{\varepsilon}}( \beta', Z') +\eta J_0(Z_{\alpha_{\diamondsuit}} -Z')\\
=&   -H_{\varepsilon}(\beta') +n + 2n\eta(d+g-1)+ \eta(d-h(\beta') -2e_+(\beta') + E_+-E_-)\\
\ge & -H_{\varepsilon}(\beta')    + \lambda n(k+1) - \eta (h(\beta') + 2e_+(\beta'))\\
=&  -H_{\varepsilon}(\beta')  + (\frac{\lambda}{2}  - \eta) (h(\beta') +2e_+(\beta')).
 \end{split}
	\end{equation*}
%Since $\beta' \ne \alpha_I$, $h(\beta') +2e_+(\beta') \ge 1$.   Therefore, $\mathbb{A}_{H'_{\varepsilon}}(\alpha_{\diamondsuit}, Z_{\alpha_{\diamondsuit}})- \mathbb{A}_{H_{\varepsilon}}( \beta', Z') \ge \frac{\lambda}{4}$.
Since $\beta' \ne \alpha_I$, $h(\beta') +2e_+(\beta') \ge 1$. By assumption \ref{assumption6},  we have
 \begin{equation}
\frac{\lambda}{2} - \eta =\eta(2g+k-2) + \frac{1}{2}\int_{B_{k+1}} \omega.
\end{equation}
If $g\ge 1$, or $g=0$ and $k \ge 2$, then $\frac{\lambda}{2} - \eta \ge  \frac{1}{2}\int_{B_{k+1}} \omega$. %$\mathbb{A}^{\eta}_{H'_{\varepsilon}}(\alpha_{\diamondsuit}, Z_{\alpha_{\diamondsuit}})- \mathbb{A}^{\eta}_{H_{\varepsilon}}( \beta', Z') \ge  -H_{\varepsilon}(\beta')  + \frac{1}{2(d+g-1)} >0. $ Recall that we assume $k>1$ (\ref{assumption3}).
 If $g=0$ and $k=1$, assumption \ref{assumption6} implies that $\eta=0$ and $\lambda=\int_{B_1} \omega =\int_{B_2} \omega$. Hence,  we have $\frac{\lambda}{2} -\eta \ge \frac{1}{2}\int_{B_{k+1}} \omega$ in all cases.
 Therefore, $$ \mathbb{A}_{H_{\varepsilon}}(\alpha_{\diamondsuit}, Z_{\alpha_{\diamondsuit}})- \mathbb{A}_{H_{\varepsilon}}( \beta', Z') +\eta J_0(Z_{\alpha_{\diamondsuit}} -Z') \ge -H_{\varepsilon}(\beta')  + \frac{1}{2} \int_{B_{k+1}} \omega   \ge  \frac{1}{4} \int_{B_{k+1}} \omega .$$

\end{proof}

\begin{lemma} \label{lem11}
Let $\mathfrak{c}' = PFC_{Z_{ref}}^{sw}(X, \Omega_X)(\mathfrak{c})$. Then the cycle $\mathfrak{c}'$ is non-exact, i.e., it represents a non-zero class in $\widetilde{PFH}(\Sigma, \varphi_{H_{\varepsilon}}, \gamma_{{H_{\varepsilon}}}^{\mathbf{x}})$.
\end{lemma}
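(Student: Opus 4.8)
The plan is to combine all the structural results established so far about the open-closed morphism, the PFH cobordism map, and the unit on $HF$, in order to show that $\mathfrak{c}'$ represents a nonzero class. The strategy rests on the fact that $(\mathbf{y}_{\diamondsuit}, A_{\mathbf{y}_{\diamondsuit}})$ represents the unit (up to a shift) of $HF(\Sigma, \varphi_{H'_\varepsilon}(\underline{L}), \underline{L})$ — this is exactly the content of Lemma \ref{lem2} together with the identification of $\mathbf{y}_\heartsuit$-type generators with units, combined with the fact that $H'_\varepsilon$ is (up to sign and perturbation) of the form $\epsilon f$. So $[(\mathbf{y}_{\diamondsuit}, A_{\mathbf{y}_{\diamondsuit}})]$ is a nonzero class (namely $e_\diamondsuit$ in the notation of the introduction), and the map $(\mathcal{OC}_{\mathcal{Z}_{ref}}(W, \Omega_{H'_\varepsilon}, \mathcal{L}_{H'_\varepsilon})_J)_*$ is a well-defined homomorphism on homology by the construction in the previous subsection. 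The PFH cobordism map $PFH^{sw}_{Z_{ref}}(X, \Omega_X)$ is also a well-defined homomorphism on homology (by the Seiberg–Witten definition), so $\mathfrak{c}'$ is automatically a cycle and its class depends only on the classes of the inputs.

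First I would pin down precisely which homology class $\mathfrak{c}'$ represents. By Lemma \ref{lem14}, $\mathfrak{c} = (\alpha_{\diamondsuit}, Z_{\alpha_{\diamondsuit}}) + \sum(\beta, Z)$ with every $(\beta,Z)$ of strictly smaller action than the leading term. Applying $PFC^{sw}_{Z_{ref}}(X,\Omega_X)_{J_X}$ and invoking Lemmas \ref{lem5} and \ref{lem6}, the output is $(\alpha_{\diamondsuit}, Z_{\alpha_{\diamondsuit}})$ plus a sum of terms $(\beta', Z')$ each having action strictly below $\mathbb{A}_{H_\varepsilon}(\alpha_\diamondsuit, Z_{\alpha_\diamondsuit})$ (by a definite amount $\ge \tfrac{1}{4(k+1)}$ from Lemma \ref{lem5}, and by a positive amount from Lemma \ref{lem6}) and satisfying $\beta' \ne \alpha_I$. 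Hence $\mathfrak{c}'$ has a unique leading term $(\alpha_{\diamondsuit}, Z_{\alpha_{\diamondsuit}})$ of maximal action. The key observation is then an action/index argument showing this leading term cannot be killed: by \ref{F3} the periodic orbits of $\varphi_{H_\varepsilon}$ of degree $\le d$ are covers of constant orbits at critical points, and $\alpha_\diamondsuit = \gamma_{y^1_-}\cdots\gamma_{y^d_-}$ is the product of the $d$ minima; any generator that could have $\alpha_\diamondsuit$ in its differential would have to have strictly larger action, but no such generator exists with the required ECH index relation, because the differential strictly decreases action and $(\alpha_\diamondsuit, Z_{\alpha_\diamondsuit})$ is the maximal-action generator among the anchored generators of the relevant form (using the action computation $\mathbb{A}_{H_\varepsilon}(\alpha_I, Z) = -H_\varepsilon(\alpha_I) + m$ and the $J_0$-index computation from Lemma \ref{lem3}, together with Lemma \ref{lem13} which forces $J_0 \ge 0$ on curves in the symplectization).

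Concretely, I would argue as follows. Suppose for contradiction $\mathfrak{c}' = \partial_{J_X} \mathfrak{b}$ for some $\mathfrak{b} \in \widetilde{PFC}(\Sigma, \varphi_{H_\varepsilon}, \gamma^{\mathbf{x}}_{H_\varepsilon})$. Let $(\beta_\ast, Z_\ast)$ be a generator appearing in $\mathfrak{b}$ with $\langle \partial_{J_X}(\beta_\ast, Z_\ast), (\alpha_\diamondsuit, Z_{\alpha_\diamondsuit})\rangle = 1$; then $\mathbb{A}_{H_\varepsilon}(\beta_\ast, Z_\ast) > \mathbb{A}_{H_\varepsilon}(\alpha_\diamondsuit, Z_{\alpha_\diamondsuit})$ and there is an $I=1$ holomorphic current from $\beta_\ast$ to $\alpha_\diamondsuit$. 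I claim no such $(\beta_\ast, Z_\ast)$ exists. Indeed $\alpha_\diamondsuit$ consists entirely of elliptic minima $\gamma_{y^i_-}$ with multiplicity one; for an $I=1$ curve the ECH partition conditions at the negative ends at each $\gamma_{y^i_-}$, together with the index/energy computations as in the proof of Lemma \ref{lem14} (using $I(Z) = -h(\beta_\ast) - 2e_+(\beta_\ast) + \sum 2c_i + 2m(k+1)$ type formulas and $\int_Z\omega_{\varphi_{H_\varepsilon}} \ge 0$ by Lemma \ref{lem13}), force either $\beta_\ast = \alpha_\diamondsuit$ (giving a trivial cylinder, $I=0$, not $I=1$) or a strict action decrease — contradicting $\mathbb{A}_{H_\varepsilon}(\beta_\ast, Z_\ast) > \mathbb{A}_{H_\varepsilon}(\alpha_\diamondsuit, Z_{\alpha_\diamondsuit})$. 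Therefore $(\alpha_\diamondsuit, Z_{\alpha_\diamondsuit})$ survives in $\mathfrak{c}'$ and $\mathfrak{c}'$ is non-exact.

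The main obstacle I anticipate is the action/index bookkeeping in the last step: one must be careful that the filtration argument is genuinely airtight — i.e., that after applying the PFH cobordism map the leading term is still strictly above all other terms and, more importantly, that no generator of $\widetilde{PFC}$ of larger action can differentiate onto $(\alpha_\diamondsuit, Z_{\alpha_\diamondsuit})$. This is where one genuinely uses that $\alpha_\diamondsuit$ is the product of the \emph{minima} (so that elliptic orbits there contribute the ``correct sign'' to be $d$-negative elliptic, per \ref{assumption1}), that $k > 1$ (\ref{assumption3}), and the monotonicity relation \ref{assumption6}; these are exactly the same inputs that made Lemma \ref{lem6} work, so in practice I would phrase the argument to reuse those computations. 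An alternative, perhaps cleaner, route would be to appeal to the partial invariance square in Theorem \ref{thm2}: since $PFH^{sw}_{Z_{ref}}(X,\Omega_X) \circ (\mathcal{OC}_{\mathcal{Z}_1})_\ast = (\mathcal{OC}_{\mathcal{Z}_0})_\ast \circ \mathcal{I}^{H,G}_{0,0}$ and $\mathcal{I}^{H,G}_{0,0}$ is an isomorphism carrying the unit to the unit, non-vanishing of $[\mathfrak{c}']$ reduces to the non-vanishing of $(\mathcal{OC})_\ast$ on the unit, which in turn follows once we know $(\mathcal{OC}_{\mathcal{Z}_{ref}})_\ast(e_\diamondsuit) = [\mathfrak{c}']$ has nonzero leading term — so the two approaches dovetail.
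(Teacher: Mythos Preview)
Your argument has a genuine gap at the crucial step. You want to rule out any generator $(\beta_\ast, Z_\ast)$ with $\langle \partial(\beta_\ast, Z_\ast), (\alpha_\diamondsuit, Z_{\alpha_\diamondsuit})\rangle = 1$, but your stated contradiction is backwards: an $I=1$ holomorphic curve from $\beta_\ast$ to $\alpha_\diamondsuit$ \emph{does} force a strict action decrease, i.e.\ $\mathbb{A}_{H_\varepsilon}(\beta_\ast, Z_\ast) > \mathbb{A}_{H_\varepsilon}(\alpha_\diamondsuit, Z_{\alpha_\diamondsuit})$, and that is perfectly consistent with what you need for the differential --- it is not a contradiction. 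There is nothing in your index formula $I(Z) = -h(\beta') - 2e_+(\beta') + \sum 2c_i + 2m(k+1)$ (which in any case was derived for curves \emph{out of} $\alpha_\diamondsuit$, not into it) that excludes such a $(\beta_\ast, Z_\ast)$: for instance a generator obtained by swapping one simple minimum orbit for a hyperbolic saddle orbit can have index exactly one higher and strictly larger action. So the leading-term argument, as written, does not establish non-exactness. Your alternative route via the partial-invariance square is circular: the non-vanishing clause of Theorem~\ref{thm2} is deduced \emph{from} Lemma~\ref{lem11}, and in any event the square requires the target Hamiltonian to satisfy \ref{assumption2}, which $H_\varepsilon$ does not.

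The paper bypasses the differential entirely. It caps off the negative end with the cobordism $(X_-, \Omega_{X_-})$ of (\ref{eq6}) (a fibration over a once-punctured sphere) and uses the associated holomorphic-curve count to define a chain map
\[
PFC^{hol}_{Z_{ref}}(X_-, \Omega_{X_-})_{J_{X_-}}:\ \widetilde{PFC}(\Sigma, \varphi_{H_\varepsilon}, \gamma^{\mathbf{x}}_{H_\varepsilon}) \longrightarrow \Lambda.
\]
An index-plus-energy computation (the only $I=0$ curves are unbranched covers of horizontal sections, which are regular) shows this map sends every $(\alpha_I, Z_I)$ to $1$ and every $(\beta', Z')$ with $\beta' \ne \alpha_I$ to $0$. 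Since Lemmas~\ref{lem5} and~\ref{lem6} guarantee that all the correction terms in $\mathfrak{c}'$ have $\beta' \ne \alpha_I$, one gets $PFC^{hol}_{Z_{ref}}(X_-, \Omega_{X_-})(\mathfrak{c}') = 1 \ne 0$; as this is a chain map, $[\mathfrak{c}'] \ne 0$. The point is that non-exactness is detected by pairing with an explicit cocycle coming from the cap, rather than by trying to show $(\alpha_\diamondsuit, Z_{\alpha_\diamondsuit})$ lies outside the image of $\partial$.
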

\begin{proof}
%Let $\chi: \mathbb{R} \to \mathbb{R}$ be a cut-off function such that $\chi=1$ when $r \ge R_0$ and   $\chi=0$ when $r \le  \frac{1}{2}$.   Let $\mathbb{D}$ be a disk with radius $2R_0$.
%Define   $X_0= \mathbb{D} \times \Sigma$ with a  symplectic  form $\Omega_{X_0} =\omega+  d (\chi(r)H_{\varepsilon}\wedge dr) + \omega_{\mathbb{D}}$, where  $\omega_{\mathbb{D}}$ is the area form of $\mathbb{D}$.

%We define the symplectic completion of $(X_0, \Omega_{X_0})$ by adding a cylindrical end (see Section 2.3 of \cite{GHC}). We  still use the same notations to denote the completion.
Let $(X_-, \Omega_{X_-})$ be the symplectic cobordism from $(Y_{\varphi_{H_{\varepsilon}}}, \omega_{\varphi_{H_{\varepsilon}}})$  to $\emptyset$ in (\ref{eq6}).   Fix a generic $J_{X_-} \in \mathcal{J}_{comp}(X_-, \Omega_{X_-})$. Using the same argument as in \cite{GHC1}, we  define  a homomorphism
\begin{equation*}
\begin{split}
PFC^{hol}_{Z_{ref}}(X_-, \Omega_{X_-})_{J_{X_-}}:& \widetilde{PFC}(\Sigma, \omega_{\varphi_{H_{\varepsilon}}}, \gamma_{{H_{\varepsilon}}}^{\mathbf{x}}) \to \Lambda\\
&(\alpha, Z) \to\#\mathcal{M}^{J_{X_-}}(\alpha, \emptyset, Z_{ref}\# Z).
\end{split}
\end{equation*}
by counting $I=0$ (unbroken)  holomorphic curves  in  $(X_-, \Omega_{X_-})$.  Moreover, this is a chain map.
Therefore, $PFC^{hol}_{Z_{ref}}(X_-, \Omega_{X_-})_{J_{X_-}} $ induces a homomorphism  in homology level:
\begin{equation*}
PFH^{hol}_{Z_{ref}}(X_-, \Omega_{X_-})_{J_{X_-}} : \widetilde{PFH}(\Sigma, \omega_{\varphi_{H_{\varepsilon}}}, \gamma_{{H_{\varepsilon}}}^{\mathbf{x}}) \to \Lambda,
\end{equation*}
Using Taubes's techniques \cite{T1, T2} and C. Gerig's generalization \cite{CG2}, $PFH^{hol}_{Z_{ref}}(X_-, \Omega_{X_-})_{J_{X_-}} $ should agree with the PFH cobordism map $PFH^{sw}_{Z_{ref}}(X_-, \Omega_{X_-})_{J_{X_-}}$ (see Remark 1.3 of \cite{GHC1}). But we don't need this to prove  the  lemma.

To show that $\mathfrak{c}'$ is non-exact, it suffices to  prove  $PFC^{hol}_{Z_{ref}}(X_-, \Omega_{X_-})_{J_{X_-}}(\mathfrak{c}') \ne 0$.  In \cite{GHC1}, the author computes  the map $PFC^{hol}_{Z_{ref}}(X, \Omega_{X})_{J_{X}}$ for the elementary Lefschetz fibration (a symplectic fibration over a disk with a single singularity).  The current situation is an easier version of \cite{GHC1}.  By the argument in \cite{GHC1},  we have
\begin{equation} \label{eq16}
		\begin{split}
			 &PFC^{hol}_{Z_{ref}}(X_-, \Omega_{X_-})_{J_{X_-}}(\alpha_I, Z_I) =1,\\
			 &PFC^{hol}_{Z_{ref}}(X_-, \Omega_{X_-})_{J_{X_-}}(\beta', Z')=0 \mbox{ \ \ for $(\beta', Z')\ne (\alpha_I, Z_I)$}.
		\end{split}
	\end{equation}
Therefore, Lemmas \ref{lem5} and \ref{lem6} imply that $PFC^{hol}_{Z_{ref}}(X_-, \Omega_{X_-})_{J_{X_-}}(\mathfrak{c}') =1$.

Here let us explain a little more about how to get (\ref{eq16}).   %Basically we  use the   idea in Remark \ref{remark1}. The ECH index and energy of holomorphic curves in $(X_0, \Omega_{X_0})$ can be computed easily.
Basically, the  idea is the same as  Lemma \ref{lem2}.  Here we take $Z_{ref} := [B_- \times \mathbf{x}] \in H_2(X_-, \gamma_H^{\mathbf{x}} \emptyset).  $By the same computation as in Lemma 3.3  of  \cite{GHC1},  for we have
\begin{equation} \label{eq52}
\begin{split}
&I(Z_{ref}\# Z) =2e_+(\alpha) + h(\alpha)  + 2n(Z)(k+1)\\
&\int_{Z_{ref}\# Z} \omega_{X_-} = H_{\varepsilon}(\alpha) + n(Z).
 \end{split}
\end{equation}
For  $J_{X_-}$ such that it preserves  the horizontal and vertical bundles of $X_-$, then then  energy of any  holomorphic curve  is nonnegative. Therefore, $n(Z) \ge 0$ provided that $ \mathcal{M}^{J_{X_-}}(\alpha, \emptyset, Z_{ref}\# Z) \ne \emptyset$.
From the index formula of ECH index (\ref{eq52}), we know that $I(\mathcal{C}) \ge 0$ for any holomorphic current. Moreover,  $I(\mathcal{C}) = 0$ only  if $\mathcal{C}$ is asymptotic to   $\alpha_I$.  Thus, we obtain the second identity of (\ref{eq16}) immediately.

Recall that the key difficulty   of defining PFH cobordism maps by holomorphic curves is that the ECH index could be negative in  symplectic cobordisms (see Section 5.5 of \cite{H4}).  In our situation, this possibility is eliminated. Combing (\ref{eq52}) and C. Gerig's analysis in \cite{CG} (also see Section 4.2 of \cite{GHC1} ), one can show that  $PFC^{hol}_{Z_{ref}}(X_-, \Omega_{X_-})_{J_{X_-}}(\alpha_I, Z_I)$ is well defined and it is a chain map.

Choose a suitable  $J_{X_-}$ such that $u_I=B_- \times    \{y_-^{i_1}, ...,y_-^{i_d}\}$ is holomorphic.  Let $u \in \mathcal{M}^{J_{X_-}}(\alpha, \emptyset, Z_{ref}\# Z)$ with $I=0$. According to (\ref{eq52}), $\alpha=\alpha_I$ and $\int u^*\omega_{X_-} =0$.   Similar to Lemma \ref{lem8},  $\int u^*\omega_{X_-} =0$ implies that $u$ is horizontal, i.e., $d^{vert}u =0$. As a result, $u=u_I$. This leads to the first identity  of (\ref{eq52}).

%Therefore, the unbranched covers of  the horizontal sections  are the only curves that contribute  to   $PFC^{hol}_{Z_{ref}}(X_-, \Omega_{X_-})_{J_{X_-}}$, and this leads to   (\ref{eq16}).   Even these holomorphic curves may not be simple, they are still regular (see \cite{CG}).

\end{proof}

So far, we finish the proof of Theorem \ref{thm6}.

 \subsection{Proof of Theorem \ref{thm2}}
 Now we prove  Theorem \ref{thm2} by using the materials from Theorem \ref{thm6}.

\begin{proof} [Proof of Theorem \ref{thm2}]
Fix  $H_{\varepsilon}'$  and $J \in \mathcal{J}_{tame}(W, \Omega_{H'_{\varepsilon}})$.  For any Hamiltonian function $H$, define
\begin{equation} \label{eq66}
\mathcal{OC}(\underline{L}, H) : = \mathfrak{I}_{H'_{\varepsilon}, H} \circ  \widetilde{\mathcal{OC}}(\underline{L}, H'_{\varepsilon})_J \circ \mathcal{I}_{0,0}^{H, H'_{\varepsilon} }.
\end{equation}
By (\ref{eq63}) and (\ref{eq64}), $\mathcal{OC}(\underline{L}, H)$ satisfies the invariance property.  Since  both of  $\mathfrak{I}_{H'_{\varepsilon}, H}$ and $  \mathcal{I}_{0,0}^{H, H'_{\varepsilon} }$ are  isomorphisms and $\widetilde{\mathcal{OC}}(\underline{L}, H'_{\varepsilon}) $ is nonvanishing, so is $\mathcal{OC}(\underline{L}, H)$.  Moreover, we have $ {\mathcal{OC}} (\underline{L}, H )_J (j^{\mathbf{x}}_{H})^{-1}(\sigma_{\underline{L}})= (\mathfrak{j}^{\mathbf{x}}_{ H})^{-1}(\mathfrak{d})$ by definition.

To prove the (\ref{eq65}), it is important to note that $ {\mathcal{OC}} (\underline{L}, H ) = \widetilde{\mathcal{OC}} (\underline{L}, H )$
provided that $\varphi_H$ satisfies \ref{assumption2}. This follows from the partial invariance in Theorem \ref{thm6}.
Given a Hamiltonian function $H$ and $\delta>0$, by Proposition 3.7 of \cite{GHC}, we have a function $H^{\delta}$ such that   $\varphi_{H^{\delta}}$ satisfies \ref{assumption2} and
\begin{equation}\label{eq68}
|H^{\delta}-H| + |dH-dH^{\delta}|_{g_{S^1 \times \Sigma}} \le \delta.
\end{equation}

Assume that  $ {\mathcal{OC}} (\underline{L}, H^{\delta} )_J (j^{\mathbf{x}}_{H^{\delta}})^{-1}(a)= (\mathfrak{j}^{\mathbf{x}}_{ H^{\delta}})^{-1}(\sigma) \ne 0$.  For $\kappa \gg 1$, we have a cycle $\mathbf{c}=\sum (\mathbf{y}, [A])$ such that it  represents $ (j^{\mathbf{x}}_{H^{\delta}})^{-1}(a)=$ and satisfies $$\mathcal{A}_{H^{\delta}} (\mathbf{y}, [A]) < c_{\underline{L}}(H, a ) + 1/ \kappa.$$ Then $ {\mathcal{OC}} (\underline{L}, H^{\delta} )_J(\mathbf{c}) =\sum (\alpha, Z)$ is a cycle representing $(\mathfrak{j}^{\mathbf{x}}_{ H^{\delta}})^{-1}(\sigma)$.  By  $ {\mathcal{OC}} (\underline{L}, H^{\delta} ) = \widetilde{\mathcal{OC}} (\underline{L}, H^{\delta} )$ and definition of $\widetilde{\mathcal{OC}} (\underline{L}, H^{\delta} )$, there is a HF-PFH curve $u \in \mathcal{M}^J(\mathbf{y}, \alpha)$ satisfying $[u]=A\#\mathcal{Z}_{ref} \#(-Z)$, where $J \in \mathcal{J}_{comp}(W, \Omega_{H^{\delta}})$.  Recall that
 $\mathcal{Z}_{ref} =[ (\mathbb{R} \times \Psi_{H^{\delta}}(S^1 \times \mathbf{x}) ) \cap W]\in H_2(W, \mathbf{x}_{H^{\delta}},  \gamma^{\mathbf{x}}_{H^{\delta}})$. Therefore, $ \int_{\mathcal{Z}_{ref}} \omega_{\varphi_{H^{\delta}}}  =0.$ By Lemma \ref{lem8}, we have
 \begin{equation*}
 \begin{split}
 0 \le  \int u^* \omega_{\varphi_{H^{\delta}}} &= \int_{A}   \omega_{\varphi_{H^{\delta}}} +  \int_{\mathcal{Z}_{ref}} \omega_{\varphi_{H^{\delta}}} - \int_{A}   \omega_{\varphi_{H^{\delta}}}\\
 &=\mathcal{A}_H (\mathbf{y}, [A])  -\mathbb{A}_H (\alpha, [Z])\\
 &\le c_{\underline{L}}(H, a ) + 1/ \kappa-\mathbb{A}_H (\alpha, [Z]).
    \end{split}
 \end{equation*}
 Therefore,  $c^{pfh}_d(H^{\delta}, \sigma) \le c_{\underline{L}}(H^{\delta}, a ) + 1/ \kappa$. Let  $\kappa \to \infty$. Then  $c^{pfh}_d(H^{\delta}, \sigma) \le c_{\underline{L}}(H^{\delta}, a ).$ Take $\delta \to 0$. By the Hofer-Lipschitz continuity (Theorem \ref{thm1} and Theorem 3.1  \cite{CHS2}) and the estimate (\ref{eq68}),  (\ref{eq65}) is true for $H$.

\end{proof}
\begin{remark}
A priori,  the definition of $\mathcal{OC}(\underline{L}, H)$ in (\ref{eq66})   could depend on the choice  of the pair   $(H_{\varepsilon}', J) $. Say if we replace  $H_{\varepsilon}'$ by another perturbation of $-1/\kappa f$, then we do not know whether the open-closed morphisms defined by this new function agree with those defined by $H_{\varepsilon}'$ . This  because in the diagram (\ref{eq67}) we require one function satisfying \ref{assumption1} and the other one satisfying  \ref{assumption2}.
\end{remark}

\section{Spectral invariants} \label{section6}

\subsection{Comparing  PFH and HF spectral invariants}
In this section, we prove Theorem \ref{thm0} and Theorem \ref{thm4}, Hence, we assume that the link $\underline{L}$ is $0$-admissible.

Let  $\mathfrak{c}=(\alpha_{\diamondsuit} , Z_{\alpha_{{\diamondsuit} }}) + \sum (\beta, Z)$  be the cycle  in Lemma \ref{lem14}. It  represents a class $\mathfrak{d}^{\mathbf{x}}_{H'_{\varepsilon}} \ne 0 \in   \widetilde{PFH}(\Sigma, \varphi_{H'_{\varepsilon}}, \gamma^{\mathbf{x}}_{H'_{\varepsilon}} )$.
Define $\mathfrak{d} = \mathfrak{j}_{H'_{\varepsilon}}^{\mathbf{x}} (\mathfrak{d}^{\mathbf{x}}_{H'_{\varepsilon}}) \in \widetilde{PFH}(\Sigma, d).$

\begin{proof} [Proof of  Theorem \ref{thm5}]
%By (\ref{eq17}), we have $c_{\underline{L}}^{-}(H) -\nu(e_{\diamondsuit}) \le c_{\underline{L}}^{+}(H) $.  It is easy to see that $\nu(e_{\diamondsuit}) = 0$ from the definition.

The inequality $ c_{\underline{L}} (H,\mathfrak{e}) \le c^{pfh}_{d}(H,   \mathfrak{e})$ is Theorem 3 of \cite{GHC2}. By Theorem \ref{thm3}, we have $  c^{pfh}_{d}(H, \mathfrak{d}) \le c_{\underline{L}}(H, \sigma_{\underline{L}})$.

%To prove $ c^{pfh}_{d}(H,   \mathfrak{e}^{\mathbf{x}}_{H},\gamma^{\textbf{x}}_H)  +   \int_0^1 H_t(\mathbf{x}) dt \le c_{\underline{L}}^-(H) $, it suffices to prove the inequality   for a Hamiltonian function $H$ satisfying \ref{assumption2} because of the Hofer-Lipschitz property and Remark \ref{remark2}.  By using Theorem \ref{thm2} and the argument in Section 7 of \cite{GHC2}, we obtain the inequality.

%By (\ref{G1}), $\varphi_{H'_{\varepsilon}}(\underline{L}) = \varphi_{H_{\varepsilon}}(\underline{L}).$ Thus, $HF(\Sigma, \varphi_{H'_{\varepsilon}}(\underline{L}), \underline{L}, \mathbf{x})$ is isomorphic to $HF(\Sigma, \varphi_{H_{\varepsilon}}(\underline{L}), \underline{L}, \mathbf{x})$ canonically by identifying their generators.
It is remind to prove  $c_{\underline{L}} (H, \sigma_{\underline{L}}) \le c_{\underline{L}} (H, e_{\underline{L}}).$   By definition, $(\mathbf{y}_{\diamondsuit}, A_{\mathbf{y}_{\diamondsuit}})$ represents  $(j^{\mathbf{x}}_{H'_{\varepsilon}})^{-1}(\sigma_{\underline{L}})=(j^{\mathbf{x}}_{H_{\varepsilon}})^{-1}(\sigma_{\underline{L}}).$  By (\ref{eq20}), we have
\begin{equation*}
 c_{\underline{L}}(H_{\varepsilon}, \sigma_{\underline{L}}) \le \mathcal{A}_{H_{\varepsilon}}(\mathbf{y}_{\diamondsuit}, A_{\mathbf{y}_{\diamondsuit}}) =O(\varepsilon).
 \end{equation*}
 Let $\varepsilon \to 0$. We obtain  $c_{\underline{L}}(0, \sigma_{\underline{L}}) \le 0$.  By the triangle inequality, we have
 \begin{equation*}
\begin{split}
c_{\underline{L}} (H, \sigma_{\underline{L}}) = c_{\underline{L}} (H, \mu_2(e_{\underline{L}} \otimes \sigma_{\underline{L}}))\le   c_{\underline{L}}(H, e_{\underline{L}}) +  c_{\underline{L}}(0, \sigma_{\underline{L}}) \le   c_{\underline{L}}(H, e_{\underline{L}}).
\end{split}
\end{equation*}
\end{proof}

Next, we prove Theorem \ref{thm0} by using Theorem \ref{thm5}.
\begin{proof} [Proof of  Theorem  \ref{thm0}]
In the case of the sphere, let $\mathfrak{e}^{\mathbf{x}}_{H}:= ( \mathfrak{j}_H^{\mathbf{x}})^{-1}(\mathfrak{e})$ and $\mathfrak{d}^{\mathbf{x}}_{H}  :=( \mathfrak{j}_H^{\mathbf{x}})^{-1}(\mathfrak{d})$.  We will show that  the classes $\mathfrak{e}^{\mathbf{x}}_{H}$ and $\mathfrak{d}^{\mathbf{x}}_{H}$ are related by the $U$-map.

There is a natural trivialization $\tau_H$ of  $\xi \vert_{\gamma_H^{\mathbf{x}}}$ defined by pushing forward  the $S^1$-invariant trivialization over $S^1 \times \{ \mathbf{x} \}$.  Then we have a well-defined grading $\operatorname{gr}(\alpha, [Z])$ for each anchored orbit set (see (11) of  \cite{CHS2}).
%For an anchored orbit set $(\alpha, Z)$, we define its  grading  by $gr(\alpha, [Z]) =I(Z_{ref}\# Z)$, where  $I(Z_{ref}\# Z)$ is the ECH index is  given by (\ref{eq54}).  Note that this agrees with the grading defined in  (11) of  \cite{CHS2} up to a fixed constant.
We claim that
\begin{equation} \label{eq54}
\operatorname{gr}(\mathfrak{e}^{\mathbf{x}}_{H}) -\operatorname{gr}( \mathfrak{d}^{\mathbf{x}}_{H}) =2d.
\end{equation}
Because the cobordism maps  $\mathfrak{I}^{\mathbf{x}}_{H, G}$
 preserve  the grading,  it suffices to check this for  a special case that $H$ is a small Morse function.  Take $H=H_{\varepsilon}$. Then $\operatorname{gr}(\alpha, [Z]) =I(Z_{ref}\# Z) + c$, where  $I(Z_{ref}\# Z)$ is the ECH index is  given by (\ref{eq54}), and $c$ is a constant dependent on the choice of the base point. Without loss of generality, assume that $c=0.$

 By  Lemma \ref{lem5} and Lemma \ref{lem6}, $\mathfrak{d}_{H_{\varepsilon}}^{\mathbf{x}}$ is represented by $\mathfrak{c}'=(\alpha_{\diamondsuit}, Z_{\alpha_{\diamondsuit}}) +\sum (\beta', Z')$, where $\beta' \ne \alpha_I.$  By (\ref{eq52}),
 we have $\operatorname{gr}( \mathfrak{d}^{\mathbf{x}}_{H_{\varepsilon}})=\operatorname{gr}(\alpha_{\diamondsuit}, Z_{\alpha_{\diamondsuit}})=0.$
\begin{comment}
On the other hand,  by (\ref{eq52}), we have
 \begin{equation*}
   I(\alpha_{\diamondsuit}, Z_{\alpha_{\diamondsuit}}) -I (\beta', Z') = -2e_+(\beta') -h(\beta')  +2n(Z')(d+1).
 \end{equation*}
Note that  $1\le 2e_+(\beta') + h(\beta') \le 2d$ because $\beta' \ne \alpha_I$.  If  $ I(\alpha_{\diamondsuit}, Z_{\alpha_{\diamondsuit}}) -I (\beta', Z') =0$, then  $\frac{1}{2(d+1)}\le n(Z') \le \frac{d}{d+1}$. We get a contradiction.
Therefore,   $\mathfrak{c}'=(\alpha_{\diamondsuit}, Z_{\alpha_{\diamondsuit}}) $ and $gr(\mathfrak{c}')=0$.
\end{comment}

 The class $ \mathfrak{e}_{H_{\varepsilon}}^{\mathbf{x}}=PFH^{sw}_{Z_{ref}}(X_+, \Omega_{X_+})(1)$ (see Remark 6.1 of \cite{GHC2}), where $X_+ =B_+ \times \Sigma$ and $B_+$ is a punctured sphere with a negative end. The construction of $(X_+, \Omega_{X_+})$ is similar to  (\ref{eq6}).
 Assume that $PFC^{sw}_{Z_{ref}}(X_+, \Omega_{X_+}) =\sum (\alpha_+, Z_+)$.
 For $Z_+ \in H_2(X_+, \emptyset, \alpha_+)$,  by the same argument as in  Lemma 3.3 of \cite{GHC1}, we have
 \begin{equation} \label{eq53}
 \begin{split}
 &I(Z_+) =2d-2e_+(\alpha_+) - h(\alpha_+) +2n(Z_+) (d+1),\\
 &\int_{Z_+} \omega_{X_+}  = -H_{\varepsilon}(\alpha_+)   +n(Z_+).
 \end{split}
 \end{equation}
 By holomorphic curve axiom, we have a broken holomorphic current  $\mathcal{C} \in \overline{\mathcal{M}^{J_{X_+}}}(\emptyset, \alpha_+, Z_+)$.   Similar to Lemma \ref{lem11}, the energy of $\mathcal{C}$ is nonnegative. Then $n(Z_+) \ge 0$.  The formula of ECH index (\ref{eq53}) implies that $e_+(\alpha_+)=d$. Hence, $\operatorname{gr}(\alpha_+, Z_+) =2d$, and we finish the proof of the claim.

  %By index reason,  the class $ \sigma^{\mathbf{x}}_{\heartsuit H}$ can be represented by a cycle that  is a certain combination of constant orbits at the maximum points. It is not difficult to show that  the claim is true.

According to  Example 2.19 of  \cite{EH} and (\ref{eq54}), we know that
\begin{equation*}
\begin{split}
U^d \mathfrak{e}^{\mathbf{x}}_{H}=  \mathfrak{d}^{\mathbf{x}}_{H}  \mbox{ and } U^{d+1}   \mathfrak{e}^{\mathbf{x}}_{H}   = q  \mathfrak{e}^{\mathbf{x}}_{H},
\end{split}
\end{equation*}
where $q$ is the formal variable of the Novikov ring $\Lambda$.
The usual energy estimate imply that the $U$-map decreases the PFH spectral invariants. As a result,
\begin{equation*}
\begin{split}
c_{\underline{L}}^-(H) &\ge  c^{pfh}_{d}(H,     \mathfrak{d}^{\mathbf{x}}_{H_{\varepsilon}} ,\gamma^{\textbf{x}}_H) + \int_0^1 H_t(\mathbf{x}) dt\\
&\ge   c^{pfh}_{d}(H,    U^{d+1}   \mathfrak{e}^{\mathbf{x}}_{H} ,\gamma^{\textbf{x}}_H) + \int_0^1 H_t(\mathbf{x}) dt\\
&= c^{pfh}_{d}(H,    q    \mathfrak{e}^{\mathbf{x}}_{H} ,\gamma^{\textbf{x}}_H) + \int_0^1 H_t(\mathbf{x}) dt.
\end{split}
\end{equation*}
%By definition, $\mathbb{A}_H^{\eta}(\alpha, [Z+m\Sigma]) =\mathbb{A}_H^{\eta}(\alpha, [Z]) + m + 2\eta m(d-1) $.
 According to  Proposition 4.2 of \cite{EH}, we have $$c^{pfh}_{d}(H, q \mathfrak{e}^{\mathbf{x}}_{H},  \gamma^{\textbf{x}}_H) =c^{pfh}_{d}(H, \mathfrak{e}^{\mathbf{x}}_{H},  \gamma^{\textbf{x}}_H) -1.  $$
%\begin{equation*}
%\begin{split}
%c^{pfh}_{d, \eta}(H, \sigma^{\mathbf{x}}_{\heartsuit H},  \gamma^{\textbf{x}}_H)  -1 -2\eta(d-1) + \int_0^1 H_t(\textbf{x}) dt =c^{pfh}_{d, \eta}(H, q \sigma^{\mathbf{x}}_{\heartsuit H},  \gamma^{\textbf{x}}_H) + \int_0^1 H_t(\textbf{x}) dt  \le  c_{\underline{L},\eta}^{-}(H).
%\end{split}
%\end{equation*}
Therefore,  we have
\begin{equation*}
\begin{split}
c^{pfh}_{d}(H, \mathfrak{e}^{\mathbf{x}}_{H},  \gamma^{\textbf{x}}_H) +  \int_0^1 H_t( \textbf{x}) dt-1   \le c_{\underline{L}}^-(H) \le c_{\underline{L}}^+(H) \le c^{pfh}_{d}(H,  \mathfrak{e}^{\mathbf{x}}_{H},  \gamma^{\textbf{x}}_H) + \int_0^1 H_t(\textbf{x}) dt .
\end{split}
\end{equation*}
This implies that  (\ref{eq32}).
\end{proof}

%\begin{proof}[Proof of the claim]
%Let $g: \Sigma \to \mathbb{R}$ be a Morse function with two critical points $x_+, x_-$, where $x_+$ is the maximum point and $x_-$ is the minimum point. Let ${G}_{\epsilon}: =\epsilon g$. Take $\mathbf{x}=(x_-, ..., x_-)$ be the base point. Let $\gamma_{x_{\pm}}$ denote the constant orbits at $x_{\pm}$.
%
%
%For sufficiently small $\epsilon>0$, the PFH complex is generated by degree $d$ symmetric monomials of the constant orbits (the orbits corresponding to the index 1 critical points cannot repeat). Also, the holomorphic curves corresponding to Morse flow lines are the only  curves contributed to the differential. Therefore, we assume that $\partial =0$.
%
%
%By index reason,  the holomorphic curves  contributed  to $PFH^{sw}_{Z_{ref}}(X_+, \Omega_{X_+})(1)$ must asymptotic to $\gamma_{x_+}^d$. Therefore,  we have  $\sigma^{\mathbf{x}}_{\heartsuit  {G}_{\epsilon}} = (\gamma^d_{x_+}, Z_{\gamma_{x_+}^{d} })$. By Lemma \ref{lem11}, we know that  $\sigma^{\mathbf{x}}_{\diamondsuit  {G}_{\epsilon}}$ is a class such that
%\end{proof}

\subsection{Quasimorphisms }
In this section, we show that $\mu^{pfh}_d$ is a quasimorphism on $Ham(\mathbb{S}^2, \omega)$.    This result could be deduced from the equivalence between PFH spectral invariant and link spectral invariant in   Theorem \ref{thm0}  and the corresponding result in Theorem 7.6 of \cite{CHMSS}.  Here we provide an alternative proof by using the duality in Floer homology. The argument is invented by   M. Entov and L. Polterovich \cite{EP}.

To begin with,  let us recall some facts about the duality in Floer homology.
Let $\mathfrak{c}$ be a   \textbf{graded filtered Floer-Novikov complex} over a field $\mathbb{F}$ in the sense of \cite{U}.  We can associate  $\mathfrak{c}$  with a graded chain complex $(C_*(\mathfrak{c}), \partial)$. One can define the homology and spectral numbers for   $(C_*(\mathfrak{c}), \partial)$.   Roughly speaking,  $\mathfrak{c}$ is an abstract  complex that  is characterized by the common properties of Floer homology.
We remark   that   the PFH chain complex is an example  of  graded filtered Floer-Novikov complexes.

For $\mathfrak{c}$, M. Usher defines another   graded filtered Floer-Novikov complex   $\mathfrak{c}^{op}$ called the opposite complex.  %The associated chain complex of  $\mathfrak{c}^{op}$ is denoted by $(D_*(\mathfrak{c}^{op}), \delta)$.
Roughly speaking, the homology of  $(C_*(\mathfrak{c}^{op}), \delta)$ is the Poincare  duality of $H_*(C_*(\mathfrak{c}))$ in the following sense:
There is a non-degenerate   pairing $\Delta: H_{-k}(C_*(\mathfrak{c}^{op})) \times H_k(C_*(\mathfrak{c})) \to \mathbb{F}$. We refer the readers to \cite{U} for the details of the    graded filtered Floer-Novikov complex and   opposite complex.

Let $ \mathfrak{c}_1,  \mathfrak{c}_2$ be   graded filtered Floer-Novikov complexes.  Let $I: C_*(\mathfrak{c}_1) \to  C_*(\mathfrak{c}_2)$  be a 0-degree  chain map given by
\begin{equation*}
I p_1 = \sum_{p_2} n(p_1, p_2)p_2,
\end{equation*}
where $p_i$ are generators of  $C_*(\mathfrak{c}_i)$ and $n(p_1,p_2) \in \mathbb{F}$.   Define  $I^{op}: C_*(\mathfrak{c}^{op}_2) \to  C_*(\mathfrak{c}^{op}_1)$  by  $$I^{op}p_2=\sum_{p_1} n(p_1, p_2)p_1. $$

\begin{lemma}
The map  $I^{op}: C_*(\mathfrak{c}^{op}_2) \to  C_*(\mathfrak{c}^{op}_1)$ satisfies the following properties:
\begin{itemize}
\item
 $I^{op}$ is a chain map.  It descends to a map $I^{op}_* :  H_*(C_*(\mathfrak{c}^{op}_2))\to  H_*(C_*(\mathfrak{c}^{op}_1)).$
\item
Let $I_1: C_*(\mathfrak{c}_1) \to  C_*(\mathfrak{c}_2)$ and $I_2: C_*(\mathfrak{c}_2) \to  C_*(\mathfrak{c}_3)$ be two  0-degree  chain maps. Then $(I_1 \circ I_2)^{op}= I_2^{op} \circ I_1^{op}$.  In particular, if $I_*$ is an isomorphism, so is $I^{op}_*$.

\item
Let $a \in H_{-k}(C_{*}(\mathfrak{c}^{op}_2))$ and  $b \in H_k(C_*(\mathfrak{c}_1))$. Then  we have
$$\Delta(a, I_*(b)) = \Delta(I^{op}_*(a), b). $$
\end{itemize}
\end{lemma}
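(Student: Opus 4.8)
The plan is to verify the three bullet points directly from the definitions of the opposite complex $\mathfrak{c}^{op}$ and the transpose map $I^{op}$, since all of the nontrivial analytic content (transversality, compactness, the existence of the pairing $\Delta$) has already been absorbed into Usher's formalism of graded filtered Floer–Novikov complexes. The key observation is that $I^{op}$ is, on the level of coefficient matrices, literally the transpose of $I$, and likewise the differential $\delta$ on $\mathfrak{c}^{op}$ is the transpose of $\partial$ on $\mathfrak{c}$ (this is how Usher builds $\mathfrak{c}^{op}$). So each statement is an instance of the elementary linear-algebra fact that transposition reverses composition and is an anti-isomorphism of matrix algebras, upgraded to the Novikov-coefficient setting with the appropriate finiteness bookkeeping.

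First I would record the precise form of $\delta$. By construction $\langle \delta p', q' \rangle_{\mathfrak{c}^{op}} = \langle \partial q, p \rangle_{\mathfrak{c}}$ under the tautological identification of generators of $C_*(\mathfrak{c})$ with those of $C_*(\mathfrak{c}^{op})$ (with grading negated). Then for the first bullet I compute
\begin{equation*}
\langle I^{op}\delta_2 p_2', q_1'\rangle = \langle \delta_2 p_2', (I^{op})^{\!\top}q_1'\rangle = \langle \delta_2 p_2', I q_1\rangle = \langle \partial_2 (Iq_1), p_2\rangle = \langle I \partial_1 q_1, p_2\rangle = \langle \partial_1 q_1, I^{\!\top}p_2\rangle = \langle \delta_1 I^{op}p_2', q_1'\rangle,
\end{equation*}
where the middle equality $\partial_2 I = I \partial_1$ is the hypothesis that $I$ is a chain map; one also checks that $I^{op}$ preserves the action filtration because $n(p_1,p_2)\neq 0$ forces an inequality between the actions in the right direction (this is exactly the filtered-chain-map condition for $I$, read backwards). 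Hence $I^{op}$ is a filtered chain map and descends to $I^{op}_*$ on homology. The second bullet is immediate: $(I_1\circ I_2)^{op}$ has coefficient matrix $(M_2 M_1)^{\!\top} = M_1^{\!\top} M_2^{\!\top}$, which is the coefficient matrix of $I_2^{op}\circ I_1^{op}$; applying this to $\mathrm{id}$ and to a two-sided inverse shows $I_*$ iso $\Rightarrow$ $I^{op}_*$ iso.

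For the third bullet, the pairing $\Delta: H_{-k}(C_*(\mathfrak{c}^{op})) \times H_k(C_*(\mathfrak{c})) \to \mathbb{F}$ is, at the chain level, the evaluation pairing $\langle\,\cdot\,,\,\cdot\,\rangle$ between $C_{-k}(\mathfrak{c}^{op})$ and $C_k(\mathfrak{c})$ (descended to homology, which is well defined precisely because $\delta$ and $\partial$ are mutual transposes). So for representative cycles $a \in C_{-k}(\mathfrak{c}_2^{op})$ and $b\in C_k(\mathfrak{c}_1)$,
\begin{equation*}
\Delta(a, I_*(b)) = \langle a, Ib\rangle = \langle I^{op}a, b\rangle = \Delta(I^{op}_*(a), b),
\end{equation*}
the middle step being the defining adjointness of $I$ and $I^{op}$. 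The only genuine points needing care — and the part I expect to be the main obstacle to write cleanly rather than conceptually — are the finiteness/convergence conditions: one must check that $I^{op}$ maps the Novikov-completed complex $C_*(\mathfrak{c}_2^{op})$ into $C_*(\mathfrak{c}_1^{op})$ (i.e. that the sums $\sum_{p_1} n(p_1,p_2)p_1$ still satisfy the Novikov finiteness condition), and that $\Delta$ is genuinely non-degenerate and action-compatible so that the displayed identities survive passage to homology. Both follow from Usher's general results once one matches conventions, so I would cite \cite{U} for those and present the above three computations as the proof.
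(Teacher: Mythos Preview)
Your proposal is correct and in fact supplies considerably more detail than the paper, which does not give a proof at all: it simply declares the lemma ``straightforward'', refers to Proposition~2.4 of \cite{U} for the special case $\mathfrak{c}_1=\mathfrak{c}_2$, and leaves the rest to the reader. Your transpose/adjointness computation is exactly the intended argument.

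One small point worth tightening: in the second bullet you deduce that $I^{op}_*$ is an isomorphism by ``applying this to $\mathrm{id}$ and to a two-sided inverse''. This presumes a chain-level (or chain-homotopy) inverse to $I$, whereas the hypothesis is only that $I_*$ is an isomorphism on homology. The cleanest fix is to use the third bullet together with the non-degeneracy of $\Delta$: if $I^{op}_*(a)=0$ then $\Delta(a,I_*(b))=\Delta(I^{op}_*(a),b)=0$ for all $b$, and since $I_*$ is surjective and $\Delta$ non-degenerate this forces $a=0$; surjectivity is similar. Alternatively, in the applications at hand the continuation maps do admit chain-homotopy inverses, so your argument also goes through once you say ``homotopy inverse'' rather than ``inverse''.
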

The proof of this lemma  is straightforward (see Proposition 2.4 in \cite{U} for the case $\mathfrak{c}_1=\mathfrak{c}_2$), we left the details to the readers.

Now we construct the opposite complex of  $(PFC_*(\mathbb{S}^2, \varphi_H, \gamma_{{H}}^{\mathbf{x}}), \partial_J)$. % is a   graded filtered Floer-Novikov complex in the sense of Usher.
Let  $\bar{H}_t = -H_{1-t}$. This is  a Hamiltonian  function generated $\varphi_H^{-1}$.   Define  a diffeomorphism
\begin{equation*}
\begin{split}
\iota: &  S_t^1 \times \Sigma \to    S_{\tau}^1 \times \Sigma\\
&(t, x) \to (1-t, x).
\end{split}
\end{equation*}
Note that $ (\iota^{-1})^*(\omega + dH_t \wedge dt) = \omega + d \bar{H}_{\tau} \wedge d\tau. $ If $\gamma$ is a $\varphi_H$ periodic orbit, then $\bar{\gamma} :=\iota \circ \gamma$ is a $\varphi_H^{-1}$ periodic orbit. Here we orient  $\bar{\gamma}$ such that it transverse $\Sigma$ positively.

Recall that the symplectic cobordism $(X=\mathbb{R} \times S^1 \times \Sigma, \Omega_X =\omega + d(H_t^s dt) + ds\wedge dt)$.  We extend  the map $\iota$ to be
\begin{equation*}
\begin{split}
\iota: &\mathbb{R}_s \times S_t^1 \times \Sigma \to  \mathbb{R}_r \times S_{\tau}^1 \times \Sigma\\
&(s, t, x) \to (-s, 1-t, x).
\end{split}
\end{equation*}
Note that $(\iota^{-1})^*\Omega_X = \omega - d(H^{-r}_{1-\tau} \wedge d\tau) + dr \wedge d\tau$.    Therefore, $(X, (\iota^{-1})^*\Omega_X )$ is a symplectic cobordism  from $(Y_{\varphi_{\bar{H}^-}}, \omega_{\varphi_{\bar{H}^-}})$ to  $(Y_{\varphi_{\bar{H}^+}}, \omega_{\varphi_{\bar{H}^+}})$

Consider the case that $H_t^s=H_t$. Let $PFC_*(\mathbb{S}^2, \varphi_H^{-1}, \gamma_{\bar{H}}^{\mathbf{x}})$ be the  complex generated by $(\bar{\alpha}, -\iota_*Z)$.    Note that $\iota_*Z \in H_2(Y_{\varphi^{-1}_H},   \gamma_{\bar{H}}^{\mathbf{x}}, \bar{\alpha})$.  Here   $-\iota_* Z $ denote   the unique class in    $H_2(Y_{\varphi^{-1}_H},     \bar{\alpha}, \gamma_{\bar{H}}^{\mathbf{x}})$  such that $(-\iota_*Z) \# \iota_*Z =[\mathbb{R} \times \bar{\alpha}]$.
\begin{comment}
The grading on PFH is defined by
\begin{equation*}
\begin{split}
gr(\alpha, Z) :=c_{\tau, \tau_0}(Z) +Q_{\tau, \tau_0}(Z) + \mu_{\tau}^{ech}(\alpha).
\end{split}
\end{equation*}
\end{comment}
 Note that we have
\begin{enumerate} [label=\textbf{O.\arabic*}]
\item
 $\mathbb{A}_{\bar{H}}  (\bar{\alpha}, -\iota_*Z) = - \mathbb{A}_H(\alpha, [Z])$.

 \item
 $\operatorname{gr} (\bar{\alpha}, -\iota_*Z) = -\operatorname{gr}(\alpha, Z). $%
%$I (\bar{\alpha}_+, \bar{\alpha}_-,     \iota_*Z_-\#(-\iota_*Z_+) )  = -I(\alpha_+, \alpha_-, Z_+-Z_-).$

\item \label{O3}
Let $u \in \mathcal{M}^J(\alpha_+, \alpha_-, Z)$ be a holomorphic curve in $ \left(X,  \Omega_X \right) $. Then $\bar{u}:=\iota\circ u  \in \mathcal{M}^{\bar{J}}(\bar{\alpha}_-, \bar{\alpha}_+, \iota_*Z)$  is  a holomorphic curve in $ \left(X,  (\iota^{-1})^*\Omega_X \right) $, where $\bar{J} = \iota_* \circ J \circ \iota_*^{-1}$.  This establishes a one-to-one correspondence between $ \mathcal{M}^J(\alpha_+, \alpha_-, Z)$ and $\mathcal{M}^{\bar{J}}(\bar{\alpha}_-, \bar{\alpha}_+, \iota_*Z)$.

\end{enumerate}
These three points implies that     $PFC_*(\mathbb{S}^2, \varphi_H^{-1}, \gamma_{\bar{H}}^{\mathbf{x}})$ is the opposite complex  of  $PFC_*(\mathbb{S}^2, \varphi_H^{-1}, \gamma_{\bar{H}}^{\mathbf{x}})$.

The  pairing
$\Delta:   PFC_{-k}(\mathbb{S}^2, \varphi_H^{-1}, \gamma_{\bar{H}}^{\mathbf{x}}) \otimes PFC_k(\mathbb{S}^2, \varphi_H, \gamma_{H}^{\mathbf{x}})    \to \mathbb{F}$
is defined by
\begin{equation*}
\begin{split}
\Delta \left(\sum a_{(\bar{\alpha}, -\iota_*[Z])} (\bar{\alpha}, -\iota_*[Z]), \sum b_{(\alpha, [Z])} (\alpha, [Z]) \right) = \sum_{ (\alpha, [Z])} a_{(\bar{\alpha}, -\iota_*[Z])}  b_{(\alpha, [Z])}.
\end{split}
\end{equation*}
This pairing descends to the homologies.  By Corollary 1.4 of  \cite{U}, we have
\begin{equation*}
\begin{split}
c^{pfh}_{d }(H, \mathfrak{e}) = -\inf \{c^{pfh}_{d }(\bar{H}, \sigma) \vert \sigma \in PFH_{-k}(\mathbb{S}^2, \varphi_H^{-1}, \gamma_{\bar{H}}^{\mathbf{x}}), \Delta(\sigma,   \mathfrak{e}_H^{\mathbf{x}}) \ne 0 \},
\end{split}
\end{equation*}
where  $\mathfrak{e}^{\mathbf{x}}_{H} = ( \mathfrak{j}_H^{\mathbf{x}})^{-1}(\mathfrak{e})$.

The key of proving Theorem \ref{thm6} is the following lemma.
\begin{lemma} \label{lem19}
For any Hamiltonian function $H$, we have
\begin{equation*}
\begin{split}
&c^{pfh}_{d }(H,  \mathfrak{e} ) + c^{pfh}_{d }(\bar{H},  \mathfrak{e})\le 1 \\
& c_{\underline{L} }^+(H) +  c_{\underline{L} }^+(\bar{H}) \le  1 .
\end{split}
\end{equation*}
\end{lemma}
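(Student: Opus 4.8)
The plan is to prove the two inequalities
$c^{pfh}_{d}(H,\sigma^{\mathbf{x}}_{\heartsuit H}) + c^{pfh}_{d}(\bar H,\sigma^{\mathbf{x}}_{\heartsuit \bar H}) \le 1$
and
$c^{+}_{\underline{L}}(H) + c^{+}_{\underline{L}}(\bar H) \le 1$
by a duality-plus-triangle-inequality argument in the spirit of Entov--Polterovich \cite{EP}. First I would treat the PFH inequality. Using the opposite-complex description established just above, together with Usher's formula
\begin{equation*}
c^{pfh}_{d}(H,\sigma^{\mathbf{x}}_{\heartsuit H}) = -\inf\{\, c^{pfh}_{d}(\bar H,\sigma) \mid \sigma\in PFH_{-k}(\mathbb{S}^2,\varphi_H^{-1},\gamma^{\mathbf{x}}_{\bar H}),\ \Delta(\sigma,\sigma^{\mathbf{x}}_{\heartsuit H})\ne 0\,\},
\end{equation*}
the claim is equivalent to showing that $c^{pfh}_{d}(\bar H,\sigma^{\mathbf{x}}_{\heartsuit \bar H}) - 1 \le c^{pfh}_{d}(\bar H,\sigma)$ for \emph{some} class $\sigma$ dual to $\sigma^{\mathbf{x}}_{\heartsuit H}$, i.e.\ that the class $\sigma^{\mathbf{x}}_{\heartsuit \bar H}$ (or a $U$-translate of it) pairs nontrivially with $\sigma^{\mathbf{x}}_{\heartsuit H}$. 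The natural candidate is that $\sigma^{\mathbf{x}}_{\heartsuit \bar H}$ is, up to a factor of $q$ (hence a shift of $1$ in action, by Proposition 4.2 of \cite{EH}), the Poincar\'e dual of $\sigma^{\mathbf{x}}_{\heartsuit H}$. To verify this I would reduce, via the grading- and filtration-preserving cobordism maps $\mathfrak{I}^{\mathbf{x}}_{H,G}$, to the model case $H=H_\varepsilon$ a small Morse function, where both classes are explicitly represented by combinations of constant orbits at critical points; the pairing $\Delta$ then becomes a combinatorial count that one checks is nonzero. The needed $U^{d+1}\sigma^{\mathbf{x}}_{\heartsuit H} = q\,\sigma^{\mathbf{x}}_{\heartsuit H}$ relation from Example 2.16 of \cite{EH} plus $U^d\sigma^{\mathbf{x}}_{\heartsuit H}=\sigma^{\mathbf{x}}_{\diamondsuit H}$ is what produces the $-1$: pairing $\sigma^{\mathbf{x}}_{\heartsuit \bar H}$ against $q\sigma^{\mathbf{x}}_{\heartsuit H}$ costs one unit of action.

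For the HF inequality $c^{+}_{\underline{L}}(H) + c^{+}_{\underline{L}}(\bar H) \le 1$, I would instead use Corollary \ref{corollary2} on $\mathbb{S}^2$, which gives $c^{+}_{\underline{L}}(H) \le c^{pfh}_{d}(H,\sigma^{\mathbf{x}}_{\heartsuit H}) + \int_0^1 H_t(\mathbf{x})\,dt$ and the analogous bound for $\bar H$; since $\int_0^1 H_t(\mathbf{x})\,dt + \int_0^1 \bar H_t(\mathbf{x})\,dt = 0$ by $\bar H_t = -H_{1-t}$, adding the two PFH estimates and invoking the first inequality yields the second immediately. Alternatively one could argue intrinsically on the HF side: the triangle inequality (Theorem \ref{thm4}(6)) applied to $H$, $\bar H$ and $H\#\bar H$ with $\mu_2(e\otimes e)=e$ gives $c^{+}_{\underline{L}}(H\#\bar H) \le c^{+}_{\underline{L}}(H)+c^{+}_{\underline{L}}(\bar H)$; the point then is to show $c^{+}_{\underline{L}}(H\#\bar H)$, for a Hamiltonian generating a path homotopic to the constant one, is at least $-1$ — which on $\mathbb{S}^2$ reflects the monotonicity constant and can again be routed through Corollary \ref{corollary2} and the PFH spectrum. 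Either route reduces to the same core PFH fact.

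The main obstacle I anticipate is the duality computation in the model case: making precise that $\sigma^{\mathbf{x}}_{\heartsuit \bar H}$ is genuinely (a $q$-multiple of) the Usher-dual of $\sigma^{\mathbf{x}}_{\heartsuit H}$, and pinning down exactly which $U$-power and which grading shift is needed so that the action defect is precisely $1$ rather than some other multiple of the monotonicity constant. This requires carefully tracking the trivialization $\tau_H$, the grading $gr(\alpha,[Z])$, and the behavior of $\iota_*$ on relative homology classes, as well as checking that the isomorphism $\mathfrak{I}^{\mathbf{x}}_{H,G}$ really does intertwine the pairing $\Delta$ (which follows from property \ref{O3}, but needs to be spelled out). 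Everything else — the reduction to the Morse model, the invocation of \cite{EH} for the $U$-map relations and the $q$-shift, and the assembly of the final inequality — is routine given the machinery already developed in the paper.
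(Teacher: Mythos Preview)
Your proposal is correct and follows essentially the same route as the paper: Usher duality on the opposite complex, reduction to a Morse model via the cobordism isomorphisms $\mathfrak{I}^{\mathbf{x}}$, explicit identification of the dual class there, and then the $U$-map relations from \cite{EH} to produce the defect $1$; the HF inequality is deduced from the PFH one via Corollary~\ref{corollary2} exactly as you suggest.

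One sharpening worth noting: in the paper the dual class to $\sigma^{\mathbf{x}}_{\heartsuit \bar H}$ is identified \emph{exactly} as $\sigma^{\mathbf{x}}_{\diamondsuit H}$, not as ``$\sigma^{\mathbf{x}}_{\heartsuit \bar H}$ up to a factor of $q$''. Concretely, one works in the model $\bar G_\epsilon=\epsilon g$ with $g$ a height function on $\mathbb{S}^2$; there the grading formula forces a unique class in degree $-d$, and transporting back via $(\mathfrak{I}^{\mathbf{x}}_{\bar G_\epsilon,\bar H})^{op}=\mathfrak{I}^{\mathbf{x}}_{H,G_\epsilon}$ (your anticipated compatibility of $\Delta$ with cobordism maps, which does follow from \ref{O3}) gives $\sigma^{\mathbf{x}}_{\diamondsuit H}$ on the nose. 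This yields the \emph{equality} $-c^{pfh}_d(\bar H,\sigma^{\mathbf{x}}_{\heartsuit \bar H})=c^{pfh}_d(H,\sigma^{\mathbf{x}}_{\diamondsuit H})$, and the $-1$ then enters only at the very last step via $U\sigma^{\mathbf{x}}_{\diamondsuit H}=q\,\sigma^{\mathbf{x}}_{\heartsuit H}$ together with the fact that $U$ decreases spectral numbers and $q$ shifts them by $1$. This is cleaner than hunting for the right $U$-power inside the duality computation itself, and it resolves precisely the ``main obstacle'' you flagged.
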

\begin{proof}
Let $g: \mathbb{S}^2\to \mathbb{R}$ be a Morse function with two critical points $x_+, x_-$, where $x_+$ is the maximum point and $x_-$ is the minimum point. Let $\bar{G}_{\epsilon}: =\epsilon g$. Take $\mathbf{x}=(x_-, ..., x_-)$ be the base point.
By  (\ref{eq69}) and (\ref{eq53}), we have
\begin{equation*}
\operatorname{gr}((\gamma_{x_+}^{d_+} \gamma^{d_-}_{x_-}, Z_{\gamma_{x_+}^{d_+} \gamma^{d_-}_{x_-}}+n[\mathbb{S}^2])) = 2d_+  + 2n(d+1) -d,
\end{equation*}
where $d_{\pm} \ge 0$ such that $d_++d_-=d$. The grading formula implies that $\partial =0$.  Note that $ (\gamma^d_{x_+}, Z_{\gamma_{x_+}^{d} })$ is the only element with $\operatorname{gr}=d$,  and  $(\gamma^d_{x_-}, Z_{\gamma_{x_-}^{d} })$ is the only element with $\operatorname{gr}=-d$. Hence, we have  $\mathfrak{e}^{\mathbf{x}}_{ \bar{G}_{\epsilon}} = (\gamma^d_{x_+}, Z_{\gamma_{x_+}^{d} })$ and  $\mathfrak{d}^{\mathbf{x}}_{ \bar{G}_{\epsilon}} = (\gamma^d_{x_-}, Z_{\gamma_{x_-}^{d} }).$
 Then for any $H$,  we have $\mathfrak{e}_{\bar{H}}^{\mathbf{x}} = \mathfrak{I}^{\mathbf{x}}_{\bar{G}_{\epsilon}, \bar{H}}( (\gamma^d_{x_+}, Z_{\gamma_{x_+}^{d} }))$.

By the observation \ref{O3}, we have  $( \mathfrak{I}^{\mathbf{x}}_{\bar{G}_{\epsilon}, \bar{H}})^{op} =\mathfrak{I}^{\mathbf{x}}_{H, G_{\epsilon}}$. Therefore,
\begin{equation*}
\Delta(\sigma, \mathfrak{e}_{ \bar{H}}^{\mathbf{x}} )= \Delta(\sigma, \mathfrak{I}^{\mathbf{x}}_{\bar{G}_{\epsilon}, \bar{H}}( \gamma^d_{x_+}, Z_{\gamma_{x_+}^{d} })) = \Delta( \mathfrak{I}^{\mathbf{x}}_{H, G_{\epsilon}}(\sigma), (\gamma^d_{x_+}, Z_{\gamma_{x_+}^{d} })).
\end{equation*}
Note that  $(\bar{\gamma}_{x_+}^{d} , -\iota_*Z_{\gamma^d_{x_+}})$ is the only class with $\operatorname{gr}=-d$. Hence, $ \Delta(\sigma, \mathfrak{e}_{\bar{H}}^{\mathbf{x}} )\ne 0$ if and only if $  \mathfrak{I}^{\mathbf{x}}_{H, G_{\epsilon}}(\sigma)=(\bar{\gamma}_{x_+}^{d} , -\iota_*Z_{\gamma^d_{x_+}})$. Therefore, $\sigma=\mathfrak{d}_{H}^{\mathbf{x}}$.
We have
\begin{equation*}
-c^{pfh}_{d}(\bar{H}, \mathfrak{e}) =  c^{pfh}_{d}(H, \mathfrak{d})  \ge c^{pfh}_{d}(H, \mathfrak{e} ) -1.
\end{equation*}
By Theorem \ref{thm5}, we get the second inequality for $c_{\underline{L}}(H, e_{\underline{L}})$ and   $c_{\underline{L}}(H, \sigma_{\underline{L}})$.
\end{proof}

\begin{proof} [Proof of Theorem \ref{thm1}]
By the triangle inequality  in Theorem \ref{thm4} and Lemma \ref{lem19}, we have
\begin{comment}
\begin{equation*}
\begin{split}
& c^{pfh}_d(H \diamond K, \sigma_{\heartsuit  H \diamond K}^{\mathbf{x}} )  + \int_0^1 H_t\diamond K_t(\mathbf{x}) dt \\
 \le& c_{\underline{L}}(H \diamond K) +1 \le   c_{\underline{L}}(H) + c_{\underline{L}}(K) +1 \\
 \le & c^{pfh}_d(H, \sigma_{\heartsuit  H}^{\mathbf{x}} ) + \int_0^1 H_t  (\mathbf{x}) dt +  c^{pfh}_d(K, \sigma_{\heartsuit  K}^{\mathbf{x}} )  + \int_0^1  K_t (\mathbf{x}) dt+1.
 \end{split}
\end{equation*}
\end{comment}
%On the other hand,
\begin{equation*}
\begin{split}
& c_{\underline{L}}(H, e_{\underline{L}})  +  c_{\underline{L}}(K, e_{\underline{L}})\\
=& c_{\underline{L}}(H, e_{\underline{L}})  +  c_{\underline{L}}(\bar{H} \diamond H \diamond K, e_{\underline{L}})\\
\le &  c_{\underline{L}}(H, e_{\underline{L}})  + c_{\underline{L}}^+(\bar{H}, e_{\underline{L}})  +  c_{\underline{L} }(H \diamond K, e_{\underline{L}}) \le   c_{\underline{L}}(H \diamond K, e_{\underline{L}})+1.
%&c_{\underline{L}}^+(H) +  c_{\underline{L}}^+(K)\\
%=& c_{\underline{L}}^+(H) +  c_{\underline{L}}^+(\bar{H} \diamond H \diamond K)\\
%\le &  c_{\underline{L}}^+(H)  + c_{\underline{L}}^+(\bar{H})  +  c_{\underline{L} }^+(H \diamond K) \le   c_{\underline{L}}^+(H \diamond K)+1.
 \end{split}
\end{equation*}
\begin{comment}
\begin{equation*}
\begin{split}
&c^{pfh}_d(H, \sigma_{\heartsuit H}^{\mathbf{x}} ) +  c^{pfh}_d(K, \sigma_{\heartsuit K}^{\mathbf{x}} )\\
= & c^{pfh}_d(H, \sigma_{\heartsuit H}^{\mathbf{x}} ) +  c^{pfh}_d(\bar{H}  \diamond H \diamond K, \sigma_{\heartsuit  K}^{\mathbf{x}} )  \\
 \le & c^{pfh}_d(H, \sigma_{\heartsuit  H}^{\mathbf{x}} ) + c^{pfh}_d(\bar{H}, \sigma_{\heartsuit  \bar{H}}^{\mathbf{x}} )  +  c^{pfh}_d(H \diamond K, \sigma_{\heartsuit H \diamond}^{\mathbf{x}} ) +1\\
 \le &  c^{pfh}_d(H \diamond K, \sigma_{\heartsuit H \diamond}^{\mathbf{x}} )  +2.
 \end{split}
\end{equation*}
\end{comment}
The above inequality and  triangle inequality imply  that $\mu_{\underline{L}, \eta=0}$ is a quasimorphism with defect $1$. So is $\mu_{d}^{pfh}$.
\end{proof}

Shenzhen University

\verb| E-mail adress: ghchen@szu.edu.cn |


\begin{thebibliography}{unsrt}
		\addcontentsline{toc}{chapter}{Bibliography}
		
		\bibitem{PA}
		P. Albers, {A Lagrangian Piunikhin-Salamon-Schwarz morphism and two com- parison homomorphisms in Floer homology, Int. Math. Res. Not. IMRN 2008, no. \textbf{4}, 2008.}
		%\bibitem{AF}
		%A. Floer
		%\it{Witten's complex and infinite dimensional Morse theory}
		
		\bibitem{FYHKE}
		F. Bourgeois, Y. Eliashberg, H. Hofer, K. Wysocki, and E. Zehnder,
		{\it Compactness results in symplectic field theory},
		Geom. Topol. \textbf{7} (2003), 799--888.
		
		%\bibitem{DCG}
		%D. Cristofaro-Gardiner,
		%\it{The absolute gradings on embedded contact homology and Seiberg–Witten Floer cohomology,} Algebr. Geom. Topol. \textbf{13} (2013) 2239–2260
		
		\bibitem{CHR}
		D. Cristofaro-Gardiner, M.Hutchings, and V. Ramos,
		\it{The asymptotics of ECH capacities,}  Invent. Math. \textbf{199}(2015), 187--214.
		
		
		
		\bibitem{CHS1}
		D. Cristofaro-Gardiner,  V. Humili$\grave{e}$re, and  S. Seyfaddini,
		\it{Proof of the simplicity conjecture.} arXiv:2001.01792,
		2020.
		
		\bibitem{CHS2}
		D. Cristofaro-Gardiner, V. Humili$\grave{e}$re,  and S. Seyfaddini
		\it{PFH spectral invariants on the two-sphere and the
			large scale geometry of Hofer’s metric.
		} 	  arXiv:2102.04404v1, 2021.
		
		\bibitem{CPZ}
		D. Cristofaro-Gardiner, R. Prasad, B. Zhang
		\it{The smooth closing lemma for area-preserving surface diffeomorphisms.}   arXiv:2110.02925,  2021.
		
		
		\bibitem{CHMSS}
		D. Cristofaro-Gardiner,  V. Humili$\grave{e}$re, C. Mak, S. Seyfaddini, I. Smith
		\it{Quantitative Heegaard Floer cohomology and the Calabi invariant.} arXiv:2105.11026, 2022.
		
		\bibitem{CHMSS1}
		D. Cristofaro-Gardiner,  V. Humili$\grave{e}$re, C. Mak, S. Seyfaddini, I. Smith
		\it{Subleading asymptotics of link spectral invariants and homeomorphism groups of surfaces.} arXiv:2206.10749
				




		\bibitem{CG}
C. Gerig,
\it{ Taming the pseudoholomorphic beasts in $ \mathbb{R} \times S^1 \times S^2$}, Geom,Topol. 24(2020) 1791--1839.



\bibitem{CG2}
C.  Gerig,
{\it
 Seiberg--Witten and Gromov invariants for self--dual harmonic 2--forms},  arXiv:1809.03405, (2018).

\bibitem{CEWYZ}
 J. Chaidez, O. Edtmair, L. Wang, Y. Yao, Z. Zhao, Legendrian embedded
 contact homology, arXiv:2302.07259v2
		
		\bibitem{GHC}
		G. Chen,
		 On  cobordism maps on periodic Floer homology,  Algebr. Geom. Topol. \textbf{21} (1) 1--103, 2021.
		
		\bibitem{GHC1}
		G. Chen,
		Cobordism maps on periodic Floer homology induced by elementary Lefschetz fibrations. \emph{Topology Appl.} \textbf{302} (2021), Paper No. 107818, 23 pp.	
		
		\bibitem{GHC2}
		G. Chen,
		 Closed-open morphisms on periodic Floer homology, arXiv:2111.11891, 2021.
		
		\bibitem{CHT}
		V. Colin, K. Honda, and  Y. Tian,
		\it{Applications of higher--dimensional Heegaard Floer homology to contact topology.} 	arXiv:2006.05701, 2020.
		

		\bibitem{VPK}
		V. Colin, P. Ghiggini, and K. Honda,
		{\it The equivalence of Heegaard Floer homology
			and embedded contact homology via open book decompositions I}, arXiv:1208.1074, 2012.
		
		\bibitem{EH}
		O. Edtmair, M. Hutchings,
		\it{PFH spectral invariants and $C^{\infty}$--closing lemmas}.  arXiv:2110.02463, 2021
		
	\bibitem{EP}	
	 M. Entov and L. Polterovich. Calabi quasimorphism and quantum homology. Int. Math. Res. Not. 2003, no. 30, 1635–1676. MR1979584.

		
		\bibitem{H1}
		M. Hutchings,
		{\it An index inequality for embedded pseudoholomorphic curves in symplectizations},
		J. Eur. Math. Soc. \textbf{4} (2002) 313--361.
		
		\bibitem{H2}
		M. Hutchings,
		{\it The embedded contact homology index revisited},
		New perspectives
		and challenges in symplectic field theory, 263--297, CRM Proc. Lecture
		Notes 49, Amer. Math. Soc., 2009.
		
		\bibitem{H3}
		M. Hutchings, M. Sullivan,
		{\it The periodic Floer homology of Dehn twist},
		Algebr. Geom. Topol. \textbf{5} (2005), pp. 301 -- 354. issn: 1472 -- 2747.
		
		
		
		
		\bibitem{H4}
		M. Hutchings,
		{\it Lecture notes on embedded contact homology}, Contact and Symplectic Topology, Bolyai Society Mathematical Studies, vol. 26, Springer, 2014, 389--484.
		
		
		\bibitem{H5}
		M. Hutchings,
		{\it Beyond ECH capacities},
		Geometry and Topology \textbf{20} (2016) 1085--1126.
		
		 		
		\bibitem{HT}
		M. Hutchings and C. H. Taubes,
		{\it Proof of the Arnold chord conjecture in three dimensions II
		},
		Geom, Topol. \textbf{17} (2003), 2601--2688.
		
		
\bibitem{HT1}
M. Hutchings and C. H. Taubes,
{\it Gluing pseudoholomorphic curves along
branched covered cylinders I },  J. Symplectic Geom. 5 (2007) 43–-137.


\bibitem{HT2}
M. Hutchings and C. H. Taubes,
{\it Gluing pseudoholomorphic curves along
branched covered cylinders II} , J. Symplectic Geom. 7 (2009) 29–-133.

		
\bibitem{HT3}
M. Hutchings and C. H. Taubes,
{\it  The Weinstein conjecture for stable
Hamiltonian structures}, Geom. Topol. \textbf{13} (2009), 901--941.
		

\bibitem{KMHW}		
C. Kutluhan, G. Matic, J. Van Horn-Morris, A. Wand,
Filtering the Heegaard Floer contact invariant, arxiv:1603.02673.


		\bibitem{KM}
		P.  Kronheimer, T.  Mrowka, \it{Monopoles and three-manifolds,} New Math. Monogr. 10,
Cambridge Univ. Press (2007).
		
		\bibitem{RL1}
		R. Lipshitz, A cylindrical reformulation of Heegaard Floer homology, Geom. Topol. \textbf{10} (2006).
		 		
		
		
		
		
		\bibitem{LT}
		Y-J. Lee and C. H. Taubes,
		Periodic Floer homology and Seiberg--Witten-Floer cohomology,
		J. Symplectic Geom. \textbf{10} (2012), no. 1, 81--164.
		
\bibitem{LZ}		
R. Leclercq and F. Zapolsky,  Spectral invariants for monotone Lagrangians. J. Topol. Anal., 10(3):627–700, 2018.
	
	 \bibitem{Oh}
	 Y-G. Oh
	  Symplectic topology and Floer homology. Vol. 2, New Mathematical Mono- graphs, vol. 28, Cambridge University Press, Cambridge, 2015, Symplectic geometry and pseudoholomorphic curves.


 \bibitem{T1}
C. H. Taubes,
{\it Seiberg--Witten and Gromov invariants for symplectic 4--manifolds,} First
International Press Lecture Series 2, International Press, Somerville, MA (2000)
MR1798809




\bibitem{T2}
C. H. Taubes,
{\it Embedded contact homology and Seiberg--Witten Floer cohomology I--V}, Geometry and Topology 14 (2010).
%\bibitem{PS}
%P. Seidel,
%{\it A long exact  sequence for symplectic Floer cohomology},
% Topology 42 (5) (2003) 1003--1063. % MR1978046 (2004d:53105).

%\bibitem{PS1}
%P. Seidel,
%{\it Floer homology and the symplectic isotopy problem}
%PhD thesis, Oxford University, 1997.
		
		

		
		
 		
\bibitem{U}	
M. Usher,	
 Duality in filtered floer-novikov complexes. Journal of Topology and Analysis (2011).
		
		
	\end{thebibliography}
\end{document}